\documentclass[11pt]{article}
\usepackage{graphicx}

\usepackage{tikz}
\usepackage{caption}
\usepackage{subcaption}
\usepackage{mathtools}

\usepackage{complexity}
\usepackage{amsthm}
\usepackage{thmtools}
\usepackage[margin=1in]{geometry}

\newcommand{\eps}{\varepsilon}
\usepackage{algorithm,algorithmicx}

\usepackage{dirtytalk}
\usepackage{csquotes}
\usepackage{bbold}

\usepackage{enumerate}
\usepackage{paralist}

\usepackage{amsmath} 
\usepackage{amssymb} 

\usepackage{natbib}

\usepackage[mathletters]{ucs}

\usepackage{amsmath,amstext,amssymb,amsfonts}
\usepackage{graphicx}
\usepackage[dvipsnames]{xcolor}
\usepackage{tabularx}
\usepackage{verbatim}
\usepackage{bbm}

\usepackage{microtype}

\usepackage[T1]{fontenc}    

\newtheorem{theorem}{Theorem}
\newtheorem*{theorem*}{Theorem}

\newtheorem{claim}[theorem]{Claim}
\newtheorem*{claim*}{Claim}

\newtheorem{proposition}[theorem]{Proposition}
\newtheorem*{proposition*}{Proposition}
\newtheorem{lemma}[theorem]{Lemma}
\newtheorem*{lemma*}{Lemma}
\newtheorem{corollary}[theorem]{Corollary}
\newtheorem*{corollary*}{Corollary}

\newtheorem*{conjecture*}{Conjecture}
\newtheorem{observation}[theorem]{Observation}
\newtheorem{fact}[theorem]{Fact}
\newtheorem*{fact*}{Fact}

\newtheorem*{hypothesis*}{Hypothesis}

\theoremstyle{definition}
\newtheorem{definition}[theorem]{Definition}

\newtheorem{remark}[theorem]{Remark}
\newtheorem*{remark*}{Remark}

\newcommand{\Sref}[1]{\hyperref[#1]{\S\ref*{#1}}}

\renewcommand{\leq}{\leqslant}
\renewcommand{\le}{\leqslant}
\renewcommand{\geq}{\geqslant}
\renewcommand{\ge}{\geqslant}

\usepackage{bm}

\newcommand{\paren}[1]{\left(#1 \right )}

\newcommand{\Brac}[1]{\left[#1\right]}

\newcommand{\set}[1]{\left\{#1\right\}}

\newcommand{\ceil}[1]{\lceil #1 \rceil}
\newcommand{\floor}[1]{\lfloor #1 \rfloor}

\newcommand{\lspan}[1]{\mathsf{span}\set{#1}}

\newcommand{\norm}[1]{\left\lVert#1\right\rVert}

\newcommand{\inprod}[1]{\left\langle #1\right\rangle}

\newcommand{\N}{{\mathbb N}}
\renewcommand{\R}{\mathbb R}

\newcommand{\Fi}{\mathbb F}

\newcommand{\val}{\mathsf {val}}

\newcommand{\Esymb}{\mathbbm{E}}
\newcommand{\Psymb}{\mathbbm{P}}

\DeclareMathOperator*{\Expect}{\Esymb}

\DeclareMathOperator*{\ProbOp}{\Psymb}

\newcommand{\ex}[2]{\Expect_{#1}\Brac{#2}}
\newcommand{\Ex}[1]{\Expect\Brac{#1}}

\renewcommand{\Pr}[1]{\ProbOp\Brac{#1}}
\newcommand{\pr}[2]{\ProbOp_{#1}\Brac{#2}}

\newcommand{\one}{\mathbbm{1}}

\definecolor{DSgray}{cmyk}{0,0,0,0.7}

\renewcommand{\deg}{\mathsf{deg}}

\newcommand{\size}{\mathsf{size}}

\usepackage[pdftex,pagebackref,colorlinks, linkcolor=teal, filecolor = blue, citecolor = Periwinkle, urlcolor  = teal]{hyperref}
\usepackage{cleveref}
\usepackage{thmtools, thm-restate}

    \definecolor{urlcolor}{rgb}{0,.145,.698}
    \definecolor{linkcolor}{rgb}{.71,0.21,0.01}
    \definecolor{citecolor}{rgb}{.12,.54,.11}

    \definecolor{ansi-black}{HTML}{3E424D}
    \definecolor{ansi-black-intense}{HTML}{282C36}
    \definecolor{ansi-red}{HTML}{E75C58}
    \definecolor{ansi-red-intense}{HTML}{B22B31}
    \definecolor{ansi-green}{HTML}{00A250}
    \definecolor{ansi-green-intense}{HTML}{007427}
    \definecolor{ansi-yellow}{HTML}{DDB62B}
    \definecolor{ansi-yellow-intense}{HTML}{B27D12}
    \definecolor{ansi-blue}{HTML}{208FFB}
    \definecolor{ansi-blue-intense}{HTML}{0065CA}
    \definecolor{ansi-magenta}{HTML}{D160C4}
    \definecolor{ansi-magenta-intense}{HTML}{A03196}
    \definecolor{ansi-cyan}{HTML}{60C6C8}
    \definecolor{ansi-cyan-intense}{HTML}{258F8F}
    \definecolor{ansi-white}{HTML}{C5C1B4}
    \definecolor{ansi-white-intense}{HTML}{A1A6B2}

\usepackage{mlmodern}

\crefname{fact}{fact}{facts}
\Crefname{fact}{Fact}{Facts}

\crefname{observation}{observation}{observations}
\Crefname{observation}{Observation}{Observations}

\crefname{claim}{claim}{claims}
\Crefname{claim}{Claim}{Claims}

\newcommand{\wt}{\mathsf{wt}}
\newcommand{\im}{\mathrm{im}}
\newcommand{\dist}{\mathsf{dist}}
\newcommand{\maj}{\mathsf{maj}}
\newcommand{\agr}{\mathsf {agr}}

\newcommand{\gapLC}[2]{\mathsf {gapLabelCover}[#1,#2] }

\newcommand{\col}{\mathtt {col}}
\newcommand{\diam}{\mathsf {diam}}
\newcommand{\sign}{\mathsf {sign}}

\newcommand{\buildings}{\mathfrak {G}} 
\newcommand{\symmetric}{\mathrm {Sym}}
\newcommand{\grassmann}{\mathrm {GP}}

\newcommand{\faces}{\mathcal {Z}}
\newcommand{\fcolors}{\mathcal {C}}
\newcommand{\F}{\mathrm F}

\title{A Simple Sub-Polynomial Degree Coboundary Expander}
\author{
Max Hopkins\footnote{Supported by NSF Award DMS-2424441}\\
Institute for Advanced Study, Princeton\\
\href{mailto:nmhopkin@ias.edu}{nmhopkin@ias.edu}
\and
Arka Ray\footnote{Supported in part by the Walmart Center for Tech Excellence at IISc (CSR Grant WMGT-23-0001).}\\
    Indian Institute of Science, Bengaluru\\
    \href{mailto:arkaray@iisc.ac.in}{arkaray@iisc.ac.in}
}
\date{}

\begin{document}

\maketitle

\begin{abstract}
High dimensional expanders simultaneously satisfying spectral and combinatorial (coboundary) expansion have recently played a major role in breakthroughs in PCP and coding theory, but the only known construction of such complexes is extremely involved, requiring deep algebraic number theory.
In this work, we give an extremely simple combinatorial construction of a sub-polynomial degree complex based on projections of the flags complex (subspace chains) that is (i) a local spectral expander, (ii) a coboundary expander, and (iii) a swap coboundary expander.
As a corollary, we also give the first near-linear size combinatorial hypergraphs with good agreement tests in the `1\%' regime, and a simple PCP construction with near-linear size.
\end{abstract}

\newpage
\tableofcontents
\newpage

\section{Introduction}
High dimensional expanders, sparse but robustly connected hypergraphs, have seen a surge of interest and application in theoretical computer science, driving breakthrough progress in coding theory~\cite{EKZ20,DDHR20, AJQST20, JST21, PK22, FK22, DELLM22, KO22,gur20253}, approximate sampling ~\cite{ALGV19, ALGVV21, AJKPV22, ALG24, LLG25}, property testing~\cite{DK17, DD19,KM20, GK23, DD24a, BM24, DDL24}, hardness of approximation~\cite{DFHT21, HL22, BMVY25}, and pseudorandomness \cite{hsieh2025explicit,Dikstein2026HighRate}. Unlike their widely-studied graph analogs, there are very few known constructions of high dimensional expanders, and all known methods are algebraic. In this work, we give a simple construction of sub-polynomial degree hypergraphs based on subspaces that are both spectral and combinatorial high dimensional expanders, and a corresponding simple near-linear size PCP construction.

We consider three core notions of expansion: local spectral expansion (see \Cref{def:local-spectral-expansion}), a generalization of spectral expansion in graphs, coboundary expansion (see \Cref{def:coboundary-expansion}), a generalization of edge expansion, and swap-coboundary expansion, a key notion related to the construction of PCPs measuring the coboundary expansion of higher level disjoint faces (see \Cref{def:faces}). Unlike the graph setting, these generalizations behave quite differently in high dimensions, e.g., there are random two-dimensional simplicial complexes that are good local spectral expanders but not good coboundary expanders with high probability~\cite{GW16}. Constructing sparse hypergraphs satisfying both spectral and coboundary expansion has been a major challenge since the notions were introduced \cite{LM06,Gromov2010,KKL16}, and has received renewed attention after it was shown such hypergraphs admit low soundness `agreement tests' \cite{GK23,DD24a,BM24} and efficient PCPs \cite{BMVY25stoc}.

The first sparse hypergraphs satisfying all three notions of expansion were constructed recently in \cite{CL25, DDL24, BLM24}, based on deep algebraic number theory. This was followed very recently (and independently to our work) by \cite{OS25}, who gave a simpler algebraic construction based on variants of the elementary coset complexes of \cite{KO18}. In this work, we show that if one is willing to sacrifice somewhat on sparsity, it is possible to give completely elementary combinatorial constructions of local-spectral and (swap)-coboundary HDX of sub-polynomial degree. Following \cite{BMVY25}, we show these give rise to a simpler PCP construction roughly matching the parameters of the breakthrough algebraic PCP of \cite{MR08} in the small constant soundness regime.

Our complexes are based upon projections of the \emph{flags complex}, a well-known dense hypergraph more typically studied as the local structure of \cite{LSV05}'s quotients of the affine building, and shown to be a swap-coboundary expander in \cite{DD24buildings}. The vertices of flags complex are the subspaces of $\mathbb F_q^d$ (excepting $0$ and $\mathbb F_q^d$). Its hyperedges are given by inclusion chains (`flags') of these subspaces.
\begin{definition}[Flags Complex]
\label{def:building}
    Given $d \in \mathbb{N}$ and prime power $q$, the flags complex is the $(d-1)$-partite complex with vertices given by all subspaces of $\mathbb{F}_q^d$, and top-level faces given by inclusion chains (flags) of subspaces:
    \[
    \buildings^d_q(d-2) = \left\{ (W_1 \subseteq W_2 \subseteq \ldots \subseteq W_{d-1}): \dim(W_i)=i\right\}.
    \]
\end{definition}

While the flags complex is known to be a high-dimensional expander under all 
three definitions~\cite{DD24buildings}, it has two fatal flaws: it is 
\emph{dense}, with roughly $q^{d^2/2}$ top faces over only 
$q^{d^2/4}$ vertices (a quadratic blow-up), and has 
\emph{extremely high max-degree}. In fact, both flaws stem from the same source: the 
number of $i$-dimensional subspaces of $\mathbb{F}_q^d$ varies wildly with $i$. In particular, since there are roughly $q^{i(d-i)}$ subspaces of dimension $i$, 
the levels are sharply imbalanced: each of the $q^{d-1}$ lines, for instance, show up in far more hyperedges than any of the $q^{d^2/4}$ subspaces of dimension $d/2$. Measured by size and max-degree (the key 
parameters used in applications), the flags complex is thus, asymptotically, essentially no better than simply taking an imbalanced product $[q^{d/4}]^{d-1} \times [q^{d^2/4}]$. As such, despite its 
fantastic expansion, the flags complex has seen little use 
outside its role as the local structure of~\cite{LSV05a}'s Ramanujan complexes in the literature.

The above, however, suggests a simple fix. The imbalance in the flags complex comes from \emph{extremal} dimensions, those near $1$ and $d$, whereas \emph{central} dimensions, i.e., those near $d/2$, are much closer to balanced, each containing $q^{d^2/4 \pm \Theta(d)}$ subspaces. As such, we should simply discard the problematic levels and only consider 
inclusion chains through the well-behaved ones.

\begin{definition}[Projected Flags Complex]
Given $d \in \mathbb{N}$, a prime power $q$, and a subset of dimensions 
$S = \{s_1 < s_2 < \cdots < s_k\} \subseteq [d-1]$, the \emph{projected 
flags complex} $\buildings_q^{d,S}$ is the $k$-partite simplicial complex with 
vertices given by subspaces $W \subset \mathbb{F}_q^d$ such that 
$\dim(W) \in S$ (partitioned into parts by dimension), and top-level 
faces given by inclusion chains using one subspace of each dimension 
in $S$:
\[
\buildings_q^{d,S}(k-1) = \left\{ W_1 \subset W_2 \subset \cdots \subset W_k 
\;:\; \dim(W_i) = s_i \right\}.
\]
Equivalently, $\buildings_q^{d,S}$ is the complex whose top faces are obtained 
by projecting each flag of $\buildings_q^d$ onto its components of dimension 
in $S$. We drop $d$ and $q$ from the notation when clear from context.
\end{definition}
Taking $S$ to be the $2k+1$ central 
dimensions of the flags complex over $\mathbb{F}_q^{2d}$,
\[
S_k = \{d-k,\, \ldots,\, d-1,\, d,\, d+1,\, \ldots,\, d+k\},
\]
every level has $q^{d^2/4 \pm \Theta(kd)}$ of 
subspaces, eliminating the imbalance up to lower order factors. Fixing $k$ and letting 
$d \to \infty$, we get a strongly explicit infinite family of complexes with 
$q^{\Theta(d^2)}$ vertices and max-degree 
$q^{O(kd)} \approx 2^{O(k\sqrt{\log N})}$ --- sub-polynomial in the number of vertices 
$N$. The main technical content of this work is to show this 
projection retains the spectral and (swap)-coboundary expansion of the 
full complex, yielding simple near-linear-size low-soundness PCPs.

\subsection{Our Results}
\paragraph{Local Spectral Expansion.}
A $k$-dimensional simplicial complex $X=\set{\emptyset}\sqcup X(0)\sqcup X(1)\sqcup \cdots \sqcup X(k)$, where the set of $i$-dimensional faces $X(i)$ contains faces with cardinality $i+1$, is a (two-sided) $\lambda$-local spectral expander if (a) the underlying graph of $X$ (i.e.\ the graph with vertices $X(0)$ and edges $X(1)$) is a (two-sided) spectral expander,
and (b) for any face $s\in X$ with dimension at most $k-2$, the underlying graph of the $s$-link $X_s=\set{\tau\setminus s| \tau \in X,\ \tau\supseteq s }$ is also a (two-sided) spectral expander.

Our first result states any projection of the flags complex is a local spectral expander.\footnote{The below also references the notion of a $\lambda$-product, a slight variant of local-spectral expansion in the partite case which promises every bi-partite component of every link has expansion $\lambda_2 \leq \lambda$. See \Cref{def:lambda-product}.}
\begin{restatable}[Spectral Expansion of the Projected Flags Complex]{theorem}{spectralProjected}
\label{thm:spectral-projected}
Fix any $d\geq 3$, any prime power $q$, and a set $S\subseteq [d-1]$ of size at least 2.
Let $s\in \buildings^{d,S}_q$ be any face with co-dimension at least 2.
Then, the second eigenvalue of the underlying graph of the $s$-link of $\buildings^{d,S}_q$ is $\frac{2}{(|S|-|s|-1)(\sqrt{q}-1)}$.
In particular, the complex $\buildings^{d,S}_q$ is a $\frac{2}{\sqrt{q}-1}$-local-spectral expander for any prime power $q$.
Moreover, the complex $\buildings^{d,S}_q$ is also a $\frac{1}{\sqrt{q}}$-product for any prime power $q$.
\end{restatable}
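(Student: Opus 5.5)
The plan is to reduce every link to a product-like structure of smaller projected flags complexes. Then the whole statement follows from one spectral computation: the bipartite graph between two consecutive chosen dimensions inside an interval of the subspace lattice.

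\textbf{Step 1 (structure of links).} Let $s = (W_{i_1}\subset \cdots \subset W_{i_m})$ be a face with dimensions $t_1<\cdots<t_m$ in $S$. The link $X_s$ is the join (tensor product of partite complexes) of the intervals $[W_{i_j}, W_{i_{j+1}}]$. Each interval is isomorphic to a projected flags complex of $\F_q^{t_{j+1}-t_j}$ (quotient by $W_{i_j}$), with dimension set $S\cap(t_j,t_{j+1})$ shifted by $t_j$. The two end intervals are handled the same way using $0$ and $\F_q^d$ as endpoints. Hence every link is a join of complexes of the form $\buildings^{d',S'}_q$.

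\textbf{Step 2 (bipartite components).} For the $\frac{1}{\sqrt q}$-product claim we must bound $\lambda_2$ of every bipartite graph between two colors $a<b$ in every link. In a join, two colors from different factors give a complete bipartite graph, so $\lambda_2=0$. If the colors lie in the same factor, the bipartite graph between dimension-$a$ and dimension-$b$ subspaces of $\F_q^{d'}$ is weighted by the number of flags through the pair. By transitivity of $GL$ this weighting is uniform, so the graph is exactly the inclusion graph between $a$- and $b$-subspaces; the other chosen levels only reweight uniformly.

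\textbf{Step 3 (key spectral computation, main obstacle).} Show that the normalized inclusion graph between $\binom{V}{a}_q$ and $\binom{V}{b}_q$ has $\lambda_2\le 1/\sqrt q$. The approach is to decompose $\R^{\binom{V}{a}_q}$ into the standard $GL$-irreducible (q-Johnson/Grassmann scheme) eigenspaces $V_0,\dots,V_{\min(a,n-a)}$. On each eigenspace the up–down operator acts as a scalar given by explicit $q$-binomial ratios, and the largest nontrivial one comes from $V_1$. If $a,b$ are consecutive chosen levels with a gap, this reduces to a composition of nested-gap walks. The hard part is the explicit ratio and the bound $\le q^{-1/2}$, and I would first try the case $b=a+1$. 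There, $\lambda_2^2$ is the eigenvalue of the down–up walk on $a$-spaces restricted to $V_1$, which is about $1/q$. The general case then follows from submultiplicativity through intermediate levels, or by a direct Gaussian-binomial estimate.

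\textbf{Step 4 (the link graph and the $\frac{2}{\sqrt q-1}$ bound).} The underlying graph of a link with $r=|S|-|s|$ colors is the average of its $\binom r2$ bipartite pieces. I would apply the standard lemma for partite complexes: if every bipartite piece is a $\lambda$-product with $\lambda=1/\sqrt q$, the full graph has $\lambda_2\le \frac{\lambda}{1-(r-2)\lambda}$-type bounds. For the exact value I would instead use the trickle-down/"garland" relation. A top-link (two colors) gives a bipartite eigenvalue of $q^{-1/2}$ in absolute value. Propagating downward via $\lambda_{i}=\lambda_{i+1}/(1-\lambda_{i+1})$ in the normalized $(r-1)$-averaged form gives $\frac{2}{(r-1)(\sqrt q-1)}$. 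Finally I would verify attainment by exhibiting the $V_1$-type eigenvector (a "contains fixed line" indicator) to show the value is exact, and then take $r\ge 2$ to obtain the uniform bound $\frac{2}{\sqrt q-1}$.
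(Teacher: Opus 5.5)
Your Steps 1--3 are sound and agree with the paper, which simply cites Delsarte (\Cref{fact:grassmann_expansion}) for the Grassmann bound. Your reduction to consecutive levels plus submultiplicativity in fact gives the sharper, distance-dependent bound $\lambda_2\le q^{-(b-a)/2}$. For $b=a+1$ the down--up eigenvalue on the first nontrivial eigenspace is $\frac{q[a]_q[n-a-1]_q}{[a+1]_q[n-a]_q}<\frac1q$, where $[m]_q=\frac{q^m-1}{q-1}$. This settles the $\frac{1}{\sqrt q}$-product claim.

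The genuine gap is Step 4, which is exactly where the factor $\frac{1}{r-1}$ must come from. Trickle-down cannot supply it. The recursion $\lambda\mapsto\lambda/(1-\lambda)$ makes the bound \emph{worse} each time you pass to a link with one more color. Starting from $q^{-1/2}$ at two colors you reach about $\frac{1}{\sqrt q-(r-2)}$, which grows with $r$ and becomes vacuous once $r$ is around $\sqrt q$, while $|S|$ may be as large as $d-1$. Nor is the uniform bound $q^{-1/2}$ on every bipartite piece enough: the natural averaging argument then gives only $\lambda_2\le q^{-1/2}$, with no decay in $r$.

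The missing idea is to feed the \emph{non-uniform} bounds into an averaging lemma for $r$-partite graphs (\Cref{prop:kpartite-expansion}). These bounds are $\lambda_{ij}\le q^{-|i-j|/2}$ within a join factor and $\lambda_{ij}=0$ across factors. The averaging argument goes as follows. For $f$ orthogonal to constants, split each restriction $f_i$ into its mean and a mean-zero part. The cross terms vanish because every bipartite operator preserves constants. The vector of part-means sees the uniformly weighted $K_r$ and contributes $\le 0$, since $\lambda_2(K_r)=-\frac{1}{r-1}$. AM--GM bounds the mean-zero contribution by $\max_i\frac{1}{r-1}\sum_{j\ne i}\lambda_{ij}$. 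The gaps $|i-j|$ are distinct positive integers on each side of $i$, so $\sum_{j\ne i}q^{-|i-j|/2}\le 2\sum_{t\ge1}q^{-t/2}=\frac{2}{\sqrt q-1}$. This gives exactly $\frac{2}{(r-1)(\sqrt q-1)}$.

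Finally, drop the attainment step. The stated value is only an upper bound; the paper proves only $\le$, and the ``is'' should be read that way. It is not attained in general. For $r=2$ the link has $\lambda_2\le q^{-1/2}<\frac{2}{\sqrt q-1}$. When the remaining colors lie in pairwise different join factors, the link graph is complete multipartite with $\lambda_2=0$. So no eigenvector can realize the claimed value.
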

We remark that while the local-spectral expansion of the projected flags complex has not been studied explicitly to our knowledge, similar bounds (up to a worse constant in the denominator) can be derived from combining improved `Trickling-Down' Theorems \cite{AGO23,LLG25} with well known bounds on the spectrum of subspace inclusion (Grassmann) graphs \cite{Del76}.
We give a simple and direct proof that does not require the former machinery.

\paragraph{Coboundary Expansion.}
One way to view combinatorial (edge) expansion of a (connected) graph is that if a graph has expansion $\eta$, then for any function $f:V \to \mathbb F_2$ there exists a constant function $\delta g: V \to \mathbb{F}_2$ such that
\[
 \eta \cdot \dist(f,\delta g)\leq  \wt (\delta f)
\]
where $\delta f(u,v) = f(u)-f(v)$ is the `coboundary' of $f$ (similarly $\delta g$ is the `coboundary' of some constant $g: \{\emptyset\}\to \mathbb F_2$). In other words, $\delta f$ is the size of the cut formed by $f$, and any $f$ whose cut has few edges must be close to $0$ (a very small set) or $1$ (a very large set).

For general groups, this definition can be extended to anti-symmetric\footnote{We discuss this standard mild restriction on the family of functions in \Cref{sec:simplicial-expansion}.} functions taking values from any group $\Gamma$ instead of $\mathbb F_2$ by taking $\delta f(u,v)=f(u)f(v)^{-1}$.
For a simplicial complex, it is possible to generalize this notion to the 1-coboundary expansion with respect to a group $\Gamma$ that compares weights of the coboundary of 1-dimensional faces to their distance from 0-dimensional coboundary, as opposed to comparing weights of coboundary functions on 0-dimensional faces to their distance from the corresponding coboundary.
In particular, a simplicial complex has 1-coboundary expansion $h^1(X, \Gamma)$ if for any anti-symmetric function $f:\overrightarrow X(1)\to \Gamma$
\[
 h^1(X, \Gamma) \cdot \dist(f,\delta g)\leq  \wt (\delta f)
\]
for some anti-symmetric function $g: X(0) \to \Gamma$ on the directed edges $\overrightarrow X(1)$ and vertices $X(0)$, where $\delta f(u,v,w) = f(u,v)f(v,w)f(w,u)$ is the `coboundary' of $f$.
Our second result states that the projection of the flags complex onto its central dimensions is a coboundary expander.
\begin{restatable}[Coboundary Expansion of Projected Flags Complex]{theorem}{coboundarySpherical}
\label{thm:coboundary-spherical}
    There exists a constant $q_0$ such that the following holds.
    Let $\Gamma$ be any group.
    Let $d > k \geq 2$ be two integers and $q\geq q_0$ be a prime power. 
    Let 
\[
S_{k}=\set{d-k, \dots, d-1, d, d+1, \dots, d+k}.
\]
    Then, for any face $s\in \buildings^{2d,S_{k}}_q$ with codimension at least 3, $\buildings^{2d,S_{k}}_{q,s}$ is 1-coboundary expander with
    \[
    h^{1}(\buildings^{2d,S_{k}}_{q,s}, \Gamma) = \Omega\left(1\right).
    \]
\end{restatable}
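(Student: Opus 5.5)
We will prove the statement with the cone method (Gromov; Kaufman–Mass; Kozlov–Meshulam), in the group-valued form used for buildings by \cite{DD24buildings}. The complex $\buildings^{2d,S_k}_{q,s}$ is vertex-transitive on each color class under the stabilizer of $s$ in $\mathrm{GL}_{2d}(\mathbb F_q)$, which acts transitively on top faces. It therefore suffices to build a \emph{cone} and bound its \emph{diameter}. A cone consists of a base vertex $v_0$, a path $P_u$ from $v_0$ to every vertex $u$, and, for every edge $\{u,w\}$, a contraction of the cycle $P_u \circ (u,w) \circ P_w^{-1}$ into triangles of the complex. If every such cycle is filled with $O(1)$ triangles, independently of $d$ and $q$, then the cone criterion gives $h^1 = \Omega(1)$ for every group $\Gamma$. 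The transitivity lets us average over base points, which is why symmetry is needed.

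First we describe links. A face $s$ is a chain of subspaces with dimensions in $S_k$. Its link is a join of projected flag complexes, one for each gap $W_i \subset W_{i+1}$ between consecutive members of $s$ (and the two end gaps). Each factor is isomorphic to a projected flags complex of $W_{i+1}/W_i$ on a set of consecutive dimensions. Joins of coboundary expanders are coboundary expanders; if that is not quoted earlier, we use the standard cone for a join, going through a vertex of the other factor. The codimension-$\ge 3$ hypothesis should ensure that the relevant pieces have at least three colors, or are joins with nonempty factors, so that triangles exist. This reduces the problem to two cases: a projected flags complex of $\mathbb F_q^n$ on consecutive dimensions $\{a,\dots,b\}$ with $b-a\ge 2$, and joins.

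Next we build the cone in the consecutive-dimension complex. Take a vertex $U$ of dimension $c$ with $a<c<b$. For a vertex $W$, we find a short path from $U$ to $W$ through meets and joins: $U \supseteq U\cap W' \subseteq \dots$. The catch is that $U\cap W$ can have too small a dimension to be a vertex. We handle this with \emph{generic intermediate subspaces}: choose $Y$ of dimension $c$ in general position with both $U$ and $W$. Then steps $U \to U+Y'$ and similar moves, which change dimension by one each time and stay inside $\{a,\dots,b\}$, connect $U$ to $W$ in $O(1)$ steps. The paths are defined by averaging over random choices of $Y$, which is the probabilistic form of the cone method in \cite{DD24buildings}. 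Cycles $P_u (u,w) P_w^{-1}$ are filled using the fact that two subspaces contained in a common subspace, or containing a common one, lie in a link of a vertex, a cone, so they are contracted immediately.

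The main obstacle is that the window $S_k$ is narrow: dimensions range only over $d\pm k$, while $U$ and $W$ can intersect in dimension $0$. Our paths therefore cannot pass through intersections and sums. Instead they must "rotate" $U$ toward $W$ one step at a time, changing one basis vector per step, which takes $\Theta(d)$ steps. Such paths give a diameter depending on $d$, which is unacceptable. To fix this we will not use the geometric diameter. We use the \emph{local-to-global} variant instead: coboundary expansion of vertex links, supplied by the inductive statement on codimension (links of vertices are joins of smaller projected flag complexes), together with the spectral expansion of \Cref{thm:spectral-projected}. The latter gives $\lambda \le 2/(\sqrt q-1)$, which is small for $q \ge q_0$. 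Link coboundary expansion plus sufficiently good spectral expansion implies global coboundary expansion for cocycle-type arguments, as in the \cite{EK16}/\cite{DD24buildings} propagation lemmas. Making this step work independently of $d$, with $q_0$ absolute, is where we expect most of the effort to go.
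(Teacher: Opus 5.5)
Your eventual skeleton (cones in links, then \Cref{lem:localcoboundary-to-coboundary} with $\lambda\le 2/(\sqrt q-1)$ for the whole complex) matches the paper's. However, the global step has a genuine hole. That lemma only controls $\dist(f,Z^1)$, so it gives \emph{cocycle} expansion and says nothing about whether $Z^1=B^1$. To call $\buildings^{2d,S_k}_q$ a coboundary expander you must separately show that its first cohomology vanishes for every group $\Gamma$, and nothing in your proposal does this. Your cones are, by your own account, unavailable for $s=\emptyset$. Local spectral expansion plus coboundary-expanding links cannot imply it either: these are exactly the local hypotheses used to prove cosystolic expansion of Ramanujan complexes, whose $H^1$ need not vanish. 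The paper supplies this ingredient via shellability (\Cref{lem:trivial-cohomology}). Labeling maximal chains of the Grassmann poset by ordered bases and ordering them lexicographically gives a shelling. Rank-selection and joins preserve shellability. A shellable complex of dimension $\ge 2$ has every loop contractible by triangle and backtracking moves, and the same argument covers all links. Conversely, the step you flag as the hard one is immediate once the link constants are absolute: the lemma gives $h^1\ge \beta^2/8-e\lambda$, so $q_0$ depends only on $\beta$.

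The real work is making the link constant $\beta$ independent of $d$ and $k$, and your sketch does not get there. The ``generic intermediate subspace'' cone is never specified. For an unanchored window $\{a,\dots,b\}$ it meets precisely the narrow-window obstruction you describe afterwards; indeed $s=\emptyset$ is such a window. What rescues links is that for nonempty $s$ each join factor is a flags complex anchored at relative dimension $1$. For the bottom bin this comes from orthogonal complements inside $W_{j_1}$; for every other bin it comes from quotienting by the vertex of $s$ just below it. Sums of well-separated small subspaces then stay inside the window. Even so, a single cone on a link with $K=\Theta(k)$ colors only yields $h^1\ge 1/(\binom{K}{3}R)$ from \Cref{lem:cone-argument}. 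This is because automorphisms are transitive on top faces but not on triangles of different color types, so the bound decays with $k$. The paper avoids this by restricting to three-color projections $I=\{i_0<i_1<i_2\}$ that, in the bin's relative coordinates, satisfy $2i_0\le i_1$ and $2i_1-i_0\le i_2$ (or the variants using vertices of $s$). There automorphisms are transitive on triangles and an explicit cone of diameter $13$ exists (\Cref{lem:three-projection-coboundary}). The paper then uses the bin structure to show that a constant fraction of random triples in $S_k\setminus\col(s)$ qualify, with a separate case analysis when $|s|>2k-79$, and concludes with the color-restriction lemma \Cref{lem:color-restriction}.
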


As a corollary, we get a completely elementary construction of local-spectral coboundary expanders of sub-polynomial degree:
\begin{restatable}[Elementary Sub-polynomial Degree HDX]{corollary}{Introcor}
\label{cor:coboundary-spherical}
    There exists a constant $q_0$ such that for any $d \geq k+1 \geq 3$, prime power $q \geq q_0$, and group $\Gamma$, $X=\buildings^{2d,S_{k}}_q$ has $O(q^{d^2})$ vertices and
    \begin{enumerate}[(i)]
        \item \textbf{Local-spectral Expansion:} $\frac{2}{(j-1)(\sqrt{q}-1)}$-spectral expansion at co-dimension $j$, 
        \item \textbf{Coboundary Expansion:} $h^1(X, \Gamma) \geq \Omega(1)$,
        \item \textbf{Maximum Degree:} $\Delta_{max}(X) \leq q^{O(kd)}$.
    \end{enumerate}
    Moreover, the complex is strongly explicit.
\end{restatable}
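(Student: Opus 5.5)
The corollary is an assembly of the two preceding theorems together with a direct count of subspaces, so I will prove its four parts in turn.

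\emph{Vertex count.} The vertices of $X=\buildings^{2d,S_k}_q$ are the subspaces of $\mathbb{F}_q^{2d}$ whose dimension lies in $S_k$. The number of $i$-dimensional subspaces is the Gaussian binomial $\binom{2d}{i}_q$. Using $\binom{n}{i}_q \le C q^{i(n-i)}$ for an absolute constant $C$ (valid for $q\ge 2$, since $\prod_{j\ge1}(1-q^{-j})^{-1}$ is bounded), each level has at most $C q^{i(2d-i)} \le C q^{d^2}$ vertices, because $i(2d-i)$ is maximized at $i=d$. Summing over the $2k+1$ levels gives $O(k\,q^{d^2})$ vertices. I would then sharpen this to $O(q^{d^2})$: the levels $i=d\pm j$ contribute $q^{d^2-j^2}$, which is a geometric series, so the total is $O(q^{d^2})$ with no factor of $k$.

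\emph{Local-spectral expansion.} This is \Cref{thm:spectral-projected} applied with $S=S_k$, which has $|S|=2k+1\ge 2$, in ambient dimension $2d\ge 3$. A face $s$ of codimension $j$ satisfies $|S|-|s|=j$, so the stated second eigenvalue is exactly $\frac{2}{(j-1)(\sqrt q-1)}$.

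\emph{Coboundary expansion.} I apply \Cref{thm:coboundary-spherical} to the empty face $s=\emptyset$. Its codimension is $2k+1\ge 5\ge 3$, and the hypothesis $d>k\ge 2$ matches $d\ge k+1\ge 3$. The link of $\emptyset$ is $X$ itself, so $h^1(X,\Gamma)=\Omega(1)$. The constant $q_0$ is the one provided by that theorem.

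\emph{Maximum degree.} I bound the number of top faces containing a fixed vertex $W$ with $\dim W=i\in S_k$. Such a face is a chain extending $W$. Every other subspace $V$ in the chain lies between $A=W_{d-k}$ and $B=W_{d+k}$ for some choice of the extremal members, so I count in three stages.
\begin{itemize}
\item If $W\neq A$, choosing $A\subseteq W$ of dimension $d-k$ gives at most $\binom{i}{d-k}_q\le C q^{(d-k)(i-d+k)}\le Cq^{2kd}$ options.
\item Similarly, choosing $B\supseteq W$ gives $\binom{2d-i}{d+k-i}_q\le Cq^{2kd}$ options.
\item Given $A$ and $B$, the remaining chain is a flag in $B/A\cong\mathbb{F}_q^{2k}$. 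The number of such flags is at most the number of complete flags of $\mathbb{F}_q^{2k}$, which is $q^{O(k^2)}\le q^{O(kd)}$ since $k<d$.
\end{itemize}
The product of these three counts is $q^{O(kd)}$. Lower-dimensional faces have degree at most that of their vertices, so $\Delta_{\max}(X)\le q^{O(kd)}$.

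\emph{Strong explicitness.} Vertices can be indexed by reduced row echelon forms, and neighbors, that is, subspaces containing or contained in a given subspace with a prescribed dimension, can be enumerated and indexed in $\mathrm{poly}(d,\log q)$ time per element by standard linear algebra.

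The only step requiring any care is the degree count; the rest is direct substitution into the two theorems.
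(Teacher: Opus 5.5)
Your proposal is correct and follows the paper's route. The paper states this as an immediate corollary of \Cref{thm:spectral-projected} and \Cref{thm:coboundary-spherical} (applied to the empty face), and it omits the vertex and degree counting that you supply. One small addition: if $\Delta_{\max}$ is read as the degree in the underlying graph, as in the paper's preliminaries, note that it is at most $2k$ times your count of top faces through a vertex, since every edge extends to a top face.
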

In particular, for any $\lambda>0$ we get a family of $\lambda$-local-spectral and coboundary expanders with degree $\exp(k\sqrt{\log(N)\log(1/\lambda)})$. To the best of our knowledge, these are the first sub-polynomial degree high dimensional expanders for which the cohomology is known to vanish for \emph{every group}, rather than for all sufficiently small copies of $\symmetric_m$. As we discuss below, this is an important point toward the construction of better soundness agreement tests and PCPs.

We briefly comment on the proof before moving on, which is divided into two main steps. First, we need to argue that the projected flags complex (and all its links) have \emph{vanishing cohomology} (\Cref{lem:trivial-cohomology}), meaning that every $f: E \to \Gamma$ with $\delta f=0$ is a coboundary $\delta g$ for some $g: V \to \Gamma$. This is a well-known fact for the full flags-complex, and can be derived for projections of the flags complex via fairly standard tools in algebraic topology, e.g., from \cite{bjorner1980shellable,surowski1984covers}. We give a simplified, self-contained proof via \cite{bjorner1980shellable}'s theory of shellability.\footnote{We thank Roy Meshulam for pointing us to this approach.}

Second, we show that the central projections of the flags complex also satisfy ``cocycle'' expansion, which states that if $|\delta f|$ is small, then $f$ must be close to a function in $\text{Ker}(\delta)$ (which due to vanishing cohomology, is exactly the set of coboundaries). The proof relies on several techniques from \cite{evra2016bounded,DD24coboundary} to reduce the analysis to simple local components of the complex. In particular, we first use a local-to-global argument (\Cref{lem:localcoboundary-to-coboundary}) from~\cite{DD24coboundary} to move to links of the complex, and a `color restriction lemma' (\Cref{lem:color-restriction}) to reduce to showing most projections of the links to three colors are coboundary expanders.
Finally, we use the cones method \cite{KO21} (see \Cref{sec:cones}) to argue directly that most projections to three colors indeed have constant coboundary expansion.

\paragraph{Swap Coboundary Expansion.}
Towards construction of agreement testers, \cite{DD24buildings,BM24} define a third notion of expansion called swap coboundary expansion.
Swap coboundary expansion essentially says the faces complexes (defined below) of the simplicial complex under consideration is a 1-coboundary expander.
\begin{definition}
Let $X$ be a $d$-dimensional simplicial complex.
Let $r \leq d$.
We use $\mathrm F^rX$ to denote the simplicial complex whose vertices are $\mathrm F^rX(0) = X(r)$ and whose faces are all $\set{s_0, s_1, ..., s_j}$ such that $s_0 \sqcup s_1\sqcup \dots \sqcup s_j \in X((j+ 1)(r+ 1)-1)$.
We call this the $r$-\emph{faces complex} of $X$.
\end{definition}
Now, we can define swap coboundary expansion.
\begin{definition}
A simplicial complex $X$ is said to have \emph{$(\beta, r)$-swap coboundary expansion} if $\F^r X$
has 1-coboundary expansion at least $\beta$ with respect to every group.
\end{definition}
Finally, we show swap coboundary expansion of the projections of the flags complex.
\begin{theorem}[Swap coboundary expansion of the projected flags complex]
\label{thm:informal-swap-coboundary-projected-flags}
Let $d,k, r$ be integers such that $d > k \geq r^5$.
There is some $q_0 = q_0 (k,r)$ such that the following holds. 
Let $q \geq q_0$ be any prime power, $\buildings$ the $\buildings^{2d}_q$-flags complex, and 
\[
S_{k}=\set{d-k, \dots, d-1, d, d+1, \dots, d+k}.
\]
Then $\buildings^{S_{k}}$ is a $(\exp(-O(\sqrt r \log^2 r)),r)$-swap coboundary expander.
\end{theorem}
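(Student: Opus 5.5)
We will follow the framework of \cite{DD24buildings,BM24} for swap coboundary expansion, with the projected flags complex in place of the full one. The faces complex $\F^r\buildings^{S_k}$ has as vertices the $r$-faces of $\buildings^{S_k}$, i.e.\ chains of $r+1$ subspaces with dimensions in $S_k$. Two such chains are adjacent when their union is a chain of $2(r+1)$ subspaces. The plan has two steps. First, show that $\F^r\buildings^{S_k}$ has vanishing $1$-cohomology over every group $\Gamma$. Second, show it has cocycle expansion $\exp(-O(\sqrt r\log^2 r))$. Combining the two gives the claimed coboundary constant for every $\Gamma$.

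For cocycle expansion we reuse the pipeline behind \Cref{thm:coboundary-spherical}. We apply the local-to-global lemma \Cref{lem:localcoboundary-to-coboundary} to the faces complex. This needs two inputs: (a) the links of $\F^r\buildings^{S_k}$ are coboundary expanders, and (b) the faces complex has sufficiently good local spectral expansion. Input (b) follows from \Cref{thm:spectral-projected}. The faces-complex links are obtained from links of $\buildings^{S_k}$ by swap walks, and swap walks of a $\frac{1}{\sqrt q}$-product are $\mathrm{poly}(k)/\sqrt q$ expanders. Taking $q_0(k,r)$ large makes this error negligible.

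For (a), a link of a face of $\F^r\buildings^{S_k}$ is the faces complex of a link of $\buildings^{S_k}$. Such a link is a join of projected flags complexes of smaller dimension, one for each gap between consecutive fixed subspaces. We then use the color restriction lemma \Cref{lem:color-restriction}. It reduces the problem to showing that, for a random choice of three disjoint $(r+1)$-sets of colors, the restricted 3-partite faces complex is a coboundary expander.

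That restricted complex is where we apply the cones method of \Cref{sec:cones}. We build a cone from a fixed base vertex (an $r$-chain) to any other $r$-chain $u$ through a short sequence of intermediate $r$-chains, each adjacent to the next in the faces complex. The contraction of each cone triangle must use few $2$-faces. The natural construction inserts generic subspaces of central dimension, and genericity is affordable because $q$ is large. The hypothesis $k\ge r^5$ guarantees enough spare colors to route the path. The bound $\exp(-O(\sqrt r\log^2 r))$ should come from the diameter of this cone, as in \cite{BM24}. There, chains are rearranged by a sorting-network-like procedure on the $r+1$ color positions, recursively in blocks of size $\sqrt r$. This yields cones of length $\exp(O(\sqrt r\log r))$ triangles, each contracted with $\mathrm{polylog}$ overhead.

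For vanishing cohomology of the faces complex, we would try to show that every cycle in $\F^r$ is homotopic, through triangles of $\F^r$, to a concatenation of cycles living inside images of triangles of $\buildings^{S_k}$. We then lift via \Cref{lem:trivial-cohomology}, which applies to the projected complex and its links. An alternative is to deduce it from the cone construction itself, since a bounded cone implies trivial cohomology.

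The main obstacle is this cone construction in the projected setting. In the full flags complex every dimension is available, so $r$-chains can be moved freely. Here only the $2k+1$ central dimensions exist, and chains with adjacent colors have little room to interpolate. Our first attempt is to exploit $k\ge r^5$ by sparsifying color sets: we first move both chains to color sets spaced at least $r^4$ apart, then run the \cite{BM24} permutation argument on these well-separated colors. We would check that every triangle used is a genuine face of the faces complex.
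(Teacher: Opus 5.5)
There is a genuine gap, and it sits in your input (b). \Cref{lem:localcoboundary-to-coboundary} requires every link of the uncolored faces complex $\F^r\buildings^{S_k}$ to be a $\lambda$-expander with $e\lambda$ below $\beta^2/8=\exp(-O(\sqrt r\log^2 r))$. That complex has eigenvalues that do not depend on $q$ at all.

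For any partite $X$, functions of the color type $\col(s)\in\binom{S_k}{r+1}$ are invariant under the swap walk. On them the walk acts as the normalized Kneser walk on $(r+1)$-subsets of the $n$ available colors, whose second eigenvalue is $\frac{(r+1)r}{(n-r-1)(n-r-2)}$.

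\begin{itemize}
\item For the whole complex ($n=2k+1$) this is $\Theta(r^2/k^2)$. When $k=r^5$ that is $\Theta(r^{-8})$, vastly larger than $\exp(-O(\sqrt r\log^2 r))$.
\item In the top links only $2(r+1)+\rho$ colors remain ($0\le\rho\le r$), and the eigenvalue exceeds $1/4$.
\item It equals $1$, i.e., the link graph is disconnected, whenever $(r+1)\mid(2k+1)$.
\end{itemize}

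The bound you cite, \Cref{lem:color-swap-expansion}, controls colored swap walks between fixed disjoint color sets. That is why it gives local spectral expansion of $\F^J$ for a fixed $J$ (\Cref{lem:spectral-projected-faces}), not of $\F^r$.

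The same issue undermines (a). \Cref{lem:color-restriction} is for partite complexes with $I$ a set of colors. $\F^r X$ is not partite in that sense: a top face meets only $\lfloor (2k+1)/(r+1)\rfloor$ of the $\binom{2k+1}{r+1}$ color types. The faces-complex analogue, \Cref{lem:disjoint-color-restriction}, uses $m=\sqrt{r+1}$ disjoint color sets, and internally $(r+1)^2$ of them. This guarantees every vertex outside $\mathcal Y^I$ has a nonempty link inside it. With three sets that fails for any vertex whose color type meets all three.

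Beyond this, the quantitative core is only asserted. You do not construct a cone of diameter $\exp(O(\sqrt r\log^2 r))$ on a three-colored faces complex. Also, "moving chains to color sets spaced $r^4$ apart" is not an available move inside a colored faces complex, whose vertex color types are fixed. The separate vanishing-cohomology step is left open as well.

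In the paper, the $\sqrt r$ has a different origin, and no cone on a faces complex is built. The steps are:
\begin{enumerate}
\item Restrict to well-spread $J$ consisting of $m=\sqrt{r+1}$ disjoint color sets, using \Cref{claim:good-color-density} and \Cref{lem:disjoint-color-restriction}. That lemma needs only $\Theta(1)$ edge expansion and already outputs coboundary, not merely cocycle, expansion, so no separate cohomology argument is needed.
\item Apply local-to-global only inside $\F^J$, which genuinely is a $\mathrm{poly}(r)/\sqrt q$ local spectral expander. This descends to links of $(m-6)$-faces at cost $\exp(-O(m))$ (\Cref{claim:faces-local-to-global}).
\item Discard colors in lonely or empty bins (\Cref{cor:tensor-corollary}).
\item Reduce to five-colored projections of deep links of the base complex via \Cref{prop:base-to-face}, at cost $\beta^R$. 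Well-spreadness forces $R=O(\sqrt r)$ crowded colors.
\item Bound $\beta\ge\exp(-O(\log^2 r))$ using four-partite projection and diameter bounds (\Cref{claim:buildings-expansion}).
\end{enumerate}
The final exponent $\sqrt r\log^2 r$ is $R\cdot\log^2 r$, not the length of a sorting-network cone.
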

\Cref{thm:informal-swap-coboundary-projected-flags} is an adaptation of the strategy of \cite{DD24buildings} for the full un-projected flags complex.
We break the faces complex $\F^r (\buildings^{S_{k}})$ into progressively simpler complexes (projections and deep links) until we can bound the coboundary expansion directly via the cones method. At a high level, the critical observation is simply that once one has moved to the link of a vertex, the projected flags complex behaves very similarly to a product (join) of full flags complexes, allowing us to apply \cite{DD24buildings}'s machinery without substantial modification.

\paragraph{Applications.} Following \cite{DD24a,BM24,BMVY25}, we give two main applications of our swap coboundary expansion result: (i) an agreement test in the low-soundness regime, and (ii) a corresponding simple, near-linear size PCP construction.
\subparagraph{Agreement testing:}
An agreement test is a randomized property tester that, given an ensemble of local functions $\set{f_s : s\to \Sigma | s\in \mathcal F}$ on $\mathcal F \subseteq 2^{[n]}$, tries to determine whether there is a global function $G: [n] \to \Sigma$ such that $f_s = G|_s$ for all $s\in \mathcal F$. Many recent works have examined the case where the local functions are defined on some level of a high-dimensional expander.
In this work, we consider the standard two-query agreement test (the `V test') where we pick two functions $f_s,f_{s'}$ on sets of size $\ell$ where $|s\cap s'| = \sqrt \ell$,
and the test accepts if $f_s|_{s\cap s'}=f_{s\cap s'}$, i.e., the two functions agree on all the common points in their domain.
\cite{DD24a, BM24, BMVY25} recently showed that if a complex has good local-spectral and swap coboundary expansion, then it supports this two-query test. Thus, as a corollary of our results above, we immediately get that the projected flags complex gives rise to a simple near-linear size low soundness agreement tester.
\begin{restatable}[Agreement testing over the projected flags complex]{theorem}{flagsAgrtest}
\label{thm:flags-agrtest}
Fix a $\delta>0$.
There is some $q_0$ such that the following holds. 

Let $q \geq q_0$ be any prime power.
Let $d,k,\ell$ be such that $d> k \geq \poly(\ell)$ and $\ell \geq \exp(\poly(1/\delta))$.
Let $\buildings$ denote the $\buildings^{2d}_q$-flags complex
and 
\[
S_{k}=\set{d-k, \dots, d-1, d, d+1, \dots, d+k}.
\]
Then, the $(\ell, \sqrt \ell)$-agreement test over $\buildings^{S_{k}}(\ell)$ with respect to every alphabet $\Sigma$ has soundness $\delta$,
i.e., if $F : \buildings^{S_{k}}(\ell-1) \to \Sigma^\ell$ passes the $(\ell, \sqrt \ell)$-agreement test with respect to $\buildings^{S_{k}}$ with probability at least $\delta$,
then there is $f : \buildings^{S_{k}}(0) \to \Sigma$ such that
\[
\pr{A\sim \mu_\ell}{\dist(F [A], f |A) \leq \delta} \geq \poly(\delta).
\]
\end{restatable}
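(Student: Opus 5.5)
The plan is to obtain the statement as a black-box application of the agreement-testing theorem of \cite{BMVY25} (building on \cite{DD24a,BM24}). That theorem says, roughly: if a $d'$-dimensional complex $X$ satisfies
\begin{itemize}
\item[(a)] $\gamma$-local spectral expansion with $\gamma\le \poly(1/\ell)$ in the relevant links, and
\item[(b)] $(\beta,r)$-swap coboundary expansion in the links used by the test, with $r=r(\delta)=\poly(1/\delta)$ and $\beta=\beta(\delta)$,
\end{itemize}
then the $(\ell,\sqrt\ell)$-test on $X(\ell-1)$ has soundness $\delta$ for every alphabet whenever $d'\ge\poly(\ell)$ and $\ell\ge\exp(\poly(1/\delta))$. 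The whole difficulty is to verify (a) and (b) with a single threshold $q_0=q_0(\delta)$, independent of both $\ell$ and $k$.

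\textbf{Spectral hypothesis (a).} Here I would use the fact that \Cref{thm:spectral-projected} improves with codimension. A link of a face of dimension at most $\ell$ in $\buildings^{S_k}$ has codimension $j\ge 2k+1-\ell\ge k$, since $k\ge\poly(\ell)$. So its second eigenvalue is at most
\[
\frac{2}{(j-1)(\sqrt q-1)}\le \frac{4}{k(\sqrt q-1)}\le \frac{1}{\poly(\ell)}
\]
for every $q\ge 4$. The polynomial requirement in $\ell$ is thus paid for by $k$, not by $q$. The few lower-codimension links that the \cite{BMVY25} argument touches only need constant expansion, and any fixed $q$ provides it.

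\textbf{Swap hypothesis (b).} \Cref{thm:informal-swap-coboundary-projected-flags} gives $q_0(k,r)$, which depends on $k$, so I would not apply it to the whole complex. Instead I would use the standard colour-restriction reduction inside the agreement argument: the test's analysis only needs swap coboundary expansion of the complex restricted to a (random) set $C$ of $m=\poly(r)\ge r^5$ colours, conditioned on the relevant faces. Such a restriction of $\buildings^{S_k}$ is again a projected flags complex $\buildings^{2d,C}$ with $|C|=m$. Its links are joins of projected flags complexes of smaller ambient dimensions.

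I would then rerun the proof of \Cref{thm:informal-swap-coboundary-projected-flags} (the \cite{DD24buildings} decomposition followed by the cones method) with $|C|=m$ in place of $2k+1$. The threshold it produces should be $q_0(m,r)=q_0(\delta)$. This is because every constant in that proof (cone diameters, the number of colours handled, the spectral error terms) depends only on the number of colours and on $r$, not on the ambient dimension $d$. The result is $(\exp(-O(\sqrt r\log^2 r)),r)$-swap expansion of every such restriction with $q_0$ depending only on $\delta$. Together with the spectral bound, the theorem of \cite{BMVY25} then yields the stated conclusion
\[
\pr{A\sim\mu_\ell}{\dist(F[A],f|_A)\le\delta}\ge\poly(\delta).
\]

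\textbf{Main obstacle.} I expect the hard step to be handling colour sets $C$ that are not windows of consecutive central dimensions, i.e.\ checking that the swap-expansion proof tolerates arbitrary $C\subseteq S_k$ with gaps. Through its links, such a $C$ behaves like a join of full flags complexes over quotient spaces of differing dimensions. I would first try to show that the \cite{DD24buildings} cone constructions go through verbatim for any $C$ of size $m$, using only partiteness and the join structure of links. If that fails, the fallback is to modify the agreement reduction so that it only samples colour sets that are intervals of length $m$. The price is re-checking the sampling step of \cite{BMVY25}, which seems affordable since $\ell\ll k$.
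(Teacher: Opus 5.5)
\textbf{This is a genuine gap.} Your items (a) and (b) are not the hypotheses of the theorem the paper uses. They are weaker, localized versions of them, and you assert the weakening without proving it.

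\Cref{thm:expander-to-agrtest} requires the \emph{whole} complex to be a $\gamma$-local spectral expander with $\gamma\le\gamma_0$, where $\gamma_0$ is a sufficiently small function of the dimension. It also requires the complex itself to be a $(c2^{-o(r)},r)$-swap coboundary expander.

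\begin{itemize}
\item \emph{Spectral hypothesis.} In $\buildings^{S_k}$, a codimension-2 link missing two consecutive colours is a point--line incidence graph of a projective plane. Its second eigenvalue is of order $1/\sqrt q$ regardless of $k$, matching the bound $\frac{2}{\sqrt q-1}$ of \Cref{thm:spectral-projected} at codimension 2. So with $q_0$ independent of $k$ the spectral hypothesis fails. Your remark that low-codimension links \emph{only need constant expansion} is a claim about the internals of \cite{DD24a,BM24}, and you give no argument for it.
\item \emph{Swap hypothesis.} The test samples top faces of the full $(2k+1)$-partite complex. Reducing its low-soundness analysis to swap expansion of random colour restrictions $\buildings^{2d,C}$ is not a routine black-box step. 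In the 1\% regime, gluing together the global functions obtained on different colour restrictions is precisely the hard part.
\item \emph{Your main obstacle.} Swap expansion for non-interval $C$, or an interval-only variant of the agreement reduction, is left open.
\end{itemize}
As written, the proposal therefore does not prove the theorem.

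\textbf{What the paper does.} Its proof is a direct application of the black box, with $q_0$ allowed to depend on $k$ and on $r=r(\delta)$. It chooses $q_0$ so that \Cref{thm:spectral-projected} gives $\frac{2}{\sqrt q-1}\le\gamma_0$ and so that $q\ge q_0(k,r)$ in \Cref{thm:swap-coboundary-projected-flags}. It then invokes \Cref{thm:expander-to-agrtest}.

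You are right that this makes $q_0$ depend on $k$, which sits uneasily with the quantifier order in the statement as written. But that is the version the paper actually establishes, and it needs none of your additional steps.

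If you want genuine $k$-independence, your codimension observation is a sensible starting point: links of faces of dimension $O(\ell)$ satisfy $\lambda\le\frac{4}{k(\sqrt q-1)}$ for every $q$. To use it, you would have to re-prove a variant of the \cite{DD24a,BM24} theorem whose spectral and swap hypotheses involve only such low-dimensional links or colour restrictions. That is new work, not a routine verification.
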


We remark that on the complete complex, it is known that the V test has soundness $\delta=1/\poly(\ell)$, rather then $1/\poly\log(\ell)$ as given above. In prior HDX constructions, there is a barrier toward improving the soundness that comes from the cohomology of the complex becoming non-trivial for larger $\symmetric_m=\Gamma$. Since our complexes are coboundary expanders over \emph{every group}, this obstruction vanishes and raises the interesting possibility of showing agreement tests with optimal soundness (an important factor in the eventual goal of constructing subconstant soundness PCPs from HDX). Indeed, it is plausible that 3-query tests like the Z-test \cite{impagliazzo2009new,DN23} may have soundness $2^{-\Omega(\ell)}$.

\subparagraph{Probabilistically Checkable Proofs (PCPs):}
The PCP theorem~\cite{FGLSS96, AS98, ALMSS98, Din07} is one of the most celebrated results in theoretical computer science.
Here, we will take the classic view of PCPs through hardness of approximation for the Label Cover problem.
\begin{definition} 
An instance of \emph{Label Cover} $\Psi = (G = (L\cup R, E), \Sigma_\ell, \Sigma_r, \Phi = \set{\phi_e}_{e \in E} )$ 
consists of a bipartite graph $G$, alphabets $\Sigma_\ell$, $\Sigma_r$ and constraints 
$\phi_e : \Sigma_\ell \to \Sigma_r$, for each edge $e\in E$.
\end{definition}

Given a label cover instance $\Phi$, the goal is to find assignments $A_\ell : L \to \Sigma_\ell$ and $A_r : R \to \Sigma_r$ that satisfy as many of the constraints as possible,
namely that maximize the quantity
\[
\val_{\Phi}(A_\ell, A_r) = \frac{1}{|E|}\big|\set{e = (u, v) \in E ~|~\phi_e(A_\ell(u))= A_r(v)}\big|.
\]
This maximum value is referred to as the value of the instance $\Phi$ and denoted by
\[
\val(\Phi) = \max_{A_\ell,A_r} \val_\Phi(A_\ell, A_r).
\]
When viewed through the label cover problem, the PCP theorem takes the form of NP-hardness of distinguishing between satisfiable label cover instances and instances with value less than $\delta$ provided the alphabet size is large enough.
Proofs of the PCP theorem proceed by reducing the 3SAT problem to a label cover instance. A longstanding goal in such reductions is to achieve small $\delta$ (soundness) without dramatically blowing up the size of the resulting label-cover instance.

\cite{BMVY25} give the first construction of quasi-linear sized PCP with low constant soundness.
They do this by showing how to use a local-spectral expander with few additional structural properties and that supports the aforementioned agreement test to construct such instances.
Finally, they show that variants of the complexes from \cite{CL25} have the required properties. Unfortunately, these complexes also rely on extremely involved algebra. In contrast, our construction is elementary and combinatorial,
and as a corollary to our agreement testing result (\Cref{thm:flags-agrtest}) we obtain the following elementary low-soundness PCP construction.
\begin{restatable}{theorem}{pcpTheorem}
\label{thm:pcp-theorem}
For all $\delta \in (0, 1)$, there exist a constant $C > 0$ such that for all sufficiently large integers $d$ and $n$ there is a polynomial time reduction mapping a 3SAT instance $\phi$ of size $n$ to a label cover instance $\Psi$ over a bipartite graph $G$ of size $n'$ for some $n'\leq  n \exp \paren{C\sqrt{\log n \log \log n}}$, such that
\begin{enumerate}
    \item If $\phi$ is satisfiable, then $\Psi$ is satisfiable.
    \item If $\phi$ is unsatisfiable, then $\val(\Psi)\leq \delta$.
    \item The left alphabet of $\Psi$ is $\Sigma^{\exp(C/\delta)}$ and right alphabet is $\Sigma^{\sqrt {\exp(C/\delta)}}$ for some alphabet $\Sigma$ with $\log |\Sigma| \leq \exp \paren{C\sqrt{\log n \log \log n}}$.
\end{enumerate}
\end{restatable}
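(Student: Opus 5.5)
The plan is to instantiate the black-box reduction of \cite{BMVY25}. They show that any family of complexes with the following properties yields a quasi-linear-size low-soundness PCP: (i) good local-spectral expansion, (ii) support for the $(\ell,\sqrt\ell)$ agreement test with soundness $\delta$, and (iii) a few structural properties, namely strong explicitness, efficient sampling, and an "embedding" of a routing/constant-degree structure. So the proof reduces to checking that $X=\buildings^{2d,S_k}_q$ supplies each ingredient with the right parameters, and then tracking the size blow-up.

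\textbf{Parameter choice.} Fix $\delta$, and set $\ell=\exp(\poly(1/\delta))$ and $k=\poly(\ell)$ as in \Cref{thm:flags-agrtest}. Let $q\ge q_0(k,\ell)$ be a constant depending only on $\delta$. Given $n$, choose $d$ minimal with $|X(0)|\approx q^{d^2}\ge n$. Then $d=\Theta(\sqrt{\log n/\log q})$, and the maximum degree is $q^{O(kd)}=\exp(O(k\sqrt{\log q\,\log n}))$. One caveat: with constant $q$ this is only $\exp(O(\sqrt{\log n}))$. The extra $\sqrt{\log\log n}$ in the statement suggests that \cite{BMVY25}'s reduction also needs $\lambda$ shrinking (e.g.\ $q$ growing like $\poly\log n$) so that the routing/embedding step of 3SAT into the complex loses only $\poly\log$ factors. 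I would therefore take $q=\Theta(\log^{c} n)$. This gives size $n\cdot q^{O(kd)}=n\exp(C\sqrt{\log n\log\log n})$, and all the theorems above still apply since they hold for all $q\ge q_0$.

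\textbf{Ingredients and assembly.} Local-spectral expansion with $\lambda=O(1/\sqrt q)$ is \Cref{thm:spectral-projected}. The agreement test soundness is \Cref{thm:flags-agrtest}, which relies on \Cref{thm:informal-swap-coboundary-projected-flags}. Strong explicitness is clear, since faces are chains of subspaces, which are easily enumerable and samplable via bases. I then plug these into \cite{BMVY25}'s theorem:
\begin{itemize}
\item 3SAT is first reduced to a constant-degree CSP embedded on the vertex set via routing over the expander (this is where the $\exp(\sqrt{\log n\log\log n})$ overhead appears);
\item the left vertices are the $\ell$-faces, labeled by local assignments in $\Sigma^{\ell}$ with $\ell=\exp(C/\delta)$;
\item the right vertices are the $\sqrt\ell$-faces;
\item the constraints check consistency plus local CSP satisfaction.
\end{itemize}
Completeness is immediate. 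Soundness follows from the agreement test: a labeling of value $>\delta$ decodes, via \Cref{thm:flags-agrtest}, to a global assignment satisfying many constraints, which contradicts the unsatisfiability of $\phi$ after the gap amplification built into \cite{BMVY25}. The alphabet $\Sigma$ carries the routed CSP symbols, with $\log|\Sigma|$ bounded by the degree overhead.

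\textbf{Main obstacle.} I expect the hard step to be verifying the structural hypotheses that \cite{BMVY25} impose beyond expansion and agreement. In particular, one needs efficient routing/embedding of an arbitrary constant-degree graph onto the complex's base graph with small congestion. This likely requires an explicit group-like symmetry: $\mathrm{GL}_{2d}(\mathbb F_q)$ acts transitively on each level, so I would try to use a Cayley-type routing via transvections. One must also confirm that the layered partite structure causes no imbalance in vertex degrees; this holds up to $q^{O(kd)}$ factors, which are absorbed into the size bound. I would first try to show the base graph has polylog diameter and a transitive automorphism group, and then apply \cite{BMVY25}'s generic routing lemma.
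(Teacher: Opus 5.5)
Your overall architecture matches the paper's. You instantiate \cite{BMVY25}'s reduction (abstracted here as \Cref{thm:general-pcp}) on $X=\buildings^{2d,S_k}_q$ with $q=\Theta(\log^{C} n)$ and $d\approx\sqrt{\log_q n}$. Then $q^{O(kd)}=\exp(O(\sqrt{\log n\log\log n}))$ controls both the size and $\log|\Sigma|$.

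Two points you leave vague are pinned down in the paper. First, 3SAT is not fed in directly: the paper starts from a constant-gap, quasi-linear-size 2-CSP (Meir \cite{Meir16}, \Cref{thm:small-soundness-pcp}). The "gap amplification" in \cite{BMVY25} only goes from a constant gap down to $\delta$, via the agreement test. Second, the reason $q$ must be polylogarithmic is exact. The soundness hypothesis of \Cref{thm:general-pcp} is $\val(\Psi')\le 1-\poly(d)r^2\lambda^2\log^C n-\log^{-C} n$ with $\lambda=1/\sqrt q$. A constant-gap base PCP meets this only if $q\gg\poly(d)\log^C n$.

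The genuine gap is in which structural hypotheses you set out to verify.

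\textbf{What is not needed.} The black box does not route over the base graph using group structure. It takes the zig-zag product of $G=(X(0),X(1))$ with constant-degree expanders and applies Upfal-type pebble routing (\Cref{thm:ext-paths}). That step needs only $|\lambda|(G)\le\alpha/4$ and the degree bound $\Delta$. So transvection/Cayley routing, transitivity of $\mathrm{GL}_{2d}(\mathbb F_q)$ on levels, a polylog diameter for $G$, and degree balance are all beside the point.

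\textbf{What is needed.} Your plan never addresses that every vertex link must be symmetrically connected (\Cref{def:sym-bound-main}; \Cref{item:sym-connected} of \Cref{thm:good-hdx}). This has two parts:
\begin{enumerate}
\item a group acting transitively on the top faces of each vertex link $L$ (the paper uses basis-change matrices fixing the link's vertex);
\item for every pair of colors $i\neq j$, the bound $\diam(L^{ij})=O(k/|i-j|)$, which the paper imports from \cite[Claim 8.7.2]{DD24buildings}.
\end{enumerate}
\cite{BMVY25} use exactly these properties to build short, low-congestion internal paths inside links (\Cref{lem:paths,lem:bounded-corruption}). Those paths are what make the link-to-link protocol of \Cref{lem:link-to-link} edge-fault-tolerant. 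A diameter bound on the global graph cannot substitute for them.

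You should also record that $X$ is a $\frac{1}{\sqrt q}$-product (\Cref{thm:spectral-projected}) and a clique complex. With these properties in place, your argument goes through as the paper's does.
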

We remark that instantiating the base PCP with \cite{Meir16}, one can make the entire construction combinatorial with the exception of the use of a multiplication code, of which only algebraic constructions are known. Finally, we note the alphabet size in the above statement is considerably larger compared to \cite{BMVY25}. There are standard methods to reduce this \cite{dinur2013composition}, but they also rely on algebraic techniques. It would be interesting to give a purely combinatorial method, and we leave this as a direction for future work.

\subsection{Further Related Works}
\paragraph{Simple High Dimensional Expanders.} Beyond the celebrated construction of Kaufman and Oppenheim \cite{KO18} and its variants \cite{OP22,PVB25}, a number of recent works have considered simplified constructions of local-spectral expanders. \cite{LMY20, Gol21} construct bounded degree local-spectral HDX via a product of expanders and the complete complex, but cannot achieve strong enough expansion for most applications \cite{dikstein2024chernoff}. \cite{Gol23} constructs local-spectral HDX of any expansion from low-rank matrices, achieving roughly the same degree as our construction. This was improved in \cite{DLW25} via a recursive procedure to degree $2^{\log(N)^\varepsilon}$ for any constant $\varepsilon>0$. Neither construction has vanishing cohomology, and therefore cannot be used in the construction of PCPs.

\paragraph{Comparison with \cite{OS25}.} 

In recent independent work, O'Donnell and Singer \cite{OS25}, building on \cite{KO25}, showed that \cite{BMVY25}'s PCPs may be instantiated on a variant of the coset-complex HDX construction of Kaufman and Oppenheim \cite{KO18}. These complexes are remarkably simpler than those originally used in \cite{BMVY25}, relying only elementary matrix groups, and achieve parameters matching \cite{BMVY25}'s construction while our flags-complex based PCP is closer to the original construction of Moshkovitz-Raz \cite{MR08}. Nevertheless, \cite{OS25} still requires a fairly substantial amount of algebraic manipulation, and while the complexes are undeniably simpler than \cite{CL25}, they are certainly more involved than the completely elementary subspace-based construction suggested in this work. Finally, as discussed our complexes have vanishing cohomology over \emph{all} groups, rather than just $\symmetric_m$ for small $m$, removing a major barrier toward better agreement tests and PCPs with strong sub-constant soundness.

\section{Preliminaries}
\paragraph{General Notations.} We will use $[n]$ to denote the set $\set{1,2, \dots, n}$.
We will use $\symmetric_A$ to denote the group of permutations on a set $A$.
We will simply use $\symmetric_n$ for the group $\symmetric_{[n]}$.
For a tuple $t$, we will write $\mathsf {set}(t)$ to denote the set of elements that occur in the tuple.
For a set of sets $J$, we write $\cup J=\bigcup_{s\in J}s$.
We abuse the symbol $0$ to mean the subspace $\set{0}$ as well; such usage should be clear from context.
We denote by $\lambda(\mathsf A)$ the second largest eigenvalue of an operator $\mathsf A$.
We denote by $|\lambda|(\mathsf A)$ the second largest eigenvalue of an operator  $\mathsf A$ in absolute value.
We will use $\Fi_q$ to denote the field of order $q$.

Let $A,B$ be two set and $\mathcal D$ and distribution on $A$.
Then, for any two functions $f:A\to B$ and $g:A\to B$, we define
\[
\dist_{\mathcal D}(f,g)=\pr{x\sim \mathcal D}{f(x)\ne g(x)}.
\]
When the distribution is not specified the uniform distribution is assumed.

\subsection{Graphs and Expansion}
We will use $V(G)$ and $E(G)$ to denote the vertices and the edges of a graph $G$, respectively.
We will drop the parenthetical and simply write $V$ and $E$ when $G$ is clear from the context.
We will use $\Delta(G)$ to denote the max degree of the graph $G$.
We will use $\diam(G)$ denote the diameter of the graph $G$, i.e., the maximum (over all pairs of vertices) length of shortest paths.

\paragraph{Probability density over Graphs.}
We can define an arbitrary probability density $\pi_1$ over the edges of a graph.
This induces a distribution $\pi_0$ over the vertices in which we first sample an edge $\set{u,v}\sim \pi_1$ and then uniformly pick $u$ or $v$.
Note that this is the stationary distribution for the random walk on the graph.

\paragraph{Partite Graphs.}
A \emph{$k$-partite} graph is a graph such that we can decompose $V=V_1\sqcup V_2\sqcup \dots \sqcup V_k$
such that for every edge $\set{u,v}\in E$ we have $u\in V_i$ and $v\in V_j$ for $i\ne j$.
If $v\in V_i$, we define the color $\col(v)=i$.
Let $c\subseteq [k]$ be a set of colors.
We define the \emph{projection} of a graph $G$ onto $c$ as the graph $G^c$ induced by the vertex set $V(G^c)=\set{v\in V| \col(v)\in c}$.
We will often shorten $V(G^c)$ as $V^c$ and $E(G^c)$ as $E^c$.

\paragraph{Expansion.}
Let $G = (V, E)$ be a graph and let $\pi_1 : E \to (0, 1]$ be a probability distribution.
Let $\mathsf A_G$ be the \emph{normalized adjacency operator}.
This operator takes as input $f : V \to \R$ and outputs $\mathsf A_Gf : V \to \R$, with 
\[
\mathsf A_Gf(v) = \frac{1}{2\pi_0(v)} \sum_{u\sim v}\pi_1(uv)f(u).
\]
The (normalized) adjacency operator is self adjoint with respect to the inner product on $\ell_2(V) = \set{f : V \to \R}$ given by
\[
\inprod{f, g}=\sum_{v\in V}\pi_0(v)f(v)g(v).
\]
We write $\lambda(G):=\lambda(\mathsf A_G)$ and $|\lambda|(G):=|\lambda|(\mathsf A_G)$ for the the second largest eigenvalue and the second largest absolute value of the eigenvalue of the (normalized) adjacency operator, respectively.
We say that $G$ is a \emph{$\lambda$-one sided spectral expander} if for every $\lambda(G) \le \lambda$
and say that $G$ is a \emph{$\lambda$-two sided spectral expander} if $|\lambda|(G) \leq \lambda$.
We say that $G$ is an $\eta$-edge expander if for every subset $S \subseteq V$, $S\ne \emptyset$, it holds that 
\[
\pr{uv\sim \mu_1}{u\in S, v\in V \setminus S} \geq \eta \pr{\mu_0}{S} \pr{\mu_0}{V \setminus S}.
\]
Spectral and edge expansion are classically related to each other via Cheeger's Inequality \cite{Alo86,AM85}. We will only use the following `easy direction' of Cheeger.
\begin{lemma}[Folklore]
\label{lem:spectral-to-edge}
Let $G$ be a $\lambda$-one sided spectral expander, then $G$ is a $(1-\lambda)$-edge expander.
\end{lemma}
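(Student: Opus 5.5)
The plan is to prove the easy direction of Cheeger through the quadratic form of $\mathsf A_G$ applied to a centered indicator. Fix a nonempty $S\subseteq V$ and write $p=\pi_0(S)$. If $p=1$ the right-hand side is $0$ and there is nothing to prove, so assume $p<1$. Let $f=\one_S-p$. Then $\inprod{f,\one}=0$, so $f$ is orthogonal to the top eigenvector (the constants). Since $\mathsf A_G$ is self-adjoint with respect to $\inprod{\cdot,\cdot}$, the variational characterization of the second eigenvalue gives
\[
\inprod{f,\mathsf A_G f}\leq \lambda\,\inprod{f,f}.
\]
I will assume the standard convention that $\mu_1,\mu_0$ in the statement are the distributions $\pi_1,\pi_0$.

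Next I rewrite both sides via the Dirichlet form. A direct expansion of the definition gives
\[
\inprod{f,f}-\inprod{f,\mathsf A_G f}=\sum_{uv\in E}\pi_1(uv)\cdot\tfrac12\paren{f(u)-f(v)}^2 .
\]
This uses $\pi_0(v)=\frac12\sum_{u\sim v}\pi_1(uv)$, which is how $\pi_0$ is induced from $\pi_1$. For $f=\one_S-p$ we have $(f(u)-f(v))^2=\one[\text{$uv$ crosses the cut}]$. The right-hand side is therefore $\frac12\Pr{uv \text{ crosses}}$. For an ordered sample of the edge (random orientation), this equals $\pr{uv\sim\mu_1}{u\in S, v\notin S}$. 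Also $\inprod{f,f}=p(1-p)$.

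Combining these, $\pr{}{u\in S,v\notin S}=\inprod{f,f}-\inprod{f,\mathsf A_Gf}\geq (1-\lambda)p(1-p)$, which is the claimed $(1-\lambda)$-edge expansion.

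The only delicate point is the bookkeeping of the factor $\frac12$ and of edge orientation. It must be consistent with the convention that $\mu_1$ samples an ordered pair with a random orientation. With unordered edges one instead gets $\Pr{\text{cut}}\geq 2(1-\lambda)p(1-p)$, which is stronger. Either way the stated bound follows.
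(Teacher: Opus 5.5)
Your proof is correct. It is the standard Rayleigh-quotient (Dirichlet-form) proof of the easy direction of Cheeger's inequality, which is exactly what the paper relies on: it states the lemma as folklore, cites \cite{Alo86,AM85}, and gives no proof of its own. Your reading of $\mu_1,\mu_0$ as $\pi_1,\pi_0$, with a uniformly random orientation of the sampled edge, is the convention under which the statement holds, and your computations $\inprod{f,f}=p(1-p)$ and $\inprod{f,f}-\inprod{f,\mathsf A_G f}=\tfrac12\cdot(\text{crossing probability})$ are correct.
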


We will also need the standard expander-mixing lemma, which states that the weight of edges between any two sets $A,B$ is roughly the expected weight based on the sizes of $A$ and $B$
\begin{lemma}[Expander Mixing Lemma]
\label{lem:expander-mixing}
Let $G = (U, V, E)$ be a bipartite graph in which the $\lambda(G)\leq \lambda$, and let $\mu$ be the stationary distribution over $G$. Then for all $A \subseteq U$ and $B \subseteq V$ we have that

\[
\left|\pr{(u,v)\in E}{u \in A, v \in B} - \mu(A)\mu(B)\right| \leq \lambda \sqrt{\mu(A)(1 - \mu(A))\mu(B)(1 - \mu(B))}.
\]
\end{lemma}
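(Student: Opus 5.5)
We prove the bipartite Expander Mixing Lemma with the standard spectral argument, applied to the normalized bipartite adjacency operator. Interpret $\lambda(G)$ for a bipartite graph as the second singular value of the bipartite operator $\mathsf B:\ell_2(V)\to\ell_2(U)$, given by $\mathsf Bg(u)=\Ex{g(v)\mid u}$ under the edge distribution. This is the same as the second largest eigenvalue of $\mathsf A_G$: the spectrum of the full adjacency operator is $\pm$ the singular values of $\mathsf B$. We take $\mu$ to be the marginal of the edge distribution on each side, normalized to a probability measure on $U$ and on $V$ separately; the factor-of-2 convention relative to $\pi_0$ is a normalization choice we fix at the start.

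\textbf{Step 1.} Write the edge probability as an inner product:
\[
\pr{(u,v)}{u\in A, v\in B}=\inprod{\one_A,\mathsf B\one_B}_{\mu_U}.
\]
\textbf{Step 2.} Decompose $\one_A=\mu(A)\one+f$ and $\one_B=\mu(B)\one+g$, where $f\perp\one$ in $\ell_2(\mu_U)$ and $g\perp\one$ in $\ell_2(\mu_V)$. Since $\mathsf B\one=\one$, and $\mathsf B^*\one=\one$ (the marginals are stationary), the cross terms vanish. Hence
\[
\inprod{\one_A,\mathsf B\one_B}=\mu(A)\mu(B)+\inprod{f,\mathsf Bg}.
\]
\textbf{Step 3.} The operator $\mathsf B$ maps $\one^\perp$ to $\one^\perp$ with norm at most the second singular value $\lambda$. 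By Cauchy--Schwarz,
\[
|\inprod{f,\mathsf Bg}|\le\lambda\norm{f}\norm{g}.
\]
Finally, $\norm{f}^2=\mu(A)-\mu(A)^2=\mu(A)(1-\mu(A))$, and similarly for $g$. This gives the stated bound.

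The only step needing care is the identification in Step 3: that the norm of $\mathsf B$ on $\one^\perp$ equals the second eigenvalue of $\mathsf A_G$. The nonzero eigenvalues of $\mathsf A_G$ are $\pm\sigma_i(\mathsf B)$, since $\mathsf A_G^2=\mathsf B\mathsf B^*\oplus\mathsf B^*\mathsf B$ up to the half-normalization. So the second largest eigenvalue of $\mathsf A_G$ is $\sigma_2(\mathsf B)$, and we read the hypothesis $\lambda(G)\le\lambda$ in this sense. Everything else is routine.
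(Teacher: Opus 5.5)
Your argument is correct. It is the standard spectral proof; the paper gives no proof of this lemma and simply quotes it as the standard expander mixing lemma, so there is no other route to compare against. Two small precisions are worth recording:
\begin{itemize}
\item Reading $\mu$ as the per-side marginal is forced rather than a normalization choice. With $\mu=\pi_0$, taking $A=U$ and $B=V$ gives a left side of $3/4$ against a right side of at most $\lambda/4$, so the inequality fails.
\item For the one-sided hypothesis $\lambda(G)\le\lambda$ to bound $\sigma_2(\mathsf B)$, you need that $+\sigma_2$ is actually an eigenvalue of $\mathsf A_G$, orthogonal to the constant vector. This follows from the eigenvector $(f,g)$ with $\mathsf B g=\sigma_2 f$ and $\mathsf B^* f=\sigma_2 g$. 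The identity for $\mathsf A_G^2$ alone determines the eigenvalues only up to sign. Also, under the paper's random-walk normalization that identity holds exactly, with no half-normalization adjustment.
\end{itemize}
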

We also note a well-known sampling property of the bipartite expanders.
\begin{lemma}
\label{lem:expander-sampling}
Let $G = (U, V, E)$ be a weighted bipartite graph with $\lambda(G)\leq \lambda$.
Let $B \subseteq U$ be a subset with $\mu(B) = \delta$ and set
\[
T = \set{v\in V \left|\pr{u\text{ neighbour of }v}{u \in B} - \delta > \eps\right.} .
\]
Then $\Pr{T} \leq \lambda^2 \delta/\eps^2$.
\end{lemma}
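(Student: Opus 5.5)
We prove \Cref{lem:expander-sampling} by the standard second-moment argument, phrased through the bipartite walk operator. Let $\mathsf A$ denote the operator taking functions on $U$ to functions on $V$ by averaging over neighbours: $(\mathsf A h)(v)=\ex{u\sim v}{h(u)}$, where $u$ is drawn according to the edge weights conditioned on $v$. We apply it to the centered indicator $h=\one_B-\delta$, which satisfies $\ex{\mu}{h}=0$. By definition of $T$, every $v\in T$ has $(\mathsf A h)(v)>\eps$. Markov's inequality on $(\mathsf A h)^2$ then gives
\[
\Pr{T}\leq \Pr{(\mathsf A h)^2>\eps^2}\leq \frac{\norm{\mathsf A h}^2}{\eps^2},
\]
with norms taken under the stationary measure restricted to each side. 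It therefore suffices to show $\norm{\mathsf A h}^2\leq \lambda^2\norm{h}^2$ and $\norm{h}^2=\delta(1-\delta)\leq\delta$.

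The second fact is a direct computation. For the first, note that $\norm{\mathsf A h}^2=\inprod{h,\mathsf A^*\mathsf A h}$, where $\mathsf A^*\mathsf A$ is the two-step walk $U\to V\to U$. This walk is the square of the full bipartite adjacency operator restricted to functions on $U$. The eigenvalues of the bipartite adjacency operator come in pairs $\pm\sigma$, where the $\sigma$ are the singular values of $\mathsf A$. The top singular value $1$ corresponds to constants, and $h$ is orthogonal to constants. Hence $\norm{\mathsf A h}\leq \sigma_2\norm{h}$, where $\sigma_2$ is the second singular value, and $\sigma_2=\lambda(G)\leq\lambda$ because $\sigma_2$ is the second largest eigenvalue of the bipartite operator.

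The only point needing care is this identification. For bipartite graphs, ``$\lambda(G)$'' in such statements means the second singular value, i.e., the largest nontrivial eigenvalue, excluding $-1$. Since the spectrum is symmetric, the second largest eigenvalue of the full operator equals $\sigma_2$, so the one-sided hypothesis already gives what we need. A second point is normalization: the stationary measure on $U\sqcup V$ gives each side mass $1/2$. This is harmless because we measure $\Pr{T}$ and $\mu(B)$ within their own sides, the $1/2$ factors cancel, and the same computation underlies \Cref{lem:expander-mixing}. Combining the pieces yields $\Pr{T}\leq \lambda^2\delta(1-\delta)/\eps^2\leq\lambda^2\delta/\eps^2$.
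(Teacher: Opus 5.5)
Your proof is correct. Applying Markov's inequality to $(\mathsf A(\one_B-\delta))^2$ and bounding $\norm{\mathsf A h}\le\sigma_2\norm{h}$ on the complement of constants is the standard argument. You also handle correctly the identification $\sigma_2=\lambda(G)$ under the paper's one-sided convention (the meaningful one for bipartite graphs, since $-1$ is always an eigenvalue) and the per-side normalization of $\mu$.

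The paper gives no proof of this lemma; it cites it as a well-known sampling property of bipartite expanders. Your argument is the standard proof it has in mind, and it in fact yields the slightly sharper bound $\lambda^2\delta(1-\delta)/\eps^2$.
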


\paragraph{Majority and expansion.}

A second very useful fact about expander graphs is that local agreement on the graph implies global agreement with majority.
Let $G= (V, E)$ be a graph and $g : V \to \set{1, 2,..., n}$ be some function.
Let $S_i = \set{v\in V | g(v) = i}$.
The majority assignment $\maj(g) \in \set{1, 2,..., n}$ is the $i$ such that $\Pr{S_i}$ is largest (ties broken arbitrarily).
Observe that if $\pr{v}{g(v) = \maj(g)} \approx 1$ then for most edges $uv \in E$,
it holds that $g(v) = g(u)$ (since with high probability they are both equal to $\maj(g)$).
In expander graphs a converse to this statement also holds.
That is, if for most edges $g(v) = g(u)$, then $\pr{v}{g(v) = \maj(g)} \approx 1$.
\begin{lemma}
\label{lem:expansion-majority}
Let $G$ be an $\eta$-edge expander.
Let $S_1, S_2,\dots, S_m$ be a partition of the vertices of $V$.
Assume that 
\[
\pr{uv\sim \mu_1}{\exists i, u\in S_i\text{ and }v \not \in S_i} \leq \eps.
\]
Then, there exists $i$ such that $\Pr{S_i} \ge 1-\frac{\eps}{\eta}$.
Stated differently, for every $g : V \to \set{1, 2,..., n}$, we have
\[
\pr{v}{g(v) \ne \maj(g)} \leq \pr{uv\sim \mu_1} {g(u) \ne g(v)}.
\]
\end{lemma}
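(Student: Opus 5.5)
The plan is a direct counting argument: apply the edge-expansion inequality to each part $S_i$ separately and sum. Write $p_i = \Pr{S_i}$ under $\mu_0$, so that $\sum_i p_i = 1$. Sample $uv \sim \mu_1$ as an ordered pair, with the orientation chosen uniformly; this is the same convention under which the edge-expansion definition is stated.

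\textbf{Step 1 (disjoint decomposition).} Each $u$ lies in exactly one part. Hence the event $\{\exists i:\ u \in S_i,\ v \notin S_i\}$ is the disjoint union over $i$ of the events $\{u \in S_i,\ v \in V\setminus S_i\}$, and
\[
\eps \;\geq\; \sum_{i=1}^m \pr{uv\sim\mu_1}{u\in S_i,\ v\in V\setminus S_i}.
\]

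\textbf{Step 2 (apply expansion to each part).} For each $i$ with $S_i\neq\emptyset$, the $\eta$-edge expansion of $G$ bounds the $i$-th term below by $\eta\, p_i(1-p_i)$. Empty parts contribute $0$ on both sides, so
\[
\eps \;\geq\; \eta \sum_i p_i(1-p_i) \;=\; \eta\Bigl(1-\sum_i p_i^2\Bigr).
\]

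\textbf{Step 3 (compare with the largest part).} Let $p_{\max}=\max_i p_i$. Then $\sum_i p_i^2 \leq p_{\max}\sum_i p_i = p_{\max}$, so $\eps \geq \eta(1-p_{\max})$. Rearranging gives $p_{\max} \geq 1-\eps/\eta$, which is the first claim.

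\textbf{The restatement.} Given $g$, take $S_i = g^{-1}(i)$. The crossing event is then exactly $\{g(u)\neq g(v)\}$, and $\maj(g)$ is the index of a part of maximal mass. Step 3 therefore gives
\[
\pr{v}{g(v)\neq\maj(g)} = 1-p_{\max} \leq \frac{1}{\eta}\,\pr{uv\sim\mu_1}{g(u)\neq g(v)}.
\]
This has a factor $1/\eta$, which the displayed form in the statement omits. It coincides with the stated inequality only when $\eta = 1$, so that form should be read as holding with the $1/\eta$ factor.

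There is no real obstacle here. The only step needing care is Step 1: the crossing probability must be counted over ordered pairs under the same convention as the definition of edge expansion, so that each unordered cut edge is charged once per part it leaves and nothing is double counted.
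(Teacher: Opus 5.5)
Your argument is correct. The paper gives no proof of this lemma; it is quoted as a standard fact from Dikstein--Dinur. Your route is the standard one: apply edge expansion to each part, sum using the fact that the ordered crossing events are disjoint, and bound $\sum_i p_i^2 \le p_{\max}$. Fixing the ordered-pair convention in both the hypothesis and the expansion definition is what makes the constant $\eps/\eta$ rather than $2\eps/\eta$.

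Your correction of the second display is also right: it needs the factor $1/\eta$, and without it the inequality fails, e.g., for two cliques joined by a single edge. The paper only invokes the lemma inside an $O(\cdot)$ with $\eta = \Omega(1)$, in the appendix proof of the faces color-restriction lemma, so nothing downstream is affected.
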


\subsection{Basics of Simplicial Complexes}
We will now describe simplicial complexes and related notions.
Our exposition and notation in this section closely follow~\cite{DD24coboundary, DD24buildings}.
\paragraph{Simplicial Complexes.}

A \emph{simplicial complex} $X$ is a downward closed set system, i.e., if $s\in X$ and $t\subseteq s$, then $t\in X$.

\begin{itemize}
    \item The sets in a simplicial complex are called \emph{faces}. The \emph{dimension} of a face $s$ is $|s|-1$. 
    \item The dimension of a complex is the dimension of the largest face. In a \emph{pure} simplicial complex, all maximal faces have the same dimension. 
    \item $X(i)$ is the set of all the faces of dimension $i$ of $X$. In particular, we have $X(-1)=\set{\emptyset}$. We will sometimes refer to $X(i)$ as the level $i$ of the complex.
    \item The set of \emph{oriented} $k$-faces is $\overrightarrow X(k)=\set{(v_0, v_1, \dots, v_{k})| \set{v_0, v_1, \dots, v_k}\in X(k)}$.
    \item Let $s=(v_0,v_1, \dots, v_k)$ be an oriented face and $i\in \set{0, 1, \dots, k}$ be an index. Then, we get the face $s_i$ by removing $v_i$ from $s$.
    \item For convenience, typically we drop the set brackets and the tuple brackets for faces.

\end{itemize}
We will use $\Delta_n$ to denote the \emph{complete complex} on $n$ vertices, i.e., the $n-1$-dimensional complex with $[n]$ as the top face.
\paragraph{Partite Complexes.}
A $(d + 1)$-partite simplicial complex is a $d$-dimensional simplicial complex such that one can decompose $X(0) = A_0 \sqcup A_1 \sqcup \dots \sqcup A_d$ such that for every $s \in X (d)$ and $i \in \set{0, 1,\dots, d}$ it
holds that $|s \cap A_i | = 1$.
The color of a vertex $\col (v) = i$ such that $v \in A_i$.
More generally, the color of a face $s$ is $c = \col (s) = \set{\col (v) | v \in s}$.
We denote by $X^c$, the \emph{projection} of X to $c$, the set of faces $s\in X$ with
 the color $\col(s)\subseteq c$, and for a singleton $\set{i}$ we sometimes write $X^i$ instead of $X^{\set{i}}$.
Note $X^c$ is a $|c|$-partite complex.

\paragraph{Links, Underlying Graphs and Skeletons.} Let $X$ be a $d$-dimensional simplicial complex.
For $k \leq d$, the $k$-\emph{skeleton} of $X$ is the complex $X ^{\leq k} = \bigcup^k_{j=-1} X(j)$.
When $k = 1$, we call this complex the \emph{underlying graph} of $X$, as it consists of the vertices and edges in $X$ (as well as the empty face). A $d$-dimensional complex is called a \textit{clique complex} if every $(d+1)$-clique in its underlying graph is a face in $X$.
The \emph{diameter} $\diam(X)$, and the \emph{max degree $\Delta(X)$} are the diameter $\diam(G=(X(0),X(1)))$, and the max degree $\Delta(G)$ of the underlying graph of $X$.
The \emph{link} of a face $s\in X$ is the $d' = (d - |s|)$-dimensional complex
$X_s = \set{t \setminus s | t \in X, t \supseteq s}$.

\paragraph{Joins.}
Given two simplicial complexes $X_1$ and $X_2$, we define the \emph{join} $X_1*X_2$ to be the simplicial complex defined on the vertices $X_1(0) \sqcup X_2(0)$ with
\[
X_1*X_2= \set{\sigma\sqcup \tau | \sigma \in X_1, \tau\in X_2}.
\]
Extend this definition inductively to define joins of multiple complexes.

\paragraph{Probability density over a simplicial complex.}
Let $X$ be a $d$-dimensional (pure) simplicial complex.
The top faces can be assigned an arbitrary probability density $\pi_d$.
The probability density $\pi_d$ induces a probability density $\pi_k$ on the $k$-dimensional faces in which we first sample $t\sim \pi_d$ and then uniformly sample a subface $s\subseteq t$. We observe that 
\[
\pi_{k}(s)=\frac{1}{\binom{d+1}{k+1}}\sum_{t\supseteq s} \pi_{d}(t).
\]
This also defines a probability density on oriented faces where all permutations of a face have the same probability. So, we have $\pi_{k}(s)=\frac{1}{(k+1)!}\pi_{k}(\textsf{set}(s))$ for any face $s\in \overrightarrow X(k)$.
For a simplicial complex $X$ with a measure $\pi_d : X (d) \to (0, 1]$, the induced measure
$\pi_{d',X_s} : X_s (d - |s|) \to (0, 1]$ on $X_s$, for $t\in X_s(d-|s|)$, i.e., the probability $t\sim \pi_{d',X_s}$ is the probability of sampling $t\sqcup s$ from $\pi_d$ conditioned on containing $s$.
Observe that we have
\[
\pi_{d',X_s}(t \setminus s)= \frac{\pi_{d}(t)}{\sum_{t'\supseteq s}\pi_{d}(t')}.
\]
Whenever the complex $X$ is clear we shall write $\pi_{d',s}$ instead of $\pi_{d',X_{s}}$.

\subsection{Expansion on a Simplicial Complex}
\label{sec:simplicial-expansion}
Expansion on a simplicial complex can be defined in numerous ways.
We discuss \emph{local spectral expansion} and \emph{coboundary expansion} here.

\paragraph{Local Spectral Expansion.} The notion of local spectral expansion requires that underlying graph of each of its links is a spectral expander.
Before formally defining local spectral expansion, let us setup some notation.
We use $\lambda(X_s)$ to denote the second largest eigenvalue of the (normalized) adjacency operator of the underlying graph of $X_s$
and we use $|\lambda|(X_s)$ to denote $|\lambda|(G_{X_s})$, where $G_{X_s}$ is the underlying graph of $X_s$.

\begin{definition}[Local Spectral Expansion {\cite{KKL16,Opp18}}]
\label{def:local-spectral-expansion}
Let $X$ be a $d$-dimensional simplicial complex and let $\lambda \in (0, 1)$.
$X$ is a $\lambda$-one sided local spectral expander if for every $s \in X ^{\leq d-2}$ it holds that $\lambda(X_s) \leq \lambda$.
Similarly, $X$ is a $\lambda$-two sided local spectral expander if for every $s \in X ^{\leq d-2}$ it holds that $|\lambda|(X_s)\leq \lambda$.
\end{definition}
\begin{remark}
In the literature, the term \emph{high-dimensional expander} is also sometimes used to mean local spectral expanders. 
\end{remark}

The following useful result relates the expansion of the links of faces with codimension 2 with local spectral expansion of the entire complex.
\begin{theorem}[Trickle Down Theorem \cite{Opp18}]
\label{thm:trickle-down}
Let $X$ be a $d$-dimensional simplicial complex such that the 1-skeleton of every link is connected and for every $s\in X(d-2)$, the link $X_s$ is a one-sided $\lambda$-expander.
Then X is a $\mu$-local spectral expander, where $\mu=\frac{\lambda}{1-(d-1)\lambda}$.
\end{theorem}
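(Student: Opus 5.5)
The plan is to induct downward on the codimension of faces, with the whole argument resting on a single ``one-step'' lemma. The lemma says: let $Y$ be a pure $2$-dimensional complex (with its induced measures) whose underlying graph is connected, and suppose every vertex link $Y_v$ is a one-sided $\lambda$-expander. Then $\lambda(Y)\leq \frac{\lambda}{1-\lambda}$. Granting the lemma, I apply it to $Y=X_s$ for faces $s$ of codimension $3$, then codimension $4$, and so on. This works because the link of a vertex $v$ inside $X_s$ is exactly $X_{s\cup\{v\}}$, and the induced measures agree, since both are obtained by conditioning $\pi_d$ on containing $s\cup\{v\}$. I assume throughout that $(d-1)\lambda<1$, so every denominator below is positive; otherwise the statement is vacuous.

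To prove the lemma, let $f$ be an eigenfunction of $\mathsf A_Y$ with eigenvalue $\mu$ and $f\perp \one$. For each vertex $v$, let $f_v$ denote the restriction of $f$ to $Y_v(0)$. The first step is to check the following identities from the definition of the induced measures:
\[
\langle \mathsf A_Y f,f\rangle=\Expect_{v\sim\pi_0}\langle \mathsf A_{Y_v}f_v,f_v\rangle_{Y_v},\qquad \Expect_{v}\|f_v\|_{Y_v}^2=\|f\|^2,\qquad \Expect_{Y_v}[f_v]=\mathsf A_Yf(v).
\]
Each holds because sampling a triangle and then a vertex $v$ in it, followed by an edge (respectively, a vertex) in $Y_v$, reproduces the edge (respectively, vertex) distribution of $Y$.

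Next I decompose $f_v=(\mathsf A_Y f(v))\one+f_v^{\perp}$. By the spectral assumption on $Y_v$,
\[
\langle \mathsf A_{Y_v}f_v,f_v\rangle\leq (\mathsf A_Yf(v))^2+\lambda\left(\|f_v\|^2-(\mathsf A_Yf(v))^2\right).
\]
Averaging over $v$ and using $\mathsf A_Y f=\mu f$ gives
\[
\mu\|f\|^2\leq (1-\lambda)\mu^2\|f\|^2+\lambda\|f\|^2 .
\]
Hence $(1-\lambda)\mu^2-\mu+\lambda\geq 0$. The roots of this quadratic are $1$ and $\frac{\lambda}{1-\lambda}$, so either $\mu\le\frac{\lambda}{1-\lambda}$ or $\mu\geq 1$. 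Connectivity of the underlying graph excludes $\mu=1$ for $f\perp\one$, which leaves $\mu\le \frac{\lambda}{1-\lambda}$.

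For the induction, write $\lambda_j$ for the bound on links of codimension-$j$ faces, so $\lambda_2=\lambda$. The lemma gives $\lambda_{j+1}=\frac{\lambda_j}{1-\lambda_j}$, equivalently $1/\lambda_{j+1}=1/\lambda_j-1$. Therefore
\[
\lambda_j=\frac{\lambda}{1-(j-2)\lambda},
\]
which is increasing in $j$. The worst case is $s=\emptyset$, of codimension $d+1$, giving $\frac{\lambda}{1-(d-1)\lambda}=\mu$. This also covers all $s\in X^{\le d-2}$, as Definition~\ref{def:local-spectral-expansion} requires. At each step the hypothesis that every link's $1$-skeleton is connected supplies the connectivity needed in the lemma.

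I expect the main obstacle to be the bookkeeping in the first step: verifying carefully that the local inner products, local norms, and local means average exactly to the global quantities under the induced measures. Once those three identities hold, the rest is the quadratic inequality and the recursion.
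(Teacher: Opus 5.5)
Your proposal is correct, and it is essentially the standard Garland--Oppenheim argument from \cite{Opp18}, which the paper cites without reproving: the one-step bound $\lambda \mapsto \frac{\lambda}{1-\lambda}$ for pure $2$-dimensional complexes via the local decomposition $\langle \mathsf A_Y f,f\rangle=\mathbb{E}_v\langle \mathsf A_{Y_v}f_v,f_v\rangle$ with $\mathbb{E}_{Y_v}[f_v]=\mathsf A_Y f(v)$, followed by the induction $1/\lambda_{j+1}=1/\lambda_j-1$. One small tidying: "roots $1$ and $\frac{\lambda}{1-\lambda}$, so $\mu\le\frac{\lambda}{1-\lambda}$ or $\mu\ge 1$" implicitly assumes $\lambda\le 1/2$; writing the inequality as $(1-\lambda)(\mu-1)\bigl(\mu-\frac{\lambda}{1-\lambda}\bigr)\ge 0$ and using $\mu<1$ removes the need to know which root is larger.
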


We will also use the following variant of local-spectral expansion for partite complexes which asks that the projection of any link onto two colors is a good expander.

\begin{definition}[$\lambda$-product {\cite{GLL22}}]\label{def:lambda-product}
  A $d$-partite complex $X$ is called a $\lambda$-product if for every link $X_\tau$ of co-dimension
  at least 2 and colors $i, j \not \in  \col(\tau)$ , we have:
  \[
    \lambda(X^{ij}_\tau)\leq \lambda.
  \]
\end{definition}

\paragraph{Coboundary Expansion.}
Another useful notion of expansion in a simplicial complex is coboundary expansion.
Let $X$ be a $d$-dimensional simplicial complex for $d \geq 1$ and let $\Gamma$ be any group.
First, we define the notion of cochains.
For $i=-1,0,$ the set of $i$-\emph{cochains} is $C^i(X, \Gamma) = \set{f:X(i)\to \Gamma}$.
We sometimes identify $C^{-1}(X, \Gamma)\cong \Gamma$.
For $i = 1, 2,$ the $i$-cochains are defined as follows.
The 1-cochains
\[
C^1(X,\Gamma)=\set{f:\overrightarrow{X}(1)\to \Gamma ~\big|~ f(u,v)=f(v,u)^{-1}}
\]
and the 2-cochains
\[
C^2(X,\Gamma) = \set{f:\overrightarrow{X}(i) \to \Gamma ~\big|~ \forall \pi \in \symmetric_3, (v_0,v_1,v_2) \in X(2)\quad f(v_{\pi(0)} , v_{\pi(1)} , v_{\pi(2)} ) = f(v_0, v_1, v_2)^{\sign(\pi)}}
\]
are the spaces of the so-called anti-symmetric functions on edges and triangles, respectively.
For $i = -1, 0, 1,$ we define the `coboundary operators' as
$\delta_i:C^i(X, \Gamma) \to C^{i+1}(X, \Gamma)$ by
\begin{enumerate}[(i)]
    \item $\delta_{-1}:C^{-1}(X, \Gamma) \to C^0 (X, \Gamma)$ is $\delta_{-1}h(v) = h(\emptyset)$,
    \item $\delta_0:C^0(X,\Gamma)\to C^1(X,\Gamma)$ is $\delta_0 h(v, u) = h(v )h(u)^{-1}$,
    \item $\delta_1:C^1(X,\Gamma)\to C^2 (X, \Gamma)$ is $\delta_1 h(v, u, w) = h(v, u)h(u, w)h(w, v)$.
\end{enumerate}
We write just $\delta f$ for $\delta_i f$ when the dimension $i$ is clear from context.

Although we want results on a general group $\Gamma$, it is instructive to consider the case where $\Gamma = \mathbb F_2$.
Specifically, in this case, each $i$-cochain $f$ is an indicator for some subset $S_f\subseteq X(i)$ of $i$-faces.
Furthermore, the coboundary of an 0-cochain $f$ corresponds to the indicator edges crossing the set $S_f$.

Let $Id = Id_i \in C^i (X, \Gamma)$ be the function that always outputs the identity element.
It is easy to check that $\delta_{i+1} \circ \delta_i h = Id_{i+2}$ for all $i = -1, 0$ and $h \in C^i (X, \Gamma)$.
Thus, if we write
\[
Z^i(X,\Gamma)=\ker(\delta_i)\subseteq C^i,
\]
and
\[
B^i(X,\Gamma)=\im(\delta_{i-1})\subseteq C^i,
\]
for the sets of the so-called \emph{cocycles} and \emph{coboundaries}, respectively, we see that $B^i\subseteq Z^i$.
We say $i$-th cohomology is \emph{trivial} whenever $B^i = Z^i$.

\begin{definition}[Cocycle and coboundary expansion {\cite{LM06,Gromov2010}}]
\label{def:coboundary-expansion}
Let $X$ be a $d$-dimensional simplicial complex for $d \geq 2$. Let $-1\leq k\leq d-1$.
The $k$-cocycle expansion $h^k(X, \Gamma)$ of $X$ with respect to a group $\Gamma$ is given by
\[
    h^k(X,\Gamma)= \min_{f\in C^k\setminus Z^k} \frac{\wt(\delta f)}{\dist(f, Z^k)}.
\]
Additionally, if the $k$-th cohomology is trivial, i.e., we have $B^k(X,\Gamma) = Z^k(X, \Gamma)$, then we say $h^k(X, \Gamma)$ is the $k$-coboundary expansion of $X$ with respect to a group $\Gamma$.
\end{definition}
We remark the above is often called `cosystolic' rather than `cocycle' expansion in the literature. However the former originally referred to complexes with other additional properties \cite{KKL16}, so there has been a recent shift in the it the literature to using `cocycle expansion' for this notion instead.

\subsection{Walks on Simplicial Complexes.}
Simplicial complexes come equipped with several natural families of random walks generalizing the standard
random walk on a graph.

\paragraph{Up-down and Down-up walks.}
Let $\ell \leq k \leq d$.
The $(k, \ell)$-\emph{containment graph} $G_{k,\ell} = G_{k,\ell}(X)$ is the bipartite graph whose vertices are $L= X(k)$, $R= X(\ell)$ and whose edges are all $(t, s)$ such that $t \supseteq s$.
The distribution we associate to the edges is the natural distribution induced by the complex $X$, i.e.,
sampling $t\sim \pi_k$ and then uniformly sampling $s\subseteq t$ of size $|s|= \ell + 1$.

The \emph{down operator} $\mathsf D_{k,\ell} : \ell_2(X(k)) \to \ell_2(X(\ell))$ is the bipartite operator of
the containment graph $G_{k,l}$, i.e.,
\[
\mathsf D_{k,\ell}f (s) = \ex{t \supseteq s}{f (t)}.
\]
The adjoint of $\mathsf D_{k,\ell}$ is the \emph{up operator}, $\mathsf U_{\ell,k} : \ell_2(X(\ell)) \to \ell_2(X(k))$, given by:
\[
\mathsf U_{\ell,k} g(t) = \ex{s \subseteq t} {g(s)}.
\]
Then the composition of the up and down operators $\mathsf U_{\ell,k} \mathsf D_{k,\ell}$ is called the `down-up' walk, and the `down-up' walk is similarly defined.

\paragraph{Swap Walks.}
Let $X$ be a $d$-dimensional simplicial complex.
Let $i, j$ be so that $i + j + 1 \leq  d$.
The swap walk $\mathsf S_{i,j} = \mathsf S_{i,j}(X)$ is the bipartite adjacency operator of the graph $(X(i), X(j), E)$, where an edge $\set{s_i, s_j}$ is chosen in this graph by first selecting a face $t \sim \pi_{i +j+1}$, and then partitioning it to $t= s_i \sqcup s_j$ where $s_i \in X(i)$ and $s_j \in X(j)$.
This walk was defined and studied independently by \cite{AJT19} and \cite{DD19}.
\begin{lemma}[\cite{GLL22}]
Let $X$ be a $\lambda$-two-sided local spectral expander.
Then the second largest eigenvalue of $\mathsf S_{k,\ell}(X)$ is upper bounded by $\sqrt{(k+ 1)(\ell+ 1)}\lambda$.
\end{lemma}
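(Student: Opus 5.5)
The statement is the lemma of \cite{GLL22}: in a $\lambda$-two-sided local spectral expander, the second largest eigenvalue of the swap walk $\mathsf S_{k,\ell}$ is at most $\sqrt{(k+1)(\ell+1)}\lambda$. The plan is to argue by induction on $k+\ell$, comparing $\mathsf S_{k,\ell}$ with an averaging of swap walks inside vertex links. Alternatively, one can go through the partite route: pass to the $(k+\ell+2)$-partite colored version of $X$ and reduce to bipartite $2$-colour graphs in links.

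\textbf{Local decomposition.} Fix $f\in\ell_2(X(k))$ and $g\in\ell_2(X(\ell))$, both orthogonal to constants. Sampling $t=s\sqcup s'\sim\pi_{k+\ell+1}$ and then a vertex $v\in s$ uniformly gives
\[
\langle f,\mathsf S_{k,\ell} g\rangle=\ex{v}{\langle f_v,\mathsf S_{k-1,\ell}(X_v)g_v^{\ast}\rangle}+\text{(correction)},
\]
where $f_v(s\setminus v)=f(s)$. Here $g_v^\ast$ is the restriction of $g$ to faces in the link of $v$. The correction is needed because $v\notin s'$ forces $s'\in X_v$, but the marginal of $s'$ in $X_v$ differs from the global one only through the link structure.

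\textbf{Base case.} The base case is $k=\ell=0$. There $\mathsf S_{0,0}$ is the underlying graph walk of $X$, which by definition has $|\lambda|\le\lambda$, matching $\sqrt{1\cdot1}\,\lambda$.

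\textbf{Inductive step.} Decompose $f_v=(f_v-\Ex{f_v})+\Ex{f_v}$ in each link, and similarly for $g$. The fluctuating parts are controlled by induction in the link $X_v$. The link is again a $\lambda$-local spectral expander, and its swap walk has bound $\sqrt{k(\ell+1)}\lambda$. The mean parts give $\ex{v}{\Ex{f_v}\Ex{g_v}}$. These are functions on $X(0)$, namely $\mathsf D f$ and $\mathsf D g$, coupled via a walk on vertices. Since $f$ and $g$ are orthogonal to constants, this term is bounded by $\lambda\|\mathsf Df\|\|\mathsf Dg\|$. Combining the two contributions with Cauchy--Schwarz, and using $\|f_v-\Ex{f_v}\|^2+\Ex{f_v}^2=\|f_v\|^2$, should give a recursion whose solution is $\sqrt{(k+1)(\ell+1)}\lambda$. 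The cleaner alternative is to use the known identity that in a local spectral expander $\mathsf S_{k,\ell}\approx\mathsf U\mathsf D$-type compositions, together with the Garland/\cite{AJT19} bound $\|\mathsf S_{k,\ell}-\mathsf D\mathsf U\text{-projection}\|\le(k+1)(\ell+1)\lambda$. The geometric mean $\sqrt{(k+1)(\ell+1)}$ instead suggests the partite argument: write $\mathsf S_{k,\ell}$ as an average over colourings and bound each colour-pair block by $\lambda$.

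\textbf{Main obstacle.} The hard part is getting the sharp constant $\sqrt{(k+1)(\ell+1)}$ rather than $(k+1)(\ell+1)$. My first attempt would be a product bound: write the bipartite operator as $\mathsf S_{k,\ell}=\mathsf A\mathsf B$ with $\|\mathsf A\|_{\perp}\le\sqrt{k+1}\cdot$ and $\|\mathsf B\|_\perp\le\sqrt{\ell+1}\cdot$, passing through $(k+1)\times(\ell+1)$ vertex-pair blocks. Each block is a link graph of expansion $\lambda$, and a Schur-test/Cauchy--Schwarz over blocks would produce the square roots. Since this lemma is cited from \cite{GLL22}, in the paper it suffices to invoke that result directly.
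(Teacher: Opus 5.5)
The paper gives no proof of this lemma; it is quoted from \cite{GLL22}, so your closing sentence is exactly what the paper does. The self-contained induction you sketch, however, breaks at the step that carries the content.

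Peeling a vertex $v\in s$ gives an \emph{exact} identity, with no correction term: $\inprod{f,\mathsf S_{k,\ell}g}=\ex{v\sim\pi_0}{\inprod{f_v,\mathsf S_{k-1,\ell}(X_v)g_v}}$, where $g_v=g|_{X_v(\ell)}$ with the link measure. The link mean of $f_v$ is $(\mathsf D_{k,0}f)(v)$. The link mean of $g_v$, however, is the average of $g$ over $\ell$-faces \emph{in the link} of $v$, i.e.\ $(\mathsf S_{0,\ell}g)(v)$, not $(\mathsf D_{\ell,0}g)(v)$. Both means are evaluated at the same $v$, so no vertex walk couples them.

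Your bound $\lambda\norm{\mathsf Df}\norm{\mathsf Dg}$ for the mean term is therefore unjustified, and it is in fact too strong. With it, your Cauchy--Schwarz step gives $\lambda ab+c\sqrt{(1-a^2)(1-b^2)}\le\max(\lambda,c)$. Starting from the base case, the recursion would then yield $\lambda(\mathsf S_{k,\ell})\le\lambda$ for all $k,\ell$. That is false: on the $d$-skeleton of the complete complex on $n\gg d$ vertices, $\lambda\approx 1/n$. Testing $g(b)=\sum_{u\in b}h(u)$ with $h\perp\one$ shows that $\mathsf S_{0,\ell}$ has second singular value $\approx\sqrt{\ell+1}/n$.

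With the correct mean term the argument does close, and it gives the sharp constant. Let $\sigma_Y(k,\ell)$ denote the second singular value of $\mathsf S_{k,\ell}(Y)$, and take unit $f,g\perp\one$ with $a=\norm{\mathsf D_{k,0}f}$.
\begin{itemize}
\item Since $\mathsf S_{0,\ell}g\perp\one$, the mean term $\inprod{\mathsf D_{k,0}f,\mathsf S_{0,\ell}g}$ is at most $a\,\sigma_X(0,\ell)$.
\item The fluctuation term is at most $\max_v\sigma_{X_v}(k-1,\ell)\sqrt{1-a^2}$. This uses $\ex{v}{\norm{\tilde f_v}^2}=1-a^2$ and $\ex{v}{\norm{\tilde g_v}^2}\le 1$.
\end{itemize}
Cauchy--Schwarz in the vector $(a,\sqrt{1-a^2})$ then yields
\[
\sigma_X(k,\ell)^2\le\sigma_X(0,\ell)^2+\max_{v}\sigma_{X_v}(k-1,\ell)^2 .
\]
By adjointness $\sigma_X(0,\ell)=\sigma_X(\ell,0)$. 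The same recursion with base $\sigma_{X_w}(0,0)=|\lambda|(X_w)\le\lambda$ gives $\sigma_X(\ell,0)^2\le(\ell+1)\lambda^2$. Induction on $k$ then gives $(k+1)(\ell+1)\lambda^2$, which is the claimed bound. Every face $w$ whose link is used has $|w|\le k+\ell$, hence dimension at most $k+\ell-1\le d-2$, so the two-sided hypothesis applies to it.

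Your alternative routes are unnecessary. The \cite{AJT19}-style comparison only gives $(k+1)(\ell+1)\lambda$, and the partite and factorization ideas are left unspecified.
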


\paragraph{Colored Swap Walks.}
The standard swap walks are not well behaved on partite complexes, but there is a useful analog for this setting defined in \cite{DD19}.
Let $X$ be a $d$-partite simplicial complex, and $F_1,F_2 \subseteq [d]$ be two disjoint subsets.
The colored swap walk $\mathsf S_{F_1,F_2} = \mathsf S_{F_1,F_2} (X )$ is the bipartite adjacency operator of the graph $(X^{F_1}, X^{F_2}, E)$, where an edge $\set{s_1, s_2}$ is chosen in this graph by first selecting a face $t \in X^{F_1 \sqcup F_2}$, and then partitioning it to $t= s_1\sqcup s_2$ according to its colors.

\begin{lemma}[\cite{GLL22}]
\label{lem:color-swap-expansion}
Let $X$ be a $d$-partite $\lambda$-product, and $F_1,F_2\subseteq [d]$ disjoint subsets. Then \[\lambda(\mathsf S_{F_1 ,F_2}) \leq \sqrt{|F_1 | |F_2 |} \lambda. \]
\end{lemma}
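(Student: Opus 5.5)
We prove the bound by induction on $|F_1|+|F_2|$, using the variational form of the second eigenvalue of a bipartite operator. It suffices to show that for all $f:X^{F_1}\to\R$ and $g:X^{F_2}\to\R$ with $\Ex{f}=\Ex{g}=0$ (here $X^{F}$ means the faces of color exactly $F$, with their induced measures),
\[
\langle f,\mathsf S_{F_1,F_2}g\rangle=\ex{t\in X^{F_1\sqcup F_2}}{f(t_{F_1})\,g(t_{F_2})}\le \sqrt{|F_1||F_2|}\,\lambda\,\norm{f}\norm{g}.
\]
We prove this simultaneously for every link $X_\tau$ of $X$, with $F_1,F_2$ disjoint from $\col(\tau)$. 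This is legitimate because every link of a $\lambda$-product is again a $\lambda$-product, since links of links are links.

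\textbf{Base case.} For $|F_1|=|F_2|=1$, say $F_1=\{i\}$ and $F_2=\{j\}$, the operator $\mathsf S_{\{i\},\{j\}}(X_\tau)$ is exactly the bipartite adjacency operator of $X^{ij}_\tau$. The bound $\lambda(X^{ij}_\tau)\le\lambda$ is then just the definition of a $\lambda$-product. By symmetry of the bound in $F_1$ and $F_2$, we may assume in the inductive step that $|F_2|\ge 2$.

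\textbf{Inductive step.} Write $F_2=F_2'\sqcup\{j\}$ and condition on the $F_2'$-part $\sigma$ of $t$. We set up the following objects.
\begin{itemize}
\item For each $\sigma$, let $f_\sigma$ be $f$ restricted to $F_1$-faces in the link $X_\sigma$, and $g_\sigma(v)=g(\sigma\sqcup v)$ for vertices $v$ of color $j$ in $X_\sigma$.
\item Then $\langle f,\mathsf S_{F_1,F_2}g\rangle=\ex{\sigma}{\langle f_\sigma,\mathsf S_{F_1,\{j\}}(X_\sigma)\,g_\sigma\rangle}$.
\item Let $\bar f(\sigma)=\Ex{f_\sigma}$ and $\bar g(\sigma)=\Ex{g_\sigma}$ be the link means.
\end{itemize}
Splitting each link inner product into its mean part and its mean-zero part gives
\[
\ex{\sigma}{\bar f(\sigma)\bar g(\sigma)}+\ex{\sigma}{\langle f_\sigma-\bar f(\sigma),\,\mathsf S_{F_1,\{j\}}(X_\sigma)(g_\sigma-\bar g(\sigma))\rangle}.
\]
We bound the two terms separately.
\begin{itemize}
\item The first term equals $\langle f,\mathsf S_{F_1,F_2'}\bar g\rangle$, because $\bar f(\sigma)$ is exactly $(\mathsf S_{F_1,F_2'}^{*}f)(\sigma)$. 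Also $\Ex{\bar g}=\Ex{g}=0$. By induction the first term is therefore at most $\sqrt{|F_1||F_2'|}\,\lambda\norm{f}\norm{\bar g}$.
\item For the second term, apply induction in the link $X_\sigma$ with color sets $F_1$ and $\{j\}$. This bounds it by $\sqrt{|F_1|}\,\lambda\,\ex{\sigma}{\norm{f_\sigma-\bar f(\sigma)}\norm{g_\sigma-\bar g(\sigma)}}$. Since $\norm{f_\sigma-\bar f(\sigma)}\le\norm{f_\sigma}$, Cauchy–Schwarz bounds it further by $\sqrt{|F_1|}\,\lambda\norm{f}\norm{g-\bar g}$.
\end{itemize}
Here $\ex{\sigma}{\norm{f_\sigma}^2}=\norm{f}^2$ because the law of $t_{F_1}$ given $\sigma$, averaged over $\sigma$, is the marginal of $f$'s measure.

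\textbf{Combining.} Put $a=\norm{\bar g}$ and $b=\norm{g-\bar g}$; orthogonality gives $a^2+b^2=\norm{g}^2$. The total is then at most
\[
\lambda\norm{f}\bigl(\sqrt{|F_1||F_2'|}\,a+\sqrt{|F_1|}\,b\bigr)\le\lambda\norm{f}\sqrt{|F_1|(|F_2'|+1)}\,\norm{g},
\]
by Cauchy–Schwarz on the two-vector sum. Since $|F_2'|+1=|F_2|$, this completes the induction.

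\textbf{Main obstacle.} The delicate step is the measure bookkeeping. We must check three things:
\begin{itemize}
\item the conditional law of $t_{F_1}$ given $\sigma$ is exactly the $F_1$-marginal of the link measure on $X_\sigma$;
\item the law of $\sigma$ matches the $F_2'$-marginal, so that the first term is genuinely $\langle f,\mathsf S_{F_1,F_2'}\bar g\rangle$;
\item the identities $\ex{\sigma}{\norm{f_\sigma}^2}=\norm{f}^2$ and $\ex{\sigma}{\norm{g_\sigma}^2}=\norm{g}^2$ hold.
\end{itemize}
All three follow from the fact that link measures are conditionals of the top-face measure $\pi_d$. I would verify this carefully before anything else, since the rest of the argument is routine.
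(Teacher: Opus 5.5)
The paper does not prove this lemma. It is quoted from \cite{GLL22} and used as a black box, for instance in Lemma~\ref{lem:spectral-projected-faces} and in the routing analysis of Appendix~\ref{app:pcp}. So there is no in-paper argument to compare against. Your induction on $|F_1|+|F_2|$ is a correct, self-contained proof of the stated bound.

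The individual steps all check out:
\begin{itemize}
\item The cross terms vanish because $\mathsf S_{F_1,\{j\}}(X_\sigma)$ and its adjoint both fix constants.
\item Both uses of the hypothesis are at strictly smaller total size, precisely because you arranged $|F_2|\ge 2$. One use is global, with $(F_1,F_2')$; the other is inside $X_\sigma$, with $(F_1,\{j\})$.
\item The final two-term Cauchy--Schwarz, with $a^2+b^2=\norm{g}^2$, recovers exactly $\sqrt{|F_1||F_2|}$.
\item The measure identities you single out as the main obstacle all hold. Every projection and link here carries the marginal, respectively conditional, of the single top-face measure.
\end{itemize}

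The one step you leave implicit is the reduction of $\lambda(\mathsf S_{F_1,F_2})$ to the bilinear form on mean-zero pairs. The same identification is what lets you read the base case directly off the definition of a $\lambda$-product. It holds for the following reasons:
\begin{itemize}
\item The spectrum of a bipartite operator consists of $\pm$ the singular values of its biadjacency block, together with zeros.
\item An eigenvector for the second eigenvalue can always be chosen orthogonal to both $(\one,\one)$ and $(\one,-\one)$, even when the graph is disconnected. Its two halves are therefore mean zero, and they have equal norm when the eigenvalue is nonzero.
\end{itemize}
A sentence saying this would close the argument.

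Compared with simply citing \cite{GLL22}, your proof shows the lemma needs only the two-color link bounds and the fact that the $\lambda$-product property passes to links. That is worth knowing, since the paper applies the lemma to links and color-restrictions of projected flags complexes.
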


\subsection{Cones}
\label{sec:cones}
Cones are the main machinery used in most proofs of coboundary expansion. We define (non-abelian) cones.
First, we define the composition of two paths $P_0 = (v_0, v_1,\dots, v_m)$ and $P_1 = (v_m, v_{m+1},\dots, v_n)$ by $P_0 \circ P_1 = (v_0, v_1,\dots, v_m, v_{m+1},\dots, v_n)$.
Note that composition is only defined when the end point of the first path is equal to the starting point of the
second.

Now, fix a simplicial complex $X$ and some $v_0 \in X(0)$.
We define two symmetric relations on loops around $v_0$:
\begin{description}
    \item[(BT)] We say that $P_0 \overset{BT}{\sim} P_1$ if $P_i = Q_0 \circ (u, v, u) \circ Q_1$ and
    $P_{1-i} = Q_0 \circ (u) \circ Q_1$ for $i= 0, 1$ (i.e. going
from $u$ to $v$ and then backtracking is trivial).
    \item[(TR)] We say that $P_0 \overset{TR}{\sim} P_1$ if $P_i = Q_0 \circ (u, v) \circ Q_1$
    and $P_{1-i} = Q_0 \circ (u, w, v) \circ Q_1$ for some triangle $uvw \in X(2)$ and $i= 0, 1$.
\end{description}
We denote by $P\sim_1 P'$ if there is a sequence of loops $(P_0 = P, P_1, \dots, P_m = P')$ and $j \in [m-1]$ such that:
\begin{enumerate}
    \item $P_j \overset{TR}{\sim} P_{j+1}$ and
    \item For every $j' \ne j$, $P_{j'} \overset{BT}{\sim} P_{j'+1}$. 
\end{enumerate}
In other words, We can get from $P$ to $P'$ by a sequence of `equivalences', where exactly one equivalence is by (TR).

Finally, for a path $P = (u_0, u_1, \dots, u_m)$ on the graph underlying $X$,
we denote by $P^{-1}$ the inverse path $(u_m,\dots, u_1, u_0)$.

\begin{definition}[Cone]
A cone is a triple $C = (v_0, \set{P_u}_{u \in X(0)}, \set{T_{uw}}_{uw\in X(1)})$ such that
\begin{enumerate}
    \item $v_0 \in X(0)$,
    \item for every $v_0 u \in X(0)$, we have a walk $P_u$ from $v_0$ to $u$,
    \item for $u= v_0$, we take $P_{v_0}$ to be the loop with no edges from $v_0$,
    \item and for every $uw \in X(1)$, $T_{uw}$ is a sequence of walks $(P_0, P_1, \dots, P_m)$ such that
    \begin{enumerate}
        \item $P_0 = P_u\circ (u, w) \circ P^{-1}_w$,
        \item for every $i= 0, 1, \dots, m-1$, $P_i \sim_1 P_{i+1}$ and
        \item $P_m$ is equivalent to the trivial loop by a sequence of (BT) relations.
    \end{enumerate}
\end{enumerate}
We call $T_{uw}$ a contraction, and we write $|T_{uw}|= m$.
The diameter of a cone is defined as follows.
\[
\mathrm {diam}(C) = \max_{uw\in X(1)} |T_{uw}|.
\]
\end{definition}

\begin{lemma}[\cite{KO21,DD24buildings}]
\label{lem:cone-argument}
Let $X$ be a simplicial complex such that $\mathrm{Aut}(X)$ is transitive on $k$-faces.
Suppose that there exists a cone $C$ with diameter $R$.
Then the 1-coboundary expansion of $X$ at least $\frac{1}{\binom{k+1}{3}\cdot R}$.
\end{lemma}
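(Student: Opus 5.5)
This is the standard cone argument of Kozlov--Meshulam / Kaufman--Oppenheim, and I would reproduce it. Fix a cone $C=(v_0,\{P_u\},\{T_{uw}\})$ of diameter $R$. For $f\in C^1(X,\Gamma)$ and a walk $P=(u_0,\dots,u_m)$, write $f(P)=f(u_0,u_1)f(u_1,u_2)\cdots f(u_{m-1},u_m)$. Define $g(u)=f(P_u)$, the product of $f$ along the cone path from $v_0$ to $u$.

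\textbf{Step 1 (pointwise reduction).} I would show that whenever $f(u,w)\neq \delta g(u,w)$, at least one triangle used in the contraction $T_{uw}$ has $\delta f\neq Id$. Up to the convention for $\delta_0$, the condition $f(u,w)=\delta g(u,w)$ is equivalent to the loop value $f(P_u\circ(u,w)\circ P_w^{-1})$ being the identity. Along $T_{uw}$ each (BT) move leaves $f(P)$ unchanged, by antisymmetry $f(v,u)=f(u,v)^{-1}$. Each (TR) move through a triangle $uvw'$ changes $f(P)$ by conjugating the value $\delta f(u,v,w')$ (or its inverse) into the product. So if all $T_{uw}$-triangles have $\delta f=Id$, the loop value is preserved and equals the value of a BT-trivial loop, namely $Id$. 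Hence $\one[f(u,w)\neq\delta g(u,w)]\le\sum_{t\in T_{uw}}\one[\delta f(t)\neq Id]$, where the sum runs over at most $R$ triangles.

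\textbf{Step 2 (averaging via symmetry).} Summing over edges with the measure $\pi_1$ gives
\[
\dist(f,\delta g)\le \sum_{t\in X(2)}\one[\delta f(t)\ne Id]\cdot \ex{uw\sim\pi_1}{\#\{\text{occurrences of } t \text{ in } T_{uw}\}}.
\]
Bounding this by $\wt(\delta f)$ times the stated constant requires that triangle usage be roughly uniform. To get this, I would average the cone over $\mathrm{Aut}(X)$. For each $\sigma\in\mathrm{Aut}(X)$, the image $\sigma C$ is again a cone of diameter $R$, and it produces a $g_\sigma$ with $\dist(f,\delta g_\sigma)\le$ the corresponding sum. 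Averaging over $\sigma$, some $g_\sigma$ achieves at most the average. Transitivity on $k$-faces, and hence on triangles inside top faces, makes the $\sigma$-averaged usage of any triangle depend only on the measure.

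\textbf{Step 3 (the counting).} Because the measure is induced from top faces and $\mathrm{Aut}(X)$ is transitive on $X(k)$, every $k$-face carries the same weight. Each triangle's weight $\pi_2(t)$ is then proportional to the number of $k$-faces containing it. A $k$-face contains $\binom{k+1}{3}$ triangles, so the uniform-over-$X(k)$ bound transfers to $\pi_2$ losing at most a factor $\binom{k+1}{3}$. Combined with the $R$ triangles per contraction, this gives $\dist(f,B^1)\le \binom{k+1}{3}R\cdot\wt(\delta f)$, which is the claimed bound.

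\textbf{Main obstacle.} The delicate part is Step 3: making the normalization between the uniform-on-orbit averaging and the measures $\pi_1,\pi_2$ precise. An edge's average usage count becomes a ratio of orbit sizes, and I would bound it using transitivity on $k$-faces together with the double counting of triangles per $k$-face. The other step needing care is the non-abelian bookkeeping in Step 1. There I would argue by induction along $T_{uw}$ that the loop value stays a product of conjugates of the $\delta f(t)$, so it is trivial exactly when those $\delta f(t)$ are.
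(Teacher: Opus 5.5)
Your proposal is correct and is the standard Kaufman--Oppenheim/Dikstein--Dinur cone argument. The paper invokes that argument by citation and does not reproduce it. The argument has three parts. First, contract $P_u\circ(u,w)\circ P_w^{-1}$ along $T_{uw}$. Second, average the cone over $\mathrm{Aut}(X)$, using $\mathrm{Aut}$-invariance of $\pi_1$ to swap the two averages. Third, pay the factor $\binom{k+1}{3}$ because every $\mathrm{Aut}(X)$-orbit $\mathcal{O}$ of triangles meets every top face. Hence $\pi_2(\mathcal{O})\geq 1/\binom{k+1}{3}$, and so $\pr{\sigma}{\delta f(\sigma t)\neq Id}\leq \wt(\delta f)/\pi_2(\mathcal{O})\leq\binom{k+1}{3}\wt(\delta f)$ for each fixed triangle $t$. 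This orbit-mass bound is the precise statement your Step 3 needs.

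A few small fixes:
\begin{enumerate}[(i)]
\item With the paper's convention $\delta_0 h(v,u)=h(v)h(u)^{-1}$, take $g(u)=f(P_u)^{-1}$ rather than $f(P_u)$.
\item Transitivity on top faces does not make $\mathrm{Aut}(X)$ transitive on triangles. You only need the orbit-mass bound above.
\item Equal top-face weights come from the implicit $\mathrm{Aut}$-invariance of the measure, not from transitivity.
\item In Step 1 you have, and need, only one direction: trivial $\delta f$ on the contraction's triangles forces a trivial loop value. The claim ``exactly when'' is stronger than what holds.
\end{enumerate}
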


\subsection{Faces Complexes}
\begin{definition}\label{def:faces}
Let $X$ be a $d$-dimensional simplicial complex.
Let $r \leq d$.
We use $\mathrm F^rX$ to denote the simplicial complex whose vertices are $\mathrm F^rX(0) = X(r)$ and whose faces are all $\set{s_0, s_1, ..., s_j}$ such that $s_0 \sqcup s_1\sqcup \dots \sqcup s_j \in X((j+ 1)(r+ 1)-1)$.
We call this the $r$-\emph{faces complex} of $X$.
\end{definition}
Observe that this is a $\paren{\floor{\frac{d+1}{r+1}}-1}$-dimensional complex.

\paragraph{Probability density over a faces complex.}
Let $X$ be a $d$-dimensional simplicial complex, and let $r < d$.
The distribution on the top-level faces of $\mathrm F^r X$ is given by the following.
Let $m= \floor{\frac{d+1}{r+1}}-1$.
\begin{enumerate}
    \item Sample a $d$-face $t= \set{v_0, v_1,\dots , v_d} \in X(d)$.
    \item Sample $s_0, s_1, \dots, s_m \subseteq t$ such that $|s_i|= r+ 1$,
    $s_i\cap s_j= \emptyset$ and output $\set{s_0, s_1, \dots, s_m}$.
\end{enumerate}
A faces complex can be viewed as a subcomplex of the following (non-pure) simplicial complex.
\begin{definition}
Let $X$ be a $d$-dimensional simplicial complex.
We denote by $\mathrm FX$ the simplicial complex whose vertices are $\mathrm FX(0) = X\setminus \set{\emptyset}$ and whose faces are all $\set{s_0, s_1, ..., s_j}$ such that $s_0 \sqcup s_1\sqcup \dots \sqcup s_j \in X$.
\end{definition}
This complex is not pure in general, hence we do not define a distribution on it.

\paragraph{Links of a faces complex.}
The links of a faces complex correspond to a faces complexes of the links of the original complex.
More precisely, we have the following statement.
\begin{observation}
\label{obs:face-link}
Let $s \in \mathrm FX$. Then $\mathrm F X_s = \mathrm F (X_{\cup s})$, where $\cup s = \bigcup_{t\in s} t$.
Furthermore, if $s\in \F^r X$, then we have $\mathrm F^rX_s = \mathrm F^r (X_{\cup s})$.
\end{observation}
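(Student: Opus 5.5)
The statement is a direct unfolding of definitions, so the plan is to show a natural bijection between the faces of $\mathrm F X_s$ and those of $\mathrm F(X_{\cup s})$, and then check that it also respects the level structure of $\F^r$. Write $s=\set{s_0,\dots,s_j}\in \mathrm FX$, so the $s_i$ are pairwise disjoint nonempty faces of $X$ with $\cup s = s_0\sqcup\dots\sqcup s_j\in X$.

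\textbf{The two complexes have the same faces.} First compute the faces of the link. By definition of a link in $\mathrm FX$, a set $\set{t_0,\dots,t_m}$ lies in $(\mathrm FX)_s$ if and only if $\set{s_0,\dots,s_j,t_0,\dots,t_m}\in \mathrm FX$ and the $t_i$ are not among the $s_i$. Since faces of $\mathrm FX$ consist of disjoint sets, this means the following three conditions hold:
\begin{enumerate}[(i)]
    \item the $t_i$ are pairwise disjoint and nonempty;
    \item each $t_i$ is disjoint from $\cup s$;
    \item $(\cup s)\sqcup t_0\sqcup\dots\sqcup t_m\in X$.
\end{enumerate}
Conditions (ii) and (iii) together say exactly that $t_0\sqcup\dots\sqcup t_m\in X_{\cup s}$. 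In particular, each $t_i\in X_{\cup s}\setminus\set{\emptyset}$ by downward closure. With (i), this is precisely the condition that $\set{t_0,\dots,t_m}\in \mathrm F(X_{\cup s})$. The converse direction runs the same equivalences backwards. So the two complexes have identical vertex sets ($X_{\cup s}\setminus\set{\emptyset}$) and identical faces.

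\textbf{The $\F^r$ version.} For $s\in\F^r X$, I would repeat the argument with every $s_i$ and $t_i$ of size $r+1$. Membership in $\F^r X$ requires $\cup s\sqcup\bigsqcup t_i$ to lie at the appropriate level of $X$. Membership in $\F^r(X_{\cup s})$ requires $\bigsqcup t_i$ to lie at the corresponding level of $X_{\cup s}$. Removing $\cup s$, which has size $(j+1)(r+1)$, shifts these dimensions consistently, so the two conditions agree.

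\textbf{Measures.} If the observation is also meant to cover the measures, I would check them by sampling. A top face of $\F^r X$ containing $s$ is obtained by conditioning a top face of $X$ to contain $\cup s$ and then partitioning the remainder uniformly. That is exactly the sampling procedure for $\F^r(X_{\cup s})$.

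I do not expect a real obstacle here. The only point needing care is tracking the dimension bookkeeping in the $\F^r$ case.
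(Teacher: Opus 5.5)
Your proposal is correct and matches the paper's treatment: the paper states this observation without proof, as a direct consequence of the definitions of $\mathrm FX$, $\F^r X$ and links, and your unfolding of those definitions (including the size bookkeeping for $\F^r$) is exactly what that amounts to. Your additional check that the induced top-face measures of $\F^r X_s$ and $\F^r(X_{\cup s})$ coincide goes beyond the written statement, and it is also correct.
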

We introduce the notion of \emph{generalized links} to simplify notation for links of faces complex.
\begin{definition}[Generalized links]
Let $w \in X$.
We write $\mathrm FX_w$ for $\mathrm F(X_w)$.
We also write $\mathrm F^rX_w$ for $\mathrm F^rX\cap \mathrm FX_w$.
\end{definition}
Note that $\mathrm F^rX_w$ is not necessarily a proper link of $\mathrm F^rX$.
However, using this notation we can write the conclusions of~\Cref{obs:face-link} more compactly as $\mathrm F X_s = \mathrm F X_{\cup s}$ and $\mathrm F^rX_s = \mathrm F^r X_{\cup s}$ for a face $s\in \F^r X$.

\paragraph{Colors on a faces complex.}
Let $X$ be a $k$-partite complex.
We denote the set of colors of $\F^rX$ by $\fcolors = \binom{[k]}{r+1}$.
Fix a set $J \subset \Delta_k$, denoted $J= \set{c_1, \dots, c_m}$, such that $c_j \subset [k]$ are pairwise disjoint.
Let $\F^JX=\set{s\in \F X | \col(s) \subseteq J}$ be the sub-complex of $\F X$ whose vertex colors are in $J$, so $\F^JX(0)=\bigcup^m_{j=1} X^{c_j}$.
We call such a complex a $J$-faces complex and also a \emph{colored} faces complex.
We will be particularly interested in the case where $J \in \F^r\Delta$, namely, $J$ consists of pairwise disjoint subsets of cardinality $r+1$.
In this case, $\F^JX$ is $|J|$-partite.
We abuse notation in this section allowing multiple $c_j$’s to be empty sets.
In this case $X^{c_j}$ are copies of ${\emptyset}$, and every empty set is in all top level faces of $\F^JX$.

The measure induced on the top level faces of $\F^J X$ is obtained by sampling $t \in X^{\cup J}$ and
partitioning it into $t= s_1\sqcup s_2\sqcup \dots \sqcup s_m$ such that $s_i \in X^{c_i}$.
Finally, throughout the paper, we use the following notation.
Let $J', J \subseteq \F \Delta$.
We write $J' \leq J$, if $J = \set{c_1, c_2, \dots, c_m}$ and $J'= \set{c'_1,\dots, c'_m}$ where $c'_j \subseteq c_j$.

\paragraph{Swap Coboundary Expansion.} We end this section by defining swap coboundary expansion.
\begin{definition}
A simplicial complex $X$ is said to have \emph{$(\beta, r)$-swap coboundary (cocycle) expansion} if $\F^r X$
has 1-coboundary (cocycle) expansion (with respect to every group) at least $\beta$.
\end{definition}
In general, our goal is to prove swap co-boundary expansion for $\beta \geq 2^{-o(r)}$, which is the pre-condition to a $1\%$ agreement test and resulting PCP construction \cite{BM24,DD24a,BMVY25}.

\section{Expansion of the Projected Flags Complex}

\subsection{Local Spectral Expansion Bound}

In this section, we show \Cref{thm:spectral-projected}, that any projection of the flags complex is a good local-spectral-expander.
\spectralProjected*
Towards proving \Cref{thm:spectral-projected}, we first show \Cref{prop:kpartite-expansion} which relates the spectral expansion of a $k$-partite graph to its bipartite components.
\begin{restatable}{proposition}{kpartExpansion}
\label{prop:kpartite-expansion}
Let $G$ be a $k$-partite graph with the vertex-set $V=\bigsqcup_{i\in [k]}P_i$ with (normalized) edge weights given by $\pi_2:E\to [0,1]$ and stationary distribution given by $\pi_1:V\to [0,1]$.
Suppose $\lambda_2(G^{\set{i,j}}) \leq \lambda_{ij}$ for all $i \neq j \in [k]$.
Let $K$ be the complete graph on $k$ vertices with edge weight $\kappa_2(i,j) = \pi_2(E^{\set{i,j}})$.
Then, the second eigenvalue of $G$ is at most 
\[
\lambda_2(G) \leq \lambda_2(K)+ \max_{i\in [k]}\ex{j\sim \kappa_2[i]}{\lambda_{ij}}.
\]
where $\kappa_2[i]$ denotes the distribution on the neighbors of $i$ induced by $\kappa_2$.
\end{restatable}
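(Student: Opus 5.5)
We bound the Rayleigh quotient of $\mathsf A_G$ on functions orthogonal to constants by splitting $f:V\to\R$ into a part that is constant on each color class and a part orthogonal to constants on each class. Write $f = g + h$, where $g(v) = \ex{u\sim \pi_1|_{P_{\col(v)}}}{f(u)}$ is the average of $f$ over its part and $h = f - g$. The part-averages define a function $\bar g:[k]\to\R$, and the class weights $\pi_1(P_i)$ coincide with the stationary distribution $\kappa_1$ of $K$, since $\pi_1(P_i) = \frac12\sum_{j}\pi_2(E^{\set{i,j}})$. Hence $\langle f,\one\rangle = 0$ gives $\langle \bar g,\one\rangle_{\kappa_1}=0$. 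The goal is to show
\[
\langle f,\mathsf A_G f\rangle \leq \lambda_2(K)\norm{g}^2 + \max_i\ex{j\sim\kappa_2[i]}{\lambda_{ij}}\,\norm{h}^2 ,
\]
which, together with $\norm{f}^2=\norm{g}^2+\norm{h}^2$ (orthogonality of $g,h$ within each part), gives the claim.

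The first step is to decompose the quadratic form over the bipartite pieces: $\langle f,\mathsf A_G f\rangle = \sum_{i<j}\sum_{uv\in E^{\set{i,j}}}\pi_2(uv)\,2 f(u)f(v)$, up to normalization. On each piece $E^{\set{i,j}}$ I expand $f=g+h$, which gives four terms.

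The $g$--$g$ terms sum exactly to $\langle \bar g,\mathsf A_K\bar g\rangle_{\kappa_1}\le\lambda_2(K)\norm{\bar g}^2_{\kappa_1}=\lambda_2(K)\norm{g}^2$.

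The cross terms $g$--$h$ vanish. Within $G^{\set{i,j}}$, the vertices of $P_i$ have marginal distribution $\pi_1|_{P_i}$: because every edge at $u\in P_i$ goes to some $P_j$, conditioning on edges in $E^{\set{i,j}}$ gives a marginal on $P_i$ proportional to $\pi_2$-degree into $P_j$. I need this conditional marginal to equal $\pi_1|_{P_i}$ for each $j$ separately. That is not automatic, and it is the main obstacle. I would verify it in the simplicial setting of the application, where for a partite complex the marginal on $P_i$ of edges of colors $\set{i,j}$ is the same for all $j$, since both come from the top-face distribution. Otherwise I would define $h$ relative to the bipartite marginal. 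I expect the intended proof assumes this compatibility implicitly, so I will state it as the natural partite-complex condition.

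Given compatibility, $h$ restricted to $P_i$ is orthogonal to constants in $G^{\set{i,j}}$, so the cross term $\sum \pi_2(uv) g(u)h(v) = \bar g(i)\cdot \kappa_2(i,j)\,\ex{}{h|_{P_j}}=0$.

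The $h$--$h$ term on $G^{\set{i,j}}$ is bounded by $\lambda_{ij}$ times the restricted norm. Concretely, $2\sum\pi_2(uv)h(u)h(v)\le \lambda_{ij}\,\kappa_2(i,j)\bigl(\norm{h_i}^2_{P_i}+\norm{h_j}^2_{P_j}\bigr)$, using that $h$ is orthogonal to both sides' constants, so it lies in the space where the second eigenvalue bound applies. The relevant eigenvalue here is $\lambda_2$ rather than $|\lambda|$, which is why the one-sided statement suffices.

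Finally, summing over $j$ for fixed $i$ gives $\norm{h_i}^2\sum_j\kappa_2(i,j)\lambda_{ij}$, with $\norm{h_i}^2$ normalized within $P_i$. Since $\sum_j\kappa_2(i,j)=\kappa_1(i)$ up to the factor 2, this equals $\kappa_1(i)\norm{h_i}^2\,\ex{j\sim\kappa_2[i]}{\lambda_{ij}}$. Taking the maximum over $i$ yields the bound.
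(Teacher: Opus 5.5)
Your argument is the paper's argument. You split $f$ into part-averages and per-part mean-zero pieces, bound the averaged piece through $K$, discard the cross terms, and bound the mean-zero piece through the $\lambda_{ij}$. The paper does the last step with Cauchy--Schwarz plus AM--GM; you use the quadratic form of $G^{\{i,j\}}$, which is equivalent.

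You are right that both the vanishing of the cross terms and the identification of $\|h_i\|$ across different $j$ require the marginal of $E^{\{i,j\}}$ on $P_i$ to equal $\pi_1|_{P_i}$ for every $j$. The paper's proof uses this silently. It is a genuine extra hypothesis, not an artifact of the proof, so your fallback of centering with respect to the bipartite marginals cannot succeed. To see this, take $P_1=\{a,b\}$, $P_2=\{c\}$, $P_3=\{e\}$, with weight $1$ on $ac$, $be$, $ce$ and weight $\epsilon$ on $bc$, $ae$. Every $G^{\{i,j\}}$ is a star or a single edge, so one may take all $\lambda_{ij}=0$, and $\lambda_2(K)<0$ with $\lambda_2(K)\to-1/2$. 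Yet as $\epsilon\to 0$ the graph $G$ degenerates to the path $a\,c\,e\,b$, so $\lambda_2(G)\to 1/2$. Imposing compatibility, which holds for links of a partite complex weighted by its top faces, is the correct fix.

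There is one more gap, in your final step. The paper shares it, in its line $\lambda_2(K)\,\mathbb{E}_i[g(i)^2]\le\lambda_2(K)$. Write $\Lambda=\max_i\mathbb{E}_{j\sim\kappa_2[i]}[\lambda_{ij}]$. Your inequality $\langle f,\mathsf A_G f\rangle\le\lambda_2(K)\|g\|^2+\Lambda\|h\|^2$, together with $\|g\|^2+\|h\|^2=1$, gives only $\lambda_2(G)\le\max\{\lambda_2(K),\Lambda\}$. This is at most $\lambda_2(K)+\Lambda$ only when both terms are nonnegative.

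However, $\lambda_2(K)$ is usually negative, e.g.\ $-1/(k-1)$ for uniform weights, which is exactly the situation in \Cref{thm:spectral-projected}. There the stated bound is false. For the complete tripartite graph $K_{n,n,n}$ with $n\ge 2$ (uniform weights, compatible marginals), all $\lambda_{ij}=0$ and $\lambda_2(K)=-1/2$, yet $\lambda_2(G)=0$.

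So what your proof establishes, and what is true, is $\lambda_2(G)\le\max\{\lambda_2(K),\Lambda\}$ under the compatibility assumption. This is all the application needs, since the paper substitutes $0$ for the $K$-term there anyway.
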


The proof is fairly elementary linear algebra, and we include it for completeness in \Cref{app:omitted}. We will also need the following bound on the expansion of the Grassmann graphs.

\begin{fact}[\cite{Del76, GHKLZ22}]
\label{fact:grassmann_expansion}
Fix any $d\geq 3$. For any $i<j\leq d-1$, the second eigenvalue of the underlying graph of $\buildings_d^q[i,j]$ is at most $q^{-(j-i)/2}$.
\end{fact}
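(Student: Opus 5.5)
The plan is to reduce the statement to a singular-value computation for the inclusion operator between two Grassmannians, and then use the representation theory of the $q$-Johnson (Grassmann) scheme due to Delsarte. Write $\mathrm{Gr}(i)$ and $\mathrm{Gr}(j)$ for the $i$- and $j$-dimensional subspaces of $\mathbb{F}_q^d$. The underlying graph of $\buildings^{d}_q[i,j]$ is the bipartite inclusion graph between $\mathrm{Gr}(i)$ and $\mathrm{Gr}(j)$. Under the measure induced by the complex it is biregular with uniform marginals on each side, by transitivity of $\mathrm{GL}_d(\mathbb{F}_q)$.

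Its normalized adjacency operator has the block form $\begin{pmatrix} 0 & \mathsf{M} \\ \mathsf{M}^* & 0\end{pmatrix}$, where $\mathsf{M}:\ell_2(\mathrm{Gr}(j))\to\ell_2(\mathrm{Gr}(i))$ sends $f$ to $\mathsf{M}f(W)=\ex{W'\supseteq W}{f(W')}$. The eigenvalues of this block operator are $\pm$ the singular values of $\mathsf{M}$. It therefore suffices to show that every singular value of $\mathsf{M}$ on the orthogonal complement of the constants is at most $q^{-(j-i)/2}$. Equivalently, the walk $\mathsf{M}\mathsf{M}^*$ on $\mathrm{Gr}(i)$ should have second eigenvalue at most $q^{-(j-i)}$.

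The key structural input is the standard decomposition $\ell_2(\mathrm{Gr}(i))=\bigoplus_{t=0}^{i} V_t^{(i)}$ into $\mathrm{GL}_d$-irreducibles (assuming without loss of generality $i\le d-j$, by duality $W\mapsto W^\perp$). Here $V_t^{(i)}$ consists of the "degree-$t$ harmonic" functions obtained by lifting functions on $\mathrm{Gr}(t)$ and removing lower-degree parts. Since $\mathsf{M}$ is $\mathrm{GL}_d$-equivariant and the decompositions are multiplicity-free, Schur's lemma shows that $\mathsf{M}$ maps $V_t^{(j)}$ to $V_t^{(i)}$ by a scalar multiple of an isometry. Hence $\mathsf{M}\mathsf{M}^*$ acts on $V_t^{(i)}$ as a scalar $\sigma_t^2$. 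I would compute $\sigma_t^2$ by evaluating $\mathsf{M}\mathsf{M}^*$ on an explicit "zonal" test function, for instance the indicator of containing a fixed $t$-space, minus its projection onto lower levels. Using Gaussian-binomial counts of the number of $j$-spaces containing a given configuration, this should give
\[
\sigma_t^2=\prod_{s=0}^{t-1}\frac{[i-s]_q\,[d-j-s]_q}{[j-s]_q\,[d-i-s]_q},\qquad [n]_q=\frac{q^n-1}{q-1}.
\]

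Finally, each factor is at most $1$, so $\sigma_t^2$ is decreasing in $t$ and the second eigenvalue is $\sigma_1^2=\frac{[i]_q[d-j]_q}{[j]_q[d-i]_q}$. The elementary inequality $\frac{q^a-1}{q^b-1}\le q^{a-b}$ for $a\le b$ bounds each of the two ratios by $q^{-(j-i)}$. This gives $\sigma_1\le q^{-(j-i)}\le q^{-(j-i)/2}$, which is even stronger than what is claimed.

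The main obstacle is the middle step: establishing the multiplicity-free decomposition and pinning down the exact scalar $\sigma_t$. For the final bound, however, only $t=1$ is needed, together with the monotonicity fact that higher levels have smaller eigenvalues. If the full Delsarte machinery is too heavy to include, I would instead handle $t=1$ directly. One can compute $\mathsf{M}\mathsf{M}^*$ on the span of the functions $W\mapsto \one[v\in W]-\mathrm{const}$, for $v$ ranging over lines, which is exactly $V_0\oplus V_1$. For the higher levels I would argue that $\mathsf{M}\mathsf{M}^*$ factors through the down-up walks $\mathrm{Gr}(i)\to\mathrm{Gr}(i-1)\to\mathrm{Gr}(i)$, whose level-$t$ eigenvalues are known to decay, and cite \cite{Del76} for that part.
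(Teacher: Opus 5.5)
You are right to reduce to the singular values of $\mathsf M$ and to use $\mathrm{GL}_d$-equivariance with the multiplicity-free level decomposition; the paper gives no proof here and simply cites \cite{Del76, GHKLZ22}. But there is a genuine error in your key formula for $\sigma_t^2$. It is the naive $q$-substitution of Wilson's Johnson-scheme eigenvalues, and it drops a factor $q^{\,j-i}$ at every level.

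You can see this with exactly the test functions you propose. For a line $\ell$, put $f_\ell(W)=\one[\ell\subseteq W]$ on $i$-spaces and $g_\ell(U)=\one[\ell\subseteq U]$ on $j$-spaces. Double counting gives $\mathsf M^*f_\ell=\frac{[i]_q}{[j]_q}g_\ell$. For $\ell\not\subseteq W$, passing to $\mathbb F_q^d/W$ gives $\mathsf Mg_\ell(W)=p:=\frac{[j-i]_q}{[d-i]_q}$. Hence $\mathsf M\mathsf M^*f_\ell=\frac{[i]_q}{[j]_q}\bigl((1-p)f_\ell+p\one\bigr)$. Since $\mathsf M\mathsf M^*$ fixes constants and preserves their orthogonal complement, the level-$1$ eigenvalue is the coefficient of $f_\ell$:
\[
\sigma_1^2=\frac{[i]_q}{[j]_q}\cdot\frac{[d-i]_q-[j-i]_q}{[d-i]_q}=q^{\,j-i}\,\frac{[i]_q\,[d-j]_q}{[j]_q\,[d-i]_q},
\]
because $[d-i]_q-[j-i]_q=q^{\,j-i}[d-j]_q$. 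In the set case $(n-i)-(j-i)=n-j$ carries no extra factor, which is exactly where your analogy breaks.

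Concretely, take $d=3$, $i=1$, $j=2$. The graph is the point--line incidence graph of $\mathrm{PG}(2,q)$, with $NN^{\top}=qI+J$, so $\sigma_1=\sqrt q/(q+1)$ and not your $1/(q+1)$. Your claimed bound $\sigma_1\le q^{-(j-i)}=1/q$ therefore fails for every $q\ge 3$. Getting something ``even stronger than what is claimed'' should have been a warning sign, since the stated bound is tight up to $1-O(1/q)$ factors.

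With the corrected value the argument goes through and gives exactly the stated bound, with no slack. The inequality $\frac{q^a-1}{q^b-1}\le q^{a-b}$ for $a\le b$ yields $\sigma_1^2\le q^{\,j-i}\cdot q^{-(j-i)}\cdot q^{-(j-i)}=q^{-(j-i)}$, i.e.\ $\sigma_1\le q^{-(j-i)/2}$.

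For the higher levels, the correct scalars (Delsarte's $q$-analogue of Wilson's formula) are
\[
\sigma_t^2=\prod_{s=0}^{t-1}q^{\,j-i}\,\frac{[i-s]_q\,[d-j-s]_q}{[j-s]_q\,[d-i-s]_q}.
\]
As a sanity check, for $d=5$, $i=2$, $j=3$, $q=2$ these give $1+30\cdot\frac{18}{49}+124\cdot\frac{4}{49}=\frac{1085}{49}=\operatorname{tr}(\mathsf M\mathsf M^*)=155/7$, whereas your formula gives $\frac{443}{49}$. Each corrected factor is still at most $q^{-(j-i)}<1$, so your monotonicity step survives, but it has to be run on these factors. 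So you need to redo the middle step with the $q^{t(j-i)}$ factors and replace your final conclusion $\sigma_1\le q^{-(j-i)}$ with $\sigma_1\le q^{-(j-i)/2}$.
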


\begin{proof}[Proof of \Cref{thm:spectral-projected}]

The underlying graph of the complex is a $k$-partite graph where $k=|S|$ with vertex-set $V=\bigsqcup_{i\in S}P_i$ for $P_i$ subspaces of dimension $i$ with edges given by subspace inclusion and weight given by the number of chains containing end points of the edge with appropriate normalization.
Moreover, the weights $\pi_2(E^{\set{i,j}})$ are all equal.
Thus, using \Cref{prop:kpartite-expansion} we can bound the second eigenvalue of this graph by

\begin{align*}
0+\max_{i\in S}\ex{j\in S\setminus \set{i}}{\lambda_{ij}}&\leq \max_{i\in [d-1]}\ex{j\in S\setminus \set{i}}{\lambda_{ij}}\\
&\leq \frac{1}{(|S|-1)}\max_{i\in [d-1]}\sum_{j\in S\setminus\set{i}}\lambda_{ij}\\
&\leq \frac{1}{(|S|-1)}\max_{i\in [d-1]}\sum_{j\in [d-1]\setminus\set{i}}\lambda_{ij}
\end{align*}
Using the bounds on random walks on the projections of flags complex stated in \Cref{fact:grassmann_expansion} we get the final bound of
\begin{align*}
\frac{1}{(|S|-1)}\max_{i\in [d-1]}\sum_{j\in [d-1]\setminus\set{i}}\lambda_{ij} &\leq \frac{1}{(|S|-1)}\max_{i\in [d-1]}\left(\sum_{j=1}^{i-1}q^{-(i-j)/2}+\sum_{j=i+1}^{d-1}q^{-(j-i)/2}\right)\\
&\leq \frac{1}{(|S|-1)}\max_{i\in [d-1]} \frac{(1-q^{-(i-1)/2})+(1-q^{-(d-i-1)/2})}{\sqrt{q}-1}\\
&\leq \frac{2}{(|S|-1)(\sqrt{q}- 1)}.
\end{align*}

Now, consider a face $(W_{i_1} \subseteq W_{i_2} \subseteq \ldots \subseteq W_{i_{\ell}})$ of $\buildings^S(\ell-1)$.
Notice that the link of this face is isomorphic to the join of projected complexes of form $\buildings^{i_j-i_{j-1},S_j}$ where $S_j\subseteq [i_j-i_{j-1}]$ with $i_0=0$. 
As before, the underlying graph is some $(k-\ell)$-partite graph $G$ with each $G^{\set{p,q}}$ being either the underlying graph of $\buildings^{i_j-i_{j-1},S_j}$ or a complete bipartite graph.
Again, similarly applying \Cref{prop:kpartite-expansion}, \Cref{fact:grassmann_expansion}, and the fact that the second eigenvalue of a complete bipartite graph is 0 gives us the required bound of $\frac{2}{(|S|-\ell-1)(\sqrt{q}-1)}$.
This observation about the structure of the links, \Cref{fact:grassmann_expansion},
and the fact that the second eigenvalue of a complete bipartite graph is 0 also lets us conclude that the complex is a $\frac{1}{\sqrt q}$-product.
\end{proof}

\subsection{Coboundary Expansion}
In this section we prove \Cref{thm:coboundary-spherical}, that central projections of the Flags complex and their links have good 1-coboundary expansion over any group.
\coboundarySpherical*

Broadly speaking, the proof of \Cref{thm:coboundary-spherical} has two main steps, vanishing cohomology (\Cref{lem:trivial-cohomology}) and co-cycle expansion (\Cref{lem:localcoboundary-to-coboundary} + \Cref{lem:coboundary-link-spherical}). We start with the former, which we prove via the theory of shellability.

\subsubsection{Shellability of the Flags Complex}

In this section, we prove projections of the flags complex have trivial cohomology over any group $\Gamma$.

\begin{restatable}{lemma}{trivialCohomology}
\label{lem:trivial-cohomology}
Fix any $d\geq 3$, any prime power $q$, any group $\Gamma$, and a set $S\subseteq [d-1]$ of size at least $2$.
Let $s\in\buildings^{d,S}_q$ be any face of codimension at least 3.
Then, the 1-cohomology of $\buildings^{d,S}_{q,s}$ with respect to $\Gamma$ is trivial.
\end{restatable}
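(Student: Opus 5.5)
Our plan is to show that each link $\buildings^{d,S}_{q,s}$ is simply connected (as a topological space, i.e.\ its geometric realization has trivial fundamental group). Trivial 1-cohomology with coefficients in an arbitrary group $\Gamma$ then follows by a standard argument. A cocycle $f\in Z^1$ is the same thing as a homomorphism from the edge-path groupoid to $\Gamma$ that kills every (TR) and (BT) move. Fix a base vertex $v_0$ and, for each vertex $u$, choose a path $P_u$ from $v_0$ to $u$; this is possible because the link is connected. Set $g(u)=f(P_u)^{-1}$, where $f(P)$ denotes the product of $f$ along $P$. Simple connectivity says every loop $P_u\circ(u,w)\circ P_w^{-1}$ reduces to the trivial loop by (BT)/(TR) moves. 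Hence $f$ is trivial on these loops, which rearranges to $f=\delta g$ up to the orientation convention. This is exactly the non-abelian cone reasoning of \Cref{sec:cones} without the diameter bound.

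For simple connectivity we use Björner's theory \cite{bjorner1980shellable}. A shellable complex of dimension at least $2$ is homotopy equivalent to a wedge of spheres of its top dimension, so it is simply connected. We proceed in three steps.
\begin{enumerate}[(i)]
\item The subspace lattice of $\Fi_q^d$ (the order complex of its proper part is the full flags complex $\buildings^d_q$) is a modular, hence semimodular, lattice. By \cite{bjorner1980shellable} it is EL-shellable. One explicit EL-labelling labels a cover $U\lessdot U+\langle v\rangle$ by the least index $i$ such that the coordinate filtration $E_i=\langle e_1,\dots,e_i\rangle$ satisfies $U+E_i\supsetneq U+E_{i-1}$.
\item Björner shows that rank-selection preserves shellability: for a shellable graded poset, the order complex restricted to any set $S$ of ranks is shellable. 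This handles $s=\emptyset$, giving that $\buildings^{d,S}_q$ is shellable of dimension $|S|-1$.
\item For a general face $s=(W_{i_1}\subset\cdots\subset W_{i_\ell})$, the link is the join of the rank-selected order complexes of the intervals $[W_{i_{j-1}},W_{i_j}]$ (with $W_{i_0}=0$ and $W_{i_{\ell+1}}=\Fi_q^d$). This is the same decomposition used in the proof of \Cref{thm:spectral-projected}. Each interval is a subspace lattice of $\Fi_q^{i_j-i_{j-1}}$, so each factor is shellable by steps (i) and (ii). Joins of shellable complexes are shellable. Codimension at least $3$ means the link has dimension at least $2$, so it is simply connected.
\end{enumerate}

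The step we expect to be the main obstacle is step (iii): the reduction to simple connectivity must hold for all links simultaneously, including degenerate cases. Some join factors may be empty, contributing the complex $\{\emptyset\}$, and some may be $0$-dimensional, i.e.\ discrete sets of subspaces. We would treat this through homotopy types rather than shellability of each factor. A join $A*B$ with $A$ nonempty and $B$ connected is simply connected. Two discrete factors join to a connected bipartite graph, and adding a third discrete factor gives a simply connected complex. A shellable factor of dimension at least $1$ is a wedge of spheres of dimension at least $1$, hence connected, and joining it with any nonempty complex gives a simply connected result. Since the total dimension is at least $2$, one of these cases always applies. Connectivity of the link's underlying graph, needed to choose the paths $P_u$, also follows from this argument.
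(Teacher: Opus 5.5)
Your proposal is correct and takes essentially the paper's route: shellability of the subspace poset, preserved under rank-selection and joins, then shellability in dimension at least $2$ forcing trivial $1$-cohomology over every $\Gamma$. The differences are that the paper proves the Grassmann shellability and the shellable-implies-vanishing-cohomology step directly instead of citing Bj\"orner's EL-shellability and wedge-of-spheres theorems, and that your case analysis of degenerate join factors is unnecessary, since $0$-dimensional factors are already shellable and $\{\emptyset\}$ is the identity for joins. One slip: your explicit labelling, as written, does not depend on the top of the cover $U\lessdot V$, so it is not an EL-labelling (the standard one is $\lambda(U,V)=\min\{i : V\subseteq U+E_i\}$); this does not affect the argument, because you cite Bj\"orner for EL-shellability anyway.
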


\paragraph{Shellability and Vanishing Cohomology.}

To prove \Cref{lem:trivial-cohomology}, it is enough to show projections of the Flags complex are \emph{shellable}, a condition roughly stating that the complex can be built up one (top-level) face at a time in such a way that each new face is glued to the prior faces along co-dimension 1 facets. It is a standard fact that shellable complexes have vanishing 1-cohomology, since they are homotopy equivalent to a wedge of spheres \cite{bjorner1980shellable}. Here, we give a simple, self-contained proof of vanishing cohomology that does not rely on the latter machinery.

First, we formalize the standard notion of shellability.

\begin{definition}[Topological Shellability]
Suppose that $X$ is a simplicial complex. A total ordering $\sigma_1,\sigma_2,\dots, \sigma_k$ of maximal faces of $X$ is a \emph{topological shelling} of $X$ if 
\[
\dim(\sigma_1)\geq \dim(\sigma_2)\geq \dots \geq \dim(\sigma_k),
\]
and for any $j>1$,
\[
\paren{\bigcup_{i<j}\sigma_i}\cap \sigma_j
\]
is a pure $(\dim(\sigma_j)-1)$-dimensional complex.\footnote{Note the union denotes the simplicial complex given by taking all the $\sigma_i$ and their downward closures.}
A complex $X$ is called \emph{shellable} whenever a topological shelling exists.
\end{definition}
 In other words, a complex is shellable if it may be constructed by sequentially gluing top level faces along a (union of) co-dimension 1 facets. As discussed, it is a standard fact that shellable complexes have vanishing 1-cohomology over every group (see \cite[Proposition B.44]{Lon13}, \cite[Corollary 2.11]{Hat02}, and \cite[Proposition B.42]{Lon13}). We give an elementary proof of this fact by inductively constructing a contraction for any loop along the shelling order, where the inductive step is essentially a simple special case of Van Kampen's Theorem we handle directly.

\begin{theorem}
\label{cor:shellable-trivial}
If $X$ is a shellable complex of dimension at least $2$, its 1-cohomology vanishes with respect to every group $\Gamma$.
\end{theorem}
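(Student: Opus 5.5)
We will show directly that every $1$-cocycle is a coboundary. Fix $f \in Z^1(X,\Gamma)$. For a walk $P=(u_0,\dots,u_m)$ in the underlying graph, set $f(P)=f(u_0,u_1)f(u_1,u_2)\cdots f(u_{m-1},u_m)$.

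\textbf{Invariance under the loop relations.} Anti-symmetry of $f$ makes $f(P)$ invariant under (BT). The cocycle condition $\delta f = Id$ on triangles makes it invariant under (TR), since for $uvw\in X(2)$ we have $f(u,w)f(w,v)=f(u,v)$. Hence $f(P)$ depends only on the equivalence class of $P$ under the relations of \Cref{sec:cones}.

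\textbf{Reduction to trivial loops.} It therefore suffices to prove two things: $X$ is connected, and every closed loop is equivalent to the trivial loop under (BT) and (TR) moves. Granting this, fix a base vertex $v_0$ and a walk $P_u$ from $v_0$ to each $u$, and put $g(u)=f(P_u)^{-1}$. For any edge $uw$, the loop $P_u\circ(u,w)\circ P_w^{-1}$ is trivial, so $f(P_u)f(u,w)f(P_w)^{-1}=1$. This rearranges to $f(u,w)=g(u)g(w)^{-1}=\delta g(u,w)$.

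\textbf{Induction along the shelling.} We prove both properties by induction along the shelling $\sigma_1,\dots,\sigma_k$, with $X_j=\bigcup_{i\le j}\sigma_i$. We treat the case where every facet has dimension at least $2$, which covers the (pure) projected flags complexes and their links to which the result is applied. In a non-pure shelling, a late $1$-dimensional facet glued along both endpoints would create a nontrivial cycle, so this restriction is genuinely needed.

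The base case $X_1=\sigma_1$ is a full simplex. Any loop $(v_0=u_0,u_1,\dots,u_m=v_0)$ contracts by coning from $v_0$: repeatedly apply (TR) with the triangle $v_0u_iu_{i+1}$ (when nondegenerate) to replace $(u_i,u_{i+1})$ by $(u_i,v_0,u_{i+1})$, then cancel the backtracks.

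\textbf{Inductive step.} Let $I=X_{j-1}\cap\sigma_j$, which by shellability is a pure $(\dim\sigma_j-1)$-dimensional union of facets of $\sigma_j$.

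\emph{Connectivity of $I$.} Since $\dim\sigma_j\ge 2$, two distinct facets of $\sigma_j$ share $\dim\sigma_j-1\ge 1$ vertices. So $I$ is nonempty and connected, and consequently $X_j$ is connected.

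\emph{Rerouting a loop.} Take a loop in $X_j$ based at $v_0\in X_{j-1}$; we may choose $v_0$ in $X_1$ once and for all. Split it into maximal segments lying in $X_{j-1}$ and excursions $(a,\dots,b)$ that use edges of $\sigma_j$ not in $X_{j-1}$. Each excursion starts and ends at vertices of $I$: vertices of $\sigma_j$ outside $I$ are new vertices, and the loop must re-enter $X_{j-1}$ through $\sigma_j$'s vertices in $I$. An edge of $\sigma_j$ joining two vertices of $I$ but not lying in $X_{j-1}$ is treated as an excursion of length one.

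For each excursion, choose a path $Q_{ab}$ inside $I$, using connectivity of $I$. The loop formed by the excursion followed by $Q_{ab}^{-1}$ lies entirely in the simplex $\sigma_j$. By the base-case coning argument it is contractible using only triangles of $\sigma_j\subseteq X$. So we may replace each excursion by $Q_{ab}$. More precisely, insert $Q_{ab}^{-1}\circ Q_{ab}$ by (BT), then contract the first part.

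The resulting loop lies in $X_{j-1}$, and the induction hypothesis finishes the step.

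\textbf{Expected difficulty.} The main point needing care is exactly this rerouting step: verifying that excursion endpoints lie in $I$ and that $I$ is connected. This is where both the shelling condition (pure codimension-one intersection) and $\dim\sigma_j\ge 2$ are used. The remaining bookkeeping of (BT)/(TR) moves is routine.
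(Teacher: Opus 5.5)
Your proposal is correct and follows essentially the same route as the paper: induction along the shelling order, rerouting each part of the loop that leaves $X_{j-1}$ through a path in the connected intersection $X_{j-1}\cap\sigma_j$, and contracting the resulting loop inside the simplex $\sigma_j$. You are somewhat more careful than the paper in three places: you spell out the reduction from contractible loops to $Z^1=B^1$ (which the paper takes for granted); you treat new edges between old vertices as length-one excursions (the paper's argument only tracks offending vertices); and your restriction to facets of dimension at least $2$ is genuinely necessary, as your example shows, and is assumed implicitly by the paper's connectivity argument for $X_{j-1}\cap\sigma_j$.
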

\begin{proof}
    It is enough to prove any finite loop in the $1$-skeleton of $X$ can be contracted via a series of triangle and backtracking relations. We show this inductively. Given a shelling order $\sigma_1 \ldots \sigma_k$, define
    \[
    X_j \coloneqq \bigcup_{i \leq j} \sigma_j
    \]
    Observe $X_1=\sigma_1$, and $X_k=X$. We will prove by induction that every loop in $X_i$ can be contracted via a series of triangle and backtracking relations. The base case is immediate from the fact that $X_1$ is a simplex (complete complex), for which it is a standard exercise to construct such a contraction.

    Assume the inductive hypothesis holds for $X_{j-1}$, and let $\gamma$ be a loop in $X_j$. Assume further that the loop $\gamma$ does not lie entirely inside $\sigma_j$, else a contraction exists within the full simplex $\sigma_j$ itself. We will prove in this case it is always possible to contract the loop $\gamma$ to a loop entirely contained within $X_{j-1}$, at which point we may apply the inductive hypothesis.
    
    Assume $\gamma$ is not already entirely inside $X_{j-1}$ (else we are done). Let $v \in \gamma$ be an offending vertex in $\sigma_j \setminus X_{j-1}$,
    and let $(v_i,\ldots,v_j)$ be the largest arc of $\gamma$ containing $v$ such that the entire arc lies inside $\sigma_j$. We will rely on the following two useful facts:
    \begin{enumerate}
        \item $v_i$ and $v_j$ are in $X_{j-1} \cap \sigma_j$, and
        \item The $1$-skeleton of $X_{j-1} \cap \sigma_j$ is connected.
    \end{enumerate}
    Let's complete the proof assuming these facts. Since $X_{j-1} \cap \sigma_j$ is connected, we may find a path between $v_i$ and $v_j$ entirely inside $X_{j-1} \cap \sigma_j$. Concatenating this path with the arc $(v_i,v_j)$ gives a loop entirely inside $\sigma_j$, so in particular, the arc can be contracted to this path by triangle and backtracking relations. Repeating this procedure until there are no further loop elements outside $X_{j-1}$ completes the proof.\footnote{Note the process indeed terminates by an elementary potential argument measuring the number of loop vertices in $\sigma_j \setminus X_{j-1}$ since 1) the loop is finite and 2) in every round we strictly decrease the number of elements in $\sigma_j \setminus X_{j-1}$.}

    It is left to prove the two facts.
    The latter is immediate from shellability, which promises $X_{j-1} \cap \sigma_j$ is a pure $(d-1)$-dimensional complex on at most $(d+1)$ vertices.
    Such complexes are automatically connected (indeed have diameter at most 2), since for any vertices $w,w'$ in the complex,
    either $(w,w')$ are already in a shared face, or the top-level faces $\sigma_w$ and $\sigma_{w'}$ containing them respectively share a joint vertex $v$,
    so the path $(w,v,w')$ exists on the 1-skeleton as desired.
    
    Toward the former, observe that since we may assume $(v_i,\ldots,v_j)$ is not the entire loop (else $\gamma$ is fully contained in $\sigma_j$), the arc has \emph{boundary point(s)} $v_{i-1}$ and $v_{i+1}$ which are in $X_{j-1} \setminus \sigma_j$. Further, since $(v_{i-1},v_i)$ and $(v_{j},v_{j+1})$ are edges in $X_j$, observe by definition they must come from some face $\sigma_i$ for $i \leq j$ in the shelling order. Since $v_{i-1}$ and $v_{j+1}$ are not in $\sigma_j$, the edges cannot come from $\sigma_j$, so in particular must come from a prior $\sigma_i$ included in $X_{j-1}$, so $v_i$ and $v_j$ are also in $X_{j-1}$ as desired.
\end{proof}

\paragraph{Posets and Shellability of Projected Flags}

It is left to prove that projections of the flags complex are shellable. To do so, we will rely on the fact that the flags complex arises from taking maximal chains of a graded poset, i.e., it is the so-called ``order complex'' of the Grassmann poset. Such complexes have a simple combinatorial characterization of shellability. Moreover, if the poset is ranked, the notion is closed under projecting to a subset of ranks, exactly our projecting operation.

We need a few background definitions concerning posets.
A poset $P$ is a set $P$ endowed with a relation $\preceq_P$ that is (i) reflexive, (ii) anti-symmetric, and (iii) transitive.
Whenever it is clear we drop the subscript $P$ and use $\preceq$.
Also, if $a\ne b$ and $a\preceq b$, we write $a \prec b$.

\paragraph{Graded Poset.} A graded poset additionally has a rank function $\rho: P \to \mathbb N$ that has the following properties.
\begin{enumerate}[(i)]
\item If $x\prec y$, then $\rho(x)<\rho(y)$.
\item If $x\prec y$ and there is no element $z$ such that $x\prec z \prec y$, then $\rho(y) = \rho(x) + 1$.
\end{enumerate}
We stress that such a rank function may not exist for every poset. We assume all posets we work with this in this section are graded.

\paragraph{Joins.}
Given two posets $P_1$ and $P_2$, the \emph{join} $P_1*P_2$ is the poset on the set $P_1 \sqcup P_2$ with $\leq_{P_1*P_2}$ being such that if $a\leq_{P_1*P_2}b$, then either (i) $a\preceq_{P_1}b$, or (ii) $a\preceq_{P_2} b$, or (iii)$a\in P_1$ and $b\in P_2$.
Extend this definition inductively to define joins of multiple posets.

\begin{definition}[Order Complex]
Let $P$ be a poset.
The \emph{order complex} $X_P$ of the poset $P$ is a simplicial complex consisting of chains of the poset.
In other words, whenever $a_1\preceq_P a_2 \preceq_P \dots \preceq_P a_k$ we have $\set{a_1, a_2, \dots, a_k}\in X_P$.
\end{definition}

\begin{definition}[Grassmann Poset]
Given $d \in \mathbb {N}$ and prime power $q$, the \emph{Grassmann poset} $\grassmann^d_q$ is the poset whose elements are the subspaces of $\mathbb F_q^d$ and $V_1\preceq_{\grassmann_q^d} V_2$ whenever $V_1\subseteq V_2$.
\end{definition}

Thus the flags complex is exactly the order complex of the Grassmann poset.
We now move to defining shellability of posets.

\begin{definition}[Shellability of Posets]
Suppose $(P,\preceq)$ is a poset. Moreover, assume that $P$ is \emph{pure}, i.e., all its maximal chains are of the same length. If its maximal chains can be ordered as $m_1,m_2,\dots,m_t$ so that
for every $1\leq i<j\leq t$, there exists a $1\leq k<j$ and $x\in m_j$ such that
\[
    m_i\cap m_j\subseteq m_k\cap m_j = m_j\setminus \set{x},
\]
then we say $P$ is a \emph{(pure) shellable poset}.
Moreover, such an ordering is called a \emph{shelling order}.
\end{definition}

The following two lemmas, which state respectively that 1) projections of shellable posets are shellable, and 2) the order complex of a shellable poset is topologically shellable, are simple exercises and can be found in \cite{Lon13}.
\begin{lemma}[{\cite[Theorem C.9]{Lon13}}]
Suppose $(P,\preceq, \rho)$ is a pure ranked poset and $r$ is the rank of the maximal elements.
If $P$ is shellable, then for any $S\subseteq [r]$, $P_S=\set{x\in P| \rho(x)\in S}$ is a shellable poset.
\end{lemma}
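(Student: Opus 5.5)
The plan is to build a shelling of $P_S$ directly from a given shelling of $P$. The natural map to use is the restriction $\pi: m \mapsto m \cap P_S$. Since $P$ is pure and ranked with top rank $r$, every maximal chain of $P$ has exactly one element of each rank. So $\pi(m)$ is a chain with exactly one element of each rank in $S$.

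First I will check that every maximal chain of $P_S$ arises this way. Take a chain $c$ in $P_S$ with one element per rank in $S$. Consecutive elements $x \prec y$ of $c$ can be joined by a saturated chain, because the poset is graded, and we can also extend down to the bottom and up to the top. This gives a maximal chain of $P$ whose restriction is $c$. In particular $P_S$ is pure.

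Next I define the ordering. Let $m_1,\dots,m_t$ be the shelling order of $P$. For each maximal chain $c$ of $P_S$, let $f(c)$ be the smallest index $i$ with $\pi(m_i)=c$. Order the chains of $P_S$ by increasing $f(c)$.

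Now fix $c' = \pi(m_j)$ with $j=f(c')$, and let $c=\pi(m_i)$ come before it, with $i=f(c)<j$. I must find an earlier chain $c''$ and an element $x\in c'$ with $c\cap c' \subseteq c''\cap c' = c'\setminus\{x\}$. The approach is to iterate the shelling property of $P$ inside $m_j$. The shelling gives $k_1<j$ and $x_1\in m_j$ with $m_i\cap m_j\subseteq m_{k_1}\cap m_j = m_j\setminus\{x_1\}$. If $x_1 \in P_S$, then $\pi(m_{k_1}) = c'\setminus\{x_1\}\cup\{\text{one new element}\}$. This chain differs from $c'$, and its index $f\le k_1<j$, so it comes earlier. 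It also contains $c\cap c'$, so $x=x_1$ works.

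The main obstacle is the case $x_1\notin P_S$. Then $\pi(m_{k_1}) = c'$, which contradicts minimality of $j=f(c')$, since $k_1<j$. Hence this case cannot occur.

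Since the lemma is cited from \cite[Theorem C.9]{Lon13}, I expect this argument to be the intended one. I would double-check the edge case where $c \cap c'$ is large, but the inclusion $c\cap c'\subseteq m_i\cap m_j\cap P_S$ handles it uniformly.
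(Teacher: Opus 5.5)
Your argument is correct. It is the standard first-occurrence proof of rank selection (due to Bj\"orner) that underlies \cite[Theorem C.9]{Lon13}, which the paper cites without proof. The key step is your case split on whether $x_1\in P_S$: the case $x_1\notin P_S$ is excluded by minimality of $j=f(c')$, because then $\pi(m_{k_1})\supseteq c'$ and both have exactly $|S|$ elements.

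One small fix to the purity step. As written, you only treat chains that already have one element per rank in $S$. Instead, start from an arbitrary maximal chain $c$ of $P_S$ and extend it to a maximal chain $m$ of $P$. Then $\pi(m)\supseteq c$ is a chain of $P_S$ with one element of each rank in $S$, so $c=\pi(m)$ by maximality. This gives both purity of $P_S$ and surjectivity of $\pi$.
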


\begin{lemma}[{\cite[Proposition C.6]{Lon13}}]
\label{lem:pshellability-implies-shellability}
Let $P$ be a shellable poset. Then, the order complex $X_P$ is pure and shellable.
\end{lemma}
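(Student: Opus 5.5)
Since $X_P$ is the order complex, its maximal faces are exactly the maximal chains of $P$. The plan is to take the poset shelling order $m_1,\dots,m_t$ and check directly that, read as simplices of $X_P$, it satisfies the definition of a topological shelling.

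\emph{Purity.} A set is a face of $X_P$ iff it is a chain, and every chain extends to a maximal chain. So the maximal faces of $X_P$ are exactly the maximal chains of $P$. Since $P$ is pure, these all have the same cardinality, say $n$. Hence $X_P$ is pure of dimension $n-1$, and the condition $\dim(\sigma_1)\geq\dots\geq\dim(\sigma_t)$ in the topological shelling holds trivially.

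\emph{Shelling condition.} Fix $j>1$. Let $Y_j=\bigcup_{i<j}\overline{m_i}$, where $\overline{m}$ denotes the downward closure of the simplex on $m$.
\begin{enumerate}
\item A set $\tau$ lies in $Y_j\cap\overline{m_j}$ iff $\tau\subseteq m_i$ for some $i<j$ and $\tau\subseteq m_j$, i.e.\ iff $\tau\subseteq m_i\cap m_j$. Hence
\[
Y_j\cap \overline{m_j}=\bigcup_{i<j}\overline{m_i\cap m_j}.
\]
\item For each $i<j$, the poset shelling property gives some $k<j$ and $x\in m_j$ with $m_i\cap m_j\subseteq m_k\cap m_j=m_j\setminus\{x\}$. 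So each term $\overline{m_i\cap m_j}$ lies inside $\overline{m_j\setminus\{x\}}$.
\item That larger set is itself one of the terms of the union, namely the term for the index $k<j$. Therefore
\[
Y_j\cap\overline{m_j}=\bigcup_{x\in A_j}\overline{m_j\setminus\{x\}},
\]
where $A_j=\{x\in m_j : m_j\setminus\{x\}=m_k\cap m_j\text{ for some }k<j\}$.
\item $A_j$ is nonempty: apply the shelling property with $i=1<j$.
\end{enumerate}
So $Y_j\cap\overline{m_j}$ is a nonempty union of codimension-1 facets of the simplex $m_j$. It is therefore a pure $(n-2)=(\dim m_j-1)$-dimensional complex, which is exactly the topological shelling condition.

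The only delicate point is step 1: the union of simplices must be read as a simplicial complex (downward closures), so that its intersection with $\overline{m_j}$ is generated by the sets $m_i\cap m_j$. Steps 2 and 3 then use that the dominating facet $m_j\setminus\{x\}$ is itself realized by an earlier chain $m_k$, $k<j$. Once these are written out, the argument is purely set-theoretic and routine.
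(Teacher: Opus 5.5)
Your proof is correct. The paper gives no argument of its own for this lemma; it cites \cite[Proposition C.6]{Lon13} as a simple exercise, and your direct unwinding of the definitions is the standard verification that citation stands for. The key observations are all in place: $\bigl(\bigcup_{i<j}\overline{m_i}\bigr)\cap\overline{m_j}$ is generated by the sets $m_i\cap m_j$, each such set is dominated by a facet $m_j\setminus\{x\}$ that is itself realized as $m_k\cap m_j$ for some $k<j$, and nonemptiness comes from taking $i=1$.
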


Thus it suffices for us to prove the Grassmann poset is shellable. This fact is fairly standard, and can be considered a special case of \cite[Theorem 3.7]{bjorner1980shellable}. We give a self-contained direct proof here adapted from the proof of shellability of the Boolean poset in \cite[Proposition C.8]{Lon13}.
\begin{lemma}
\label{lem:grassmann-pshellability}
For $d$ any positive integer and $q$ a prime power, the Grassmann poset $\mathrm {GP}^d_q$ is shellable.
\end{lemma}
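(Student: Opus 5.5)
We will exhibit an explicit shelling order on the maximal chains of $\grassmann^d_q$, following the lexicographic strategy used for the Boolean lattice in \cite[Proposition C.8]{Lon13}. Fix a total order on the vectors of $\Fi_q^d$, e.g., via an ordered basis $e_1,\dots,e_d$ and lexicographic order on coordinate vectors. A maximal chain $0=W_0\subset W_1\subset\dots\subset W_d=\Fi_q^d$ (we include the trivial endpoints when comparing, though they play no role) is encoded by the sequence $(w_1,\dots,w_d)$, where $w_i$ is the smallest vector of $W_i\setminus W_{i-1}$ in the fixed order. We then order maximal chains lexicographically by this code. An equivalent and more algebraic choice is to use Schubert cells: each flag has a relative position (a permutation $\pi\in\symmetric_d$) with respect to the standard flag $E_i=\lspan{e_1,\dots,e_i}$. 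In that version we order flags first by a linear extension of the Bruhat order (e.g., by length of $\pi$), and break ties arbitrarily within a cell. I would commit to the Schubert-cell version, since it is the one whose exchange property is cleanest.

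The key step is verifying the shelling condition. Given chains $m_i$ before $m_j$, we must find some $m_k$ earlier than $m_j$ with $m_i\cap m_j\subseteq m_k\cap m_j=m_j\setminus\set{x}$. Chains differing from $m_j$ in exactly one element $W_t$ correspond to replacing $W_t$ by another $t$-dimensional $W'_t$ with $W_{t-1}\subset W'_t\subset W_{t+1}$. Such $W'_t$ range over the $q+1$ lines of the 2-dimensional quotient $W_{t+1}/W_{t-1}$. In Schubert terms, the relative position $\pi'$ of the new flag is either $\pi$ or $\pi\cdot(t\ t{+}1)$. Exactly one choice of $W'_t$ yields the shorter permutation when $t$ is a descent of $\pi$. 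Call $t$ a \emph{descent} of $m_j$ when such a strictly earlier neighbour exists.

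With this in place, the plan is the standard two-step argument:
\begin{enumerate}[(a)]
\item Let $t$ be an index where $m_j$ has an element not in $m_i$, chosen so that $t$ is a descent of $m_j$. Then replacing $W_t$ yields an earlier neighbour $m_k$ with $m_k\cap m_j=m_j\setminus\set{W_t}\supseteq m_i\cap m_j$.
\item For this, show that if $m_j$ has no descents outside the set of ranks where it agrees with $m_i$, then $m_j$ is the earliest flag among those containing $m_i\cap m_j$. This contradicts $m_i\prec m_j$.
\end{enumerate}
Claim (b) reduces to the flags complex of each interval. The flags containing a fixed partial chain form a join of flags complexes of the quotients $W_{a_{l+1}}/W_{a_l}$. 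Within each, the unique flag with no descents is the minimal one, namely the one in the smallest Schubert cell (the big-cell/identity position relative to the induced standard flag).

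The main obstacle is step (b): showing that "no descents in the free ranks" forces minimality within the join. For this I would induct on the length of the interval. Given a flag with no descent, push its first free element toward the induced standard flag by one-step exchanges, each of which strictly decreases the order. Applying this to a non-minimal flag produces a descent. The delicate points are making the tie-breaking within a Schubert cell compatible with this argument and handling the quotient bookkeeping. If this becomes messy, the fallback is to cite \cite[Theorem 3.7]{bjorner1980shellable}: the Grassmann lattice is modular, hence semimodular, hence admits an EL-labeling and is shellable.
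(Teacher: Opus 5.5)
\paragraph{The gap is in step (b).}
Your committed route, ordering flags by the length (or a linear extension of the Bruhat order) of their relative position $\pi=\delta(E,F)$ to a fixed flag $E$, can be made into a shelling. However, the proposal does not prove step (b), the step that carries all the weight, and you yourself flag it as the main obstacle.

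What (b) needs is the gate property of the residue $R$ of flags containing the partial chain $m_i\cap m_j$: there is a unique $G\in R$ with $\ell(\delta(E,F'))=\ell(\delta(E,G))+\ell(\delta(G,F'))$ for all $F'\in R$. This gives both halves of (b) at once:
\begin{enumerate}[(i)]
\item Every flag of $R$ other than $G$ lies in a strictly longer cell, so tie-breaking inside a cell never matters. This settles your stated worry, but only once the gate property is proved.
\item A flag $F\in R$ with no descent at a free rank must equal $G$. Otherwise, the last step of a minimal gallery from $G$ to $F$ stays in $R$ and exhibits such a descent.
\end{enumerate}
Your sketch of ``pushing the first free element toward the induced standard flag'' shows neither that the relevant exchange happens at a free rank nor that it strictly lowers the order. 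The panel fact underlying (a) is also only asserted: one replacement of $W_t$ gives $\pi\cdot(t\ t{+}1)$ and the others give $\pi$. Separately, the minimal flag of $R$ lies in the smallest cell, not the big cell. As written, (b) is open, and only your fallback citation closes it.

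\paragraph{A cheaper repair.}
Use the labelling you wrote first and then discarded. That is exactly the paper's proof, and its exchange step is fully explicit.
\begin{enumerate}[(i)]
\item Label $V_{a-1}\subset V_a$ by $\lambda(V_a)=\min(V_a\setminus V_{a-1})$. Since $V_a=V_{a-1}+\lspan{\lambda(V_a)}$, labels determine chains.
\item If $\lambda(V_t)>\lambda(V_{t+1})=\beta$, replace $V_t$ by $V_{t-1}+\lspan{\beta}$. The new chain agrees with $m_j$ below rank $t$ and has $t$-th label at most $\beta$, so it is lexicographically earlier.
\item Such a descent exists at a rank where $m_j$ leaves $m_i$, by an elementary fact. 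In any interval $[M,N]$, the greedy chain (repeatedly adjoin the smallest vector of $N$ outside the current subspace) is the unique chain with strictly increasing labels, and it is lexicographically first.
\item Let $m_i,m_j$ agree exactly through rank $r$ and next agree at rank $s$. Their labels first differ at $r+1$, with $m_i$ smaller. So $m_j$'s segment from $V_r$ to $V_s$ is not lexicographically first and has a descent at some $t$ with $r<t<s$, where $V_t\notin m_i$.
\end{enumerate}
This labelling is precisely an EL-labelling, so your Bj\"orner fallback and the paper's argument are the same idea. The Schubert-cell order would additionally give a Bruhat-compatible shelling, but only at the cost of proving the gate property above.
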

\begin{proof}
The idea is to label each maximal chain by an ordered basis $(v_1, \dots, v_d)$ of $\mathbb{F}_q^d$ with $W_i = \operatorname{span}\{v_1, \dots, v_i\}$ for all $i$, taking each $v_i$ to be the smallest vector in $W_i \setminus W_{i-1}$ under a fixed total order on $\mathbb{F}_q^d$. We show lexicographically ordering chains by these labels gives a shelling. We formalize the argument below.

\paragraph{Labelling maximal chains.}
Fix an arbitrary total order on $\mathbb{F}_q^d$. To each maximal chain
\[
  m\colon\quad 0 = W_0 \subseteq W_1 \subseteq \cdots \subseteq W_d = \mathbb{F}_q^d,
\]
associate the label-vector $\lambda(m) = (v_1, \dots, v_d)$, where $v_i$ is the
smallest vector in $W_i \setminus W_{i-1}$ under our ordering. Note by definition we have
$W_i = \operatorname{span}\{v_1, \dots, v_i\}$.

We also extend the labeling to maximal chains lying \emph{between} two subspaces $M \subseteq N$.\footnote{A chain from $A$
to $B$ is \emph{maximal between them} if no element $A \preceq C \preceq B$ can
be added.} In particular, given
$M \subseteq N$ and a maximal chain
$c\colon M = W_0 \subseteq \cdots \subseteq W_k = N$, let $\lambda(c) = (v_1, \dots, v_k)$ where $v_i$ is the smallest
vector in $W_i \setminus W_{i-1}$, so that
$W_i = W_0 \oplus \operatorname{span}\{v_1, \dots, v_i\}$.

\paragraph{The proposed shelling order.}
Order all maximal chains of $\mathrm{GP}^d_q$ as $m_1, m_2, \dots$ in
lexicographic order of their labels:
\[
  \lambda(m_1) < \lambda(m_2) < \lambda(m_3) < \cdots.
\]
We claim this is a shelling.  Fix $i < j$; we must produce some $k < j$
together with a vertex $V_t \in m_j$ such that
\[
  m_j \cap m_k \;=\; m_j \setminus \{V_t\}
  \quad\text{and}\quad
  m_j \cap m_i \;\subseteq\; m_j \cap m_k.
\]

\paragraph{Setup.}
Write the two chains as
\begin{align*}
  m_i &\colon\quad 0 = U_0 \subseteq U_1 \subseteq \cdots \subseteq U_d = \mathbb{F}_q^d,\\
  m_j &\colon\quad 0 = V_0 \subseteq V_1 \subseteq \cdots \subseteq V_d = \mathbb{F}_q^d,
\end{align*}
and let $r\geq 0$ be the largest index such that $V_i=U_i$ for all $i\leq r$, and $s>r$ be the smallest index such that we again have $V_s=U_s$.
Abusing notation, we
write $\lambda(V_a)$ for the $a$-th entry of $\lambda(m_j)$.

\paragraph{Reduction to finding a descent.}
We claim it suffices to produce an index $r < t < s$ at which the labels of
$m_j$ \emph{descend}, i.e.,\ $\lambda(V_t) > \lambda(V_{t+1})$.  Suppose we have
such a $t$, and write $\beta = \lambda(V_{t+1})$.  Set
\[
  V \;=\; V_{t-1} + \operatorname{span}\{\beta\}.
\]
Since $\beta \in V_{t+1} \setminus V_{t-1}$, $V$ is a subspace strictly between
$V_{t-1}$ and $V_{t+1}$ and distinct from $V_t$, so replacing $V_t$ with $V$
in $m_j$ produces a maximal chain
\[
  m_k\colon\quad V_0 \subseteq \cdots \subseteq V_{t-1} \subseteq V \subseteq V_{t+1} \subseteq \cdots \subseteq V_d.
\]
We now argue that
\begin{enumerate}
    \item $m_k$ comes before $m_j$ in the order, and
    \item $m_j \cap m_i \subseteq m_j \setminus \{V_t\} = m_j \cap m_k$.
\end{enumerate}
We start with the former. Because $\beta \in V \setminus V_{t-1}$, the $t$-th entry of $\lambda(m_k)$ is at most $\beta$. On the other hand, $\beta = \lambda(V_{t+1}) < \lambda(V_t)$ since $t$ is our descent. Since $\lambda(m_k)$ and $\lambda(m_j)$ agree
in positions $1,\dots,t-1$, we therefore have $\lambda(m_k) < \lambda(m_j)$ lexicographically as desired.

Towards the latter, first observe that by construction $m_j \cap m_k = m_j \setminus \{V_t\}$.
As $r < t < s$, the chains $m_i$ and $m_j$ disagree on $V_t$.
Thus, we have $V_t \notin m_i$ and
$m_j \cap m_i \subseteq m_j \setminus \{V_t\} = m_j \cap m_k$ as desired.

\paragraph{Existence of a descent.}
It remains to produce such a $t$. This is a consequence of the following fact.

\begin{claim}\label{cl:unique-min}
Let $M\subseteq N$ be two subspaces in this poset.
Then, there is a unique maximal chain $c$ starting at $M$ and ending at $N$ with increasing label-vector $\lambda(c)$. Moreover, $\lambda(c)$ is lexicogaphically minimal among labelings of maximal chains between $M$ and $N$.
\end{claim} 

Namely, consider the sub-chains of $m_i$ and $m_j$ between the matching endpoints $V_r = U_r$ and
$V_s = U_s$.  Because $\lambda(m_i) < \lambda(m_j)$ and the two chains first
disagree at step $r+1$, the labels of the $m_i$-sub-chain are
lexicographically smaller than those of the $m_j$-sub-chain. In particular, the $m_j$-sub-chain is \emph{not} the
lexicographical minimum, and so its label-vector is not strictly increasing and there must exist
a descent. It is left to prove the claim.

\begin{proof}[Proof of \Cref{cl:unique-min}]
We start with uniqueness. First note that a maximal chain $M=W_0\subseteq W_1 \subseteq \dots \subseteq W_k=N$ starting at $M$ and ending at $N$ with increasing label-vector can be constructed by taking $W_0=M$ and then at each step $i$ adding the smallest vector $w_i\in N\setminus W_{i-1}$ to $W_{i-1}$ to get $W_i=W_{i-1}\oplus \lspan{w_i}$.
This implies $(w_1<w_2<\dots<w_k)$ is the label-vector for the constructed chain.

Towards a contradiction, assume that there are two (distinct) maximal chains $c,c'$ starting at $M$ and ending at $N$ both with increasing label vector.
Suppose the chains $c,c'$ are given by
\begin{align*}
M=W_0\subseteq W_1 \subseteq \dots \subseteq W_k=N,\\
M=W'_0\subseteq W'_1 \subseteq \dots \subseteq W'_k=N,
\end{align*}
respectively, and the label-vectors are given by $(w_1,w_2,\dots, w_k)$ and $(w'_1,w'_2,\dots, w'_k)$, respectively.
Suppose $i$ is the first index such that $w_i\ne w'_i$ and without loss of generality assume that $w_i<w'_i$.
We must have $w_i\not \in W'_i$; otherwise, $W_i=W'_i$ which implies $w_i$ is the correct vector to associate with $W'_i\setminus W'_{i-1}$.
This also means that the $w_i$ must be contained in some $W'_j$ with $k\geq j>i$.
So, the corresponding label $w'_j\leq w_i$.
On the other hand, since $(w'_1,w'_2,\dots, w'_k)$ is increasing we must have $w'_j>w'_i > w_i$, which gives us a contradiction.

To see $\lambda(c)$ is minimal, consider another chain $\tilde c$ given by
\[
M=S_0\subseteq S_1 \subseteq \dots \subseteq S_k=N
\]
with label-vector $(s_1,\dots, s_k)$.
If $s_1>w_1$, then we are done.
Otherwise, let $i$ be the smallest index such that $s_i \ne w_i$ but $s_{i-1}=w_{i-1}$.
Then, as $w_i$ is the smallest vector in $N\setminus W_{i-1}=S_{i-1}$, we must have $s_i>w_i$.
\end{proof}
\end{proof}

We already have everything we need to conclude that the cohomology of the projected flags complex is trivial.
Furthermore, we can use the following observations to conclude that every link of the the projected flags complex also has trivial cohomology.

The following observation trivially follows from the definitions of join of simplicial complexes and join of posets.
\begin{observation}
Let $P_1, P_2$ be two posets. Then, we have $X_{P_1*P_2}=X_{P_1}*X_{P_2}$.
\end{observation}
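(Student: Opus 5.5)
The claim is that the order complex of $P_1*P_2$ equals the join $X_{P_1}*X_{P_2}$. I will prove it by showing the two complexes have the same vertex set and the same faces, unfolding the definitions. Both complexes live on the vertex set $P_1\sqcup P_2$: the order complex has all elements of its poset as vertices (singletons are chains), and the join has vertex set $X_{P_1}(0)\sqcup X_{P_2}(0)=P_1\sqcup P_2$.

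\emph{Every chain is a join face.} Take a chain $\{a_1,\dots,a_k\}$ in $P_1*P_2$ and split it as $\sigma=\{a_i\}\cap P_1$ and $\tau=\{a_i\}\cap P_2$. The order of $P_1*P_2$ restricted to $P_1$ is exactly $\preceq_{P_1}$. This holds because case (iii) of the definition never relates two elements of the same $P_\ell$, and because the posets are disjoint. The same argument applies to $P_2$. Hence $\sigma$ is a chain of $P_1$ and $\tau$ is a chain of $P_2$, so $\sigma\in X_{P_1}$ and $\tau\in X_{P_2}$, and the chain equals $\sigma\sqcup\tau\in X_{P_1}*X_{P_2}$. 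The empty chain is the join of the two empty faces.

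\emph{Every join face is a chain.} Take $\sigma\sqcup\tau$ with $\sigma$ a chain of $P_1$ and $\tau$ a chain of $P_2$. Any two elements of $\sigma\sqcup\tau$ are comparable in $P_1*P_2$. If both lie in $P_1$ or both lie in $P_2$, they are comparable by (i) or (ii). If one lies in $P_1$ and the other in $P_2$, then $a\le b$ by (iii). So $\sigma\sqcup\tau$ is a totally ordered subset, hence a chain, and lies in $X_{P_1*P_2}$.

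Two small points need care. First, the definition is phrased as ``if $a\le b$ then (i), (ii) or (iii)''; I read it as the intended ``if and only if'', which is the standard definition of the join of posets. Second, the convention $a\preceq a$ should be handled consistently, but it does not affect which sets are chains. The multi-poset version follows by induction, since both kinds of join are built inductively in the same way. There is no real obstacle here.
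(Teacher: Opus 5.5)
Your proof is correct and is exactly the definition-unfolding the paper intends; the paper offers no argument beyond saying the observation follows trivially from the definitions of the two joins. You were right to read the poset-join definition as an ``if and only if'': with only the stated one-directional implication, the order on $P_1\sqcup P_2$ would not be determined and the identity could fail.
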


\begin{observation}
Let $P_1$ and $P_2$ be shellable posets. Then, the join $P_1*P_2$ is also shellable.
\end{observation}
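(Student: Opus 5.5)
The plan is to build an explicit shelling order for $P_1*P_2$ out of given shelling orders of $P_1$ and $P_2$. In the join every element of $P_1$ lies below every element of $P_2$, so a maximal chain of $P_1*P_2$ is exactly a concatenation $m\cup n$, where $m$ is a maximal chain of $P_1$ and $n$ is a maximal chain of $P_2$. In particular $P_1*P_2$ is pure, because all such concatenations have the same length. Let $m_1,\dots,m_a$ be a shelling order of $P_1$ and $n_1,\dots,n_b$ a shelling order of $P_2$. I order the pairs $(m_\alpha, n_\beta)$ lexicographically: first by $\alpha$, and then by $\beta$.

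To verify the shelling condition, take $(\alpha',\beta')$ preceding $(\alpha,\beta)$. Since $P_1$ and $P_2$ are disjoint,
\[
(m_{\alpha'}\cup n_{\beta'})\cap(m_\alpha\cup n_\beta) = (m_{\alpha'}\cap m_\alpha)\cup(n_{\beta'}\cap n_\beta).
\]
There are two cases.

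\textbf{Case $\alpha'<\alpha$.} The shelling of $P_1$ gives some $\gamma<\alpha$ and some $x\in m_\alpha$ with $m_{\alpha'}\cap m_\alpha\subseteq m_\gamma\cap m_\alpha=m_\alpha\setminus\{x\}$. I take the earlier chain $m_\gamma\cup n_\beta$. Its intersection with $m_\alpha\cup n_\beta$ is $(m_\alpha\setminus\{x\})\cup n_\beta$, which equals $(m_\alpha\cup n_\beta)\setminus\{x\}$. This set contains $(m_{\alpha'}\cap m_\alpha)\cup(n_{\beta'}\cap n_\beta)$, because the second piece lies inside $n_\beta$.

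\textbf{Case $\alpha'=\alpha$, $\beta'<\beta$.} Here I argue symmetrically, using the shelling of $P_2$ to get $\delta<\beta$ and $y\in n_\beta$. The witness is $m_\alpha\cup n_\delta$, which precedes $(\alpha,\beta)$ in the order. Its intersection with $m_\alpha\cup n_\beta$ is $(m_\alpha\cup n_\beta)\setminus\{y\}$, and this contains $m_\alpha\cup(n_{\beta}\cap n_{\beta'})$.

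The only delicate points are degenerate cases. One is when a poset has a single maximal chain, in which case the condition is vacuous. The other is when a poset is empty, in which case the join is just the other poset. I do not expect either to be a real obstacle. The main thing to check carefully is that the join's maximal chains really decompose as products: no chain can skip $P_1$ entirely unless $P_1$ is empty, since any chain contained in $P_2$ can be extended by adding elements of $P_1$. Combined with the observation that $X_{P_1*P_2}=X_{P_1}*X_{P_2}$, this lets links of projected flag complexes, which are joins of projected Grassmann posets, inherit shellability.
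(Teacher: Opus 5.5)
Your proposal is correct and is essentially the paper's proof: both take the lexicographic product of the two given shelling orders on the concatenated chains $m\circ m'$. The paper nests the indices the other way, running through all chains of $P_1$ for each fixed chain of $P_2$, which makes no difference by symmetry; it also only asserts the shelling condition is ``easy to verify,'' whereas your two-case check (with the witnesses $m_\gamma\cup n_\beta$ and $m_\alpha\cup n_\delta$) supplies that verification correctly.
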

\begin{proof}
Let $m_1, m_2, \dots, m_t$ be the maximal chains of $P_1$ in a shelling order.
Let $m'_1, m'_2, \dots, m'_{t'}$ be the maximal chains of $P_2$ in a shelling order.
Then, it is easy to verify that the following is the sequence of maximal chains of $P_1*P_2$ in a shelling order:
\[
m_1\circ m'_1, m_2\circ m'_1, \dots, m_t\circ m'_1, m_1\circ m'_2, m_2\circ m'_2, \dots, m_t\circ m'_2,\dots \dots, m_1\circ m'_{t'}, m_2\circ m'_{t'}, \dots, m_t\circ m'_{t'},
\]
where $\circ$ is used to denote concatenation.
\end{proof}

Since links of the flags complex are joins of flags complexes in lower dimension, \Cref{lem:trivial-cohomology} immediately follows.

\subsubsection{Cocycle Expansion of the Projected Flags Complex}
With vanishing co-homology out of the way, it is enough to prove the complexes are also good $1$-cocycle expanders. Toward this end, we'll rely on some useful techniques from \cite{DD24coboundary} for coboundary and cocycle expansion of a complex with respect to a general (possibly non-abelian) group. The first allows us to derive cocycle expansion from spectral and coboundary expansion of the links of $X$.

\begin{lemma}[\cite{DD24coboundary}]
\label{lem:localcoboundary-to-coboundary}
Let $\beta,\lambda>0$ and let $k>0$ be an integer. Let $X$ be a $d$-dimensional simplicial complex for
$d\geq k+ 2$ and assume that $X$ is a $\lambda$-one-sided local spectral expander. Let $\Gamma$ be any group. Assume that for every non-empty $r\in X$, $X_r$ is a coboundary expander and that $h^{k+1-|r|}(X_r,\Gamma) \geq \beta$. Then
\[
    h^{k}(X,\Gamma) \geq \frac{\beta^{k+1}}{(k+1)!\cdot 4}-e\lambda.
\]
Here, $e\approx 2.71$ is the Euler's number.
\end{lemma}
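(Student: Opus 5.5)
The plan is the Evra--Kaufman / Kaufman--Kazhdan--Lubotzky ``local correction plus locally minimal cochains'' template, adapted to non-abelian groups as in \cite{DD24coboundary}. Fix $f\in C^k(X,\Gamma)\setminus Z^k(X,\Gamma)$ and set $\eps=\wt(\delta f)$. The goal is to find some $z\in Z^k$ with $\dist(f,z)\le \eps/\paren{\frac{\beta^{k+1}}{(k+1)!\cdot 4}-e\lambda}$.

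\textbf{Step 1: local correction.} Repeatedly modify $f$ on the star of a face. For a nonempty face $r$, the restriction of $f$ to the link gives a cochain $f_r$ on $X_r$ of dimension $k-|r|$ (for $k=1$, $|r|=1$: $f_v(u,w)=f(v,u)f(u,w)f(w,v)$). Each such modification multiplies $f$ by a coboundary $\delta g$ supported near $r$, so $f$ stays in the same coset of $Z^k$ and $\delta f$ is unchanged, since $\delta\delta=Id$. Allow such a move only when it strictly decreases $\wt(f)$ by a definite amount. The process terminates with $f'$ satisfying:
\begin{itemize}
\item $\dist(f,f')$ is bounded by a constant multiple of the total decrease;
\item $f'$ is \emph{locally minimal}: for every $r$, $f'_r$ is within constant factor of its distance to $Z^{k-|r|}(X_r)=B^{k-|r|}(X_r)$. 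This equality of cocycles and coboundaries is where coboundary expansion of the links (trivial cohomology) is used.
\end{itemize}

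\textbf{Step 2: locally minimal implies expanding.} For a locally minimal $f'$, the link hypothesis $h^{k+1-|r|}(X_r,\Gamma)\ge\beta$ is applied to each localization. This says the coboundary of $f'_r$ inside $X_r$ has weight at least $\beta$ times the local weight of $f'$. These local statements are chained from vertices down to $k$-faces, one level per step. Each level costs a factor $\beta$, and the ordering over the $(k+1)$ vertices of a face, together with the standard localization averaging identities $\Ex{\wt(f_r)}\propto \wt(f)$, costs the $(k+1)!$. The factor $4$ absorbs the constant lost in the non-abelian multiplication, where one cannot simply add cochains. The outcome is a lower bound of the form
\[
\wt(\delta f')\ \ge\ \frac{\beta^{k+1}}{(k+1)!\cdot 4}\,\wt(f') \;-\;(\text{error from triangles where $f'$ is nontrivial on several edges}).
\]

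\textbf{Step 3: the error term.} The error counts $(k+1)$-faces containing two or more $k$-faces in the support of $f'$. Local minimality forces the support to be ``thin'' in every link. By local spectral expansion, in Garland / expander-mixing form (\Cref{lem:expander-mixing}), applied across the $k$ levels of links and summed as a geometric-like series, the fraction of such collisions is at most $e\lambda\cdot\wt(f')$. The Euler constant arises from $\sum_j 1/j!$ when trickling the error through the levels. Combining gives $\wt(\delta f)=\wt(\delta f')\ge\paren{\frac{\beta^{k+1}}{(k+1)!4}-e\lambda}\wt(f')$. If the bracket is positive, then either $f'\in Z^k$, or $\dist(f',Z^k)\le\wt(f')$ is controlled. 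Together with Step 1's bound on $\dist(f,f')$, this yields the claim.

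\textbf{Main obstacle.} The hardest step is Step 1/2 for a non-abelian $\Gamma$. Local corrections at different faces do not commute, and ``weight'' is only a distance, not a norm. Defining the localization and the correction moves so that the weight truly decreases and composes consistently is delicate. I would follow \cite{DD24coboundary} and phrase the argument for the single cochain $f$ via the links' coboundary functions $g_r$. Concretely, I would first handle $k=1$ explicitly, correcting by $g_v:X_v(0)\to\Gamma$ obtained from $h^1(X_v)\ge\beta$ and checking agreement of the $g_v$'s across edges using $h^0$ of edge-links, and only then induct on $k$.
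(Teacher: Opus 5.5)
The paper gives no proof of this lemma; it quotes it from \cite{DD24coboundary}. Judged on its own, your sketch has a genuine gap, and the decisive one is Step~3.

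You claim that collisions ($(k+1)$-faces containing two support faces of $f'$) have measure at most $e\lambda\,\wt(f')$. This is false. Take the complete complex on $n\to\infty$ vertices. Every link graph is complete, with negative second eigenvalue, so it is a $\lambda$-one-sided local spectral expander for arbitrarily small $\lambda>0$. With $\Gamma=\mathbb{F}_2$ and $k=1$, a uniformly random edge set of density $\mu\le 1/2$ has every vertex localization of density about $\mu$, so it is locally minimal. Yet about a $3\mu^2-2\mu^3$ fraction of triangles contain two of its edges, which is not $O(\lambda\mu)$. Local minimality only bounds the local densities $\mu_v$ by a constant. Expander mixing in the links bounds collisions only by roughly $\ex{v}{\mu_v^2}+\lambda\mu$, and the quadratic term is really there. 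This is why locally-minimal arguments in the style of \cite{KKL16,evra2016bounded} give expansion only for cochains below a weight threshold. Your sketch gives no mechanism for an error linear in $\lambda$ at every weight, which is what the lemma asserts. The accounting of $\beta^{k+1}$, $(k+1)!$, $4$ and $e=\sum_j 1/j!$ is fitted to the target constant rather than derived.

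Relatedly, you are not using the stated hypothesis correctly. The template works with localizations $f_r(t)=f(r\sqcup t)\in C^{k-|r|}(X_r)$ and needs $h^{k-|r|}(X_r)$. The lemma instead provides $h^{k+1-|r|}(X_r)$: vertex links are coboundary expanders in dimension $k$, down to $h^0$ for links of $k$-faces. In Step~2 you apply this to a $(k-|r|)$-cochain. Your $k=1$ example $f_v(u,w)=f(v,u)f(u,w)f(w,v)$ is actually $\delta f(v,u,w)$, which every local correction merely conjugates, so minimizing it is vacuous.

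The hypothesis is tailored to \emph{restrictions}. The cochain $f|_{X_v}\in C^k(X_v)$ has coboundary $(\delta f)|_{X_v}$, and since $d\ge k+2$ one has $\ex{v}{\wt(\delta(f|_{X_v}))}=\wt(\delta f)$. A proof with an all-weights $O(\lambda)$ error needs some genuine use of this one-dimension-higher link expansion.

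Finally, the concluding step is wrong as written. You bound $\dist(f,f')$ by the total weight decrease, but that decrease is not controlled by $\wt(\delta f)$; think of $f=\delta g$ of large weight, where $\wt(\delta f)=0$. What you want instead is that every correction replaces $f$ by $g.f$, where $g.f(u,v)=g(u)f(u,v)g(v)^{-1}$. This map sends $Z^1$ to itself and preserves distances, so $\dist(f,Z^1)=\dist(f',Z^1)$. Also, for non-abelian $\Gamma$ only $k=1$ is defined here, so there is no induction on $k$ to carry out.
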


The second tool is the `color restriction lemma' which states that if most projections of a complex are coboundary expanders, then the complex itself is a coboundary expander.
\begin{lemma}[\cite{DD24coboundary}]
\label{lem:color-restriction}
Let $\ell, d$ be integers so that $3 \leq \ell \leq d$ and let $\beta, p \in (0, 1]$.
Let $\Gamma$ be some group. Let $X$ be a $(d+1)$-partite simplicial complex so that
\[
\pr{I\in \binom{[d+1]}{\ell}}{X^I\text{ has }1\text{-coboundary expansion }\beta
\text{ and }
\forall s\in X(0), X_s^I \text{ is a } \frac{1}{2}\text{-local spectral expander}} \geq p.
\]
Then X is a coboundary expander with $h^1(X)\geq \frac{p\beta}{36}$. 
\end{lemma}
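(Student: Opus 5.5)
This lemma is imported from \cite{DD24coboundary}. The proof I would give glues together coboundary approximations obtained on the good colour projections $X^I$.

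Fix $f\in C^1(X,\Gamma)$ and write $\varepsilon=\wt(\delta f)$. Every triangle of $X$ has three colours and lies in $X^I$ for the same fraction $\binom{d-2}{\ell-3}/\binom{d+1}{\ell}$ of the sets $I$. Hence, averaging over a uniformly random $I\in\binom{[d+1]}{\ell}$, we get $\mathbb E_I[\wt(\delta f|_{X^I})]=\varepsilon$, where weights are taken in the measure of $X^I$ induced by $X$. Call $I$ \emph{good} if it satisfies the hypothesis. For each good $I$, the coboundary expansion of $X^I$ gives $g_I\colon X^I(0)\to\Gamma$ with $\dist(f|_{X^I},\delta g_I)\le \wt(\delta f|_{X^I})/\beta$. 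By Markov, a constant fraction (say half) of the good sets, which is at least a $p/2$ fraction of all $I$, have $\dist(f|_{X^I},\delta g_I)\le 2\varepsilon/(p\beta)$. Restrict attention to these \emph{very good} $I$.

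The main step, and the one I expect to be the obstacle, is producing a single global $g\colon X(0)\to\Gamma$. Each $g_I$ is only determined up to right multiplication by a constant $\gamma_I$, and different $I$ cover different colours. I would avoid aligning the constants globally. Instead I would build $g$ through links, using the hypothesis that $X_s^I$ is a $\tfrac12$-local spectral expander.
\begin{enumerate}[(i)]
\item For a vertex $s$ and a very good $I\ni\col(s)$, consider the function $u\mapsto g_I(u)^{-1}f(u,s)g_I(s)$ on neighbours $u$ of $s$ in $X^I$. It equals the identity wherever $\delta g_I$ agrees with $f$ on the edge $us$, so it is mostly the identity.
\item Next compare two very good sets $I,I'$ containing $\col(s)$. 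Take any vertex $v\in X^{I\cap I'}$ such that $s\cup v\in X$ and both $\delta g_I,\delta g_{I'}$ agree with $f$ on $(s,v)$. On such $v$ the \emph{difference function} $v\mapsto g_{I'}(v)^{-1}g_I(v)$ takes the same value as at $s$. Now apply \Cref{lem:expansion-majority} to the underlying graph of the link, which is an edge expander with constant $\ge \tfrac12$ by \Cref{lem:spectral-to-edge}. This shows the difference function is constant on most of $X^{I\cap I'}$, so $g_I$ and $g_{I'}$ agree up to one constant on most of their common domain.
\item Then define $g(v)$ as the plurality, over very good $I\ni\col(v)$, of $g_I(v)$ after this relative alignment. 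The alignment is consistent because the pairwise constants must satisfy a cocycle relation on triples $I,I',I''$ that overlap in enough colours.
\end{enumerate}

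Finally I would bound $\dist(f,\delta g)$. Pick an edge $uv$ and a random very good $I\supseteq\col(uv)$. With probability bounded below in terms of $p$, the following hold simultaneously: $f(u,v)=\delta g_I(u,v)$, and both $g(u)$ and $g(v)$ equal the aligned $g_I$ values. On this event $f(u,v)=\delta g(u,v)$. A union bound over the failure events, each of which is $O(\varepsilon/(p\beta))$ up to the losses from Markov and from the majority steps, gives $\dist(f,\delta g)\le 36\,\varepsilon/(p\beta)$ after tracking the constants. This is the claimed $h^1(X)\ge p\beta/36$.

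The delicate points are, first, making the plurality-and-alignment well defined when $p$ is small, which is why I would work vertex by vertex inside links rather than globally; and second, checking that the constant stays linear in $p\beta$ rather than picking up extra factors of $p$.
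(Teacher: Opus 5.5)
\textbf{There is a genuine gap in the gluing step.} Steps (ii)--(iii) and your final union bound need the good colour sets to cover every colour (and every pair of colours) with density bounded below, and to overlap one another substantially. The hypothesis gives neither.

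It only says that a $p$-fraction of all $I\in\binom{[d+1]}{\ell}$ are good. The sets avoiding a fixed colour $c_0$ have measure $1-\frac{\ell}{d+1}$, so when $d\gg\ell$ nothing prevents every good $I$ from missing $c_0$. Then:
\begin{enumerate}[(a)]
\item Step (iii) has nothing to take a plurality over for vertices of colour $c_0$.
\item Your claim that a random very good $I\supseteq\col(uv)$ exists ``with probability bounded below in terms of $p$'' fails. The event $I\supseteq\col(uv)$ has measure $\frac{\ell(\ell-1)}{(d+1)d}$, and the good sets may have density zero on it.
\end{enumerate}
This cannot be absorbed into the constant. For a cocycle $f$ (so $\varepsilon=0$), the lemma forces $f=\delta g$ on every edge, including those touching $c_0$.

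The alignment in (ii) breaks for the same reason. Two random $\ell$-sets intersect with probability at most $\ell^2/(d+1)$, and when $|I\cap I'|\le 1$ the complex $X^{I\cap I'}$ has no edges at all. Even with large overlaps, (ii) only shows that $g_{I'}^{-1}g_I$ is nearly constant on the star of each $s$. Passing to near-constancy on all of $X^{I\cap I'}(0)$ needs edge expansion of the graph of $X^{I\cap I'}$ itself, which is not assumed; only the links $X^I_s$ are assumed to expand. Finally, making the pairwise constants consistent in (iii) is itself an unsolved $1$-cohomology problem, now over colour sets.

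\textbf{The missing idea is that a single good colour set suffices.} The hypothesis on $X^I_s$ is needed precisely for $\col(s)\notin I$, which is the case your plan never uses. The argument of \cite{DD24coboundary}, whose template the appendix proof of \Cref{lem:disjoint-color-restriction} follows, goes as follows.
\begin{enumerate}[(1)]
\item Use your own averaging observation (the colour triple of a random triangle is uniform) together with Markov over the good sets. This lets you fix one good $I$ such that $\delta f$ has weight $O(\varepsilon/p)$ simultaneously on $X^I(2)$, on triangles with exactly two vertices in $X^I(0)$, and on triangles with exactly one vertex in $X^I(0)$.
\item Define $g$ on $X^I(0)$ using the $\beta$-coboundary expansion of $X^I$.
\item For $v\notin X^I(0)$, set $g(v)=\maj_{u\in X^I_v(0)}\set{f(v,u)g(u)}$.
\item Since $\col(v)\notin I$, the underlying graph of $X^I_v$ is a $\frac12$-spectral, hence $\frac12$-edge, expander (\Cref{lem:spectral-to-edge}). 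So \Cref{lem:expansion-majority} bounds the error on edges with one endpoint in $X^I(0)$ by the weight of triangles $vuw$ with $u,w\in X^I(0)$ and either $\delta f(vuw)\neq Id$ or $f(u,w)\neq\delta g(u,w)$.
\item Edges with both endpoints outside $X^I(0)$ are then handled through triangles with one vertex in $X^I(0)$, together with the bound from step (4).
\end{enumerate}
Each step loses only a constant factor over $1/(p\beta)$. No alignment across different $I$ is ever needed, and the bound stays linear in $p\beta$.
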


\begin{figure}
    \centering
    \begin{subfigure}[b]{0.45\textwidth}
        \centering
        \tikzset{every picture/.style={line width=0.75pt}} 

\begin{tikzpicture}[x=0.75pt,y=0.75pt,yscale=-0.45,xscale=0.45]

\draw  [fill={rgb, 255:red, 245; green, 166; blue, 35 }  ,fill opacity=0.5 ][line width=0.75]  (124,122.25) .. controls (124,119.35) and (126.46,117) .. (129.5,117) .. controls (132.54,117) and (135,119.35) .. (135,122.25) .. controls (135,125.15) and (132.54,127.5) .. (129.5,127.5) .. controls (126.46,127.5) and (124,125.15) .. (124,122.25) -- cycle ;
\draw  [fill={rgb, 255:red, 245; green, 166; blue, 35 }  ,fill opacity=0.5 ][line width=0.75]  (364,145.25) .. controls (364,142.35) and (366.46,140) .. (369.5,140) .. controls (372.54,140) and (375,142.35) .. (375,145.25) .. controls (375,148.15) and (372.54,150.5) .. (369.5,150.5) .. controls (366.46,150.5) and (364,148.15) .. (364,145.25) -- cycle ;
\draw  [fill={rgb, 255:red, 245; green, 166; blue, 35 }  ,fill opacity=0.5 ][line width=0.75]  (365,218.25) .. controls (365,215.35) and (367.46,213) .. (370.5,213) .. controls (373.54,213) and (376,215.35) .. (376,218.25) .. controls (376,221.15) and (373.54,223.5) .. (370.5,223.5) .. controls (367.46,223.5) and (365,221.15) .. (365,218.25) -- cycle ;
\draw  [fill={rgb, 255:red, 245; green, 166; blue, 35 }  ,fill opacity=0.5 ][line width=0.75]  (125,210.25) .. controls (125,207.35) and (127.46,205) .. (130.5,205) .. controls (133.54,205) and (136,207.35) .. (136,210.25) .. controls (136,213.15) and (133.54,215.5) .. (130.5,215.5) .. controls (127.46,215.5) and (125,213.15) .. (125,210.25) -- cycle ;
\draw  [fill={rgb, 255:red, 245; green, 166; blue, 35 }  ,fill opacity=0.5 ][line width=0.75]  (127,296.25) .. controls (127,293.35) and (129.46,291) .. (132.5,291) .. controls (135.54,291) and (138,293.35) .. (138,296.25) .. controls (138,299.15) and (135.54,301.5) .. (132.5,301.5) .. controls (129.46,301.5) and (127,299.15) .. (127,296.25) -- cycle ;
\draw  [fill={rgb, 255:red, 245; green, 166; blue, 35 }  ,fill opacity=0.5 ][line width=0.75]  (366,296.25) .. controls (366,293.35) and (368.46,291) .. (371.5,291) .. controls (374.54,291) and (377,293.35) .. (377,296.25) .. controls (377,299.15) and (374.54,301.5) .. (371.5,301.5) .. controls (368.46,301.5) and (366,299.15) .. (366,296.25) -- cycle ;
\draw  [fill={rgb, 255:red, 155; green, 155; blue, 155 }  ,fill opacity=0.5 ][line width=0.75]  (599,281.25) .. controls (599,278.35) and (601.46,276) .. (604.5,276) .. controls (607.54,276) and (610,278.35) .. (610,281.25) .. controls (610,284.15) and (607.54,286.5) .. (604.5,286.5) .. controls (601.46,286.5) and (599,284.15) .. (599,281.25) -- cycle ;
\draw    (129.5,122.25) -- (370.5,218.25) ;
\draw [shift={(254.65,172.1)}, rotate = 201.72] [fill={rgb, 255:red, 0; green, 0; blue, 0 }  ][line width=0.08]  [draw opacity=0] (10.72,-5.15) -- (0,0) -- (10.72,5.15) -- (7.12,0) -- cycle    ;
\draw    (370.5,218.25) -- (132.5,296.25) ;
\draw [shift={(246.75,258.81)}, rotate = 341.85] [fill={rgb, 255:red, 0; green, 0; blue, 0 }  ][line width=0.08]  [draw opacity=0] (10.72,-5.15) -- (0,0) -- (10.72,5.15) -- (7.12,0) -- cycle    ;
\draw    (132.5,296.25) -- (371.5,296.25) ;
\draw [shift={(257,296.25)}, rotate = 180] [fill={rgb, 255:red, 0; green, 0; blue, 0 }  ][line width=0.08]  [draw opacity=0] (10.72,-5.15) -- (0,0) -- (10.72,5.15) -- (7.12,0) -- cycle    ;
\draw    (371.5,296.25) -- (130.5,210.25) ;
\draw [shift={(246.29,251.57)}, rotate = 19.64] [fill={rgb, 255:red, 0; green, 0; blue, 0 }  ][line width=0.08]  [draw opacity=0] (10.72,-5.15) -- (0,0) -- (10.72,5.15) -- (7.12,0) -- cycle    ;
\draw    (130.5,210.25) -- (369.5,145.25) ;
\draw [shift={(254.82,176.44)}, rotate = 164.79] [fill={rgb, 255:red, 0; green, 0; blue, 0 }  ][line width=0.08]  [draw opacity=0] (10.72,-5.15) -- (0,0) -- (10.72,5.15) -- (7.12,0) -- cycle    ;
\draw    (369.5,145.25) -- (129.5,122.25) ;
\draw [shift={(244.52,133.27)}, rotate = 5.47] [fill={rgb, 255:red, 0; green, 0; blue, 0 }  ][line width=0.08]  [draw opacity=0] (10.72,-5.15) -- (0,0) -- (10.72,5.15) -- (7.12,0) -- cycle    ;
\draw  [dash pattern={on 4.5pt off 4.5pt}]  (604.5,281.25) -- (370.5,218.25) ;
\draw  [dash pattern={on 4.5pt off 4.5pt}]  (371.5,296.25) -- (604.5,281.25) ;

\draw (124,75.4) node [anchor=north west][inner sep=0.75pt]    {$i_{0}$};
\draw (364,75.4) node [anchor=north west][inner sep=0.75pt]    {$i_{1}$};
\draw (597,74.4) node [anchor=north west][inner sep=0.75pt]    {$i_{2}$};
\draw (131.5,130.9) node [anchor=north west][inner sep=0.75pt]    {$v$};
\draw (372.5,226.9) node [anchor=north west][inner sep=0.75pt]    {$u'_{0}$};
\draw (134.5,304.9) node [anchor=north west][inner sep=0.75pt]    {$u'$};
\draw (373.5,304.9) node [anchor=north west][inner sep=0.75pt]    {$u$};
\draw (132.5,218.9) node [anchor=north west][inner sep=0.75pt]    {$u_{1}$};
\draw (371.5,153.9) node [anchor=north west][inner sep=0.75pt]    {$u_{0}$};
\draw (606.5,289.9) node [anchor=north west][inner sep=0.75pt]    {$x$};

\end{tikzpicture}
        \caption{$P_0=P_{u}\circ(u',u)\circ P_{u'}^{-1}=vu'_0u'uu_1u_0v$}
    \end{subfigure}
    \begin{subfigure}[b]{0.45\textwidth}
        \centering
        \tikzset{every picture/.style={line width=0.75pt}} 

\begin{tikzpicture}[x=0.75pt,y=0.75pt,yscale=-0.45,xscale=0.45]

\draw  [fill={rgb, 255:red, 245; green, 166; blue, 35 }  ,fill opacity=0.5 ][line width=0.75]  (124,122.25) .. controls (124,119.35) and (126.46,117) .. (129.5,117) .. controls (132.54,117) and (135,119.35) .. (135,122.25) .. controls (135,125.15) and (132.54,127.5) .. (129.5,127.5) .. controls (126.46,127.5) and (124,125.15) .. (124,122.25) -- cycle ;
\draw  [fill={rgb, 255:red, 245; green, 166; blue, 35 }  ,fill opacity=0.5 ][line width=0.75]  (364,145.25) .. controls (364,142.35) and (366.46,140) .. (369.5,140) .. controls (372.54,140) and (375,142.35) .. (375,145.25) .. controls (375,148.15) and (372.54,150.5) .. (369.5,150.5) .. controls (366.46,150.5) and (364,148.15) .. (364,145.25) -- cycle ;
\draw  [fill={rgb, 255:red, 245; green, 166; blue, 35 }  ,fill opacity=0.5 ][line width=0.75]  (365,218.25) .. controls (365,215.35) and (367.46,213) .. (370.5,213) .. controls (373.54,213) and (376,215.35) .. (376,218.25) .. controls (376,221.15) and (373.54,223.5) .. (370.5,223.5) .. controls (367.46,223.5) and (365,221.15) .. (365,218.25) -- cycle ;
\draw  [fill={rgb, 255:red, 245; green, 166; blue, 35 }  ,fill opacity=0.5 ][line width=0.75]  (125,210.25) .. controls (125,207.35) and (127.46,205) .. (130.5,205) .. controls (133.54,205) and (136,207.35) .. (136,210.25) .. controls (136,213.15) and (133.54,215.5) .. (130.5,215.5) .. controls (127.46,215.5) and (125,213.15) .. (125,210.25) -- cycle ;
\draw  [fill={rgb, 255:red, 245; green, 166; blue, 35 }  ,fill opacity=0.5 ][line width=0.75]  (127,296.25) .. controls (127,293.35) and (129.46,291) .. (132.5,291) .. controls (135.54,291) and (138,293.35) .. (138,296.25) .. controls (138,299.15) and (135.54,301.5) .. (132.5,301.5) .. controls (129.46,301.5) and (127,299.15) .. (127,296.25) -- cycle ;
\draw  [fill={rgb, 255:red, 245; green, 166; blue, 35 }  ,fill opacity=0.5 ][line width=0.75]  (366,296.25) .. controls (366,293.35) and (368.46,291) .. (371.5,291) .. controls (374.54,291) and (377,293.35) .. (377,296.25) .. controls (377,299.15) and (374.54,301.5) .. (371.5,301.5) .. controls (368.46,301.5) and (366,299.15) .. (366,296.25) -- cycle ;
\draw  [fill={rgb, 255:red, 155; green, 155; blue, 155 }  ,fill opacity=0.5 ][line width=0.75]  (599,281.25) .. controls (599,278.35) and (601.46,276) .. (604.5,276) .. controls (607.54,276) and (610,278.35) .. (610,281.25) .. controls (610,284.15) and (607.54,286.5) .. (604.5,286.5) .. controls (601.46,286.5) and (599,284.15) .. (599,281.25) -- cycle ;
\draw    (129.5,122.25) -- (370.5,218.25) ;
\draw [shift={(254.65,172.1)}, rotate = 201.72] [fill={rgb, 255:red, 0; green, 0; blue, 0 }  ][line width=0.08]  [draw opacity=0] (10.72,-5.15) -- (0,0) -- (10.72,5.15) -- (7.12,0) -- cycle    ;
\draw    (370.5,218.25) -- (132.5,296.25) ;
\draw [shift={(246.75,258.81)}, rotate = 341.85] [fill={rgb, 255:red, 0; green, 0; blue, 0 }  ][line width=0.08]  [draw opacity=0] (10.72,-5.15) -- (0,0) -- (10.72,5.15) -- (7.12,0) -- cycle    ;
\draw    (371.5,296.25) -- (130.5,210.25) ;
\draw [shift={(246.29,251.57)}, rotate = 19.64] [fill={rgb, 255:red, 0; green, 0; blue, 0 }  ][line width=0.08]  [draw opacity=0] (10.72,-5.15) -- (0,0) -- (10.72,5.15) -- (7.12,0) -- cycle    ;
\draw    (130.5,210.25) -- (369.5,145.25) ;
\draw [shift={(254.82,176.44)}, rotate = 164.79] [fill={rgb, 255:red, 0; green, 0; blue, 0 }  ][line width=0.08]  [draw opacity=0] (10.72,-5.15) -- (0,0) -- (10.72,5.15) -- (7.12,0) -- cycle    ;
\draw    (369.5,145.25) -- (129.5,122.25) ;
\draw [shift={(244.52,133.27)}, rotate = 5.47] [fill={rgb, 255:red, 0; green, 0; blue, 0 }  ][line width=0.08]  [draw opacity=0] (10.72,-5.15) -- (0,0) -- (10.72,5.15) -- (7.12,0) -- cycle    ;
\draw  [dash pattern={on 4.5pt off 4.5pt}]  (604.5,281.25) -- (370.5,218.25) ;
\draw    (604.5,281.25) -- (371.5,296.25) ;
\draw [shift={(483.01,289.07)}, rotate = 356.32] [fill={rgb, 255:red, 0; green, 0; blue, 0 }  ][line width=0.08]  [draw opacity=0] (10.72,-5.15) -- (0,0) -- (10.72,5.15) -- (7.12,0) -- cycle    ;
\draw    (132.5,296.25) .. controls (310,369.5) and (564.5,311.25) .. (604.5,281.25) ;
\draw [shift={(375.3,330.79)}, rotate = 177.49] [fill={rgb, 255:red, 0; green, 0; blue, 0 }  ][line width=0.08]  [draw opacity=0] (10.72,-5.15) -- (0,0) -- (10.72,5.15) -- (7.12,0) -- cycle    ;

\draw (124,75.4) node [anchor=north west][inner sep=0.75pt]    {$i_{0}$};
\draw (364,75.4) node [anchor=north west][inner sep=0.75pt]    {$i_{1}$};
\draw (597,74.4) node [anchor=north west][inner sep=0.75pt]    {$i_{2}$};
\draw (131.5,130.9) node [anchor=north west][inner sep=0.75pt]    {$v$};
\draw (372.5,226.9) node [anchor=north west][inner sep=0.75pt]    {$u'_{0}$};
\draw (134.5,304.9) node [anchor=north west][inner sep=0.75pt]    {$u'$};
\draw (373.5,304.9) node [anchor=north west][inner sep=0.75pt]    {$u$};
\draw (132.5,218.9) node [anchor=north west][inner sep=0.75pt]    {$u_{1}$};
\draw (371.5,153.9) node [anchor=north west][inner sep=0.75pt]    {$u_{0}$};
\draw (606.5,289.9) node [anchor=north west][inner sep=0.75pt]    {$x$};

\end{tikzpicture}
        \caption{$P_1=vu'_0u'xuu_1u_0v$}
    \end{subfigure}
    \begin{subfigure}[b]{0.45\textwidth}
        \centering
        \tikzset{every picture/.style={line width=0.75pt}} 

\begin{tikzpicture}[x=0.75pt,y=0.75pt,yscale=-0.45,xscale=0.45]

\draw  [fill={rgb, 255:red, 245; green, 166; blue, 35 }  ,fill opacity=0.5 ][line width=0.75]  (124,122.25) .. controls (124,119.35) and (126.46,117) .. (129.5,117) .. controls (132.54,117) and (135,119.35) .. (135,122.25) .. controls (135,125.15) and (132.54,127.5) .. (129.5,127.5) .. controls (126.46,127.5) and (124,125.15) .. (124,122.25) -- cycle ;
\draw  [fill={rgb, 255:red, 245; green, 166; blue, 35 }  ,fill opacity=0.5 ][line width=0.75]  (364,145.25) .. controls (364,142.35) and (366.46,140) .. (369.5,140) .. controls (372.54,140) and (375,142.35) .. (375,145.25) .. controls (375,148.15) and (372.54,150.5) .. (369.5,150.5) .. controls (366.46,150.5) and (364,148.15) .. (364,145.25) -- cycle ;
\draw  [fill={rgb, 255:red, 245; green, 166; blue, 35 }  ,fill opacity=0.5 ][line width=0.75]  (365,218.25) .. controls (365,215.35) and (367.46,213) .. (370.5,213) .. controls (373.54,213) and (376,215.35) .. (376,218.25) .. controls (376,221.15) and (373.54,223.5) .. (370.5,223.5) .. controls (367.46,223.5) and (365,221.15) .. (365,218.25) -- cycle ;
\draw  [fill={rgb, 255:red, 245; green, 166; blue, 35 }  ,fill opacity=0.5 ][line width=0.75]  (125,210.25) .. controls (125,207.35) and (127.46,205) .. (130.5,205) .. controls (133.54,205) and (136,207.35) .. (136,210.25) .. controls (136,213.15) and (133.54,215.5) .. (130.5,215.5) .. controls (127.46,215.5) and (125,213.15) .. (125,210.25) -- cycle ;
\draw  [fill={rgb, 255:red, 245; green, 166; blue, 35 }  ,fill opacity=0.5 ][line width=0.75]  (127,296.25) .. controls (127,293.35) and (129.46,291) .. (132.5,291) .. controls (135.54,291) and (138,293.35) .. (138,296.25) .. controls (138,299.15) and (135.54,301.5) .. (132.5,301.5) .. controls (129.46,301.5) and (127,299.15) .. (127,296.25) -- cycle ;
\draw  [fill={rgb, 255:red, 245; green, 166; blue, 35 }  ,fill opacity=0.5 ][line width=0.75]  (366,296.25) .. controls (366,293.35) and (368.46,291) .. (371.5,291) .. controls (374.54,291) and (377,293.35) .. (377,296.25) .. controls (377,299.15) and (374.54,301.5) .. (371.5,301.5) .. controls (368.46,301.5) and (366,299.15) .. (366,296.25) -- cycle ;
\draw  [fill={rgb, 255:red, 155; green, 155; blue, 155 }  ,fill opacity=0.5 ][line width=0.75]  (599,281.25) .. controls (599,278.35) and (601.46,276) .. (604.5,276) .. controls (607.54,276) and (610,278.35) .. (610,281.25) .. controls (610,284.15) and (607.54,286.5) .. (604.5,286.5) .. controls (601.46,286.5) and (599,284.15) .. (599,281.25) -- cycle ;
\draw    (129.5,122.25) -- (370.5,218.25) ;
\draw [shift={(254.65,172.1)}, rotate = 201.72] [fill={rgb, 255:red, 0; green, 0; blue, 0 }  ][line width=0.08]  [draw opacity=0] (10.72,-5.15) -- (0,0) -- (10.72,5.15) -- (7.12,0) -- cycle    ;
\draw    (371.5,296.25) -- (130.5,210.25) ;
\draw [shift={(246.29,251.57)}, rotate = 19.64] [fill={rgb, 255:red, 0; green, 0; blue, 0 }  ][line width=0.08]  [draw opacity=0] (10.72,-5.15) -- (0,0) -- (10.72,5.15) -- (7.12,0) -- cycle    ;
\draw    (130.5,210.25) -- (369.5,145.25) ;
\draw [shift={(254.82,176.44)}, rotate = 164.79] [fill={rgb, 255:red, 0; green, 0; blue, 0 }  ][line width=0.08]  [draw opacity=0] (10.72,-5.15) -- (0,0) -- (10.72,5.15) -- (7.12,0) -- cycle    ;
\draw    (369.5,145.25) -- (129.5,122.25) ;
\draw [shift={(244.52,133.27)}, rotate = 5.47] [fill={rgb, 255:red, 0; green, 0; blue, 0 }  ][line width=0.08]  [draw opacity=0] (10.72,-5.15) -- (0,0) -- (10.72,5.15) -- (7.12,0) -- cycle    ;
\draw    (604.5,281.25) -- (370.5,218.25) ;
\draw [shift={(493.78,251.44)}, rotate = 195.07] [fill={rgb, 255:red, 0; green, 0; blue, 0 }  ][line width=0.08]  [draw opacity=0] (10.72,-5.15) -- (0,0) -- (10.72,5.15) -- (7.12,0) -- cycle    ;
\draw    (604.5,281.25) -- (371.5,296.25) ;
\draw [shift={(483.01,289.07)}, rotate = 356.32] [fill={rgb, 255:red, 0; green, 0; blue, 0 }  ][line width=0.08]  [draw opacity=0] (10.72,-5.15) -- (0,0) -- (10.72,5.15) -- (7.12,0) -- cycle    ;

\draw (124,75.4) node [anchor=north west][inner sep=0.75pt]    {$i_{0}$};
\draw (364,75.4) node [anchor=north west][inner sep=0.75pt]    {$i_{1}$};
\draw (597,74.4) node [anchor=north west][inner sep=0.75pt]    {$i_{2}$};
\draw (131.5,130.9) node [anchor=north west][inner sep=0.75pt]    {$v$};
\draw (372.5,226.9) node [anchor=north west][inner sep=0.75pt]    {$u'_{0}$};
\draw (134.5,304.9) node [anchor=north west][inner sep=0.75pt]    {$u'$};
\draw (373.5,304.9) node [anchor=north west][inner sep=0.75pt]    {$u$};
\draw (132.5,218.9) node [anchor=north west][inner sep=0.75pt]    {$u_{1}$};
\draw (371.5,153.9) node [anchor=north west][inner sep=0.75pt]    {$u_{0}$};
\draw (606.5,289.9) node [anchor=north west][inner sep=0.75pt]    {$x$};

\end{tikzpicture}
        \caption{$P_2=vu'_0xuu_1u_0v$}
    \end{subfigure}
    \begin{subfigure}[b]{0.45\textwidth}
        \centering
        \tikzset{every picture/.style={line width=0.75pt}} 

\begin{tikzpicture}[x=0.75pt,y=0.75pt,yscale=-0.45,xscale=0.45]

\draw  [fill={rgb, 255:red, 245; green, 166; blue, 35 }  ,fill opacity=0.5 ][line width=0.75]  (124,122.25) .. controls (124,119.35) and (126.46,117) .. (129.5,117) .. controls (132.54,117) and (135,119.35) .. (135,122.25) .. controls (135,125.15) and (132.54,127.5) .. (129.5,127.5) .. controls (126.46,127.5) and (124,125.15) .. (124,122.25) -- cycle ;
\draw  [fill={rgb, 255:red, 245; green, 166; blue, 35 }  ,fill opacity=0.5 ][line width=0.75]  (364,145.25) .. controls (364,142.35) and (366.46,140) .. (369.5,140) .. controls (372.54,140) and (375,142.35) .. (375,145.25) .. controls (375,148.15) and (372.54,150.5) .. (369.5,150.5) .. controls (366.46,150.5) and (364,148.15) .. (364,145.25) -- cycle ;
\draw  [fill={rgb, 255:red, 245; green, 166; blue, 35 }  ,fill opacity=0.5 ][line width=0.75]  (365,218.25) .. controls (365,215.35) and (367.46,213) .. (370.5,213) .. controls (373.54,213) and (376,215.35) .. (376,218.25) .. controls (376,221.15) and (373.54,223.5) .. (370.5,223.5) .. controls (367.46,223.5) and (365,221.15) .. (365,218.25) -- cycle ;
\draw  [fill={rgb, 255:red, 245; green, 166; blue, 35 }  ,fill opacity=0.5 ][line width=0.75]  (125,210.25) .. controls (125,207.35) and (127.46,205) .. (130.5,205) .. controls (133.54,205) and (136,207.35) .. (136,210.25) .. controls (136,213.15) and (133.54,215.5) .. (130.5,215.5) .. controls (127.46,215.5) and (125,213.15) .. (125,210.25) -- cycle ;
\draw  [fill={rgb, 255:red, 245; green, 166; blue, 35 }  ,fill opacity=0.5 ][line width=0.75]  (366,296.25) .. controls (366,293.35) and (368.46,291) .. (371.5,291) .. controls (374.54,291) and (377,293.35) .. (377,296.25) .. controls (377,299.15) and (374.54,301.5) .. (371.5,301.5) .. controls (368.46,301.5) and (366,299.15) .. (366,296.25) -- cycle ;
\draw  [fill={rgb, 255:red, 155; green, 155; blue, 155 }  ,fill opacity=0.5 ][line width=0.75]  (599,281.25) .. controls (599,278.35) and (601.46,276) .. (604.5,276) .. controls (607.54,276) and (610,278.35) .. (610,281.25) .. controls (610,284.15) and (607.54,286.5) .. (604.5,286.5) .. controls (601.46,286.5) and (599,284.15) .. (599,281.25) -- cycle ;
\draw    (371.5,296.25) -- (130.5,210.25) ;
\draw [shift={(246.29,251.57)}, rotate = 19.64] [fill={rgb, 255:red, 0; green, 0; blue, 0 }  ][line width=0.08]  [draw opacity=0] (10.72,-5.15) -- (0,0) -- (10.72,5.15) -- (7.12,0) -- cycle    ;
\draw    (130.5,210.25) -- (369.5,145.25) ;
\draw [shift={(254.82,176.44)}, rotate = 164.79] [fill={rgb, 255:red, 0; green, 0; blue, 0 }  ][line width=0.08]  [draw opacity=0] (10.72,-5.15) -- (0,0) -- (10.72,5.15) -- (7.12,0) -- cycle    ;
\draw    (369.5,145.25) -- (129.5,122.25) ;
\draw [shift={(244.52,133.27)}, rotate = 5.47] [fill={rgb, 255:red, 0; green, 0; blue, 0 }  ][line width=0.08]  [draw opacity=0] (10.72,-5.15) -- (0,0) -- (10.72,5.15) -- (7.12,0) -- cycle    ;
\draw    (604.5,281.25) -- (129.5,122.25) ;
\draw [shift={(373.16,203.81)}, rotate = 198.51] [fill={rgb, 255:red, 0; green, 0; blue, 0 }  ][line width=0.08]  [draw opacity=0] (10.72,-5.15) -- (0,0) -- (10.72,5.15) -- (7.12,0) -- cycle    ;
\draw    (604.5,281.25) -- (371.5,296.25) ;
\draw [shift={(483.01,289.07)}, rotate = 356.32] [fill={rgb, 255:red, 0; green, 0; blue, 0 }  ][line width=0.08]  [draw opacity=0] (10.72,-5.15) -- (0,0) -- (10.72,5.15) -- (7.12,0) -- cycle    ;

\draw (124,75.4) node [anchor=north west][inner sep=0.75pt]    {$i_{0}$};
\draw (364,75.4) node [anchor=north west][inner sep=0.75pt]    {$i_{1}$};
\draw (597,74.4) node [anchor=north west][inner sep=0.75pt]    {$i_{2}$};
\draw (131.5,130.9) node [anchor=north west][inner sep=0.75pt]    {$v$};
\draw (372.5,226.9) node [anchor=north west][inner sep=0.75pt]    {$u'_{0}$};
\draw (373.5,304.9) node [anchor=north west][inner sep=0.75pt]    {$u$};
\draw (132.5,218.9) node [anchor=north west][inner sep=0.75pt]    {$u_{1}$};
\draw (371.5,153.9) node [anchor=north west][inner sep=0.75pt]    {$u_{0}$};
\draw (606.5,289.9) node [anchor=north west][inner sep=0.75pt]    {$x$};

\end{tikzpicture}
        \caption{$P_3=vxuu_1u_0v$}
    \end{subfigure}
    \begin{subfigure}[b]{0.45\textwidth}
        \centering
        \tikzset{every picture/.style={line width=0.75pt}} 

\begin{tikzpicture}[x=0.75pt,y=0.75pt,yscale=-0.55,xscale=0.45]

\draw  [fill={rgb, 255:red, 245; green, 166; blue, 35 }  ,fill opacity=0.5 ][line width=0.75]  (124,122.25) .. controls (124,119.35) and (126.46,117) .. (129.5,117) .. controls (132.54,117) and (135,119.35) .. (135,122.25) .. controls (135,125.15) and (132.54,127.5) .. (129.5,127.5) .. controls (126.46,127.5) and (124,125.15) .. (124,122.25) -- cycle ;
\draw  [fill={rgb, 255:red, 245; green, 166; blue, 35 }  ,fill opacity=0.5 ][line width=0.75]  (364,145.25) .. controls (364,142.35) and (366.46,140) .. (369.5,140) .. controls (372.54,140) and (375,142.35) .. (375,145.25) .. controls (375,148.15) and (372.54,150.5) .. (369.5,150.5) .. controls (366.46,150.5) and (364,148.15) .. (364,145.25) -- cycle ;
\draw  [fill={rgb, 255:red, 245; green, 166; blue, 35 }  ,fill opacity=0.5 ][line width=0.75]  (125,210.25) .. controls (125,207.35) and (127.46,205) .. (130.5,205) .. controls (133.54,205) and (136,207.35) .. (136,210.25) .. controls (136,213.15) and (133.54,215.5) .. (130.5,215.5) .. controls (127.46,215.5) and (125,213.15) .. (125,210.25) -- cycle ;
\draw  [fill={rgb, 255:red, 245; green, 166; blue, 35 }  ,fill opacity=0.5 ][line width=0.75]  (366,296.25) .. controls (366,293.35) and (368.46,291) .. (371.5,291) .. controls (374.54,291) and (377,293.35) .. (377,296.25) .. controls (377,299.15) and (374.54,301.5) .. (371.5,301.5) .. controls (368.46,301.5) and (366,299.15) .. (366,296.25) -- cycle ;
\draw  [fill={rgb, 255:red, 155; green, 155; blue, 155 }  ,fill opacity=0.5 ][line width=0.75]  (599,281.25) .. controls (599,278.35) and (601.46,276) .. (604.5,276) .. controls (607.54,276) and (610,278.35) .. (610,281.25) .. controls (610,284.15) and (607.54,286.5) .. (604.5,286.5) .. controls (601.46,286.5) and (599,284.15) .. (599,281.25) -- cycle ;
\draw    (604.5,281.25) -- (130.5,210.25) ;
\draw [shift={(362.56,245.01)}, rotate = 8.52] [fill={rgb, 255:red, 0; green, 0; blue, 0 }  ][line width=0.08]  [draw opacity=0] (10.72,-5.15) -- (0,0) -- (10.72,5.15) -- (7.12,0) -- cycle    ;
\draw    (130.5,210.25) -- (369.5,145.25) ;
\draw [shift={(254.82,176.44)}, rotate = 164.79] [fill={rgb, 255:red, 0; green, 0; blue, 0 }  ][line width=0.08]  [draw opacity=0] (10.72,-5.15) -- (0,0) -- (10.72,5.15) -- (7.12,0) -- cycle    ;
\draw    (369.5,145.25) -- (129.5,122.25) ;
\draw [shift={(244.52,133.27)}, rotate = 5.47] [fill={rgb, 255:red, 0; green, 0; blue, 0 }  ][line width=0.08]  [draw opacity=0] (10.72,-5.15) -- (0,0) -- (10.72,5.15) -- (7.12,0) -- cycle    ;
\draw    (604.5,281.25) -- (129.5,122.25) ;
\draw [shift={(373.16,203.81)}, rotate = 198.51] [fill={rgb, 255:red, 0; green, 0; blue, 0 }  ][line width=0.08]  [draw opacity=0] (10.72,-5.15) -- (0,0) -- (10.72,5.15) -- (7.12,0) -- cycle    ;
\draw  [fill={rgb, 255:red, 155; green, 155; blue, 155 }  ,fill opacity=0.5 ][line width=0.75]  (363,224.25) .. controls (363,221.35) and (365.46,219) .. (368.5,219) .. controls (371.54,219) and (374,221.35) .. (374,224.25) .. controls (374,227.15) and (371.54,229.5) .. (368.5,229.5) .. controls (365.46,229.5) and (363,227.15) .. (363,224.25) -- cycle ;
\draw  [dash pattern={on 4.5pt off 4.5pt}]  (129.5,122.25) -- (368.5,224.25) ;
\draw  [dash pattern={on 4.5pt off 4.5pt}]  (130.5,210.25) -- (368.5,224.25) ;
\draw  [dash pattern={on 4.5pt off 4.5pt}]  (368.5,224.25) -- (604.5,281.25) ;
\draw  [fill={rgb, 255:red, 155; green, 155; blue, 155 }  ,fill opacity=0.5 ][line width=0.75]  (594.5,164.25) .. controls (594.5,161.35) and (596.96,159) .. (600,159) .. controls (603.04,159) and (605.5,161.35) .. (605.5,164.25) .. controls (605.5,167.15) and (603.04,169.5) .. (600,169.5) .. controls (596.96,169.5) and (594.5,167.15) .. (594.5,164.25) -- cycle ;
\draw  [dash pattern={on 4.5pt off 4.5pt}]  (369.5,145.25) -- (600,164.25) ;
\draw  [dash pattern={on 4.5pt off 4.5pt}]  (368.5,224.25) -- (600,164.25) ;

\draw (124,75.4) node [anchor=north west][inner sep=0.75pt]    {$i_{0}$};
\draw (364,75.4) node [anchor=north west][inner sep=0.75pt]    {$i_{1}$};
\draw (597,74.4) node [anchor=north west][inner sep=0.75pt]    {$i_{2}$};
\draw (131.5,130.9) node [anchor=north west][inner sep=0.75pt]    {$v$};
\draw (373.5,304.9) node [anchor=north west][inner sep=0.75pt]    {$u$};
\draw (132.5,218.9) node [anchor=north west][inner sep=0.75pt]    {$u_{1}$};
\draw (371.5,153.9) node [anchor=north west][inner sep=0.75pt]    {$u_{0}$};
\draw (606.5,289.9) node [anchor=north west][inner sep=0.75pt]    {$x$};
\draw (376,227.65) node [anchor=north west][inner sep=0.75pt]    {$y$};
\draw (607.5,167.65) node [anchor=north west][inner sep=0.75pt]    {$z$};

\end{tikzpicture}
        \caption{$P_4=vxu_1u_0v$}
    \end{subfigure}
    \begin{subfigure}[b]{0.45\textwidth}
        \centering
        \tikzset{every picture/.style={line width=0.75pt}} 

\begin{tikzpicture}[x=0.75pt,y=0.75pt,yscale=-0.45,xscale=0.45]

\draw  [fill={rgb, 255:red, 245; green, 166; blue, 35 }  ,fill opacity=0.5 ][line width=0.75]  (124,122.25) .. controls (124,119.35) and (126.46,117) .. (129.5,117) .. controls (132.54,117) and (135,119.35) .. (135,122.25) .. controls (135,125.15) and (132.54,127.5) .. (129.5,127.5) .. controls (126.46,127.5) and (124,125.15) .. (124,122.25) -- cycle ;
\draw  [fill={rgb, 255:red, 245; green, 166; blue, 35 }  ,fill opacity=0.5 ][line width=0.75]  (364,145.25) .. controls (364,142.35) and (366.46,140) .. (369.5,140) .. controls (372.54,140) and (375,142.35) .. (375,145.25) .. controls (375,148.15) and (372.54,150.5) .. (369.5,150.5) .. controls (366.46,150.5) and (364,148.15) .. (364,145.25) -- cycle ;
\draw  [fill={rgb, 255:red, 245; green, 166; blue, 35 }  ,fill opacity=0.5 ][line width=0.75]  (125,210.25) .. controls (125,207.35) and (127.46,205) .. (130.5,205) .. controls (133.54,205) and (136,207.35) .. (136,210.25) .. controls (136,213.15) and (133.54,215.5) .. (130.5,215.5) .. controls (127.46,215.5) and (125,213.15) .. (125,210.25) -- cycle ;
\draw  [fill={rgb, 255:red, 155; green, 155; blue, 155 }  ,fill opacity=0.5 ][line width=0.75]  (597,240.25) .. controls (597,237.35) and (599.46,235) .. (602.5,235) .. controls (605.54,235) and (608,237.35) .. (608,240.25) .. controls (608,243.15) and (605.54,245.5) .. (602.5,245.5) .. controls (599.46,245.5) and (597,243.15) .. (597,240.25) -- cycle ;
\draw    (130.5,210.25) -- (369.5,145.25) ;
\draw [shift={(254.82,176.44)}, rotate = 164.79] [fill={rgb, 255:red, 0; green, 0; blue, 0 }  ][line width=0.08]  [draw opacity=0] (10.72,-5.15) -- (0,0) -- (10.72,5.15) -- (7.12,0) -- cycle    ;
\draw    (602.5,240.25) -- (129.5,122.25) ;
\draw [shift={(372.31,182.82)}, rotate = 194.01] [fill={rgb, 255:red, 0; green, 0; blue, 0 }  ][line width=0.08]  [draw opacity=0] (10.72,-5.15) -- (0,0) -- (10.72,5.15) -- (7.12,0) -- cycle    ;
\draw  [fill={rgb, 255:red, 155; green, 155; blue, 155 }  ,fill opacity=0.5 ][line width=0.75]  (366,207.25) .. controls (366,204.35) and (368.46,202) .. (371.5,202) .. controls (374.54,202) and (377,204.35) .. (377,207.25) .. controls (377,210.15) and (374.54,212.5) .. (371.5,212.5) .. controls (368.46,212.5) and (366,210.15) .. (366,207.25) -- cycle ;
\draw  [dash pattern={on 4.5pt off 4.5pt}]  (129.5,122.25) -- (371.5,207.25) ;
\draw  [dash pattern={on 4.5pt off 4.5pt}]  (130.5,210.25) -- (371.5,207.25) ;
\draw  [dash pattern={on 4.5pt off 4.5pt}]  (371.5,207.25) -- (602.5,240.25) ;
\draw  [fill={rgb, 255:red, 155; green, 155; blue, 155 }  ,fill opacity=0.5 ][line width=0.75]  (594.5,164.25) .. controls (594.5,161.35) and (596.96,159) .. (600,159) .. controls (603.04,159) and (605.5,161.35) .. (605.5,164.25) .. controls (605.5,167.15) and (603.04,169.5) .. (600,169.5) .. controls (596.96,169.5) and (594.5,167.15) .. (594.5,164.25) -- cycle ;
\draw  [dash pattern={on 4.5pt off 4.5pt}]  (371.5,207.25) -- (600,164.25) ;
\draw    (369.5,145.25) -- (600,164.25) ;
\draw [shift={(489.73,155.16)}, rotate = 184.71] [fill={rgb, 255:red, 0; green, 0; blue, 0 }  ][line width=0.08]  [draw opacity=0] (10.72,-5.15) -- (0,0) -- (10.72,5.15) -- (7.12,0) -- cycle    ;
\draw    (129.5,122.25) .. controls (169.5,92.25) and (579,125.5) .. (600,164.25) ;
\draw [shift={(361.97,116.65)}, rotate = 4.71] [fill={rgb, 255:red, 0; green, 0; blue, 0 }  ][line width=0.08]  [draw opacity=0] (10.72,-5.15) -- (0,0) -- (10.72,5.15) -- (7.12,0) -- cycle    ;
\draw    (130.5,210.25) .. controls (194,239.5) and (562.5,270.25) .. (602.5,240.25) ;
\draw [shift={(359.54,246.28)}, rotate = 4.44] [fill={rgb, 255:red, 0; green, 0; blue, 0 }  ][line width=0.08]  [draw opacity=0] (10.72,-5.15) -- (0,0) -- (10.72,5.15) -- (7.12,0) -- cycle    ;

\draw (124,75.4) node [anchor=north west][inner sep=0.75pt]    {$i_{0}$};
\draw (364,75.4) node [anchor=north west][inner sep=0.75pt]    {$i_{1}$};
\draw (597,74.4) node [anchor=north west][inner sep=0.75pt]    {$i_{2}$};
\draw (131.5,130.9) node [anchor=north west][inner sep=0.75pt]    {$v$};
\draw (132.5,218.9) node [anchor=north west][inner sep=0.75pt]    {$u_{1}$};
\draw (371.5,153.9) node [anchor=north west][inner sep=0.75pt]    {$u_{0}$};
\draw (606.5,255) node [anchor=north west][inner sep=0.75pt]    {$x$};
\draw (373.5,215.9) node [anchor=north west][inner sep=0.75pt]    {$y$};
\draw (607.5,167.65) node [anchor=north west][inner sep=0.75pt]    {$z$};

\end{tikzpicture}
        \caption{$P_5=vxu_1u_0zv$}
    \end{subfigure}
    \begin{subfigure}[b]{0.45\textwidth}
        \centering
        \tikzset{every picture/.style={line width=0.75pt}} 

\begin{tikzpicture}[x=0.75pt,y=0.75pt,yscale=-0.45,xscale=0.45]

\draw  [fill={rgb, 255:red, 245; green, 166; blue, 35 }  ,fill opacity=0.5 ][line width=0.75]  (124,122.25) .. controls (124,119.35) and (126.46,117) .. (129.5,117) .. controls (132.54,117) and (135,119.35) .. (135,122.25) .. controls (135,125.15) and (132.54,127.5) .. (129.5,127.5) .. controls (126.46,127.5) and (124,125.15) .. (124,122.25) -- cycle ;
\draw  [fill={rgb, 255:red, 245; green, 166; blue, 35 }  ,fill opacity=0.5 ][line width=0.75]  (364,145.25) .. controls (364,142.35) and (366.46,140) .. (369.5,140) .. controls (372.54,140) and (375,142.35) .. (375,145.25) .. controls (375,148.15) and (372.54,150.5) .. (369.5,150.5) .. controls (366.46,150.5) and (364,148.15) .. (364,145.25) -- cycle ;
\draw  [fill={rgb, 255:red, 245; green, 166; blue, 35 }  ,fill opacity=0.5 ][line width=0.75]  (125,210.25) .. controls (125,207.35) and (127.46,205) .. (130.5,205) .. controls (133.54,205) and (136,207.35) .. (136,210.25) .. controls (136,213.15) and (133.54,215.5) .. (130.5,215.5) .. controls (127.46,215.5) and (125,213.15) .. (125,210.25) -- cycle ;
\draw  [fill={rgb, 255:red, 155; green, 155; blue, 155 }  ,fill opacity=0.5 ][line width=0.75]  (597,240.25) .. controls (597,237.35) and (599.46,235) .. (602.5,235) .. controls (605.54,235) and (608,237.35) .. (608,240.25) .. controls (608,243.15) and (605.54,245.5) .. (602.5,245.5) .. controls (599.46,245.5) and (597,243.15) .. (597,240.25) -- cycle ;
\draw    (130.5,210.25) -- (600,164.25) ;
\draw [shift={(370.23,186.76)}, rotate = 174.4] [fill={rgb, 255:red, 0; green, 0; blue, 0 }  ][line width=0.08]  [draw opacity=0] (10.72,-5.15) -- (0,0) -- (10.72,5.15) -- (7.12,0) -- cycle    ;
\draw    (602.5,240.25) -- (129.5,122.25) ;
\draw [shift={(372.31,182.82)}, rotate = 194.01] [fill={rgb, 255:red, 0; green, 0; blue, 0 }  ][line width=0.08]  [draw opacity=0] (10.72,-5.15) -- (0,0) -- (10.72,5.15) -- (7.12,0) -- cycle    ;
\draw  [fill={rgb, 255:red, 155; green, 155; blue, 155 }  ,fill opacity=0.5 ][line width=0.75]  (366,207.25) .. controls (366,204.35) and (368.46,202) .. (371.5,202) .. controls (374.54,202) and (377,204.35) .. (377,207.25) .. controls (377,210.15) and (374.54,212.5) .. (371.5,212.5) .. controls (368.46,212.5) and (366,210.15) .. (366,207.25) -- cycle ;
\draw  [dash pattern={on 4.5pt off 4.5pt}]  (129.5,122.25) -- (371.5,207.25) ;
\draw  [dash pattern={on 4.5pt off 4.5pt}]  (130.5,210.25) -- (371.5,207.25) ;
\draw  [dash pattern={on 4.5pt off 4.5pt}]  (371.5,207.25) -- (602.5,240.25) ;
\draw  [fill={rgb, 255:red, 155; green, 155; blue, 155 }  ,fill opacity=0.5 ][line width=0.75]  (594.5,164.25) .. controls (594.5,161.35) and (596.96,159) .. (600,159) .. controls (603.04,159) and (605.5,161.35) .. (605.5,164.25) .. controls (605.5,167.15) and (603.04,169.5) .. (600,169.5) .. controls (596.96,169.5) and (594.5,167.15) .. (594.5,164.25) -- cycle ;
\draw  [dash pattern={on 4.5pt off 4.5pt}]  (371.5,207.25) -- (600,164.25) ;
\draw    (129.5,122.25) .. controls (169.5,92.25) and (579,125.5) .. (600,164.25) ;
\draw [shift={(361.97,116.65)}, rotate = 4.71] [fill={rgb, 255:red, 0; green, 0; blue, 0 }  ][line width=0.08]  [draw opacity=0] (10.72,-5.15) -- (0,0) -- (10.72,5.15) -- (7.12,0) -- cycle    ;
\draw    (130.5,210.25) .. controls (194,239.5) and (562.5,270.25) .. (602.5,240.25) ;
\draw [shift={(359.54,246.28)}, rotate = 4.44] [fill={rgb, 255:red, 0; green, 0; blue, 0 }  ][line width=0.08]  [draw opacity=0] (10.72,-5.15) -- (0,0) -- (10.72,5.15) -- (7.12,0) -- cycle    ;

\draw (124,75.4) node [anchor=north west][inner sep=0.75pt]    {$i_{0}$};
\draw (364,75.4) node [anchor=north west][inner sep=0.75pt]    {$i_{1}$};
\draw (597,74.4) node [anchor=north west][inner sep=0.75pt]    {$i_{2}$};
\draw (131.5,130.9) node [anchor=north west][inner sep=0.75pt]    {$v$};
\draw (132.5,218.9) node [anchor=north west][inner sep=0.75pt]    {$u_{1}$};
\draw (371.5,153.9) node [anchor=north west][inner sep=0.75pt]    {$u_{0}$};
\draw (606.5,255) node [anchor=north west][inner sep=0.75pt]    {$x$};
\draw (373.5,215.9) node [anchor=north west][inner sep=0.75pt]    {$y$};
\draw (607.5,167.65) node [anchor=north west][inner sep=0.75pt]    {$z$};

\end{tikzpicture}
        \caption{$P_6=vxu_1zv$}
    \end{subfigure}
    \begin{subfigure}[b]{0.45\textwidth}
        \centering
        \tikzset{every picture/.style={line width=0.75pt}} 

\begin{tikzpicture}[x=0.75pt,y=0.75pt,yscale=-0.45,xscale=0.45]

\draw  [fill={rgb, 255:red, 245; green, 166; blue, 35 }  ,fill opacity=0.5 ][line width=0.75]  (124,122.25) .. controls (124,119.35) and (126.46,117) .. (129.5,117) .. controls (132.54,117) and (135,119.35) .. (135,122.25) .. controls (135,125.15) and (132.54,127.5) .. (129.5,127.5) .. controls (126.46,127.5) and (124,125.15) .. (124,122.25) -- cycle ;
\draw  [fill={rgb, 255:red, 245; green, 166; blue, 35 }  ,fill opacity=0.5 ][line width=0.75]  (125,210.25) .. controls (125,207.35) and (127.46,205) .. (130.5,205) .. controls (133.54,205) and (136,207.35) .. (136,210.25) .. controls (136,213.15) and (133.54,215.5) .. (130.5,215.5) .. controls (127.46,215.5) and (125,213.15) .. (125,210.25) -- cycle ;
\draw  [fill={rgb, 255:red, 155; green, 155; blue, 155 }  ,fill opacity=0.5 ][line width=0.75]  (597,240.25) .. controls (597,237.35) and (599.46,235) .. (602.5,235) .. controls (605.54,235) and (608,237.35) .. (608,240.25) .. controls (608,243.15) and (605.54,245.5) .. (602.5,245.5) .. controls (599.46,245.5) and (597,243.15) .. (597,240.25) -- cycle ;
\draw    (602.5,240.25) -- (129.5,122.25) ;
\draw [shift={(372.31,182.82)}, rotate = 194.01] [fill={rgb, 255:red, 0; green, 0; blue, 0 }  ][line width=0.08]  [draw opacity=0] (10.72,-5.15) -- (0,0) -- (10.72,5.15) -- (7.12,0) -- cycle    ;
\draw  [fill={rgb, 255:red, 155; green, 155; blue, 155 }  ,fill opacity=0.5 ][line width=0.75]  (366,207.25) .. controls (366,204.35) and (368.46,202) .. (371.5,202) .. controls (374.54,202) and (377,204.35) .. (377,207.25) .. controls (377,210.15) and (374.54,212.5) .. (371.5,212.5) .. controls (368.46,212.5) and (366,210.15) .. (366,207.25) -- cycle ;
\draw  [dash pattern={on 4.5pt off 4.5pt}]  (129.5,122.25) -- (371.5,207.25) ;
\draw    (130.5,210.25) -- (371.5,207.25) ;
\draw [shift={(256,208.69)}, rotate = 179.29] [fill={rgb, 255:red, 0; green, 0; blue, 0 }  ][line width=0.08]  [draw opacity=0] (10.72,-5.15) -- (0,0) -- (10.72,5.15) -- (7.12,0) -- cycle    ;
\draw  [dash pattern={on 4.5pt off 4.5pt}]  (371.5,207.25) -- (602.5,240.25) ;
\draw  [fill={rgb, 255:red, 155; green, 155; blue, 155 }  ,fill opacity=0.5 ][line width=0.75]  (594.5,164.25) .. controls (594.5,161.35) and (596.96,159) .. (600,159) .. controls (603.04,159) and (605.5,161.35) .. (605.5,164.25) .. controls (605.5,167.15) and (603.04,169.5) .. (600,169.5) .. controls (596.96,169.5) and (594.5,167.15) .. (594.5,164.25) -- cycle ;
\draw    (371.5,207.25) -- (600,164.25) ;
\draw [shift={(490.66,184.83)}, rotate = 169.34] [fill={rgb, 255:red, 0; green, 0; blue, 0 }  ][line width=0.08]  [draw opacity=0] (10.72,-5.15) -- (0,0) -- (10.72,5.15) -- (7.12,0) -- cycle    ;
\draw    (129.5,122.25) .. controls (169.5,92.25) and (579,125.5) .. (600,164.25) ;
\draw [shift={(361.97,116.65)}, rotate = 4.71] [fill={rgb, 255:red, 0; green, 0; blue, 0 }  ][line width=0.08]  [draw opacity=0] (10.72,-5.15) -- (0,0) -- (10.72,5.15) -- (7.12,0) -- cycle    ;
\draw    (130.5,210.25) .. controls (194,239.5) and (562.5,270.25) .. (602.5,240.25) ;
\draw [shift={(359.54,246.28)}, rotate = 4.44] [fill={rgb, 255:red, 0; green, 0; blue, 0 }  ][line width=0.08]  [draw opacity=0] (10.72,-5.15) -- (0,0) -- (10.72,5.15) -- (7.12,0) -- cycle    ;

\draw (124,75.4) node [anchor=north west][inner sep=0.75pt]    {$i_{0}$};
\draw (364,75.4) node [anchor=north west][inner sep=0.75pt]    {$i_{1}$};
\draw (597,74.4) node [anchor=north west][inner sep=0.75pt]    {$i_{2}$};
\draw (131.5,130.9) node [anchor=north west][inner sep=0.75pt]    {$v$};
\draw (132.5,218.9) node [anchor=north west][inner sep=0.75pt]    {$u_{1}$};
\draw (610,243.65) node [anchor=north west][inner sep=0.75pt]    {$x$};
\draw (373.5,215.9) node [anchor=north west][inner sep=0.75pt]    {$y$};
\draw (607.5,167.65) node [anchor=north west][inner sep=0.75pt]    {$z$};

\end{tikzpicture}
        \caption{$P_7=vxu_1yzv$}
    \end{subfigure}
    \begin{subfigure}[b]{0.45\textwidth}
        \centering
        \tikzset{every picture/.style={line width=0.75pt}} 

\begin{tikzpicture}[x=0.75pt,y=0.75pt,yscale=-0.45,xscale=0.45]

\draw  [fill={rgb, 255:red, 245; green, 166; blue, 35 }  ,fill opacity=0.5 ][line width=0.75]  (124,122.25) .. controls (124,119.35) and (126.46,117) .. (129.5,117) .. controls (132.54,117) and (135,119.35) .. (135,122.25) .. controls (135,125.15) and (132.54,127.5) .. (129.5,127.5) .. controls (126.46,127.5) and (124,125.15) .. (124,122.25) -- cycle ;
\draw  [fill={rgb, 255:red, 245; green, 166; blue, 35 }  ,fill opacity=0.5 ][line width=0.75]  (125,210.25) .. controls (125,207.35) and (127.46,205) .. (130.5,205) .. controls (133.54,205) and (136,207.35) .. (136,210.25) .. controls (136,213.15) and (133.54,215.5) .. (130.5,215.5) .. controls (127.46,215.5) and (125,213.15) .. (125,210.25) -- cycle ;
\draw  [fill={rgb, 255:red, 155; green, 155; blue, 155 }  ,fill opacity=0.5 ][line width=0.75]  (599,222.25) .. controls (599,219.35) and (601.46,217) .. (604.5,217) .. controls (607.54,217) and (610,219.35) .. (610,222.25) .. controls (610,225.15) and (607.54,227.5) .. (604.5,227.5) .. controls (601.46,227.5) and (599,225.15) .. (599,222.25) -- cycle ;
\draw  [fill={rgb, 255:red, 155; green, 155; blue, 155 }  ,fill opacity=0.5 ][line width=0.75]  (368.5,176.5) .. controls (368.5,173.6) and (370.96,171.25) .. (374,171.25) .. controls (377.04,171.25) and (379.5,173.6) .. (379.5,176.5) .. controls (379.5,179.4) and (377.04,181.75) .. (374,181.75) .. controls (370.96,181.75) and (368.5,179.4) .. (368.5,176.5) -- cycle ;
\draw    (129.5,122.25) -- (374,176.5) ;
\draw [shift={(256.63,150.46)}, rotate = 192.51] [fill={rgb, 255:red, 0; green, 0; blue, 0 }  ][line width=0.08]  [draw opacity=0] (10.72,-5.15) -- (0,0) -- (10.72,5.15) -- (7.12,0) -- cycle    ;
\draw    (130.5,210.25) -- (374,176.5) ;
\draw [shift={(257.2,192.69)}, rotate = 172.11] [fill={rgb, 255:red, 0; green, 0; blue, 0 }  ][line width=0.08]  [draw opacity=0] (10.72,-5.15) -- (0,0) -- (10.72,5.15) -- (7.12,0) -- cycle    ;
\draw    (374,176.5) -- (604.5,222.25) ;
\draw [shift={(494.15,200.35)}, rotate = 191.23] [fill={rgb, 255:red, 0; green, 0; blue, 0 }  ][line width=0.08]  [draw opacity=0] (10.72,-5.15) -- (0,0) -- (10.72,5.15) -- (7.12,0) -- cycle    ;
\draw  [fill={rgb, 255:red, 155; green, 155; blue, 155 }  ,fill opacity=0.5 ][line width=0.75]  (599.5,132.25) .. controls (599.5,129.35) and (601.96,127) .. (605,127) .. controls (608.04,127) and (610.5,129.35) .. (610.5,132.25) .. controls (610.5,135.15) and (608.04,137.5) .. (605,137.5) .. controls (601.96,137.5) and (599.5,135.15) .. (599.5,132.25) -- cycle ;
\draw    (374,176.5) -- (605,132.25) ;
\draw [shift={(494.41,153.43)}, rotate = 169.16] [fill={rgb, 255:red, 0; green, 0; blue, 0 }  ][line width=0.08]  [draw opacity=0] (10.72,-5.15) -- (0,0) -- (10.72,5.15) -- (7.12,0) -- cycle    ;
\draw    (605,132.25) -- (129.5,122.25) ;
\draw [shift={(362.25,127.14)}, rotate = 1.2] [fill={rgb, 255:red, 0; green, 0; blue, 0 }  ][line width=0.08]  [draw opacity=0] (10.72,-5.15) -- (0,0) -- (10.72,5.15) -- (7.12,0) -- cycle    ;
\draw    (604.5,222.25) -- (130.5,210.25) ;
\draw [shift={(362.5,216.12)}, rotate = 1.45] [fill={rgb, 255:red, 0; green, 0; blue, 0 }  ][line width=0.08]  [draw opacity=0] (10.72,-5.15) -- (0,0) -- (10.72,5.15) -- (7.12,0) -- cycle    ;

\draw (124,75.4) node [anchor=north west][inner sep=0.75pt]    {$i_{0}$};
\draw (364,75.4) node [anchor=north west][inner sep=0.75pt]    {$i_{1}$};
\draw (598,79.4) node [anchor=north west][inner sep=0.75pt]    {$i_{2}$};
\draw (131.5,130.9) node [anchor=north west][inner sep=0.75pt]    {$v$};
\draw (132.5,218.9) node [anchor=north west][inner sep=0.75pt]    {$u_{1}$};
\draw (612,225.65) node [anchor=north west][inner sep=0.75pt]    {$x$};
\draw (381.5,179.9) node [anchor=north west][inner sep=0.75pt]    {$y$};
\draw (612.5,135.65) node [anchor=north west][inner sep=0.75pt]    {$z$};

\end{tikzpicture}
        \caption{$P_8=vyxu_1yzv$}
    \end{subfigure}
    \begin{subfigure}[b]{0.45\textwidth}
        \centering
        \tikzset{every picture/.style={line width=0.75pt}} 

\begin{tikzpicture}[x=0.75pt,y=0.75pt,yscale=-0.45,xscale=0.45]

\draw  [fill={rgb, 255:red, 245; green, 166; blue, 35 }  ,fill opacity=0.5 ][line width=0.75]  (124,122.25) .. controls (124,119.35) and (126.46,117) .. (129.5,117) .. controls (132.54,117) and (135,119.35) .. (135,122.25) .. controls (135,125.15) and (132.54,127.5) .. (129.5,127.5) .. controls (126.46,127.5) and (124,125.15) .. (124,122.25) -- cycle ;
\draw  [fill={rgb, 255:red, 245; green, 166; blue, 35 }  ,fill opacity=0.5 ][line width=0.75]  (125,210.25) .. controls (125,207.35) and (127.46,205) .. (130.5,205) .. controls (133.54,205) and (136,207.35) .. (136,210.25) .. controls (136,213.15) and (133.54,215.5) .. (130.5,215.5) .. controls (127.46,215.5) and (125,213.15) .. (125,210.25) -- cycle ;
\draw  [fill={rgb, 255:red, 155; green, 155; blue, 155 }  ,fill opacity=0.5 ][line width=0.75]  (599,222.25) .. controls (599,219.35) and (601.46,217) .. (604.5,217) .. controls (607.54,217) and (610,219.35) .. (610,222.25) .. controls (610,225.15) and (607.54,227.5) .. (604.5,227.5) .. controls (601.46,227.5) and (599,225.15) .. (599,222.25) -- cycle ;
\draw  [fill={rgb, 255:red, 155; green, 155; blue, 155 }  ,fill opacity=0.5 ][line width=0.75]  (368.5,176.5) .. controls (368.5,173.6) and (370.96,171.25) .. (374,171.25) .. controls (377.04,171.25) and (379.5,173.6) .. (379.5,176.5) .. controls (379.5,179.4) and (377.04,181.75) .. (374,181.75) .. controls (370.96,181.75) and (368.5,179.4) .. (368.5,176.5) -- cycle ;
\draw    (129.5,122.25) -- (374,176.5) ;
\draw [shift={(256.63,150.46)}, rotate = 192.51] [fill={rgb, 255:red, 0; green, 0; blue, 0 }  ][line width=0.08]  [draw opacity=0] (10.72,-5.15) -- (0,0) -- (10.72,5.15) -- (7.12,0) -- cycle    ;
\draw  [fill={rgb, 255:red, 155; green, 155; blue, 155 }  ,fill opacity=0.5 ][line width=0.75]  (599.5,132.25) .. controls (599.5,129.35) and (601.96,127) .. (605,127) .. controls (608.04,127) and (610.5,129.35) .. (610.5,132.25) .. controls (610.5,135.15) and (608.04,137.5) .. (605,137.5) .. controls (601.96,137.5) and (599.5,135.15) .. (599.5,132.25) -- cycle ;
\draw    (374,176.5) -- (605,132.25) ;
\draw [shift={(494.41,153.43)}, rotate = 169.16] [fill={rgb, 255:red, 0; green, 0; blue, 0 }  ][line width=0.08]  [draw opacity=0] (10.72,-5.15) -- (0,0) -- (10.72,5.15) -- (7.12,0) -- cycle    ;
\draw    (605,132.25) -- (129.5,122.25) ;
\draw [shift={(362.25,127.14)}, rotate = 1.2] [fill={rgb, 255:red, 0; green, 0; blue, 0 }  ][line width=0.08]  [draw opacity=0] (10.72,-5.15) -- (0,0) -- (10.72,5.15) -- (7.12,0) -- cycle    ;
\draw    (130.5,210.25) .. controls (170.5,180.25) and (314,170.5) .. (374,176.5) ;
\draw [shift={(254.94,178.8)}, rotate = 174.13] [fill={rgb, 255:red, 0; green, 0; blue, 0 }  ][line width=0.08]  [draw opacity=0] (10.72,-5.15) -- (0,0) -- (10.72,5.15) -- (7.12,0) -- cycle    ;
\draw    (130.5,210.25) .. controls (176,229.5) and (334,206.5) .. (374,176.5) ;
\draw [shift={(249.79,212.47)}, rotate = 352.16] [fill={rgb, 255:red, 0; green, 0; blue, 0 }  ][line width=0.08]  [draw opacity=0] (10.72,-5.15) -- (0,0) -- (10.72,5.15) -- (7.12,0) -- cycle    ;

\draw (124,75.4) node [anchor=north west][inner sep=0.75pt]    {$i_{0}$};
\draw (364,75.4) node [anchor=north west][inner sep=0.75pt]    {$i_{1}$};
\draw (598,79.4) node [anchor=north west][inner sep=0.75pt]    {$i_{2}$};
\draw (131.5,130.9) node [anchor=north west][inner sep=0.75pt]    {$v$};
\draw (132.5,218.9) node [anchor=north west][inner sep=0.75pt]    {$u_{1}$};
\draw (612,225.65) node [anchor=north west][inner sep=0.75pt]    {$x$};
\draw (381.5,179.9) node [anchor=north west][inner sep=0.75pt]    {$y$};
\draw (612.5,135.65) node [anchor=north west][inner sep=0.75pt]    {$z$};

\end{tikzpicture}
        \caption{$P_9=vyu_1yzv$}
    \end{subfigure}
    \begin{subfigure}[b]{0.45\textwidth}
        \centering
        \tikzset{every picture/.style={line width=0.75pt}} 

\begin{tikzpicture}[x=0.75pt,y=0.75pt,yscale=-0.45,xscale=0.45]

\draw  [fill={rgb, 255:red, 245; green, 166; blue, 35 }  ,fill opacity=0.5 ][line width=0.75]  (124,122.25) .. controls (124,119.35) and (126.46,117) .. (129.5,117) .. controls (132.54,117) and (135,119.35) .. (135,122.25) .. controls (135,125.15) and (132.54,127.5) .. (129.5,127.5) .. controls (126.46,127.5) and (124,125.15) .. (124,122.25) -- cycle ;
\draw  [fill={rgb, 255:red, 245; green, 166; blue, 35 }  ,fill opacity=0.5 ][line width=0.75]  (125,210.25) .. controls (125,207.35) and (127.46,205) .. (130.5,205) .. controls (133.54,205) and (136,207.35) .. (136,210.25) .. controls (136,213.15) and (133.54,215.5) .. (130.5,215.5) .. controls (127.46,215.5) and (125,213.15) .. (125,210.25) -- cycle ;
\draw  [fill={rgb, 255:red, 155; green, 155; blue, 155 }  ,fill opacity=0.5 ][line width=0.75]  (368.5,176.5) .. controls (368.5,173.6) and (370.96,171.25) .. (374,171.25) .. controls (377.04,171.25) and (379.5,173.6) .. (379.5,176.5) .. controls (379.5,179.4) and (377.04,181.75) .. (374,181.75) .. controls (370.96,181.75) and (368.5,179.4) .. (368.5,176.5) -- cycle ;
\draw  [fill={rgb, 255:red, 155; green, 155; blue, 155 }  ,fill opacity=0.5 ][line width=0.75]  (599.5,132.25) .. controls (599.5,129.35) and (601.96,127) .. (605,127) .. controls (608.04,127) and (610.5,129.35) .. (610.5,132.25) .. controls (610.5,135.15) and (608.04,137.5) .. (605,137.5) .. controls (601.96,137.5) and (599.5,135.15) .. (599.5,132.25) -- cycle ;
\draw    (130.5,210.25) .. controls (170.5,180.25) and (314,170.5) .. (374,176.5) ;
\draw [shift={(254.94,178.8)}, rotate = 174.13] [fill={rgb, 255:red, 0; green, 0; blue, 0 }  ][line width=0.08]  [draw opacity=0] (10.72,-5.15) -- (0,0) -- (10.72,5.15) -- (7.12,0) -- cycle    ;
\draw    (130.5,210.25) .. controls (176,229.5) and (334,206.5) .. (374,176.5) ;
\draw [shift={(249.79,212.47)}, rotate = 352.16] [fill={rgb, 255:red, 0; green, 0; blue, 0 }  ][line width=0.08]  [draw opacity=0] (10.72,-5.15) -- (0,0) -- (10.72,5.15) -- (7.12,0) -- cycle    ;
\draw    (129.5,122.25) .. controls (162,147.5) and (319,167.5) .. (368.5,176.5) ;
\draw [shift={(253.04,157.39)}, rotate = 190.36] [fill={rgb, 255:red, 0; green, 0; blue, 0 }  ][line width=0.08]  [draw opacity=0] (10.72,-5.15) -- (0,0) -- (10.72,5.15) -- (7.12,0) -- cycle    ;
\draw    (129.5,122.25) .. controls (169.5,92.25) and (359,146.5) .. (374,176.5) ;
\draw [shift={(251.33,124.52)}, rotate = 13.29] [fill={rgb, 255:red, 0; green, 0; blue, 0 }  ][line width=0.08]  [draw opacity=0] (10.72,-5.15) -- (0,0) -- (10.72,5.15) -- (7.12,0) -- cycle    ;

\draw (124,75.4) node [anchor=north west][inner sep=0.75pt]    {$i_{0}$};
\draw (364,75.4) node [anchor=north west][inner sep=0.75pt]    {$i_{1}$};
\draw (598,79.4) node [anchor=north west][inner sep=0.75pt]    {$i_{2}$};
\draw (131.5,130.9) node [anchor=north west][inner sep=0.75pt]    {$v$};
\draw (132.5,218.9) node [anchor=north west][inner sep=0.75pt]    {$u_{1}$};
\draw (381.5,179.9) node [anchor=north west][inner sep=0.75pt]    {$y$};
\draw (612.5,135.65) node [anchor=north west][inner sep=0.75pt]    {$z$};

\end{tikzpicture}
        \caption{$P_{10}=vyu_1yv$}
    \end{subfigure}
    \caption{contraction $T_{u'u}$ when $u\in \buildings^{\set{i_1}}$ and $u'\in \buildings_s^{\set{i_0}}$}
\end{figure}

\begin{figure}
    \centering
    \begin{subfigure}[b]{0.45\textwidth}
        \centering
        \tikzset{every picture/.style={line width=0.75pt}} 

\begin{tikzpicture}[x=0.75pt,y=0.75pt,yscale=-0.75,xscale=0.55]

\draw  [fill={rgb, 255:red, 245; green, 166; blue, 35 }  ,fill opacity=0.5 ][line width=0.75]  (124,122.25) .. controls (124,119.35) and (126.46,117) .. (129.5,117) .. controls (132.54,117) and (135,119.35) .. (135,122.25) .. controls (135,125.15) and (132.54,127.5) .. (129.5,127.5) .. controls (126.46,127.5) and (124,125.15) .. (124,122.25) -- cycle ;
\draw  [fill={rgb, 255:red, 245; green, 166; blue, 35 }  ,fill opacity=0.5 ][line width=0.75]  (364,145.25) .. controls (364,142.35) and (366.46,140) .. (369.5,140) .. controls (372.54,140) and (375,142.35) .. (375,145.25) .. controls (375,148.15) and (372.54,150.5) .. (369.5,150.5) .. controls (366.46,150.5) and (364,148.15) .. (364,145.25) -- cycle ;
\draw  [fill={rgb, 255:red, 245; green, 166; blue, 35 }  ,fill opacity=0.5 ][line width=0.75]  (365,218.25) .. controls (365,215.35) and (367.46,213) .. (370.5,213) .. controls (373.54,213) and (376,215.35) .. (376,218.25) .. controls (376,221.15) and (373.54,223.5) .. (370.5,223.5) .. controls (367.46,223.5) and (365,221.15) .. (365,218.25) -- cycle ;
\draw  [fill={rgb, 255:red, 245; green, 166; blue, 35 }  ,fill opacity=0.5 ][line width=0.75]  (125,210.25) .. controls (125,207.35) and (127.46,205) .. (130.5,205) .. controls (133.54,205) and (136,207.35) .. (136,210.25) .. controls (136,213.15) and (133.54,215.5) .. (130.5,215.5) .. controls (127.46,215.5) and (125,213.15) .. (125,210.25) -- cycle ;
\draw  [fill={rgb, 255:red, 245; green, 166; blue, 35 }  ,fill opacity=0.5 ][line width=0.75]  (127,296.25) .. controls (127,293.35) and (129.46,291) .. (132.5,291) .. controls (135.54,291) and (138,293.35) .. (138,296.25) .. controls (138,299.15) and (135.54,301.5) .. (132.5,301.5) .. controls (129.46,301.5) and (127,299.15) .. (127,296.25) -- cycle ;
\draw  [fill={rgb, 255:red, 245; green, 166; blue, 35 }  ,fill opacity=0.5 ][line width=0.75]  (598,295.25) .. controls (598,292.35) and (600.46,290) .. (603.5,290) .. controls (606.54,290) and (609,292.35) .. (609,295.25) .. controls (609,298.15) and (606.54,300.5) .. (603.5,300.5) .. controls (600.46,300.5) and (598,298.15) .. (598,295.25) -- cycle ;
\draw  [fill={rgb, 255:red, 155; green, 155; blue, 155 }  ,fill opacity=0.5 ][line width=0.75]  (364,271.25) .. controls (364,268.35) and (366.46,266) .. (369.5,266) .. controls (372.54,266) and (375,268.35) .. (375,271.25) .. controls (375,274.15) and (372.54,276.5) .. (369.5,276.5) .. controls (366.46,276.5) and (364,274.15) .. (364,271.25) -- cycle ;
\draw    (129.5,122.25) -- (370.5,218.25) ;
\draw [shift={(254.65,172.1)}, rotate = 201.72] [fill={rgb, 255:red, 0; green, 0; blue, 0 }  ][line width=0.08]  [draw opacity=0] (10.72,-5.15) -- (0,0) -- (10.72,5.15) -- (7.12,0) -- cycle    ;
\draw    (370.5,218.25) -- (132.5,296.25) ;
\draw [shift={(246.75,258.81)}, rotate = 341.85] [fill={rgb, 255:red, 0; green, 0; blue, 0 }  ][line width=0.08]  [draw opacity=0] (10.72,-5.15) -- (0,0) -- (10.72,5.15) -- (7.12,0) -- cycle    ;
\draw    (132.5,296.25) -- (603.5,295.25) ;
\draw [shift={(373,295.74)}, rotate = 179.88] [fill={rgb, 255:red, 0; green, 0; blue, 0 }  ][line width=0.08]  [draw opacity=0] (10.72,-5.15) -- (0,0) -- (10.72,5.15) -- (7.12,0) -- cycle    ;
\draw    (603.5,295.25) -- (130.5,210.25) ;
\draw [shift={(362.08,251.87)}, rotate = 10.19] [fill={rgb, 255:red, 0; green, 0; blue, 0 }  ][line width=0.08]  [draw opacity=0] (10.72,-5.15) -- (0,0) -- (10.72,5.15) -- (7.12,0) -- cycle    ;
\draw    (130.5,210.25) -- (369.5,145.25) ;
\draw [shift={(254.82,176.44)}, rotate = 164.79] [fill={rgb, 255:red, 0; green, 0; blue, 0 }  ][line width=0.08]  [draw opacity=0] (10.72,-5.15) -- (0,0) -- (10.72,5.15) -- (7.12,0) -- cycle    ;
\draw    (369.5,145.25) -- (129.5,122.25) ;
\draw [shift={(244.52,133.27)}, rotate = 5.47] [fill={rgb, 255:red, 0; green, 0; blue, 0 }  ][line width=0.08]  [draw opacity=0] (10.72,-5.15) -- (0,0) -- (10.72,5.15) -- (7.12,0) -- cycle    ;
\draw  [dash pattern={on 4.5pt off 4.5pt}]  (369.5,271.25) -- (130.5,210.25) ;
\draw  [dash pattern={on 4.5pt off 4.5pt}]  (132.5,296.25) -- (369.5,271.25) ;
\draw  [dash pattern={on 4.5pt off 4.5pt}]  (369.5,271.25) -- (603.5,295.25) ;

\draw (124,75.4) node [anchor=north west][inner sep=0.75pt]    {$i_{0}$};
\draw (364,75.4) node [anchor=north west][inner sep=0.75pt]    {$i_{1}$};
\draw (597,74.4) node [anchor=north west][inner sep=0.75pt]    {$i_{2}$};
\draw (131.5,130.9) node [anchor=north west][inner sep=0.75pt]    {$v$};
\draw (372.5,226.9) node [anchor=north west][inner sep=0.75pt]    {$u'_{0}$};
\draw (134.5,304.9) node [anchor=north west][inner sep=0.75pt]    {$u'$};
\draw (605.5,303.9) node [anchor=north west][inner sep=0.75pt]    {$u$};
\draw (132.5,218.9) node [anchor=north west][inner sep=0.75pt]    {$u_{1}$};
\draw (371.5,153.9) node [anchor=north west][inner sep=0.75pt]    {$u_{0}$};
\draw (377,274.65) node [anchor=north west][inner sep=0.75pt]    {$w$};

\end{tikzpicture}
        \caption{$P_0=P_{u}\circ(u',u)\circ P_{u'}^{-1}=vu'_0u'uu_1u_0v$}
    \end{subfigure}
    \begin{subfigure}[b]{0.45\textwidth}
        \centering
        \tikzset{every picture/.style={line width=0.75pt}} 

\begin{tikzpicture}[x=0.75pt,y=0.75pt,yscale=-0.75,xscale=0.55]

\draw  [fill={rgb, 255:red, 245; green, 166; blue, 35 }  ,fill opacity=0.5 ][line width=0.75]  (124,122.25) .. controls (124,119.35) and (126.46,117) .. (129.5,117) .. controls (132.54,117) and (135,119.35) .. (135,122.25) .. controls (135,125.15) and (132.54,127.5) .. (129.5,127.5) .. controls (126.46,127.5) and (124,125.15) .. (124,122.25) -- cycle ;
\draw  [fill={rgb, 255:red, 245; green, 166; blue, 35 }  ,fill opacity=0.5 ][line width=0.75]  (364,145.25) .. controls (364,142.35) and (366.46,140) .. (369.5,140) .. controls (372.54,140) and (375,142.35) .. (375,145.25) .. controls (375,148.15) and (372.54,150.5) .. (369.5,150.5) .. controls (366.46,150.5) and (364,148.15) .. (364,145.25) -- cycle ;
\draw  [fill={rgb, 255:red, 245; green, 166; blue, 35 }  ,fill opacity=0.5 ][line width=0.75]  (365,218.25) .. controls (365,215.35) and (367.46,213) .. (370.5,213) .. controls (373.54,213) and (376,215.35) .. (376,218.25) .. controls (376,221.15) and (373.54,223.5) .. (370.5,223.5) .. controls (367.46,223.5) and (365,221.15) .. (365,218.25) -- cycle ;
\draw  [fill={rgb, 255:red, 245; green, 166; blue, 35 }  ,fill opacity=0.5 ][line width=0.75]  (125,210.25) .. controls (125,207.35) and (127.46,205) .. (130.5,205) .. controls (133.54,205) and (136,207.35) .. (136,210.25) .. controls (136,213.15) and (133.54,215.5) .. (130.5,215.5) .. controls (127.46,215.5) and (125,213.15) .. (125,210.25) -- cycle ;
\draw  [fill={rgb, 255:red, 245; green, 166; blue, 35 }  ,fill opacity=0.5 ][line width=0.75]  (127,296.25) .. controls (127,293.35) and (129.46,291) .. (132.5,291) .. controls (135.54,291) and (138,293.35) .. (138,296.25) .. controls (138,299.15) and (135.54,301.5) .. (132.5,301.5) .. controls (129.46,301.5) and (127,299.15) .. (127,296.25) -- cycle ;
\draw  [fill={rgb, 255:red, 245; green, 166; blue, 35 }  ,fill opacity=0.5 ][line width=0.75]  (598,295.25) .. controls (598,292.35) and (600.46,290) .. (603.5,290) .. controls (606.54,290) and (609,292.35) .. (609,295.25) .. controls (609,298.15) and (606.54,300.5) .. (603.5,300.5) .. controls (600.46,300.5) and (598,298.15) .. (598,295.25) -- cycle ;
\draw  [fill={rgb, 255:red, 155; green, 155; blue, 155 }  ,fill opacity=0.5 ][line width=0.75]  (364,271.25) .. controls (364,268.35) and (366.46,266) .. (369.5,266) .. controls (372.54,266) and (375,268.35) .. (375,271.25) .. controls (375,274.15) and (372.54,276.5) .. (369.5,276.5) .. controls (366.46,276.5) and (364,274.15) .. (364,271.25) -- cycle ;
\draw    (129.5,122.25) -- (370.5,218.25) ;
\draw [shift={(254.65,172.1)}, rotate = 201.72] [fill={rgb, 255:red, 0; green, 0; blue, 0 }  ][line width=0.08]  [draw opacity=0] (10.72,-5.15) -- (0,0) -- (10.72,5.15) -- (7.12,0) -- cycle    ;
\draw    (370.5,218.25) -- (132.5,296.25) ;
\draw [shift={(246.75,258.81)}, rotate = 341.85] [fill={rgb, 255:red, 0; green, 0; blue, 0 }  ][line width=0.08]  [draw opacity=0] (10.72,-5.15) -- (0,0) -- (10.72,5.15) -- (7.12,0) -- cycle    ;
\draw    (132.5,296.25) -- (603.5,295.25) ;
\draw [shift={(373,295.74)}, rotate = 179.88] [fill={rgb, 255:red, 0; green, 0; blue, 0 }  ][line width=0.08]  [draw opacity=0] (10.72,-5.15) -- (0,0) -- (10.72,5.15) -- (7.12,0) -- cycle    ;
\draw    (130.5,210.25) -- (369.5,145.25) ;
\draw [shift={(254.82,176.44)}, rotate = 164.79] [fill={rgb, 255:red, 0; green, 0; blue, 0 }  ][line width=0.08]  [draw opacity=0] (10.72,-5.15) -- (0,0) -- (10.72,5.15) -- (7.12,0) -- cycle    ;
\draw    (369.5,145.25) -- (129.5,122.25) ;
\draw [shift={(244.52,133.27)}, rotate = 5.47] [fill={rgb, 255:red, 0; green, 0; blue, 0 }  ][line width=0.08]  [draw opacity=0] (10.72,-5.15) -- (0,0) -- (10.72,5.15) -- (7.12,0) -- cycle    ;
\draw    (369.5,271.25) -- (130.5,210.25) ;
\draw [shift={(245.16,239.51)}, rotate = 14.32] [fill={rgb, 255:red, 0; green, 0; blue, 0 }  ][line width=0.08]  [draw opacity=0] (10.72,-5.15) -- (0,0) -- (10.72,5.15) -- (7.12,0) -- cycle    ;
\draw  [dash pattern={on 4.5pt off 4.5pt}]  (132.5,296.25) -- (369.5,271.25) ;
\draw    (369.5,271.25) -- (603.5,295.25) ;
\draw [shift={(480.03,282.59)}, rotate = 5.86] [fill={rgb, 255:red, 0; green, 0; blue, 0 }  ][line width=0.08]  [draw opacity=0] (10.72,-5.15) -- (0,0) -- (10.72,5.15) -- (7.12,0) -- cycle    ;

\draw (124,75.4) node [anchor=north west][inner sep=0.75pt]    {$i_{0}$};
\draw (364,75.4) node [anchor=north west][inner sep=0.75pt]    {$i_{1}$};
\draw (597,74.4) node [anchor=north west][inner sep=0.75pt]    {$i_{2}$};
\draw (131.5,130.9) node [anchor=north west][inner sep=0.75pt]    {$v$};
\draw (372.5,226.9) node [anchor=north west][inner sep=0.75pt]    {$u'_{0}$};
\draw (134.5,304.9) node [anchor=north west][inner sep=0.75pt]    {$u'$};
\draw (605.5,303.9) node [anchor=north west][inner sep=0.75pt]    {$u$};
\draw (132.5,218.9) node [anchor=north west][inner sep=0.75pt]    {$u_{1}$};
\draw (371.5,153.9) node [anchor=north west][inner sep=0.75pt]    {$u_{0}$};
\draw (377,274.65) node [anchor=north west][inner sep=0.75pt]    {$w$};

\end{tikzpicture}
        \caption{$P_1=vu'_0u'uwu_1u_0v$}
    \end{subfigure}
    \begin{subfigure}[b]{0.45\textwidth}
        \centering
        \tikzset{every picture/.style={line width=0.75pt}} 

\begin{tikzpicture}[x=0.75pt,y=0.75pt,yscale=-0.75,xscale=0.55]

\draw  [fill={rgb, 255:red, 245; green, 166; blue, 35 }  ,fill opacity=0.5 ][line width=0.75]  (124,122.25) .. controls (124,119.35) and (126.46,117) .. (129.5,117) .. controls (132.54,117) and (135,119.35) .. (135,122.25) .. controls (135,125.15) and (132.54,127.5) .. (129.5,127.5) .. controls (126.46,127.5) and (124,125.15) .. (124,122.25) -- cycle ;
\draw  [fill={rgb, 255:red, 245; green, 166; blue, 35 }  ,fill opacity=0.5 ][line width=0.75]  (364,145.25) .. controls (364,142.35) and (366.46,140) .. (369.5,140) .. controls (372.54,140) and (375,142.35) .. (375,145.25) .. controls (375,148.15) and (372.54,150.5) .. (369.5,150.5) .. controls (366.46,150.5) and (364,148.15) .. (364,145.25) -- cycle ;
\draw  [fill={rgb, 255:red, 245; green, 166; blue, 35 }  ,fill opacity=0.5 ][line width=0.75]  (365,218.25) .. controls (365,215.35) and (367.46,213) .. (370.5,213) .. controls (373.54,213) and (376,215.35) .. (376,218.25) .. controls (376,221.15) and (373.54,223.5) .. (370.5,223.5) .. controls (367.46,223.5) and (365,221.15) .. (365,218.25) -- cycle ;
\draw  [fill={rgb, 255:red, 245; green, 166; blue, 35 }  ,fill opacity=0.5 ][line width=0.75]  (125,210.25) .. controls (125,207.35) and (127.46,205) .. (130.5,205) .. controls (133.54,205) and (136,207.35) .. (136,210.25) .. controls (136,213.15) and (133.54,215.5) .. (130.5,215.5) .. controls (127.46,215.5) and (125,213.15) .. (125,210.25) -- cycle ;
\draw  [fill={rgb, 255:red, 245; green, 166; blue, 35 }  ,fill opacity=0.5 ][line width=0.75]  (127,296.25) .. controls (127,293.35) and (129.46,291) .. (132.5,291) .. controls (135.54,291) and (138,293.35) .. (138,296.25) .. controls (138,299.15) and (135.54,301.5) .. (132.5,301.5) .. controls (129.46,301.5) and (127,299.15) .. (127,296.25) -- cycle ;
\draw  [fill={rgb, 255:red, 245; green, 166; blue, 35 }  ,fill opacity=0.5 ][line width=0.75]  (598,295.25) .. controls (598,292.35) and (600.46,290) .. (603.5,290) .. controls (606.54,290) and (609,292.35) .. (609,295.25) .. controls (609,298.15) and (606.54,300.5) .. (603.5,300.5) .. controls (600.46,300.5) and (598,298.15) .. (598,295.25) -- cycle ;
\draw  [fill={rgb, 255:red, 155; green, 155; blue, 155 }  ,fill opacity=0.5 ][line width=0.75]  (364,271.25) .. controls (364,268.35) and (366.46,266) .. (369.5,266) .. controls (372.54,266) and (375,268.35) .. (375,271.25) .. controls (375,274.15) and (372.54,276.5) .. (369.5,276.5) .. controls (366.46,276.5) and (364,274.15) .. (364,271.25) -- cycle ;
\draw    (129.5,122.25) -- (370.5,218.25) ;
\draw [shift={(254.65,172.1)}, rotate = 201.72] [fill={rgb, 255:red, 0; green, 0; blue, 0 }  ][line width=0.08]  [draw opacity=0] (10.72,-5.15) -- (0,0) -- (10.72,5.15) -- (7.12,0) -- cycle    ;
\draw    (370.5,218.25) -- (132.5,296.25) ;
\draw [shift={(246.75,258.81)}, rotate = 341.85] [fill={rgb, 255:red, 0; green, 0; blue, 0 }  ][line width=0.08]  [draw opacity=0] (10.72,-5.15) -- (0,0) -- (10.72,5.15) -- (7.12,0) -- cycle    ;
\draw    (130.5,210.25) -- (369.5,145.25) ;
\draw [shift={(254.82,176.44)}, rotate = 164.79] [fill={rgb, 255:red, 0; green, 0; blue, 0 }  ][line width=0.08]  [draw opacity=0] (10.72,-5.15) -- (0,0) -- (10.72,5.15) -- (7.12,0) -- cycle    ;
\draw    (369.5,145.25) -- (129.5,122.25) ;
\draw [shift={(244.52,133.27)}, rotate = 5.47] [fill={rgb, 255:red, 0; green, 0; blue, 0 }  ][line width=0.08]  [draw opacity=0] (10.72,-5.15) -- (0,0) -- (10.72,5.15) -- (7.12,0) -- cycle    ;
\draw    (369.5,271.25) -- (130.5,210.25) ;
\draw [shift={(245.16,239.51)}, rotate = 14.32] [fill={rgb, 255:red, 0; green, 0; blue, 0 }  ][line width=0.08]  [draw opacity=0] (10.72,-5.15) -- (0,0) -- (10.72,5.15) -- (7.12,0) -- cycle    ;
\draw    (132.5,296.25) -- (369.5,271.25) ;
\draw [shift={(255.97,283.23)}, rotate = 173.98] [fill={rgb, 255:red, 0; green, 0; blue, 0 }  ][line width=0.08]  [draw opacity=0] (10.72,-5.15) -- (0,0) -- (10.72,5.15) -- (7.12,0) -- cycle    ;

\draw (124,75.4) node [anchor=north west][inner sep=0.75pt]    {$i_{0}$};
\draw (364,75.4) node [anchor=north west][inner sep=0.75pt]    {$i_{1}$};
\draw (597,74.4) node [anchor=north west][inner sep=0.75pt]    {$i_{2}$};
\draw (131.5,130.9) node [anchor=north west][inner sep=0.75pt]    {$v$};
\draw (372.5,226.9) node [anchor=north west][inner sep=0.75pt]    {$u'_{0}$};
\draw (134.5,304.9) node [anchor=north west][inner sep=0.75pt]    {$u'$};
\draw (605.5,303.9) node [anchor=north west][inner sep=0.75pt]    {$u$};
\draw (132.5,218.9) node [anchor=north west][inner sep=0.75pt]    {$u_{1}$};
\draw (371.5,153.9) node [anchor=north west][inner sep=0.75pt]    {$u_{0}$};
\draw (377,274.65) node [anchor=north west][inner sep=0.75pt]    {$w$};

\end{tikzpicture}
        \caption{$P_2=vu'_0u'wu_1u_0v$}
    \end{subfigure}
    \caption{contraction $T_{u'u}$ when $u\in \buildings^{\set{i_2}}$ and $u'\in \buildings_s^{\set{i_0}}$ (reduction to $u\in \buildings^{\set{i_1}}$ and $u'\in \buildings_s^{\set{i_0}}$)}
\end{figure}

\begin{figure}
    \centering
    \begin{subfigure}[b]{0.45\textwidth}
        \centering
        \tikzset{every picture/.style={line width=0.75pt}} 

\begin{tikzpicture}[x=0.75pt,y=0.75pt,yscale=-0.75,xscale=0.55]

\draw  [fill={rgb, 255:red, 245; green, 166; blue, 35 }  ,fill opacity=0.5 ][line width=0.75]  (124,122.25) .. controls (124,119.35) and (126.46,117) .. (129.5,117) .. controls (132.54,117) and (135,119.35) .. (135,122.25) .. controls (135,125.15) and (132.54,127.5) .. (129.5,127.5) .. controls (126.46,127.5) and (124,125.15) .. (124,122.25) -- cycle ;
\draw  [fill={rgb, 255:red, 245; green, 166; blue, 35 }  ,fill opacity=0.5 ][line width=0.75]  (364,145.25) .. controls (364,142.35) and (366.46,140) .. (369.5,140) .. controls (372.54,140) and (375,142.35) .. (375,145.25) .. controls (375,148.15) and (372.54,150.5) .. (369.5,150.5) .. controls (366.46,150.5) and (364,148.15) .. (364,145.25) -- cycle ;
\draw  [fill={rgb, 255:red, 245; green, 166; blue, 35 }  ,fill opacity=0.5 ][line width=0.75]  (365,218.25) .. controls (365,215.35) and (367.46,213) .. (370.5,213) .. controls (373.54,213) and (376,215.35) .. (376,218.25) .. controls (376,221.15) and (373.54,223.5) .. (370.5,223.5) .. controls (367.46,223.5) and (365,221.15) .. (365,218.25) -- cycle ;
\draw  [fill={rgb, 255:red, 245; green, 166; blue, 35 }  ,fill opacity=0.5 ][line width=0.75]  (125,210.25) .. controls (125,207.35) and (127.46,205) .. (130.5,205) .. controls (133.54,205) and (136,207.35) .. (136,210.25) .. controls (136,213.15) and (133.54,215.5) .. (130.5,215.5) .. controls (127.46,215.5) and (125,213.15) .. (125,210.25) -- cycle ;
\draw  [fill={rgb, 255:red, 245; green, 166; blue, 35 }  ,fill opacity=0.5 ][line width=0.75]  (127,296.25) .. controls (127,293.35) and (129.46,291) .. (132.5,291) .. controls (135.54,291) and (138,293.35) .. (138,296.25) .. controls (138,299.15) and (135.54,301.5) .. (132.5,301.5) .. controls (129.46,301.5) and (127,299.15) .. (127,296.25) -- cycle ;
\draw  [fill={rgb, 255:red, 245; green, 166; blue, 35 }  ,fill opacity=0.5 ][line width=0.75]  (598,295.25) .. controls (598,292.35) and (600.46,290) .. (603.5,290) .. controls (606.54,290) and (609,292.35) .. (609,295.25) .. controls (609,298.15) and (606.54,300.5) .. (603.5,300.5) .. controls (600.46,300.5) and (598,298.15) .. (598,295.25) -- cycle ;
\draw    (129.5,122.25) -- (370.5,218.25) ;
\draw [shift={(254.65,172.1)}, rotate = 201.72] [fill={rgb, 255:red, 0; green, 0; blue, 0 }  ][line width=0.08]  [draw opacity=0] (10.72,-5.15) -- (0,0) -- (10.72,5.15) -- (7.12,0) -- cycle    ;
\draw    (370.5,218.25) -- (132.5,296.25) ;
\draw [shift={(246.75,258.81)}, rotate = 341.85] [fill={rgb, 255:red, 0; green, 0; blue, 0 }  ][line width=0.08]  [draw opacity=0] (10.72,-5.15) -- (0,0) -- (10.72,5.15) -- (7.12,0) -- cycle    ;
\draw    (603.5,295.25) -- (130.5,210.25) ;
\draw [shift={(362.08,251.87)}, rotate = 10.19] [fill={rgb, 255:red, 0; green, 0; blue, 0 }  ][line width=0.08]  [draw opacity=0] (10.72,-5.15) -- (0,0) -- (10.72,5.15) -- (7.12,0) -- cycle    ;
\draw    (130.5,210.25) -- (369.5,145.25) ;
\draw [shift={(254.82,176.44)}, rotate = 164.79] [fill={rgb, 255:red, 0; green, 0; blue, 0 }  ][line width=0.08]  [draw opacity=0] (10.72,-5.15) -- (0,0) -- (10.72,5.15) -- (7.12,0) -- cycle    ;
\draw    (369.5,145.25) -- (129.5,122.25) ;
\draw [shift={(244.52,133.27)}, rotate = 5.47] [fill={rgb, 255:red, 0; green, 0; blue, 0 }  ][line width=0.08]  [draw opacity=0] (10.72,-5.15) -- (0,0) -- (10.72,5.15) -- (7.12,0) -- cycle    ;
\draw  [fill={rgb, 255:red, 245; green, 166; blue, 35 }  ,fill opacity=0.5 ][line width=0.75]  (365,296.25) .. controls (365,293.35) and (367.46,291) .. (370.5,291) .. controls (373.54,291) and (376,293.35) .. (376,296.25) .. controls (376,299.15) and (373.54,301.5) .. (370.5,301.5) .. controls (367.46,301.5) and (365,299.15) .. (365,296.25) -- cycle ;
\draw    (132.5,296.25) -- (370.5,296.25) ;
\draw [shift={(256.5,296.25)}, rotate = 180] [fill={rgb, 255:red, 0; green, 0; blue, 0 }  ][line width=0.08]  [draw opacity=0] (10.72,-5.15) -- (0,0) -- (10.72,5.15) -- (7.12,0) -- cycle    ;
\draw    (370.5,296.25) -- (603.5,295.25) ;
\draw [shift={(492,295.73)}, rotate = 179.75] [fill={rgb, 255:red, 0; green, 0; blue, 0 }  ][line width=0.08]  [draw opacity=0] (10.72,-5.15) -- (0,0) -- (10.72,5.15) -- (7.12,0) -- cycle    ;

\draw (124,75.4) node [anchor=north west][inner sep=0.75pt]    {$i_{0}$};
\draw (364,75.4) node [anchor=north west][inner sep=0.75pt]    {$i_{1}$};
\draw (597,74.4) node [anchor=north west][inner sep=0.75pt]    {$i_{2}$};
\draw (131.5,130.9) node [anchor=north west][inner sep=0.75pt]    {$v$};
\draw (372.5,226.9) node [anchor=north west][inner sep=0.75pt]    {$u'_{0}$};
\draw (134.5,304.9) node [anchor=north west][inner sep=0.75pt]    {$u '_{1}$};
\draw (605.5,303.9) node [anchor=north west][inner sep=0.75pt]    {$u$};
\draw (132.5,218.9) node [anchor=north west][inner sep=0.75pt]    {$u_{1}$};
\draw (371.5,153.9) node [anchor=north west][inner sep=0.75pt]    {$u_{0}$};
\draw (372.5,304.9) node [anchor=north west][inner sep=0.75pt]    {$u'$};

\end{tikzpicture}
        \caption{$P_0=P_{u}\circ(u',u)\circ P_{u'}^{-1}=vu'_0u'_1u'uu_1u_0v$}
    \end{subfigure}
    \begin{subfigure}[b]{0.45\textwidth}
        \centering
        \tikzset{every picture/.style={line width=0.75pt}} 

\begin{tikzpicture}[x=0.75pt,y=0.75pt,yscale=-0.75,xscale=0.55]

\draw  [fill={rgb, 255:red, 245; green, 166; blue, 35 }  ,fill opacity=0.5 ][line width=0.75]  (124,122.25) .. controls (124,119.35) and (126.46,117) .. (129.5,117) .. controls (132.54,117) and (135,119.35) .. (135,122.25) .. controls (135,125.15) and (132.54,127.5) .. (129.5,127.5) .. controls (126.46,127.5) and (124,125.15) .. (124,122.25) -- cycle ;
\draw  [fill={rgb, 255:red, 245; green, 166; blue, 35 }  ,fill opacity=0.5 ][line width=0.75]  (364,145.25) .. controls (364,142.35) and (366.46,140) .. (369.5,140) .. controls (372.54,140) and (375,142.35) .. (375,145.25) .. controls (375,148.15) and (372.54,150.5) .. (369.5,150.5) .. controls (366.46,150.5) and (364,148.15) .. (364,145.25) -- cycle ;
\draw  [fill={rgb, 255:red, 245; green, 166; blue, 35 }  ,fill opacity=0.5 ][line width=0.75]  (365,218.25) .. controls (365,215.35) and (367.46,213) .. (370.5,213) .. controls (373.54,213) and (376,215.35) .. (376,218.25) .. controls (376,221.15) and (373.54,223.5) .. (370.5,223.5) .. controls (367.46,223.5) and (365,221.15) .. (365,218.25) -- cycle ;
\draw  [fill={rgb, 255:red, 245; green, 166; blue, 35 }  ,fill opacity=0.5 ][line width=0.75]  (125,210.25) .. controls (125,207.35) and (127.46,205) .. (130.5,205) .. controls (133.54,205) and (136,207.35) .. (136,210.25) .. controls (136,213.15) and (133.54,215.5) .. (130.5,215.5) .. controls (127.46,215.5) and (125,213.15) .. (125,210.25) -- cycle ;
\draw  [fill={rgb, 255:red, 245; green, 166; blue, 35 }  ,fill opacity=0.5 ][line width=0.75]  (127,296.25) .. controls (127,293.35) and (129.46,291) .. (132.5,291) .. controls (135.54,291) and (138,293.35) .. (138,296.25) .. controls (138,299.15) and (135.54,301.5) .. (132.5,301.5) .. controls (129.46,301.5) and (127,299.15) .. (127,296.25) -- cycle ;
\draw  [fill={rgb, 255:red, 245; green, 166; blue, 35 }  ,fill opacity=0.5 ][line width=0.75]  (598,295.25) .. controls (598,292.35) and (600.46,290) .. (603.5,290) .. controls (606.54,290) and (609,292.35) .. (609,295.25) .. controls (609,298.15) and (606.54,300.5) .. (603.5,300.5) .. controls (600.46,300.5) and (598,298.15) .. (598,295.25) -- cycle ;
\draw    (129.5,122.25) -- (370.5,218.25) ;
\draw [shift={(254.65,172.1)}, rotate = 201.72] [fill={rgb, 255:red, 0; green, 0; blue, 0 }  ][line width=0.08]  [draw opacity=0] (10.72,-5.15) -- (0,0) -- (10.72,5.15) -- (7.12,0) -- cycle    ;
\draw    (370.5,218.25) -- (132.5,296.25) ;
\draw [shift={(246.75,258.81)}, rotate = 341.85] [fill={rgb, 255:red, 0; green, 0; blue, 0 }  ][line width=0.08]  [draw opacity=0] (10.72,-5.15) -- (0,0) -- (10.72,5.15) -- (7.12,0) -- cycle    ;
\draw    (603.5,295.25) -- (130.5,210.25) ;
\draw [shift={(362.08,251.87)}, rotate = 10.19] [fill={rgb, 255:red, 0; green, 0; blue, 0 }  ][line width=0.08]  [draw opacity=0] (10.72,-5.15) -- (0,0) -- (10.72,5.15) -- (7.12,0) -- cycle    ;
\draw    (130.5,210.25) -- (369.5,145.25) ;
\draw [shift={(254.82,176.44)}, rotate = 164.79] [fill={rgb, 255:red, 0; green, 0; blue, 0 }  ][line width=0.08]  [draw opacity=0] (10.72,-5.15) -- (0,0) -- (10.72,5.15) -- (7.12,0) -- cycle    ;
\draw    (369.5,145.25) -- (129.5,122.25) ;
\draw [shift={(244.52,133.27)}, rotate = 5.47] [fill={rgb, 255:red, 0; green, 0; blue, 0 }  ][line width=0.08]  [draw opacity=0] (10.72,-5.15) -- (0,0) -- (10.72,5.15) -- (7.12,0) -- cycle    ;
\draw  [fill={rgb, 255:red, 245; green, 166; blue, 35 }  ,fill opacity=0.5 ][line width=0.75]  (365,296.25) .. controls (365,293.35) and (367.46,291) .. (370.5,291) .. controls (373.54,291) and (376,293.35) .. (376,296.25) .. controls (376,299.15) and (373.54,301.5) .. (370.5,301.5) .. controls (367.46,301.5) and (365,299.15) .. (365,296.25) -- cycle ;
\draw    (132.5,296.25) .. controls (186,347.5) and (543,339.5) .. (603.5,295.25) ;
\draw [shift={(374.69,331.43)}, rotate = 179.11] [fill={rgb, 255:red, 0; green, 0; blue, 0 }  ][line width=0.08]  [draw opacity=0] (10.72,-5.15) -- (0,0) -- (10.72,5.15) -- (7.12,0) -- cycle    ;

\draw (124,75.4) node [anchor=north west][inner sep=0.75pt]    {$i_{0}$};
\draw (364,75.4) node [anchor=north west][inner sep=0.75pt]    {$i_{1}$};
\draw (597,74.4) node [anchor=north west][inner sep=0.75pt]    {$i_{2}$};
\draw (131.5,130.9) node [anchor=north west][inner sep=0.75pt]    {$v$};
\draw (372.5,226.9) node [anchor=north west][inner sep=0.75pt]    {$u'_{0}$};
\draw (134.5,304.9) node [anchor=north west][inner sep=0.75pt]    {$u '_{1}$};
\draw (605.5,303.9) node [anchor=north west][inner sep=0.75pt]    {$u$};
\draw (132.5,218.9) node [anchor=north west][inner sep=0.75pt]    {$u_{1}$};
\draw (371.5,153.9) node [anchor=north west][inner sep=0.75pt]    {$u_{0}$};
\draw (372.5,304.9) node [anchor=north west][inner sep=0.75pt]    {$u'$};

\end{tikzpicture}
        \caption{$P_1=vu'_0u'_1uwu_1u_0v$}
    \end{subfigure}
    \caption{contraction $T_{u'u}$ when $u\in \buildings^{\set{i_2}}$ and $u'\in \buildings_s^{\set{i_1}}$ (reduction to $u\in \buildings^{\set{i_2}}$ and $u'\in \buildings_s^{\set{i_0}}$)}
\end{figure}

To take advantage of \Cref{lem:color-restriction}, we start by showing any projection of the flags complex onto a ``well-separated'' set of three colors has constant coboundary expansion by directly constructing a family of cones.

\begin{lemma}
\label{lem:three-projection-coboundary}
Fix any integer $d\geq 3$, any prime power $q$, and $\Gamma$ any group.
Let $I=\set{i_0<i_1<i_2}\subseteq [d-1]$. 
Let $s \in \buildings_{q}^{d,[d-1]\setminus I}$.
Assume that 
\begin{enumerate}
    \item $2i_0\leq i_1$ or there is a subspace $V\in s$ with $i_0<\col(V)<i_1$, and
    \item either $2i_1-i_0\leq i_2$, or $2i_1-\dim(V)\leq i_2$ where $V\in s$ is a subspace with $i_0<\col(V)<i_1$, or there is a subspace $W\in s$ with $i_1<\col(W)<i_2$.
\end{enumerate}
Then, the complex $\buildings_{q,s}^{d,I}$ has 1-coboundary expansion at least
\begin{align}
    h^1\left(\buildings_{q,s}^{d,I}, \Gamma\right)\geq \frac{1}{13}.
\end{align}
\end{lemma}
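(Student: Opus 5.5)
We will prove the bound with the cones method, \Cref{lem:cone-argument}. Write $Y=\buildings_{q,s}^{d,I}$. Because $s$ avoids the colors in $I$, the link $Y$ is a $3$-partite complex whose vertices are subspaces $U$ of dimension $i_0,i_1,i_2$ that are compatible with the chain $s$. Its triangles are the flags $U_0\subset U_1\subset U_2$ that refine $s$.

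The group of automorphisms of $\mathbb F_q^d$ stabilizing the flag $s$ acts transitively on the top faces of $Y$, which are triangles. So the hypothesis of \Cref{lem:cone-argument} holds with $k=2$, and $\binom{3}{3}=1$. It therefore suffices to build a single cone of diameter at most $13$.

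\textbf{Building the cone.} Fix a base vertex $v$ of color $i_0$, together with a fixed triangle $v\subset u_0\subset u_1$ through it, where $u_0$ has color $i_1$ and $u_1$ has color $i_0$, as in the figures. For each vertex $u$ we pick a short path $P_u$ from $v$ to $u$:

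\begin{itemize}
\item For $u$ of color $i_0$, take the two-step path $v\to u'_0\to u$ through some $i_1$-space $u'_0\supseteq v+u$. Such a space exists exactly when $\dim(v+u)\le i_1$ inside the relevant quotient. This is where condition~1 enters: either $2i_0\le i_1$, or the intermediate subspace $V\in s$ forces $v,u\subseteq V$ and so $\dim(v+u)\le \dim V<i_1$.
\item For $u$ of color $i_1$, take $v\to u_0'\to u'\to u$, passing through a common $i_0$-space $u'\subseteq u$, which we fix canonically.
\item For $u$ of color $i_2$, route through a fixed $i_1$-subspace of $u$.
\end{itemize}

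\textbf{Contracting the loops.} We then contract each loop $P_u\circ(u,w)\circ P_w^{-1}$ by following the three figures. First we reduce edges of type $(i_0,i_2)$ and $(i_1,i_2)$ to edges of type $(i_0,i_1)$; each reduction costs a couple of triangle moves. For an $(i_0,i_1)$ edge we insert an auxiliary $i_2$-vertex $x\supseteq u_0'+u$, and later $y,z$. These vertices act as cone apices over pairs of vertices, and each one lets us replace a two-edge detour by a single triangle. Their existence is exactly condition~2: we need an $i_2$-space containing the sum of two $i_1$-spaces sharing an $i_0$-space, of dimension at most $2i_1-i_0$ (or $2i_1-\dim V$), or else the sum lies inside $W\in s$.

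Counting the triangle relations in the longest chain, which is the $P_0,\dots,P_{10}$ sequence together with the reduction steps, gives at most $13$.

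\textbf{Main obstacle.} The hard part is the bookkeeping of dimensions. At every step where we introduce an auxiliary vertex ($u_0'$, $x$, $y$, $z$), we must verify that it genuinely exists in the link, so that it is compatible with $s$. The two hypotheses are tailored precisely to guarantee these containments. We expect to do this by case analysis on which alternative of each condition holds, working in the quotient by the largest element of $s$ below $i_0$.
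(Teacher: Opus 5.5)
Your plan is the paper's plan: transitivity on triangles plus \Cref{lem:cone-argument} with $\binom{3}{3}=1$, the same paths $v\,u'_0\,u$ for $i_0$-vertices and $v\,u'_0\,u'\,u$ for $i_1$-vertices, and the same auxiliary vertices $x,y,z$. The gap is your one departure from the figures, which breaks the count of $13$ that you quote. You route an $i_2$-vertex $u$ through an $i_1$-subspace $b\subseteq u$, so $P_u=v\,a_0\,a\,b\,u$ with $a\subseteq b$ of color $i_0$. For an $(i_1,i_2)$-edge $u'\subseteq u$ the loop is then $v\,a'_0\,a'\,u'\,u\,b\,a\,a_0\,v$, of length $8$.

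To bring this loop to the $(i_0,i_1)$ form $v\,a'_0\,a'\,w\,a\,a_0\,v$ that the $P_0,\dots,P_{10}$ chain handles, you must replace $a'\,u'\,u\,b\,a$ by $a'\,w\,a$. The closed walk $a'\,u'\,u\,b\,a\,w$ is generically a simple hexagon, so this needs at least $4$ triangle moves, for example $a'u'u$, $uba$, $a'wu$, $wua$. The route you describe therefore costs $4+10=14$. You cannot shortcut it by arranging $a'\subseteq b$ or $a\subseteq u'$: $b$ is fixed per $u$, while $u'$ ranges over all $i_1$-subspaces of $u$. As written, your cone certifies only $h^1\ge 1/14$ unless you find a genuinely different contraction.

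The fix is the paper's choice. Route every $i_1$- \emph{and} $i_2$-vertex $u$ through an $i_0$-subspace $u_1\subseteq u$ with $u_1\neq v$, i.e.\ $P_u=P_{u_1}\circ(u_1,u)$. Then the counts are as follows.
\begin{enumerate}[(i)]
\item An $(i_1,i_2)$ loop $v\,u'_0\,u'_1\,u'\,u\,u_1\,u_0\,v$ drops to an $(i_0,i_2)$ loop with the single triangle $u'_1u'u$.
\item That loop drops to an $(i_0,i_1)$ loop with the two triangles $uwu_1$ and $u'wu$, where $u'+u_1\subseteq w\subseteq u$.
\item The $(i_0,i_1)$ loop then takes the $10$ moves of $P_0,\dots,P_{10}$, giving $1+2+10=13$.
\end{enumerate}

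Two smaller corrections:
\begin{enumerate}[(i)]
\item The vertices $u_0,u_1$ in the figures are not a fixed triangle through $v$. As you wrote it, $u_1$ has color $i_0$ and so cannot contain $u_0$. They depend on the endpoint $u$, with $u_1\subseteq u$ and $u_0\supseteq v+u_1$.
\item Condition~1, not condition~2, produces $y$ (with $v+u_1\subseteq y\subseteq x$) and $w$ (with $u'+u_1\subseteq w\subseteq u$). Condition~2 is used only for $x\supseteq u+u'_0$ and $z\supseteq y+u_0$.
\end{enumerate}
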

\begin{proof}
Let $\buildings=\buildings^d_q$. It is elementary to see the group of automorphisms $\mathrm{Aut}(\buildings_s^I)$ acts transitively on the triangles of $\buildings_s^I$ so by \Cref{lem:cone-argument} it is sufficient to construct a cone of diameter 13.\footnote{Namely taking an appropriate basis for the vector spaces in any two such flags, we can transform one to the other with the corresponding basis exchange matrix.}

Fix an arbitrary $v\in \buildings_s^{\set{i_0}}$.
For any vertex $u\in \buildings_s^{\set{i_0}}\setminus \set{v}$, we take $P_u=v u_0 u$, where $u_0$ is an arbitrary vertex in $\buildings_s^{\set{i_1}}$ such that $u_0\supseteq v+u$.
For any vertex $u\in \buildings_s^{\set{i_1,i_2}}$, take $P_u=P_{u_1}\circ (u_1,u)$, where $u_1$ is an arbitrary vertex in $\buildings_s^{\set{i_0}}\setminus \set{v}$ such that $u\supseteq u_1$.

We now describe the contraction $T_{u'u}$ for each edge $u'u$.
First, suppose $u\in \buildings^{\set{i_1}}$ and $u'\in \buildings_s^{\set{i_0}}$.
Then the first walk in the contraction is given by 
\[
P_0=P_{u}\circ(u',u)\circ P_{u'}^{-1}=vu'_0u'uu_1u_0v.
\]
As either $u'_0\cap u\supseteq u'$ and $i_2\geq 2i_1-i_0$, or $u'_0\cap u\supseteq V$ and $i_2\geq 2i_1-\dim(V)$, or $W\supseteq u'_0, u$ there exists a vertex $x\in \buildings_s^{\set{i_2}}$ such that $x\supseteq u+u'_0$.
Now we get the following set of $\sim_1$ relations.
\begin{enumerate}
    \item $P_0\sim_1 P_1=vu'_0u'xuu_1u_0v$ using the loop $uxu$ and then the triangle $u'xu$.
    \item $P_1\sim_1 P_2=vu'_0xuu_1u_0v$ using the triangle $u'_0u'x$.
    \item $P_2\sim_1 P_3=vxuu_1u_0v$ using the triangle $vu'_0x$.
    \item $P_3\sim_1 P_4=vxu_1u_0v$ using the triangle $xuu_1$.
\end{enumerate}
Observe that there exists $y\in \buildings_s^{\set{i_1}}$ such that
$v+u_1\subseteq y \subseteq x$ as $2i_0\leq i_1$ or $v+u_1\subseteq V$.
Also there is a $z\in \buildings_s^{\set{i_2}}$ such that $z\supseteq y+u_0$ since $y\cap u_0 \supseteq v$ or $y+u_0 \subseteq W$.
This gives us another sequence of $\sim_1$ relations.
\begin{enumerate}
    \setcounter{enumi}{4}
    \item $P_4\sim_1 P_5=vxu_1u_0zv$ using the triangle $u_0zv$.
    \item $P_5\sim_1 P_6=vxu_1zv$ using the triangle $u_1u_0z$.
    \item $P_6\sim_1 P_7=vxu_1yzv$ using the triangle $u_1yz$.
    \item $P_7\sim_1 P_8=vyxu_1yzv$ using the triangle $vyx$.
    \item $P_8\sim_1 P_9=vyu_1yzv$ using the triangle $yxu_1$.
    \item $P_9\sim_1 P_{10}=vyu_1yv$ using the triangle $yzv$.
\end{enumerate}
This completes the contraction $T_{u'u}$ as $P_{10}$ is just two loops.
Now, we reduce the task of finding contraction in the other two cases to the case above.
Suppose $u\in \buildings_s^{\set{i_2}}$ and $u'\in \buildings_s^{\set{i_0}}$.
Then, we start with $P_0=vu'_0u'uu_1u_0v$.
Note that there exists a $w \in \buildings_s^{\set{i_1}}$ such that $u\supseteq w \supseteq u'+u_1$.
Observe that $P_0\sim_1 P_1=vu'_0u'uwu_1u_0v$ using the triangle $uwu_1$, and $P_1\sim_1 P_2=vu'_0u'wu_1u_0v$ using the triangle $u'wu$ and then the loop $wuw$.
This is precisely the walk at the start of the first case (except for the labeling of vertices).
Finally, assume $u\in \buildings^{\set{i_2}}$ and $u'\in \buildings_s^{\set{i_1}}$.
Then, we start with $P_0=vu'_0u'_1u'uu_1u_0v$.
We reduce it to the second case by observing $P_0\sim_1 P_1=vu'_0u'_1uu_1u_0v$ using the triangle $u'_1u'u$.
\end{proof}

While we cannot use \Cref{lem:three-projection-coboundary} directly to show the projected flags complex is a coboundary expander (since \emph{no} 3-color projection there-in is well-separated), we \emph{can} use it to show the complex's \emph{links} are good coboundary expanders. We may then combine this with the local-to-global lemma \Cref{lem:localcoboundary-to-coboundary} to complete the proof.

\begin{lemma}
\label{lem:coboundary-link-spherical}
Fix integers $d> k \geq 2$, and a prime power $q\geq 25$.
Let 
\[
S_{k}=\set{d-k, \dots, d-1, d, d+1, \dots, d+k}
\]
and let $s\in \buildings^{2d,S_{k}}_{q}$ be a non-empty face in the projected building $\buildings^{2d,S_{k}}_{q}$ with co-dimension at least 3.
Then, the link $\buildings^{2d,S_{k}}_{q,s}$ has 1-coboundary expansion 
\[
h^1(\buildings^{2d,S_{k}}_{q,s},\Gamma)=\Omega(1).
\]
\end{lemma}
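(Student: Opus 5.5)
The plan is to apply the color restriction lemma (\Cref{lem:color-restriction}) to the partite complex $X=\buildings^{2d,S_k}_{q,s}$, with $\ell=3$. The building block is the three-color cone bound \Cref{lem:three-projection-coboundary}. The colors of $X$ are $C=S_k\setminus\col(s)$, and $n:=|C|\geq 3$ because $s$ has codimension at least $3$. For a triple $I\subseteq C$, the projection $X^I$ is exactly the complex $\buildings^{2d,I}_{q,s}$ of \Cref{lem:three-projection-coboundary}. Hence $h^1(X^I,\Gamma)\geq 1/13$ whenever $I$ is ``well-separated'' relative to $s$.

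The spectral hypothesis of \Cref{lem:color-restriction} comes from \Cref{thm:spectral-projected}. Each link $X^I_{v}$ of a vertex $v$ is again a link of a projected flags complex, so its second eigenvalue is at most $\frac{2}{\sqrt q-1}$. This is at most $\frac12$ as soon as $q\geq 25$, which is presumably why that threshold appears in the statement. It therefore remains to show that a constant fraction $p=\Omega(1)$ of triples $I\in\binom{C}{3}$ satisfy conditions (1) and (2) of \Cref{lem:three-projection-coboundary}. Then \Cref{lem:color-restriction} gives $h^1(X,\Gamma)\geq p/(36\cdot 13)=\Omega(1)$.

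The counting step is where the nonemptiness of $s$ enters. Fix some $V\in s$ of dimension $m\in S_k$. All remaining colors lie in $[m-2k,m+2k]$, and every element of $s$ can serve as a separator in conditions (1) and (2). I will split into cases according to where $I$ sits relative to $V$.

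\begin{itemize}
\item If $i_0<m<i_1$, then condition (1) holds via $V$. Condition (2) needs either $i_2-i_1\geq i_1-m$ or another element of $s$ between $i_1$ and $i_2$.
\item If all of $I$ lies above $V$, I will use that the lemma's hypotheses are really about relative dimensions. Since $u\supseteq V$ for all vertices, one can pass to $\mathbb F_q^{2d}/V$, where the dimensions become $i-m\in[1,2k]$. Here the conditions $i_1'\geq 2i_0'$ and $i_2'\geq 2i_1'-i_0'$ hold for a constant fraction of triples of any \emph{interval} of relative dimensions starting near $1$. For example, take $i_0'$ among the smallest tenth, $i_1'$ in the second to third tenth, and $i_2'$ in the top half.
\item If all of $I$ lies below $V$, the argument is the same after dualizing, using $W\mapsto W^\perp$ inside $V$, which reverses dimensions.
\end{itemize}

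The main obstacle is that $C$ need not be an interval: $s$ may have deleted many colors. For instance, the surviving colors above $V$ could all lie in $[m+k,m+2k]$, where the ratio conditions fail. The fix I would try first is to let the maximal/minimal elements of $s$ act as the new reference point $V$. I would choose $V$ to be the element of $s$ adjacent to the largest block of surviving colors, so that each deleted gap is itself filled by some element of $s$, which provides the separator required in (1) or (2). Concretely, I would partition $C$ into maximal runs of consecutive surviving dimensions between consecutive elements of $s\cup\{0,\mathbb F_q^{2d}\}$. I then consider two cases.

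\begin{itemize}
\item If some run has size at least $n/10$, the relative-dimension argument above inside that run, measured from the element of $s$ bounding it, gives a constant fraction of good triples.
\item Otherwise there are at least $10$ runs, and a constant fraction of triples meet three distinct runs. Such a triple has separators from $s$ between each consecutive pair of colors, so (1) and (2) hold trivially.
\end{itemize}

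Small $n$ (e.g.\ $n=3,4$) is covered by the same dichotomy, since it gives an absolute constant lower bound on $p$. Should the run-based counting fail for some boundary configuration, the fallback is to construct a cone for that finitely-described case directly, as in the proof of \Cref{lem:three-projection-coboundary}.
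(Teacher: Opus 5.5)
Your route is the paper's route: you apply \Cref{lem:color-restriction} with $\ell=3$, take the spectral hypothesis from \Cref{thm:spectral-projected} (at $q\ge 25$, $2/(\sqrt q-1)\le 1/2$), and count good triples via the runs cut out by $\col(s)$, which the paper calls ``bins''. A long run is handled by relative dimensions at fractional positions, dualizing for the bottom run. Many short runs are handled by triples meeting three distinct runs, with elements of $s$ as separators. Two details need repair, though neither changes the approach.

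First, your sample positions violate condition (2). With $i_0'=1$ and $i_1'=3L/10$ you need $i_2'\ge 3L/5-1$, and ``top half'' does not give that. Take $i_2'$ in the top two-fifths instead; the paper uses the second twentieth, the second fifth, and the last quarter.

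Second, your assertion that small $n$ is ``covered by the same dichotomy'' is wrong as stated. Fractional positions only make sense for long runs, so for bounded $n$ you must instead exhibit a single good triple, which has probability at least $1/\binom{n}{3}=\Omega(1)$. Some configurations are fine:
\begin{itemize}
\item A run with at least three colors gives relative dimensions $\{1,2,3\}$, and these satisfy both conditions.
\item Three nonempty runs give the separator triple.
\end{itemize}
However, when there are exactly two nonempty runs, of sizes $(2,1)$, $(1,2)$ or $(2,2)$ (so $n\in\{3,4\}$), neither of your branches produces anything.

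No new cone is needed for this case; \Cref{lem:three-projection-coboundary} already covers it. Dualize if necessary so that the two-color run is bounded below by some $V\in s$ with $\dim V=j$; then its colors are $j+1,j+2$. Let $c$ be a color from the other run.
\begin{itemize}
\item If $c$ lies above, quotient by $V$. The relative dimensions become $1,2,c-j$, so $2i_0\le i_1$ holds. For condition (2), the element of $s$ bounding the two-color run from above serves as the separator $W$.
\item If $c$ lies below, $V$ separates $c$ from $j+1$, giving condition (1). For condition (2), $2i_1-\dim V=j+2=i_2$.
\end{itemize}
This is exactly the paper's Case~2 (bounded $n$). There the paper enumerates three situations: a three-color bin, a two-color bin plus another nonempty bin, or three nonempty bins.
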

\begin{proof}

Let $c^* = \col (s) \subseteq S_{k}$ and write $c^* = \set{j_1 , \dots , j_b }$.
Let $S'=S_{k}\setminus \col(s)$.
Now, we want to show that for a constant fraction of $I=\set{i_0<i_1<i_2}\in  \binom{S'}{3}$, $\buildings_{q,s}^{2d,I}$ has 1-coboundary expansion at least $\beta$ for some constant $\beta>0$.
The proof is then complete using \Cref{lem:color-restriction}, as by \Cref{thm:spectral-projected} we already know that $\buildings^{2d,I'}_{q,s}$ is $\frac{1}{2}$-local spectral expander for any $I'$ for sufficiently large $q$.
Pick a uniformly random set $I=\set{i_0<i_1<i_2}\in  \binom{S'}{3}$.

Define $b + 1$ \emph{bins} \[ B_i = \set{j \in S_{k} | j_i < j < j_{i+1} }\] for $0 < i < b$
and also $B_0 = \set{j \in S_{k} |j < j_1 }$ and $B_b = \set{j \in S_{k} |j > j_b }$. We divide the proof into two cases based on the size of the $s$

\paragraph{Case 1: Small $s$.}
Assume $|s|\leq 2k+1-80$. In this case, we may assume there is a bin  $B_{i}$ with $|B_{i}|>\frac{2k+1-|s|}{4} \geq 20$, as otherwise a union bound implies the probability our three random indices lie in separate bins is at least
\[
1-\frac{(2k+1-|s|)/4}{2k+1-|s|}-\frac{2(2k+1-|s|)/4}{2k+1-|s|}\geq \frac{1}{4},
\]
so $h^1(\buildings^{2d,I}_{q,s},\Gamma)\geq \frac{1}{13}$ by \Cref{lem:three-projection-coboundary} and $h^1(\buildings^{2d,S_{k}}_{q,s},\Gamma)\geq \frac{1}{1872}$.
We may also assume $j_i\ne 0$; otherwise, observe that any projected building is isomorphic to the projected building formed by replacing each subspace with its orthogonal complement, where $j_i=b$ instead.
Since $|B_{i}|>\frac{2k+1-|s|}{4}$, with probability at least
\[
\frac{|B_{i}|/20}{2k+1-|s|} \cdot \frac{|B_{i}|/5}{2k+1-|s|} \cdot \frac{|B_{i}|/4}{2k+1-|s|}\geq\frac{1}{25600}
\]
$i_0$ is in the second twentieth of the indices $B_{i}$, $i_1$ is in the second fifth of the indices $B_{i}$,
and $i_2$ is in the final quarter of the indices $B_{i}$.
In this case, the link is isomorphic to $\buildings_q^{j_{i+1}-j_i,\{i_0-j_i,i_1-j_i,i_2-j_i\}}$ where $\{i_0-j_i,i_1-j_i,i_2-j_i\}$ satisfy the spreadness conditions of \Cref{lem:three-projection-coboundary}. Thus $h^1(\buildings^{2d,I}_{q,s},\Gamma)\geq \frac{1}{13}$ with probability at least $\frac{1}{25600}$ and $h^1(\buildings^{2d,S_{k}}_{q,s},\Gamma)\geq 10^{-8}$ as desired.

\paragraph{Case 2: Large $s$.}
When $|s|>2k+1-80$, it suffices to show there exists a single choice of $I$ with constant coboundary expansion, since each valid $I$ appears with probability at least $\frac{1}{\binom{2k+1-|s|}{3}}=\Omega(1)$.
We now show such a choice of $I$ always exists. We break into cases based on maximum bin size
\begin{enumerate}
    \item Suppose we have a bin $B_{i}$ with at least three colors. As before, by taking orthogonal complements if necessary, we may assume $i \neq 0$ (similarly for the cases below). We may then take $I=\set{j_i+1,j_i+2,j_i+3}$.
    As $\buildings^{2d,I}_{q,s}$ isomorphic to $\buildings_q^{j_{i+1}-j_i,\{1,2,3\}}$, \Cref{lem:three-projection-coboundary} implies $h^1(\buildings^{2d,I}_{q,s},\Gamma)\geq \frac{1}{13}$.
    \item Suppose we have a bin $B_{i}$ with two colors and there is a non-empty bin $B_{i'}$ where $i'>i>0$.
    Then, taking $I=\set{j_i+1,j_i+2,j_{i'}+1}$ we get that $\buildings^{2d,I}_{q,s}$ is isomorphic to $\buildings_{q, s'}^{j_{i'+1}-j_i,\{1,2,j_{i'}-j_i+1\}}$ for the appropriate $s' \subset s$ between these indices. In particular, $s'$ contains a subspace of dimension $j_{i'}-j_i$, so $\buildings^{2d,I}_{q,s}$ has constant coboundary expansion by \Cref{lem:three-projection-coboundary}. If we instead have $B_{i}$ with one color and a bin $B_{i'}$ with two colors where $i'>i$, the proof is analogous.

    \item Finally, we may have three non-empty bins $B_{i}, B_{i'}, B_{i''}$, where $i''>i'>i$. Taking $I=\set{j_i+1,j_{i'}+1,j_{i''}+1}$ we get that $\buildings^{2d,I}_{q,s}$ is isomorphic to $\buildings_{q, s'}^{j_{i''+1}-j_i,\{1,j_{i'}-j_i+1,j_{i''}-j_i+1\}}$ where $s' \subset s$ contains subspaces of dimensions $j_{i'}-j_{i}$ and $j_{i''}-j_{i}$, and constant coboundary expansion again follows from \Cref{lem:three-projection-coboundary}.
\end{enumerate}

\end{proof}

The proof of \Cref{thm:coboundary-spherical} now follows directly from \Cref{lem:localcoboundary-to-coboundary}: if all links of a local spectral expander with vanishing cohomology are coboundary expanders, then so is the complex itself.
\begin{proof}[Proof of \autoref{thm:coboundary-spherical}]
By \Cref{lem:coboundary-link-spherical}, it is enough to handle just the case of the full complex $h^{1}(\buildings^{d,S_{k}}_q, \Gamma)$. Toward this end, let $q_0$ be large enough so that we have $\frac{2e}{\sqrt{q_0}-1}\leq \frac{\beta^2}{16}$.
Let $V, W\in \buildings^{2d,S_{k}}_{q}(1)$ be any edge.
By \Cref{thm:spectral-projected}, we get that $\buildings^{d,S_{k}}_{q,\set{W,V}}$ is a $1/2$-local spectral expander and
by \Cref{lem:coboundary-link-spherical} we have $h^{1}(\buildings^{d,S_{k}}_{q,V}, \Gamma) = \Omega(1)$.
So, there is a constant $\beta>0$ such that $h^0(\buildings^{d,S_{k}}_{q,\set{W,V}}, \Gamma) \geq \beta$ and $h^{1}(\buildings^{d,S_{k}}_{q,V}, \Gamma) \geq \beta$.
As $\frac{2e}{\sqrt{q_0}-1}\leq \frac{\beta^2}{16}$ by \Cref{lem:localcoboundary-to-coboundary} we get $h^{1}(\buildings^{d,S_{k}}_q, \Gamma)\geq \beta^2/16$.
\end{proof}

\section{The Faces Complexes}
In this section, we analyze the expansion properties of the faces complexes of the projected flags complex.
\subsection{Local Spectral Expansion}
We first bound the local spectral expansion of the faces complexes.
The following lemma, which relates the local spectral expansion of the faces complex on disjoint colors to the local spectral expansion of the partite base complex, is standard but we include it for completeness.
\begin{lemma}
\label{lem:spectral-projected-faces}
Let $X$ be a $k$-partite $\lambda$-product.
Let $2r+2\leq k$.
Let $J\subseteq \binom{[k]}{r+1}$ be a subset of disjoint colors.
Then $\mathrm F^JX$ is a $\frac{(r+1)\lambda}{1-(k-r)\lambda}$-one-sided local spectral expander.
\end{lemma}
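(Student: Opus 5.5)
We prove this with the Trickle Down Theorem (\Cref{thm:trickle-down}) applied to $\mathrm F^JX$. That reduces the problem to two inputs: connectivity of every link, and a one-sided bound of $(r+1)\lambda$ on the codimension-2 links. The target bound then matches the shape of \Cref{thm:trickle-down} after an appropriate bookkeeping of the dimension.

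\emph{Step 1: links are colored faces complexes.} Take a face $\tau=\set{s_1,\dots,s_j}$ of $\mathrm F^JX$. By \Cref{obs:face-link}, its link is $\mathrm F^{J'}X_{\cup\tau}$, where $J'\subseteq J$ consists of the color classes not yet used by $\tau$. In particular, $X_{\cup\tau}$ is again a $\lambda$-product, because links of links are links of $X$.

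\emph{Step 2: bipartite pieces of each link.} In any such link, take two remaining color classes $c_a,c_b\in J'$. The bipartite graph between $\mathrm F^{\set{c_a}}$ and $\mathrm F^{\set{c_b}}$ is exactly the colored swap walk $\mathsf S_{c_a,c_b}(X_{\cup\tau})$. By \Cref{lem:color-swap-expansion}, its second eigenvalue is at most $\sqrt{|c_a||c_b|}\,\lambda=(r+1)\lambda$.

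\emph{Step 3: codimension-2 links.} A codimension-2 link of $\mathrm F^JX$ has only two color classes left. Its underlying graph is therefore exactly one of the bipartite swap graphs from Step 2, so it is a one-sided $(r+1)\lambda$-expander. This expansion is less than $1$ for small $\lambda$, which also gives connectivity. For higher links, connectivity follows by \Cref{prop:kpartite-expansion}: a $|J'|$-partite graph all of whose bipartite pieces are connected expanders is connected.

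\emph{Step 4: apply trickle-down.} Applying \Cref{thm:trickle-down} with codimension-2 expansion $(r+1)\lambda$ and dimension $|J|-1$ gives one-sided expansion
\[
\frac{(r+1)\lambda}{1-(|J|-2)(r+1)\lambda}.
\]
Since $|J|(r+1)\le k$, we have $(|J|-2)(r+1)\le k-2(r+1)\le k-r$. The denominator is therefore at least $1-(k-r)\lambda$, which gives the stated bound.

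The main obstacle is Step 4's bookkeeping, in particular confirming that the hypothesis of \Cref{thm:trickle-down} is met in all links, including the degenerate small ones where $J'$ has only two classes. One could alternatively bypass trickle-down and use \Cref{prop:kpartite-expansion} directly on each link. There every link's $K$ is a complete graph with equal weights by symmetry of the measure, with eigenvalue $-1/(|J'|-1)\le 0$, so one would get the bound $(r+1)\lambda$ immediately, which is stronger than claimed. If the weights of the bipartite pieces fail to be uniform, I would fall back to the trickle-down route above.
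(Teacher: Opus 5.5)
Your proof is correct and follows the paper's argument. Codimension-2 links of $\mathrm F^JX$ are colored swap walks with one-sided expansion at most $(r+1)\lambda$ by \Cref{lem:color-swap-expansion}, every link is connected, and \Cref{thm:trickle-down} finishes; your bookkeeping gives the sharper denominator $1-(|J|-2)(r+1)\lambda$, which implies the stated bound.

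One caution on your aside. \Cref{prop:kpartite-expansion} as literally stated fails when $\lambda_2(K)<0$: the complete tripartite graph has $\lambda_2=0$, while the stated bound would give $-1/2$. The additive bound holds only with $\max\{\lambda_2(K),0\}$ in place of $\lambda_2(K)$, which still yields your $(r+1)\lambda$.
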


\begin{proof}
Let $s\in \mathrm F^JX$ with co-dimension exactly 2.
Observe that the random walk matrix of $(F^JX)_s$ is
\[
\mathsf W_s= \begin{bmatrix}
0 & \mathsf S_{c_1,c_2}\\
\mathsf S_{c_2,c_1} & 0
\end{bmatrix},
\] where ${c_1,c_2}\sqcup \col(s)=J$. 
By~\Cref{lem:color-swap-expansion}, we get that $\lambda(W_s)\leq (r+1)\cdot\lambda$. Similarly, links of co-dimension $i$ are $i$-partite graphs where each bi-partite component is a corresponding swap walk in the original complex as above, and are therefore connected. The Trickle-Down Theorem (\Cref{thm:trickle-down}) then gives the desired statement.
\end{proof}

Now, the following lemma is an easy consequence of \Cref{thm:spectral-projected} and \Cref{lem:spectral-projected-faces}.

\begin{lemma}
\label{lem:spectral-faces-complex}
Fix any $d\geq 3$, any prime power $q$, and a set $S\subseteq [d-1]$ of size at least 2.
Let $2r+2\leq |S|$ and $J\subseteq \binom{S}{r+1}$ be disjoint color sets.
Then $\F^J (\buildings^{d,S}_q)$ is a $\frac{2(r+1)}{\sqrt{q}-1-2(|S|-r)}$-local-spectral expander for any prime power $q$.
\end{lemma}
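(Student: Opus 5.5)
The plan is to combine the two preceding results directly, so the proof is essentially a substitution. First, by \Cref{thm:spectral-projected}, the projected flags complex $\buildings^{d,S}_q$ is a $|S|$-partite $\lambda$-product with $\lambda = \frac{1}{\sqrt q}$. Set $k = |S|$; the hypothesis $2r+2\leq |S|$ is exactly the hypothesis $2r+2\leq k$ of \Cref{lem:spectral-projected-faces}. Also, $J\subseteq\binom{S}{r+1}$ consists of disjoint color sets, after identifying $S$ with $[k]$ through the natural order-preserving bijection (colors are only labels). So \Cref{lem:spectral-projected-faces} applies and shows that $\F^J(\buildings^{d,S}_q)$ is a one-sided local spectral expander with parameter
\[
\frac{(r+1)\lambda}{1-(k-r)\lambda}.
\]

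The remaining step is to simplify this expression into the stated form. Substituting $\lambda = 1/\sqrt q$ and multiplying numerator and denominator by $\sqrt q$ gives $\frac{r+1}{\sqrt q-(|S|-r)}$. To match the stated bound $\frac{2(r+1)}{\sqrt q-1-2(|S|-r)}$, I would note that the stated denominator is positive exactly when $\sqrt q > 1+2(|S|-r)$. In that regime $\sqrt q - (|S|-r) > 0$, and
\[
2\bigl(\sqrt q-(|S|-r)\bigr) \geq \sqrt q -1-2(|S|-r)
\]
holds because it rearranges to $\sqrt q\geq -1$. Hence $\frac{r+1}{\sqrt q-(|S|-r)}\leq \frac{2(r+1)}{\sqrt q-1-2(|S|-r)}$, and the stated bound follows.

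Alternatively, if one prefers to use the two-sided parameter $\frac{2}{\sqrt q-1}$ from \Cref{thm:spectral-projected} as $\lambda$, the substitution yields exactly $\frac{2(r+1)}{\sqrt q-1-2(|S|-r)}$, which is likely the route the authors intend. When the denominator is nonpositive, the statement is vacuous (or is read as requiring $q$ large), and I would remark on this.

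The only real obstacle is bookkeeping. One point to check is that \Cref{lem:spectral-projected-faces} needs connectivity of all links for the Trickle-Down step; this holds because each bipartite component is a swap walk on a product with $\lambda<1$. The other is tracking which form of $\lambda$ produces the exact constant in the statement.
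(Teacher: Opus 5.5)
Your proposal is correct and is the argument the paper intends: the paper gives no separate proof and calls the lemma an easy consequence of \Cref{thm:spectral-projected} and \Cref{lem:spectral-projected-faces}. The exact stated constant comes from substituting $\lambda=\frac{2}{\sqrt q-1}$, which is legitimate because a $\frac{1}{\sqrt q}$-product is also a $\frac{2}{\sqrt q-1}$-product. Your substitution $\lambda=\frac{1}{\sqrt q}$ plus the denominator comparison instead gives the slightly sharper bound $\frac{r+1}{\sqrt q-(|S|-r)}$, which implies the stated one. (Minor point: the $\frac{2}{\sqrt q-1}$ parameter in \Cref{thm:spectral-projected} bounds the second eigenvalue, so it is one-sided, not two-sided. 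This is harmless here, since only the product property is used.)
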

\subsection{Coboundary Expansion}
We now prove the faces complexes associated with the projected flags complex is good 1-coboundary expander. 

\begin{theorem}[Swap coboundary expansion of the projected flags complex]
\label{thm:swap-coboundary-projected-flags}
Let $d,k, r$ be integers such that $d > k \geq r^5$.
There is some $q_0 = q_0 (k,r)$ such that the following holds.
Let $q \geq q_0$ be any prime power, $\buildings$ be the $\buildings^{2d}_q$-flags complex, and 
\[
S_{k}=\set{d-k, \dots, d-1, d, d+1, \dots, d+k}.
\]
Let $\faces = \mathrm F^r (\buildings^{S_{k}})$ be its faces complex.
Then $\faces$ is a coboundary expander and $h^1(\mathcal Z) \geq \exp(-O(\sqrt r \log^2 r))$.
\end{theorem}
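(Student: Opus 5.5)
We will follow the blueprint of \cite{DD24buildings} for the full flags complex, using the same two-step structure as in the proof of \Cref{thm:coboundary-spherical}: vanishing cohomology, then cocycle expansion via a local-to-global reduction.

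\textbf{Step 1: reduce to links.} We apply \Cref{lem:localcoboundary-to-coboundary} to $\faces$ with $k=1$. By \Cref{lem:spectral-faces-complex} (with $J$ ranging over disjoint color sets), and since the $\frac{1}{\sqrt q}$-product property of \Cref{thm:spectral-projected} passes to links, $\faces$ and all its links are $\lambda$-local spectral expanders with $\lambda=O(r/\sqrt q)$ once $q\ge q_0(k,r)$. So it suffices to show two things for every nonempty face $w\in\faces$:
\begin{itemize}
\item the links $\faces_w=\F^r(\buildings^{S_k}_{\cup w})$, where \Cref{obs:face-link} is used to identify the link, have $h^{1}\ge\beta$ (for vertices) and $h^{0}\ge \beta$ (for edges);
\item $h^1$ of these links is at least $\beta=\exp(-O(\sqrt r\log^2 r))$.
\end{itemize}
Choosing $q_0$ so that $e\lambda\le\beta^2/16$ then gives $h^1(\faces)\ge\beta^2/16$, which is still $\exp(-O(\sqrt r\log^2 r))$. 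The $h^0$ bound for links of edges follows from connectivity plus spectral expansion via \Cref{lem:spectral-to-edge}.

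\textbf{Step 2: structure of vertex links.} The key observation is that once we fix a vertex $w$ of $\faces$, namely a flag of $r+1$ subspaces, the link $\buildings^{S_k}_{\cup w}$ is a join of projected flags complexes on the intervals between consecutive members of $\cup w$. The two extreme pieces are projections $\buildings^{m,S'}$. The middle pieces are full flags complexes on $\mathbb F_q^{j_{i+1}-j_i}$, since $S_k$ is an interval and every dimension strictly between two central dimensions lies in $S_k$.

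Hence $\faces_w$ is the faces complex of such a join. We then follow \cite{DD24buildings}:
\begin{itemize}
\item Use \Cref{lem:color-restriction} to pass to colored faces complexes $\F^J X$ for random $J\in\F^r\Delta$ with $|J|$ small.
\item Repeatedly take deep links and color projections, so that with constant probability each color class $c_j$ of $J$ is spread across well-separated bins.
\item The resulting colored faces complex then decomposes into joins and products of small flags complexes, in the spirit of \Cref{lem:three-projection-coboundary}.
\item On these pieces we build cones directly and apply \Cref{lem:cone-argument}. The cone diameter, polynomial in $r$ composed over $O(\sqrt r\log r)$ levels of reduction, yields the $\exp(-O(\sqrt r\log^2 r))$ loss.
\end{itemize}
The hypothesis $k\ge r^5$ ensures that the $r+1$ removed dimensions leave bins that are large relative to $r$. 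This lets the random color sets avoid the boundary effects of the two projected extreme pieces with high probability, exactly as in Case 1 of \Cref{lem:coboundary-link-spherical}.

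\textbf{Step 3: vanishing cohomology.} We must also show that $Z^1=B^1$ for $\faces$ and its links, so that the cocycle expansion obtained above is genuine coboundary expansion. For this we reuse the \cite{DD24buildings} argument that faces complexes of complexes with vanishing cohomology in their links inherit vanishing 1-cohomology. The needed inputs are \Cref{lem:trivial-cohomology} and the shellability of joins.

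The main obstacle is Step 2. We must verify that every reduction in \cite{DD24buildings} survives with the projected extreme components in place of full flags complexes, and that the relevant color-restriction probabilities remain $2^{-o(r)}$. We would first try to show that the projected pieces can always be avoided: condition on colors landing in the interior full-flag bins, which by $k\ge r^5$ happens with probability $1-O(1/r^{3})$ per color.
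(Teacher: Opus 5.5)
\textbf{Step 1 fails, for a structural reason.} \Cref{lem:spectral-faces-complex} only controls colored complexes $\F^J(\buildings^{S_k})$ for a \emph{fixed} set $J$ of disjoint color classes. It says nothing about the uncolored $\faces=\F^r(\buildings^{S_k})$, and there the claim $\lambda=O(r/\sqrt q)$ is false.

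Consider the map $v\mapsto\col(v)$. Under the measure on $\faces$, a neighbor of a vertex of color $c$ has color uniform among $(r+1)$-subsets of $S_k\setminus c$. So functions of the color alone form an invariant subspace, on which the walk is the Kneser-type walk on $\binom{S_k}{r+1}$. Hence, for every $q$,
\[
\lambda(\faces)\ \ge\ \frac{(r+1)r}{(2k-r)(2k-r-1)}=\Theta(r^2/k^2).
\]
Deeper links are worse. In codimension-$2$ links, between $2(r+1)$ and $3(r+1)-1$ colors remain, and the same quotient has second eigenvalue at least $1/4$; the link is even disconnected when exactly $2(r+1)$ colors remain.

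\Cref{lem:localcoboundary-to-coboundary} has an additive loss $e\lambda$, and the link bound you feed in is $\beta=\exp(-O(\sqrt r\log^2 r))$. You would therefore need $r^2/k^2\lesssim\beta^2$, i.e.\ $k\gtrsim r/\beta$, which is super-polynomial in $r$ at the level of $\beta$ you are targeting. The theorem must hold already for $k=r^5$. No choice of $q_0$ helps, because this eigenvalue comes from the color structure, not from the Grassmann pieces.

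\textbf{Even granting Step 1, it makes no progress.} The link $\faces_w=\F^r(\buildings^{S_k}_{\cup w})$ is again an uncolored $r$-faces complex on $2k-r$ colors, i.e.\ the same problem. All the content therefore sits in Step 2, which only gestures at \cite{DD24buildings}. Your accounting (``$\mathrm{poly}(r)$ cone diameter over $O(\sqrt r\log r)$ levels'') does not correspond to an actual mechanism.

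The missing idea is to pass to colored projections \emph{before} any spectral local-to-global step. The paper proceeds as follows.
\begin{enumerate}
\item It uses \Cref{lem:disjoint-color-restriction} together with \Cref{claim:good-color-density} to reduce to $\faces^J$ for a well-spread set $J$ of $m=\sqrt{r+1}$ disjoint colors. That lemma needs only $\Theta(1)$ edge expansion of vertex links of the projections, and it has no additive spectral loss.
\item Inside $\faces^J$ the one-sided expansion does tend to $0$ as $q$ grows (\Cref{lem:spectral-projected-faces}). So \Cref{claim:faces-local-to-global} can trickle down to links with only five colors left.
\item \Cref{cor:tensor-corollary} discards the colors lying in lonely or empty bins.
\item \Cref{prop:base-to-face} then applies with $R=O(\sqrt r)$, by the crowding bound in the definition of well-spreadness.
\item \Cref{claim:buildings-expansion} gives $\beta=\exp(-O(\log^2 r))$, via \Cref{lem:4projection-bound} and the bin-length and separation bounds of well-spreadness.
\end{enumerate}
The final loss is $\beta^{O(\sqrt r)}=\exp(-O(\sqrt r\log^2 r))$.

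\textbf{Step 3 is then unnecessary, and it is also unsupported.} The tools above directly produce, for every $f$, a $g$ with $\dist(f,\delta g)\lesssim\wt(\delta f)$, which already forces $Z^1=B^1$. Separately, \Cref{lem:trivial-cohomology} and the shellability of joins are statements about $\buildings^{S_k}$ and its links. They do not yield vanishing $1$-cohomology of the faces complex $\F^r(\buildings^{S_k})$.
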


As in \cite[Lemma 8.2]{DD24buildings}, we start by showing it is enough to argue that a constant fraction of projections of the faces complex are themselves co-boundary expanders, a variant of \Cref{lem:color-restriction} from \cite{DD24coboundary} for the faces complex.
In particular, we start with the $r$-faces complex of a $k$-partite complex and consider the projections onto $m$ disjoint $r$-sets of colors from $[k]$, where each such $r$-set acts as a color for the faces complex.
If
\begin{inparaenum}[(i)]
    \item all the 1-skeletons of vertex-links of all such projections are spectral expanders, and
    \item a large fraction of such projections are 1-coboundary expanders,
\end{inparaenum}
then (abstracting \cite[Lemma 8.2]{DD24buildings}) this implies coboundary expansion of the faces complex. 

\begin{restatable}{lemma}{disjointcolor}
\label{lem:disjoint-color-restriction}
Let $k,r$ be integers such that $k\geq r^5$.
Let $X$ be a $k$-partite simplicial complex and $\mathcal Y = \F^r X$.
Let $m = \sqrt{r+1}$.
Assume that for each $I\in \mathrm F\Delta_k(m-1)$ and for each $v\in \mathcal Y^I$, the underlying graph of $\mathcal Y^I_v$ is a $\Theta (1)$-edge expander.
If $\pr{I\sim \mathrm F^r\Delta_k(m-1)}{\mathcal Y^I \text{ is a 1-coboundary expander with }h^1(\mathcal Y^I)\geq \beta}=p$
where $I$ is a uniformly chosen set of $m= \sqrt{r + 1}$ pairwise disjoint colors from $\fcolors$, then $\mathcal Y$ is a 1-coboundary expander with $h^1(\mathcal Y)=\Omega(\beta p^2)$.
\end{restatable}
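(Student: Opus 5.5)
We follow the standard ``color restriction'' template of \cite{DD24coboundary} (as abstracted in \cite[Lemma 8.2]{DD24buildings}), with the disjoint $r$-sets $I$ playing the role of colors. Fix $f\in C^1(\mathcal Y,\Gamma)$ and set $\eps=\wt(\delta f)$. We must produce $g:\mathcal Y(0)\to\Gamma$ with $\dist(f,\delta g)=O(\eps/(\beta p^2))$.

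The first step is to pass to projections. Sample $I\sim \F^r\Delta_k(m-1)$. A uniformly random triangle of $\mathcal Y$, conditioned on its three colors lying in $I$, is distributed as a random triangle of $\mathcal Y^I$, and similarly for edges. Here we use that $k\ge r^5$, so that $m(r+1)\le k$ and a random triple of disjoint color sets extends to a uniformly random $I$. Hence $\Ex{\wt(\delta f|_{\mathcal Y^I})}=\eps$. Call $I$ \emph{good} if $h^1(\mathcal Y^I)\ge\beta$ and $\wt(\delta f|_{\mathcal Y^I})\le 2\eps/p$. By Markov, good sets have probability at least $p/2$. For each good $I$ we obtain $g_I:\mathcal Y^I(0)\to\Gamma$ with $\dist(f|_{\mathcal Y^I},\delta g_I)\le 2\eps/(\beta p)$.

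The second step glues the local solutions. We normalize each $g_I$ by the group action $g_I\mapsto g_I\cdot\gamma$ (which leaves $\delta g_I$ unchanged), so as to make the $g_I$ agree. For a vertex $v\in\mathcal Y(0)$, consider good $I,I'$ that both contain $\col(v)$. Inside the link $\mathcal Y^I_v$, the function $u\mapsto g_I(v)^{-1}f(v,u)^{\ldots}g_I(u)$ is nearly constant, equal to the identity on most edges $vu$. Applying \Cref{lem:expansion-majority} on the underlying graph of $\mathcal Y^I_v$, which is a $\Theta(1)$-edge expander by hypothesis, shows that $g_I$ on the neighbors of $v$ is determined by $g_I(v)$ up to a small error. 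Comparing two good $I,I'$ that share a color set $c\ne\col(v)$ then shows that $g_I$ and $g_{I'}$ differ by a single right translation $\gamma_{I,I'}$ on most of $\mathcal Y^{c}$. We define $g(v)$ as the plurality value of $g_I(v)\gamma_I$ over good $I\ni\col(v)$, where the $\gamma_I$ are aligned to a fixed reference. Finally, we bound $\dist(f,\delta g)$ by averaging over random edges $uv$ and a random good $I\supseteq\col(u)\cup\col(v)$. Conditioning on good $I$ costs the first factor $p$, and aligning pairs of good $I$ costs the second, which gives $\Omega(\beta p^2)$.

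We expect the main obstacle to be the alignment of the translations $\gamma_I$. Because $\Gamma$ is non-abelian, these constants must be chosen consistently across all good $I$, not just pairwise. Our first attempt would be to fix a single pivot color set $c_0$ and align each good $I\ni c_0$ by majority on $\mathcal Y^{c_0}$, then propagate to the remaining $I$ through a chain $I\to I''\to I_0$ of overlapping color sets. The $k\ge r^5$ slack is what ensures that random $m$-tuples of disjoint $(r+1)$-sets overlap in controlled ways, so that these chains exist with constant probability. The remaining steps are routine averaging.
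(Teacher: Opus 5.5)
Your proposal has a genuine gap in the gluing step, which you yourself flag as the main obstacle. The paper's argument is built so that this step never arises.

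\textbf{Why the alignment step fails.} There are about $\binom{k}{r+1}$ possible color sets, and each sample contains only $m$ of them. So two independent samples $I,I'$ of $m$ pairwise disjoint $(r+1)$-sets share no color set with overwhelming probability. In that case $\mathcal Y^I$ and $\mathcal Y^{I'}$ are vertex-disjoint, and there is nothing on which to compare $g_I$ and $g_{I'}$.

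Even a shared class does not help much. A single class $\mathcal Y^c$ has no edges, so nothing forces $g_I$ and $g_{I'}$ to differ by one translation ``on most of $\mathcal Y^c$''. To force $g_{I'}\approx g_I\gamma$ you would need at least two shared color sets. You would also need the bipartite graph between those two classes to be connected or expanding, and this is not among the hypotheses, which concern only the links $\mathcal Y^I_v$.

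Your chains $I\to I''\to I_0$ have a further problem. The intermediate $I''$ must overlap prescribed samples in prescribed color sets, and such $I''$ form a negligible-measure family. The hypothesis that a $p$-fraction of $I$ are good therefore says nothing about whether any such $I''$ is good. The same averaging issue affects your definition of $g(v)$: for a given $\col(v)$ there may be no good $I\ni\col(v)$ at all. The slack $k\ge r^5$ makes overlaps rarer, not more common. Finally, your $p^2$ accounting is not backed by an argument.

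\textbf{The missing idea: one large seed projection.} Take $I$ to consist of $(r+1)^2$ pairwise disjoint color sets; this is where $k\ge r^5\ge (r+1)^3$ is actually used.
\begin{enumerate}
\item \emph{Upgrade the seed.} By Markov, with probability at least $p/2$ a random $m$-subfamily of such an $I$ is good with probability $\Omega(p)$. Applying \Cref{lem:color-restriction} inside $\mathcal Y^I$ then gives $h^1(\mathcal Y^I)=\Omega(\beta p)$.
\item \emph{Choose one seed.} A second averaging fixes one such $I$ for which $\delta f$ has weight $O(p^{-1})\wt(\delta f)$ inside $\mathcal Y^I$, on the triangles $T_{IIn}$ (two vertices in $\mathcal Y^I$), and on $T_{nnI}$ (one vertex in $\mathcal Y^I$).
\item \emph{Solve on the seed.} Define $g$ on $\mathcal Y^I(0)$ from its coboundary expansion. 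This gives $\dist_{E_2}(f,\delta g)=O(\wt(\delta f)/(\beta p^2))$, and this product of the two losses is where $p^2$ comes from.
\item \emph{Extend by majority.} For $v\notin\mathcal Y^I(0)$ set $g(v)=\maj_{u\in\mathcal Y^I_v(0)} f(vu)g(u)$. This is well defined because $\col(v)$ meets at most $r+1$ of the $(r+1)^2$ disjoint sets.
\item \emph{Edges with one endpoint in $\mathcal Y^I$.} Apply \Cref{lem:expansion-majority} on the edge expander $\mathcal Y^I_v$. Use the identity that $g(w)^{-1}f(wv)\neq g(u)^{-1}f(uv)$ forces $f(wu)\neq\delta g(wu)$ or $\delta f(vwu)\neq Id$. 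Finish with a $\tfrac12$-smoothness comparison with $T_{IIn}$.
\item \emph{Edges with no endpoint in $\mathcal Y^I$.} These reduce to the previous case via triangles from $T_{nnI}$ and smoothness.
\end{enumerate}
There is only one translation ambiguity, inside the single seed $\mathcal Y^I$. No consistent choice of non-abelian translations across many local solutions is ever needed.
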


We defer the proof of \Cref{lem:disjoint-color-restriction} to \Cref{app:omitted} as it is essentially identical to the proof of \cite[Lemma 8.2]{DD24buildings} up to changes in nomenclature.
We will also rely on \cite[Proposition 7.1.1]{DD24buildings} to relate the coboundary expansion of the faces complex to the base complex.

\begin{proposition}[{\citep[Proposition 7.1.1]{DD24buildings}}]
\label{prop:base-to-face}
Let $X$ be an $k$-partite complex that is a $\lambda$-local spectral expander for $\lambda \leq \frac{1}{2r^2}$.
Let $\ell \geq 5$
and let $J = \set{c_1 , c_2 , \dots , c_\ell }$ be a set of mutually disjoint colors $c_j \subseteq [k]$, with $|c_j| \leq r$. 
Denote by $R = \sum^\ell_{j =1} |c_j |$.
Let $\beta > 0$ and assume that for every $I = \set{i_1 , i_2 , \dots , i_\ell }$ such that $i_j \in c_j$
and every $w \in X^{\cup J\setminus I}$ , we have $h^1 (X^I_w)\geq \beta$.
Then $h^1(\mathrm F^J X) \geq \beta_1^R$ for $\beta_1 = \Omega_\ell (\beta)$.
\end{proposition}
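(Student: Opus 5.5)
The plan is to prove the bound by induction on $R=\sum_j |c_j|$, so that the exponent $R$ in $\beta_1^R$ comes from losing one constant factor each time we peel off a single base color.

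\textbf{Base case and what the induction must preserve.} When $R=\ell$, every $c_j$ is a singleton. Then $\mathrm F^J X$ is isomorphic to $X^I$ with $I=\cup J$ and $w=\emptyset$, so the hypothesis gives $h^1\geq\beta$ directly. For the inductive step, pick $j$ with $|c_j|\geq 2$, fix a color $i\in c_j$, and set $J'=(J\setminus\{c_j\})\cup\{c_j\setminus\{i\}\}$. For a vertex $x\in X^{\{i\}}$, the generalized link $(\mathrm F^J X)_x$ (meaning faces whose $c_j$-vertex contains $x$, together with the vertices of the other colors) is exactly $\mathrm F^{J'}(X_x)$. Its hypotheses are inherited: $X_x$ is still a $\lambda$-local spectral expander, and every $X^{I}_{x\cup w}$ appearing there is one of the links in the original hypothesis. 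So the induction hypothesis gives $h^1(\mathrm F^{J'}X_x)\geq \beta_1^{R-1}$ for every $x$. This is the reason the statement is phrased for all $w\in X^{\cup J\setminus I}$: the class of complexes in the hypothesis is closed under taking such links.

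\textbf{Local correction.} Take $f\in C^1(\mathrm F^J X,\Gamma)$ with $\wt(\delta f)=\eps$. For each $x$, restrict $f$ to $\mathrm F^{J'}X_x$. Averaging over $x$ gives $\Ex{\wt(\delta f|_x)}\lesssim \eps$, so we obtain $g_x\in C^0(\mathrm F^{J'}X_x,\Gamma)$ with $\Ex{\dist(f|_x,\delta g_x)}\leq \beta_1^{-(R-1)}O(\eps)$. The natural global candidate is $g(s)=g_{x(s)}(s\setminus\{x(s)\})$ when $s$ has color $c_j$, where $x(s)$ is the unique $i$-colored vertex of $s$. For a vertex $t$ of another color, however, the values $g_x(t)$ depend on the choice of $x\in X_t^{\{i\}}$.

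\textbf{Gluing (main obstacle).} This is where the non-abelian gauge ambiguity bites. For adjacent $x,x'$ in the link of $t$, the discrepancies $g_x(t)g_{x'}(t)^{-1}$ should be nearly independent of $t$ in the common link, and so define an element $h(x,x')\in\Gamma$. We need $h$ to be close to a coboundary on the $\{i\}\cup\{\text{one color from each other }c_{j'}\}$ projection. I would show this in three steps.
\begin{enumerate}[(a)]
\item Constancy in $t$: use the edge expansion of the swap walks, which follows from the $\lambda$-product bound with $\lambda\leq 1/(2r^2)$, together with \Cref{lem:expansion-majority} to take majorities.
\item Small coboundary: $\delta h$ has small weight, because triangles $x,x',x''$ lift to triangles of $\mathrm F^J X$ on which $\delta f$ is mostly trivial.
\item Correction: apply the coboundary expansion $\beta$ of some $X^I_w$ containing color $i$ to write $h\approx\delta\phi$. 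Then re-gauge $g_x\mapsto g_x\phi(x)$, so that all $g_x(t)$ agree with a majority value $g(t)$ for most $x$.
\end{enumerate}
After re-gauging, $\dist(f,\delta g)$ is bounded by the local errors plus these gluing errors, i.e.\ by $O_\ell(\beta^{-1})\beta_1^{-(R-1)}\eps$. This yields $\beta_1^R$ with $\beta_1=\Omega_\ell(\beta)$.

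The condition $\ell\geq 5$ should enter in steps (a)–(b). We need enough other colors that the pairs and triples of $x$'s used there are witnessed by common neighbors, and that the constancy argument can be run in a link of one of the remaining colors.

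I expect the gluing step to be the main difficulty. Controlling the $t$-dependence of $g_x(t)$ without abelian averaging needs a careful majority and expander-sampling argument (\Cref{lem:expander-sampling}). That argument also has to keep the constant loss per step independent of $R$, so that the bound compounds only as $\beta_1^R$ rather than something worse.
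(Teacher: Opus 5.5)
Your outer induction is set up correctly. Restricting $\mathrm F^JX$ to faces whose $c_j$-vertex contains $x$ gives a copy of $\mathrm F^{J'}(X_x)$, the hypotheses pass to $X_x$, and averaging the local errors over $x$ loses nothing. The gap is in the gluing step, which carries all of the content of the $\beta_1^R$ bound, and as written it is not well defined.

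Your $x,x'$ both have color $i$, so they are never adjacent and never lie in a common face of $X$. No face of $\mathrm F^JX$ contains two $i$-colored base vertices, so in (b) there are no triangles $x,x',x''$ to lift. The set of $t$ adjacent to both $x$ and $x'$ is not the link of any face, so neither local spectral expansion nor swap-walk bounds control its connectivity (it can even be empty). Step (a) therefore has no support. (Also, for non-abelian $\Gamma$ the gauge-invariant quantity is $g_{x'}(t)^{-1}g_x(t)$, not $g_x(t)g_{x'}(t)^{-1}$.)

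Step (c) consequently has no $1$-cochain on an $X^I_w$ to act on. The gauge data is indexed by $x$ and by whole $c_{j'}$-faces $t$, while $X^I_w$ sees only one base vertex of each $t$. To reduce to $X^I_w$ you must freeze the rest of each $t$ inside $w$. Then you only gauge the $x\in X_w$, up to a new $w$-dependent constant, which is the same gluing problem again.

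Formalized correctly, the gauge fixing asks for $\gamma_x$ and $G(t)$ with $g_x(t)\gamma_x=G(t)$ for most pairs with $x\cup t\in X$. This is a $1$-coboundary problem on $\mathrm F^{J''}X$, where $J''=(J\setminus\{c_j\})\cup\{\{i\}\}$. The cochain is $g_x(t)$ on $x$--$t$ edges and $f$ on $t$--$t'$ edges. Its coboundary is small exactly when $\wt(\delta f)$ and $\ex{x}{\dist(f,\delta g_x)}$ are.

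When every $c_{j'}$ with $j'\neq j$ is a singleton, this complex is an $X^I$, and your plan can be made to work with an $O(1/\beta)$ loss. In general, however, the complex keeps every other color at full size. Induction then only gives $h^1\ge\beta_1^{R-|c_j|+1}$ for it, so you get $h(J)\gtrsim h(J')\,h(J'')$. That recursion yields only $\beta^{O(\prod_j|c_j|)}$, not $\beta_1^{\sum_j|c_j|}$.

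This difference matters. The paper applies the proposition with five colors and $R=O(\sqrt r)$, and a product exponent would destroy the $\exp(-O(\sqrt r\log^2 r))$ bound. The paper itself imports the proposition from \cite{DD24buildings} without proof. What your proposal lacks is a gluing mechanism whose cost per added base color depends only on $\beta$ and $\ell$. You correctly identify this as the main difficulty, but you give no argument for it.
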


Before we begin proving \Cref{thm:swap-coboundary-projected-flags}, it will be helpful to establish some notation and definitions.
\paragraph{Notation}
\begin{itemize}

    \item Let $\buildings = \buildings^{2d}_q$. Note that $\buildings$ is $(2d-1)$-partite. 
    \item Let $\faces = \mathrm F^r (\buildings^{S_{k}})$ and $\tilde \faces = \mathrm F (\buildings^{S_{k}})$.
    \item Let $\fcolors = \binom{S_{k}}{r+1}$.
    \item Let $m=\sqrt{r+1}$
\end{itemize}

Given $w \in \buildings^{S_{k}}$, let $c^* = \col (w) \subseteq S_{k}$ and write $c^* = \set{j_1 , \dots , j_b }$.
Define $b + 1$ $c^*$-\emph{bins} \[ B_i = \set{j \in S_{k} | j_i < j < j_{i+1} }\] for $0 < i < b$
and also $B_0 = \set{j \in S_{k} |j < j_1 }$ and $B_b = \set{j \in S_{k} |j > j_b }$.
Note that the link $\buildings^{S_{k}}_w$ is isomorphic to the join of $\set{(\buildings^{j_{i+1}-j_{i}}_q)^{\tilde B_i}}_{i=0}^b$ where
we have taken $j_0=0$, $j_{b+1} = 2d$ and $\tilde B_i = \set{\tilde j|\tilde  j=j- j_{i} \text{ where }j\in B_i }$ for $i=1, 2,\dots, b$.
We will refer to a $c^*$-bin as just a bin when the $c^*$ is clear from context.

Now, recall that for $J',J \subseteq \binom{S_{k}}{\leq r+1}$ we write $J' \le J$, if $J' = \set{c'_1 , \dots , c'_m }$ where $c'_j \subseteq c_j$.
Let $c^i_j=c_j\cap B_i$.

\begin{definition}[Classification of bins]
Let $J = \set{c_1,c_2,\dots,c_\ell} \subseteq \fcolors$ be a set of colors of $\faces$.
A bin $B_i$ is $J$-\emph{crowded} if $J_i = (c^i_1 , \dots , c^i_\ell)$ has more than one coordinate for which $c^i_j \ne \emptyset$.
It is $J$-\emph{lonely} if there is exactly one such coordinate. Otherwise, it is $J$-\emph{empty}.
When the set $J$ is clear from the context we simply say crowded, lonely, and empty for $J$-crowded, $J$-lonely, and $J$-empty, respectively.

\end{definition}

\begin{definition}[Well-spread colors]
\label{def:well-spread}
Let $J$ be a set of $m$ colors in $\fcolors$.
We say that $J$ is \emph{well-spread} if the following properties hold.
\begin{enumerate}
    \item \label{item:disjointness} $J$ consists of pairwise disjoint colors.
    \item \label{item:global-spread} For every $\ell_1 , \ell_2 \in (\cup J) \cup \set{d-k, d+k}$
    it holds that
    $|\ell_1 - \ell_2 | \geq \frac{2k}{(m(r+1))^3}$. 
    \item \label{item:proj-bin-crowding} For every $J'\subseteq J$ of size $|J'| = 5$ and $\overline{J'} = J \setminus J'$ the following hold.
    \begin{enumerate}
        \item \label{item:small-bin} For every $\ell \in \cup \overline{J'} \cup \set{d-k}$,
        there exists some $\ell' \in \cup \overline{J'} \cup \set{d+k}$ 
        such that $1 < \ell' - \ell \leq \frac{200 k \log (r+1)}{(r+1)m}$. 
        \item \label{item:crowding-bound} For every $c \in J$ , the number of colors $i \in c$ that are in $J'$-crowded $\cup \overline {J'}$-bins is at most 
        $\frac{100(r+1) \log(r+1)}{m \log m}=\frac{200(r+1)}{m}$. 
        
    \end{enumerate}
\end{enumerate}
\end{definition}

\begin{remark}
Our definition of well-spread colors does not include an analogue of \cite[Item 3(c)]{DD24buildings}.
The bin size bounds in Item 3(c) lets them obtain a stronger bound for the flags complex.
Here, we have adapted the proof used for weaker swap-expansion parameter where we don't need the bin size bounds.
\end{remark}

\begin{proof}[Proof of \autoref{thm:swap-coboundary-projected-flags}]

Our proof strategy follows the overall framework of \cite{DD24buildings} for the full flags complex.
The idea is to break $\faces$ into progressively simpler complexes (projections, then deep links) until we have a complex whose coboundary expansion we can bound directly via our bounds for $\buildings^{S_{k}}_w$.

\paragraph{Moving to projections of $\faces$ to well-spread colors:}
First, we move from bounding the coboundary expansion of $\faces$ to coboundary expansion of the projections of $\faces$ on well-spread color-sets $J$ (see \Cref{def:well-spread}).
We do this in two (sub)steps:
\begin{inparaenum}[(i)]
\item first, we show most color-sets are well-spread (see \Cref{claim:good-color-density}),
\item then, we show that if most color-sets are well-spread and all well-spread color sets are good co-boundary expanders, then $\mathcal Z$ itself is a good co-boundary expander (see \Cref{claim:faces-color-restriction}).
\end{inparaenum}

More concretely, denote by $\mathcal J$ the set of well-spread $J$’s as per \Cref{def:well-spread}. We claim the measure of $\mathcal{J}$ tends to $1$ as $r$ grows large, i.e., most projections are well-spread as follows.
\begin{claim}
\label{claim:good-color-density}
Let $k, r$ be such that $k \geq r^5$. Let $6 \leq m \leq (r + 1)$.
The probability that $m$ colors chosen uniformly from $S_{k}$ are well-spread tends to 1 as $r \to \infty$.
\end{claim}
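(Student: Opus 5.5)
We treat the random well-spread check as a union bound over the defining properties of \Cref{def:well-spread}, each failing with probability $o(1)$ as $r\to\infty$. We sample $J$ by choosing $m(r+1)$ distinct colors uniformly from $S_k$ (with $|S_k|=2k+1$) and partitioning them uniformly into $m$ blocks of size $r+1$. Item~\ref{item:disjointness} then holds automatically. Throughout, $N:=m(r+1)\le (r+1)^2$ and $k\ge r^5$, so $N\ll k$, and we may treat the chosen colors as approximately i.i.d.\ uniform points of an interval of length $2k$.

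For Item~\ref{item:global-spread}, we take a union bound over the $\binom{N+2}{2}$ pairs among $\cup J\cup\{d-k,d+k\}$. A fixed pair lies within distance $2k/N^3$ with probability $O(1/N^3)$. The total failure probability is therefore $O(N^2/N^3)=O(1/N)\to 0$.

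For Item~\ref{item:small-bin}, fix $J'$ with $|J'|=5$. The set $\cup\overline{J'}$ is a uniform random subset of size $(m-5)(r+1)\ge N/6$ (using $m\ge 6$). We cover $[d-k,d+k]$ by consecutive intervals of length $L/2$, where $L=\frac{200k\log(r+1)}{(r+1)m}$. It suffices that every such interval (away from the right endpoint $d+k$) contains a point of $\cup\overline{J'}$. One interval of length $L/2$ misses all points with probability at most
\[
(1-L/(4k))^{N/6}\le \exp(-\Omega(L N/k))=\exp(-\Omega(\log (r+1)\cdot 100))=(r+1)^{-\Omega(100)},
\]
and there are $O(k/L)=O(N)$ intervals. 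We must additionally union bound over the $\binom{m}{5}\le r^{5/2}$ choices of $J'$. Each such choice is still polynomially smaller than the large power $(r+1)^{-\Omega(100)}$, so the total is $o(1)$. The requirement $\ell'-\ell>1$ is harmless, since the spacing from Item~\ref{item:global-spread} already exceeds $1$ because $k\ge r^5$.

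Item~\ref{item:crowding-bound} is the main obstacle. Fix $J'$ and $c\in J$. The bins are now defined by the $N-5(r+1)$ points of $\cup\overline{J'}$; conditionally on Item~\ref{item:small-bin} these bins all have length at most $L$. A color $i\in c$ is in a crowded bin only if some color of a different block of $J'$ lands in the same bin. The $5(r+1)$ points of $J'$ fall into bins roughly independently. A given point of $c$ therefore shares its bin with another block's point with probability about $5(r+1)\cdot L/(2k)=O(\log(r+1)/m)$. The expected number of crowded colors of $c$ is then $O((r+1)\log(r+1)/m)$, which sits below the threshold $\frac{100(r+1)\log(r+1)}{m\log m}$ after matching constants. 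Here we use $\log m=\tfrac12\log(r+1)$, which makes the two forms of the threshold in Item~\ref{item:crowding-bound} coincide. The hard part is turning this expectation into a tail bound strong enough to union bound over $J'$ and $c$, i.e.\ over $\mathrm{poly}(r)$ events. The plan is as follows. First expose $\cup\overline{J'}$. Then the positions of the $5(r+1)$ points of $J'$ are nearly uniform samples without replacement, and the crowding count is a function of them with bounded differences: moving one point changes the count by at most $O(1)$ per block, or we bound it via the bin occupancy of that point. McDiarmid's inequality, or a Chernoff bound for negatively associated indicators, then gives failure probability $\exp(-\Omega((r+1)\log(r+1)/m))=\exp(-\Omega(\sqrt r\log r))$. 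This beats the $\mathrm{poly}(r)$ union bound. We expect the delicate points to be the dependence from sampling without replacement, and the fact that "crowded" depends jointly on several blocks; if bounded differences fail, we would instead condition on the bins and count pairs of points colliding in a bin, then apply a second-moment bound. Summing the failure probabilities of all items gives $o(1)$, which proves the claim.
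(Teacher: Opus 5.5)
Your Items 1, 2 and 3(a) go through as you describe. The argument for Item 3(b), however, fails already at the expectation step. You bound the probability that a point of $c$ shares its bin with a point of another block of $J'$ by $5(r+1)\cdot L/(2k)=500\log(r+1)/m$. This uses the \emph{maximal} bin length $L=\frac{200k\log(r+1)}{(r+1)m}$ and gives an expected count of order $500(r+1)\log(r+1)/m$. The threshold is $\frac{100(r+1)\log(r+1)}{m\log m}=\frac{200(r+1)}{m}$, as you note yourself via $\log m=\tfrac12\log(r+1)$. So your bound on the mean exceeds the threshold by a factor $\Theta(\log r)$. This is not a matter of ``matching constants'', and no tail bound can repair it. The $\log(r+1)$ in $L$ is what 3(a) needs for its union bound; for 3(b) you must use the \emph{typical} bin size $\Theta(k/((m-5)(r+1)))$.

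The concentration step is also not justified. At the correct scale the slack is $t=\Theta((r+1)/m)=\Theta(\sqrt r)$. McDiarmid over the $5(r+1)$ positions of $\cup J'$ only gives a bound of the form $\exp(-Ct^2/r)$, which is a constant here and does not survive the union bound over $\Theta(m^5)$ pairs $(J',c)$. The bounded differences are not $O(1)$ either: moving one point of another block into a bin crowds every point of $c$ in that bin. Finally, the indicators of lying in a crowded bin are positively correlated within a bin, so a negative-association Chernoff bound does not apply directly.

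One way to close the gap is to note that Item 3(b) depends only on the relative order of the $N$ chosen points. This order is a uniformly random arrangement of the block labels, independent of the positions. Let $S=\cup\overline{J'}$ and let $O$ be the union of the other four blocks of $J'$. A point of $c$ lies in a crowded bin iff its left or right neighbour in the subsequence formed by $S\cup O$ lies in $O$. Hence at most $2|O|=8(r+1)$ gaps of that subsequence are bad. The positions of $c$ among all $N$ points form a uniform $(r+1)$-subset, independent of the order of $S\cup O$. By exchangeability of the gaps, the number of points of $c$ in bad gaps is distributed as the number of stars before the $b$-th bar ($b\le 8(r+1)$) in a uniform arrangement of $r+1$ stars and $N-(r+1)$ bars. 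This number is at least $t$ iff the first $t+b-1$ symbols contain at least $t$ stars, a hypergeometric event with mean $(t+b-1)/m$. For $t=200(r+1)/m=200\sqrt{r+1}$ this mean is at most $200+8\sqrt{r+1}\le t/20$ for large $r$. Hoeffding's Chernoff bound for the hypergeometric distribution then gives failure probability $\exp(-\Omega(\sqrt r))$, which comfortably beats the $\mathrm{poly}(r)$ union bound.

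For comparison, the paper does no such computation: it transfers \cite[Proposition 8.4.1]{DD24buildings} through the shift $x\mapsto d-(k+1)+x$ from $[2k+1]$ to $S_k$. A self-contained direct proof like yours is a reasonable alternative, but only once 3(b) is handled along these lines.
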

\Cref{claim:good-color-density} is an elementary combinatorial calculation and follows directly from an analogous computation in \cite[Proposition 8.4.1]{DD24buildings} for the correct setting of parameters,\footnote{To be precise, the dimension parameter in \cite[Proposition 8.4.1]{DD24buildings} should be set to $2k+1$ and the arity parameter to $r$.
After this, observe that the bijection from $[2k+1]$ to $S_{k}$ that maps $x\mapsto d-(k+1)+x$ has the property that (i) a well-spread set of color according the definition of \cite{DD24buildings} maps to a well-spread set of colors according to our definition, and (ii) a uniform distribution on $[2k+1]$ induces a uniform distribution on $S_{k}$.} so we omit the proof.

By \Cref{claim:good-color-density}, for large enough $r$ at least half of the sets $J$ are in $\mathcal J$. Thus, applying, \Cref{lem:disjoint-color-restriction} to $\buildings$, we immediately get the coboundary expansion of $\mathcal{Z}$ is lower bounded by the worst-case expansion over spread projections as stated in \Cref{claim:faces-color-restriction}.

\begin{claim}
\label{claim:faces-color-restriction}
    $h^1 (\faces, \Gamma) \geq \Omega(1) \cdot \min_{J \in \mathcal{J}} h^1 (\faces^J, \Gamma).$
\end{claim}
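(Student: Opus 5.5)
Claim~\ref{claim:faces-color-restriction} follows by applying Lemma~\ref{lem:disjoint-color-restriction} to the $k'$-partite complex $X=\buildings^{S_{k}}$, where $k'=|S_k|=2k+1\geq r^5$. We take $\mathcal Y=\faces=\F^r X$ and $m=\sqrt{r+1}$ as in the notation above. Let $\beta^*=\min_{J\in\mathcal J} h^1(\faces^J,\Gamma)$. Lemma~\ref{lem:disjoint-color-restriction} has two hypotheses: an edge-expansion condition on all vertex links of all projections, and a density condition on the projections that are good coboundary expanders. Once both are checked, the lemma gives $h^1(\faces)=\Omega(\beta^* p^2)$. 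By Claim~\ref{claim:good-color-density}, $p\geq 1/2$ for $r$ large, so $p^2$ is an absolute constant. Small $r$ can be absorbed into the constant, or handled by requiring $r\geq r_0$.

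\emph{Density hypothesis.} Every $J\in\mathcal J$ has $h^1(\faces^J)\geq\beta^*$ by definition of $\beta^*$. Hence
\[
\pr{I}{h^1(\faces^I)\geq \beta^*}\geq \pr{I}{I\in\mathcal J}=p .
\]
Claim~\ref{claim:good-color-density} shows this probability tends to $1$, so it is at least $1/2$ for large $r$. The uniform distribution on pairwise disjoint $m$-tuples of $(r+1)$-sets in Lemma~\ref{lem:disjoint-color-restriction} matches the distribution in Claim~\ref{claim:good-color-density}, via the bijection noted in the footnote.

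\emph{Expansion hypothesis.} This is the step that needs care. Fix $I\in\F\Delta_{k'}(m-1)$ and $v\in\faces^I$. By Observation~\ref{obs:face-link}, $\faces^I_v=\F^{I'}X_{\cup v}$ for the remaining colors $I'$, so it is a colored faces complex of a link of $X$.

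By Theorem~\ref{thm:spectral-projected}, $X$ is a $\frac{1}{\sqrt q}$-product, and links of a $\lambda$-product are again $\lambda$-products. Lemma~\ref{lem:color-swap-expansion} then bounds every bipartite piece of the underlying graph of $\faces^I_v$: each is a colored swap walk with second eigenvalue at most $(r+1)/\sqrt q$.

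The underlying graph is $(m-1)$-partite. Each pair of parts receives equal weight, because a top face is split uniformly into its color classes. Proposition~\ref{prop:kpartite-expansion} therefore bounds its second eigenvalue by
\[
\lambda_2(K_{m-1})+\frac{r+1}{\sqrt q}=\frac{-1}{m-2}+\frac{r+1}{\sqrt q}\leq \frac{r+1}{\sqrt q},
\]
which is at most $1/2$ once $q\geq q_0(k,r)=4(r+1)^2$. When $|I'|=1$ the link is a single part, which is trivially fine, or vacuous in the lemma's use. Lemma~\ref{lem:spectral-to-edge} then gives $\frac12$-edge expansion, i.e.\ $\Theta(1)$.

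The one subtle point is that the eigenvalue bound must hold uniformly over all $I$ and $v$, not just well-spread ones. This is why the argument uses the product property, which holds everywhere, rather than any spread condition. With both hypotheses verified, Lemma~\ref{lem:disjoint-color-restriction} yields $h^1(\faces,\Gamma)\geq\Omega(1)\cdot\beta^*$, which is the claim.
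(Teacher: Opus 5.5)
Your proposal is correct and takes the paper's route: apply \Cref{lem:disjoint-color-restriction} to $\buildings^{S_k}$ with $p\geq 1/2$ from \Cref{claim:good-color-density}, and get the edge-expansion hypothesis from the $\frac{1}{\sqrt q}$-product property of \Cref{thm:spectral-projected}. The paper invokes that property only implicitly, inside the appendix proof of the lemma, whereas you make it explicit via \Cref{lem:color-swap-expansion} and \Cref{prop:kpartite-expansion}. One small correction: drop the intermediate bound $-\frac{1}{m-2}+\frac{r+1}{\sqrt q}$, since the parallel term in \Cref{prop:kpartite-expansion} only gives $\lambda_2(K)\,\|g\|_{\kappa_1}^2\leq \max\{\lambda_2(K),0\}$, which is why the paper writes $0$ in place of $\lambda_2(K)$ in the proof of \Cref{thm:spectral-projected}; your final bound $\frac{r+1}{\sqrt q}$ is unaffected.
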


\begin{remark}
We note that
\begin{align*}
\faces^J = \F^r (\buildings^{S_{k}})^J = \F^r(\buildings)^J.
\end{align*}
Comparing this to the proof of flags coboundary expansion in \cite{DD24buildings} we find that they also end up with the projection of the $r$-faces complex of the flags complex to well-spread colors.
As a result, we will be able to proceed with the proof in exactly the same manner from hereon with the changes due to changes in parameters of the well-spreadness definition being addressed when they come up.
\end{remark}

\paragraph{Trickling down the links:}

In the previous step, we reduced the task of showing the coboundary expansion of $\faces$ to coboundary expansion of $\faces^J$ for spread color sets. Our next step will be to reduce the analysis of $h^1(\faces^J)$ to the coboundary expansion of its links. In particular, in \Cref{claim:faces-local-to-global} below, we fix $J \in \mathcal J$ and use a `local to global' argument to move to the links of $\faces^J$.
We note that we need the local spectral expansion to be constant for \Cref{claim:faces-local-to-global} to hold and indeed there is a $q_0$ such that we have local spectral expansion $\faces^J$ to be at least $1/2$ for any $q\geq q_0$ due to \Cref{claim:faces-local-to-global}.
\begin{claim}[{\citep[Claim~8.5.1]{DD24buildings}}]
\label{claim:faces-local-to-global}
Let $J$ 
be a set of $m$ pairwise disjoint colors for $\faces$,  and let $i= -1, 0, \dots, m - 6$, then
\[
h^1(\faces^J, \Gamma) \geq \exp(-O(i)) \cdot \min_{s\in \faces^J (i)} h^1(\faces_s^J, \Gamma),
\]
where we understand $\faces_s^J$ to mean $\faces^{J'}_s$ for $J'=J\setminus \col(s)$.
\end{claim}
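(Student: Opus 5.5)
The plan is to prove the claim by induction on $i$, where each step peels off one more vertex using the local-to-global lemma \Cref{lem:localcoboundary-to-coboundary}, applied to $\faces^J_s$ with $k=1$. The base case $i=-1$ is trivial, since then $s=\emptyset$ and $\faces^J_\emptyset=\faces^J$.

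For the inductive step, fix $s\in\faces^J(i-1)$ and write $J'=J\setminus\col(s)$. The complex $\faces^{J'}_s$ is $|J'|$-partite with $|J'|\geq 6$, so it has dimension at least $5\geq 3=k+2$. Any vertex $v$ of $\faces^{J'}_s$ gives a link $(\faces^{J'}_s)_v=\faces^{J}_{s\cup\{v\}}$, which is a link of a face in $\faces^J(i)$; by assumption its $h^1$ is at least $\beta_i:=\min_{t\in\faces^J(i)}h^1(\faces^J_t)$. Deeper links $(\faces^{J'}_s)_r$ with $|r|=2$ only need $h^0$, i.e.\ edge expansion of the underlying graph. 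We get this from their spectral expansion via \Cref{lem:spectral-to-edge}: by \Cref{lem:spectral-projected-faces} and \Cref{lem:spectral-faces-complex}, every link of $\faces^J$ is a $\lambda$-local spectral expander with $\lambda\to 0$ as $q\to\infty$, so $h^0\geq 1/2$ for $q\geq q_0$. Links with $|r|\geq 3$ do not enter the $k=1$ bound.

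Apply the local-to-global lemma with $\beta=\min(\beta_i,1/2)$. Since coboundary expansion is at most a constant, this gives
\[
h^1(\faces^J_s)\geq \frac{\beta^2}{8}-e\lambda .
\]
The main obstacle is exactly here: the bound is quadratic in $\beta$, so iterating it naively gives a doubly exponential loss in $i$, not the claimed $\exp(-O(i))$. To avoid this I would not chain the quadratic lemma level by level. Instead I would use the refined version from \cite{DD24coboundary,DD24buildings}, which is the form of \Cref{lem:localcoboundary-to-coboundary} actually proved there. That version is linear in the vertex-link coboundary expansion, with constants depending only on the (constant) edge expansion of the 2-links, and gives
\[
h^1(X)\geq c\cdot\min_v h^1(X_v)-O(\lambda)
\]
with an absolute constant $c$. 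Iterating this over $i+1$ levels gives the factor $c^{\,i+1}=\exp(-O(i))$.

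The additive $\lambda$ error is handled by taking $q_0(k,r)$ large. The final target $\exp(-O(\sqrt r\log^2 r))$ is fixed independently of $q$, so $\lambda$ can be made smaller than $c^{m}$ times it, and the error is absorbed at every level. The restriction $i\leq m-6$ guarantees that every complex met along the way still has at least $6$ colors, so the dimension hypothesis of the local-to-global lemma holds throughout.
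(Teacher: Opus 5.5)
Your overall shape (peel off one vertex per step and apply \Cref{lem:localcoboundary-to-coboundary} to $\faces^J_s$) is the natural one; the paper itself only cites \cite{DD24buildings} and gives no argument. However, the induction does not close as written, and the obstruction is cohomological. \Cref{lem:localcoboundary-to-coboundary} requires every link $X_r$ to be a \emph{coboundary} expander, i.e.\ to have trivial $H^1$. Its conclusion, though, only bounds cocycle expansion, the distance to $Z^1$. The paper says exactly this. When it applies the same lemma to $\buildings^{2d,S_{k}}_{q,s}$, it first supplies $Z^1=B^1$ separately via shellability (\Cref{lem:trivial-cohomology}). In your step from level $i$ to level $i-1$, the vertex links are the complexes $\faces^J_t$ with $t\in\faces^J(i)$. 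At every level except the one you start from, the only information about them is the output of the previous application of the lemma, which says nothing about $H^1(\faces^J_t,\Gamma)$. This cannot be recovered from local data. Quotients of affine buildings (e.g.\ the LSV complexes) have building vertex links with constant $h^1$, edge links with constant $h^0$, and arbitrarily small $\lambda$. Yet they come in towers of nontrivial covers, so their $H^1$ with symmetric-group coefficients is nontrivial. You therefore need an independent proof that every intermediate $\faces^J_s$, and $\faces^J$ itself, has trivial $1$-cohomology. Without it the lemma cannot be re-applied after the first step. Moreover, the top-level bound you obtain is only a cocycle bound, whereas \Cref{lem:disjoint-color-restriction} later consumes genuine coboundary expansion of $\faces^J$.

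Second, the step that makes the loss $\exp(-O(i))$ rather than doubly exponential is asserted, not derived. The lemma available in the paper gives $\beta^2/8-e\lambda$. A bound of the form $h^1(X)\ge c\min_v h^1(X_v)-O(\lambda)$ would require opening the proof in \cite{DD24coboundary} and tracking the vertex-link and edge-link constants separately, e.g.\ a product bound $\beta_1\beta_0/8-e\lambda$ with $\beta_0=\min_{uv}h^0(X_{uv})\geq 1/2$. That is plausible, but it is the load-bearing step, so it must be stated precisely and justified rather than attributed. Finally, because of the additive $e\lambda$ term, you only prove the inequality under the proviso that $\min_s h^1(\faces^J_s)$ exceeds roughly $C^{i}\lambda$, with $q_0$ chosen after the target bound is fixed. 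You do not prove the purely multiplicative statement as written. That suffices for the application, but you should say so explicitly.
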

We note that, as stated in \cite[Claim~8.5.1]{DD24buildings} the set $J$ should be well-spread, but it is easy to check the proof only relies on disjointness. Thus, by \Cref{claim:faces-local-to-global},
\[
h^1(\faces^J, \Gamma) \geq \exp(-O(m))\cdot \min_{s\in \faces^J (m-6)} h^1 (\faces_s^J, \Gamma).
\]

\paragraph{Reducing to Crowded Colors:}

Fix any $s \in \faces^J(m - 6)$. Our goal is now to reduce from analyzing coboundary expansion of $\faces^J_s$ to $\faces^{\tilde{J}}_s$
where $\tilde J$ are the crowded colors in the link.
In particular, since by definition of spreadness there are few crowded colors,
this will allow us to apply \Cref{prop:base-to-face} with much smaller 
$R=\sum\limits_{j=1}^5 |\tilde c_i|$ to achieve the final desired coboundary expansion.
Toward this end, we'd like to use the following result of \cite{DD24buildings}.

\begin{corollary}[{\citep[Corollary 8.7]{DD24buildings}}]
\label{cor:tensor-corollary}
Let $w\in \buildings$.
There is some constant $\beta > 0$ such that
\[
h^1 (\tilde \faces^J_w, \Gamma) \geq \beta \cdot h^1 (\tilde \faces_w^{\tilde J}, \Gamma),
\]
where $\tilde J = \set{\tilde c_1 , \tilde c_2 , \dots , \tilde c_\ell }$
and $\tilde c_j = \set{ i \in c_j | i \text{ is not in a lonely or empty bin}}$.
\end{corollary}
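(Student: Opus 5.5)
The plan is to use the join structure of the link described just before the statement. By that description, $\buildings^{S_{k}}_w$ is isomorphic to the join of the projected flags complexes on the $c^*$-bins. For a color $c_j \in J$, split $c_j = \tilde c_j \sqcup \hat c_j$, where $\hat c_j$ is the part of $c_j$ lying in lonely (or empty) bins. Every lonely bin contains colors of exactly one $c_j$. So the join decomposes as
\[
\buildings^{S_k}_w \;\cong\; Y * L_1 * \cdots * L_\ell ,
\]
where $Y$ carries all the crowded colors and $L_j$ is the join of the lonely-bin complexes meeting $c_j$.

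Consequently, a vertex $s_j$ of $\tilde\faces^J_w$ of color $c_j$ is a pair $(\tilde s_j, \hat s_j)$ with $\tilde s_j \in Y^{\tilde c_j}$ and $\hat s_j \in L_j^{\hat c_j}$. A set $\{s_1,\dots,s_\ell\}$ is a face if and only if $\{\tilde s_j\}$ is a face of $\tilde\faces^{\tilde J}_w$; the coordinates $\hat s_j$ are unconstrained, independent across $j$, and the measure factorizes. Thus $\tilde\faces^J_w$ is a ``tensor'' of $\tilde\faces^{\tilde J}_w$ with a fiber $L_j^{\hat c_j}$ attached to each color class. The task is then a tensor lemma: for any group $\Gamma$, the coboundary expansion of such a tensor is at least a constant times that of the base.

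Let $f \in C^1(\tilde\faces^J_w,\Gamma)$ with $\wt(\delta f)=\eps$. The argument has three steps.

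\begin{enumerate}[(i)]
\item \textbf{Fix a gauge in each fiber.} For each $j$ and each base vertex $\tilde s_j$, choose a reference point $\hat o_j$ in the fiber. Consider the triangles
\[
\bigl((\tilde s_j,\hat s_j),\ (\tilde s_j,\hat s'_j),\ (\tilde s_i,\hat s_i)\bigr).
\]
These are not faces, since two vertices share color $c_j$. We therefore route through a third vertex $(\tilde s_i, \hat s_i)$ of a different color and use the two triangles this produces. Triangle-consistency then implies that $f\bigl((\tilde s_j,\hat s_j),x\bigr)$ and $f\bigl((\tilde s_j,\hat s'_j),x\bigr)$ differ by a ``gauge'' $g_j(\tilde s_j,\hat s_j,\hat s'_j)$. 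This gauge is nearly independent of the neighbour $x$, and it is nearly a cocycle in $(\hat s_j,\hat s'_j)$.
\item \textbf{Flatten the fibers.} The fiber graphs are products of colored swap walks in full flags complexes. By \Cref{thm:spectral-projected} and \Cref{lem:color-swap-expansion}, they are $O(r/\sqrt q)$-spectral, hence constant edge, expanders for $q \geq q_0$. Applying \Cref{lem:expansion-majority} to the gauge values, we obtain $h: \tilde\faces^J_w(0)\to\Gamma$ such that $f' = f\cdot\delta h$ is, up to distance $O(\eps)$, a function $\tilde f$ of the base coordinates $(\tilde s_i,\tilde s_j)$ only.
\item \textbf{Use the base expansion.} We then have $\wt(\delta\tilde f) \le O(\eps)$ on $\tilde\faces^{\tilde J}_w$, because each base triangle is the image of a tensor triangle with the same measure. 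Hence $\tilde f$ is within $O(\eps)/h^1(\tilde\faces^{\tilde J}_w)$ of a coboundary $\delta\tilde g$. Lifting $\tilde g$ to $g(\tilde s,\hat s) = \tilde g(\tilde s)\,h(\tilde s,\hat s)^{-1}$ gives
\[
\dist(f,\delta g) \;\le\; O(\eps)\bigl(1 + 1/h^1(\tilde\faces^{\tilde J}_w)\bigr),
\]
which is the claimed bound since $h^1 \le O(1)$.
\end{enumerate}

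The main obstacle is step (ii) in the non-abelian setting. The gauge must be defined consistently across all edges leaving $(\tilde s_j,\hat s_j)$, not just for one neighbour. With $\Gamma$ non-abelian we cannot average, so each gauge value has to be selected by a majority vote over neighbours, and we must control the two error sources separately.

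\begin{itemize}
\item \textbf{Vertices where the vote fails.} I would first bound the fraction of such vertices by $O(\eps)$ using \Cref{lem:expander-sampling}.
\item \textbf{Edges where the two majorities disagree.} I would bound this fraction via \Cref{lem:expansion-majority} on the fiber swap graphs.
\end{itemize}

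Finally, I would verify that the constants do not depend on $r$: the fibers are handled coordinate by coordinate, and only pairs of colors interact in any one triangle.
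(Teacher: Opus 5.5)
The paper gives no proof of this statement. It imports it verbatim from \cite{DD24buildings}, so your proposal is a self-contained reconstruction rather than a variant of an in-paper argument. Your route is sound:
\begin{itemize}
\item use the join structure of $\buildings^{S_k}_w$ to see that lonely bins make $\tilde\faces^J_w$ the tensor of $\tilde\faces^{\tilde J}_w$ with unconstrained fibers under a product measure;
\item gauge-fix fiber by fiber;
\item push down to the base.
\end{itemize}
Its only inputs are edge expansion of the base's vertex links and $h^1$ of the base. This makes the tensor structure explicit and elementary, whereas the citation (labelled a tensor corollary) gets the same conclusion from \cite{DD24buildings}'s general tensor-product analysis at no cost in length.

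Several steps need repair when you write it out.
\begin{itemize}
\item \textbf{Gauge change.} For non-abelian $\Gamma$ the gauge change must be $f^h(u,v)=h(u)^{-1}f(u,v)h(v)$, not $f\cdot\delta h$. Only then is $\delta f^h(u,v,w)=h(u)^{-1}\delta f(u,v,w)h(u)$, so that your weight bound in step (iii) holds. Only then does $f(e)\ne\delta(h\tilde g)(e)$ hold exactly when $f^h(e)\ne\delta\tilde g(e)$. The correct lift is therefore $g(u)=h(u)\tilde g(\mathrm{proj}(u))$, not $\tilde g\,h^{-1}$.
\item \textbf{Which graph must expand.} It is the vertex link of $\tilde s_j$ in the base $\tilde\faces^{\tilde J}_w$, tensored with the other colors' fibers. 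The walk operator on this tensored graph only sees fiber averages, so its nontrivial spectrum is that of the base link, a multipartite graph of colored swap walks. This needs $|J|\ge 3$ (true here, since $|J|=5$) and $q\ge q_0(r)$.
\item \textbf{No fiber expander is needed.} The fibers carry no graph. Passing from the near-cocycle gauge $G_x(y,y')$ to $h$ uses only that the complete complex on a fiber is a coboundary expander. Averaging over $\bar y$ yields a reference point with $G_x(y,y')=G_x(y,\bar y)\,G_x(\bar y,y')$ for all but a fraction of pairs bounded by the local cocycle-failure rate at $x$. Neither \Cref{lem:expander-sampling} nor expansion of any ``fiber graph'' is needed.
\item \textbf{Flattening both endpoints.} Knowing that $f^h((x,y),v)$ is nearly independent of $y$ does not by itself give closeness to a base function. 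Apply it at both endpoints of each base edge $xx'$. Then use that a function $F(y,y')$ with $\Pr[F(y,y')\ne F(\bar y,\bar y')]\le\delta$, for independent pairs, takes its most popular value with probability at least $1-\delta$.
\end{itemize}
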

As stated, the only problem is the above bounds $\tilde \faces^J_w$ for a vertex $w \in \tilde \faces$ (equivalently a face in $\buildings$), while we need to bound $\faces^J_s$. However, by \Cref{obs:face-link} we have $ \faces_s^{J} \cong \mathrm F^{J} \buildings_{\cup s}$, and since $J$ consists entirely of $(r+1)$-sets we indeed have $\mathrm F^{J} \buildings_{\cup s} \cong \tilde \faces_{\cup s}^{J}$.
Thus, applying \Cref{cor:tensor-corollary}, we have
\[
h^1 (\faces_s^J, \Gamma ) = h^1 (\mathrm{F}^J\buildings_{\cup s}, \Gamma ) = h^1(\tilde \faces_{\cup s}^{J}, \Gamma)\geq \Omega( h^1 (\tilde \faces_{\cup s}^{\tilde J}, \Gamma)).
\]

\paragraph{Reduction to the Base Complex:} 

Next, denote by $\beta = \min_{w,I} h^1 (\buildings^{\cup I}_{\cup s\sqcup w}, \Gamma)$ where the minimum is taken over sets $I$
consisting of five singletons, i.e., $I = \set{\set{i_0 }, \dots , \set{i_4 }}$ such that $I \cup \col (s) \le \tilde J$, and $w \in \buildings_{\cup s}$
such that $\col(w) \subseteq \bigcup \tilde J$ and $\col(w)\cap I = \emptyset$.
Since $\tilde Z_{s}^{\tilde J} \cong \F^{\tilde J}\buildings_{\cup s}$, by \Cref{prop:base-to-face},
\[
h^1 (\tilde Z_{s}^{\tilde J}, \Gamma) \geq \mathrm{const} \cdot \beta_1^R,
\]
where $\beta_1 = \Omega(\beta)$ and $R = \sum_j |\tilde c_j |$.
By \Cref{item:crowding-bound} of \Cref{def:well-spread}, for every $c_j$,
\[
|\tilde c_j|=\text{the number of indices from }c_j \text{ in crowded bins}=O\paren {\frac{r \log r}{m \log m}},
\]
so, in total $R = O \paren{\frac{r \log r}{m \log m}}= O\paren{\sqrt{r}}$ 
\begin{equation}
\label{eq:intermediate-bound}
h^1 (\faces, \Gamma)\geq \mathrm{const} \cdot \exp(-O(\sqrt r)) \cdot \beta^{\sqrt r}.
\end{equation}
\paragraph{Using bounds for the Flags complex:} It is left to bound $\beta$, the coboundary expansion of deep spread out links in the projected complex.

\begin{restatable}{claim}{buildingsexpansion}
\label{claim:buildings-expansion}
Let $I = \set{\set{i_0 }, \dots , \set{i_4 }}$, and let $s \in \faces$ be such that $|s| = m - 5$
and there is a well-spread set of colors $J \in \mathcal J$  such that $I \cup \col (s) \le J$.
Let $w'\in \buildings_{\cup s}^{\cup J}$ be such that $\col(w')\cap I = \emptyset$.
Then
\[
h^1(\buildings^{\cup I}_{\cup s\sqcup w'}, \Gamma) \geq \exp(-O (\log^2 r)).
\]
\end{restatable}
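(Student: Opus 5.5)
The complex $\buildings^{\cup I}_{\cup s\sqcup w'}$ is the projection to the five colors $I=\{i_0<\dots<i_4\}$ of the link of the flag $u:=\cup s\sqcup w'$ in $\buildings=\buildings^{2d}_q$. I would first describe this complex concretely. Let $c^*=\col(u)$ with dimensions $j_1<\dots<j_b$, and consider the $c^*$-bins. The link is the join of the flags complexes $(\buildings^{j_{i+1}-j_i}_q)^{\tilde B_i}$, so the projection to $I$ is the join of the projections of these pieces onto the colors of $I$ lying in each bin. Since all of $\cup J\setminus I$ may lie in $u$, some bins contain several $i_t$'s, while the remaining ones are isolated. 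The plan is to bound $h^1$ of each join factor and then combine them with a join lemma.

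\textbf{Step 1 (join reduction).} Coboundary expansion behaves well under joins: if $X=X_1*X_2$, with each factor connected or a $0$-dimensional set, then $h^1(X)\geq \Omega(\min(h^1(X_1),h^1(X_2),\dots))$. One can prove this either with the local-to-global \Cref{lem:localcoboundary-to-coboundary}, since joins are $0$-product spectrally, or, as in \cite{DD24buildings}, by a direct cone construction through the complete-bipartite parts. Factors consisting of a single color of $I$ are $0$-dimensional, and their joins with anything are cones. So only bins containing at least two (at most five) colors of $I$ matter, and there are at most $5$ pieces, costing at most a constant factor.

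\textbf{Step 2 (each crowded bin).} Consider a bin $B=(j_i,j_{i+1})$ containing colors $i_{t}<\dots<i_{t'}$ of $I$. Shift by $j_i$, so the piece is $\buildings^{n}_q$ with $n=j_{i+1}-j_i$, projected to at most $5$ colors $a_1<\dots<a_p$ in $(0,n)$. By \Cref{item:global-spread} of \Cref{def:well-spread}, consecutive gaps (including to the ends $0$ and $n$) are at least $g=2k/(m(r+1))^3$. By \Cref{item:small-bin}, $n\le 200k\log(r+1)/((r+1)m)$, since the colors of $\cup\overline{J'}$ lie in $u$ and bound the bin. So all ratios of gaps are at most $\mathrm{poly}(r)$. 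For $p=3$ I would use the cone of \Cref{lem:three-projection-coboundary} directly. It needs $2a_1\le a_2$ and $2a_2-a_1\le a_3$, which fail in general, so instead I would generalize the cone: replace the single intermediate vertices $y,z,x$ by chains of length $O(\log(n/g))=O(\log r)$ of subspaces, using sums and intersections to climb dimensions by roughly doubling. Each triangle move in \Cref{lem:three-projection-coboundary} then becomes $O(\log r)$ moves, and nested twice this gives cone diameter $O(\log^2 r)$. For $p=4,5$ I would apply \Cref{lem:color-restriction}-type reasoning, or again build cones (with similar $O(\log^2 r)$ length), using transitivity of $\mathrm{GL}_n$ on flags for \Cref{lem:cone-argument}.

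\textbf{Step 3 (combine).} \Cref{lem:cone-argument} then gives $h^1\ge 1/(\binom{6}{3}\cdot O(\log^2 r))$ per bin, and Step 1 gives the claimed bound, in fact even $1/\mathrm{poly}\log r\ge\exp(-O(\log^2 r))$. If instead I fall back on \cite{DD24buildings}'s iterative bound, which loses a constant factor per halving of the gap ratio across nested levels, this gives $\exp(-O(\log^2 r))$, matching the statement.

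The main obstacle is Step 2: building cones of controlled diameter when gaps are unbalanced (e.g.\ $a_1$ large relative to $a_2-a_1$), where the simple "sum fits" arguments fail. I would first try to handle this by iterating through intermediate dimensions with geometric spacing. Failing that, I would cite the corresponding flags-complex bound of \cite{DD24buildings} for well-spread colors, which has exactly this form.
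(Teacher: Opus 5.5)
There is a genuine gap in your Steps~1 and~2.

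\textbf{Step 1 is misstated.} A color of $I$ that is alone in its bin contributes a discrete set with many vertices. Joining with it therefore gives many cones glued along the other factor, not a cone. A bin with exactly two colors of $I$ contributes a bipartite graph, and $h^1$ of a graph is not defined: every $1$-cochain is a cocycle, but not every one is a coboundary. No bound of the form $h^1(X_1*X_2)\ge\Omega(\min_i h^1(X_i))$ can hold for connected graph factors. For example, $C_N*C_N$ is a simply connected triangulated $3$-sphere. Yet over $\mathbb F_2$ the cochain $f(a,b)=\one_S(a)\one_T(b)$ on cross edges (with $S,T$ half-arcs and $f=0$ elsewhere) has $\wt(\delta f)=O(1/N)$ and $\dist(f,B^1)=\Omega(1)$. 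Configurations spread over several bins are governed by the diameter of the other factors, which is what \Cref{claim:coboundary-diameter-bound} records.

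\textbf{Step 2 needs an input that is not available.} You keep only bins with at least two colors and ask for $h^1$ of each. So the splits $3+2$ and $3+1+1$ require a lower bound on $h^1$ of a three-color projection inside a single bin with arbitrary spacing. Nothing available gives one:
\begin{itemize}
\item \Cref{lem:three-projection-coboundary} uses $2i_0\le i_1$ and $2i_1-i_0\le i_2$ exactly to produce the auxiliary vertices $x,y,z$.
\item \Cref{lem:4projection-bound} is a four-color statement.
\item Neither \Cref{lem:localcoboundary-to-coboundary} nor \Cref{lem:color-restriction} applies to a $3$-partite complex.
\end{itemize}
Your $O(\log r)$-chain cones do not specify the triangles that realize each move. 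If they worked, they would give $1/\mathrm{poly}\log r$ and beat the quasi-polynomial bound the paper imports, so that claim is unsupported. The fallback to \cite{DD24buildings} also does not apply verbatim: their Lemma~8.8 uses spreadness relative to the ambient dimension, which these projections lack.

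\textbf{The missing idea} is to apply \Cref{lem:color-restriction} to the whole five-color complex before looking at bins. The required $\tfrac12$-local spectral expansion of vertex links holds for large $q$ by \Cref{thm:spectral-projected}. It then suffices to bound every four-color projection $\buildings^{I'}_w$, where $w=\cup s\sqcup w'$.
\begin{itemize}
\item If $I'$ meets more than one $\col(w)$-bin, \Cref{claim:coboundary-diameter-bound} gives $h^1\ge\Omega(1/\mathrm{diam})$ for a two- or three-color projection. \Cref{claim:projected-flags-diameter-bound} together with \Cref{item:global-spread} and \Cref{item:small-bin} makes that diameter $\mathrm{poly}(r)$.
\item If $I'$ lies in one bin, \Cref{lem:4projection-bound} applies with $\tilde i_3=O(k\log r/(rm))$ and all gaps $\Omega(k/(m(r+1))^3)$, giving $\exp(-O(\log^2 r))$.
\end{itemize}
This route never needs the coboundary expansion of a three-color single-bin piece, which is exactly the input your decomposition cannot supply.
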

\Cref{claim:buildings-expansion} is a variant of \citep[Lemma 8.8]{DD24buildings}, which makes the same statement for the full flags complex under a modified notion of spreadness dependent on the ambient dimension $d$ ($n$ in their definition), while our definition of spreadness critically depends only on $k$, as $d$ grows with the number of vertices and none of the projections we consider would be spread under \cite{DD24buildings}'s definition. Nevertheless, because we are inside the link of $s$, `morally' we may treat the ambient dimension as $k$, in which case our definition reduces to theirs.

In somewhat more detail, the proof of \Cref{claim:buildings-expansion} essentially proceeds by
\begin{inparaenum}[(1)]
    \item first, using \Cref{lem:color-restriction} to move to projections to 4 indices,
    \item relating the coboundary expansion of the complexes to the diameter, and
    \item finally, obtaining a bound on the diameter as result of well-spreadness of $J$.
\end{inparaenum}
Because we are inside a link, the diameter bounds depend on the ratio of the parameters in \Cref{item:global-spread} and parameters in \Cref{item:proj-bin-crowding} which remains unchanged from the definition of \cite{DD24buildings}. For reader's convenience we reproduce relevant parts of the proof of \Cref{claim:buildings-expansion} in \Cref{app:buildings-expansion}.

We can now complete the proof. Namely, by \Cref{claim:buildings-expansion}, we have
\[
\beta = \min_{w,I} h^1 (\buildings_{\cup s\sqcup w}^{\cup I}, \Gamma) \geq \exp(-O (\log^2 r)).
\]
Plugging this back into \Cref{eq:intermediate-bound} we get
\[
h^1 (\faces, \Gamma) \geq \mathrm{const}\cdot \exp(-O(\sqrt r \log^2r))
\]
as needed.
\end{proof}

\section{Applications}
\subsection{Background}
\subsubsection{Agreement testing}
\paragraph{Agreement Testing.} Let $\mathcal{F} \subseteq {\binom{[n]}{k}}$ be a family of $k$ element subsets of $[n]$
and let $\set{f_s : s\to \Sigma | s\in \mathcal F}$ be an ensemble of local functions, each defined over a subset $s\in \mathcal F$.
An agreement test is a randomized property tester for the question:
\begin{center}
\emph{Is there a global function $G: [n] \to \Sigma$ such that $f_s = G|_s$ for all $s\in \mathcal F$?}
\end{center}
Towards answering this question, we will work with the following basic restricted type of 2-query test.
\begin{definition}(2-query agreement test)
Let $\Sigma$ be an arbitrary finite set which we will call the \emph{alphabet}.
Given a family $\mathcal F$ of $k$ element subsets of $[n]$ and an ensemble of functions $\mathcal E = \set{f_s : s\to \Sigma | s\in \mathcal F}$ from $[n]$ to $\Sigma$ which we will call \emph{local functions}, a 2-query agreement test is a distribution $\mathcal D$ over a pair of subsets $s_1,s_2\in \mathcal F$, with the agreement of the ensemble being
\begin{align*}
\agr_{\mathcal D}(\mathcal E)=\pr{s_1,s_2\sim \mathcal D}{f_{s_1}\vert_{s_1\cap s_2}=f_{s_2}\vert_{s_1\cap s_2}}.
\end{align*}
\end{definition}

\begin{remark}
In the literature, sometimes more general agreement tests are considered where the distribution $\mathcal D$ is over $\ell$ subsets in $\mathcal D$.
\end{remark}

We note that for any property testing question we would like the test to be complete, i.e., we would want an object having the property in consideration to pass the test.
By design, agreement tests (as described above) have completeness as one can clearly see restriction of a global function to sets in $\mathcal F$ would pass any 2-query test of the above form.

We also want agreement tests to be sound, i.e., whenever the tests passes there is a global function whose restrictions agree with most functions in the ensemble.
There are two regimes of soundness of agreement tests that are typically considered:
\begin{description}
    \item[The 99\% regime or the high acceptance regime.] In this regime, we want to claim that there is a global function $G:[n]\to \Sigma$ such that $G\vert_s=f\vert_s$ for most $s\in \mathcal F$ whenever $f_{s}\vert_{s\cap s'}=f_{s'}\vert_{s\cap s'}$ for most $s,s'\in \mathcal F$.
    More formally, we want to show
    \[
        \agr_{\mathcal D}(\set{f_s}_{s\in \mathcal F})\geq 1-\eps \quad \Longrightarrow \quad \exists G:[n]\to \Sigma,\ \Pr{G\vert_s=f_s}\geq 1-O(\eps).
    \]
    \item[The 1\% regime or the low acceptance regime.] In this regime, if there is at least some, say $\eps$, fraction of $s,s'\in \mathcal F$ so that $f_{s}\vert_{s\cap s'}=f_{s'}\vert_{s\cap s'}$, we want to claim that there is a global function $G:[n]\to \Sigma$ such that $G\vert_s \approx f\vert_s$ for some fraction of $s\in \mathcal F$, say $\poly(\eps)$.
    More formally, we want to show there there is some polynomial $\poly$ such that for any $\eps\geq \eps_0$,
    \[
        \agr_{\mathcal D}(\set{f_s}_{s\in \mathcal F})\geq \eps \quad \Longrightarrow \quad \exists G:[n]\to \Sigma,\ \Pr{\dist(G\vert_s, f_s)\leq \eta}\geq \poly(\eps),
    \]
    where $\eta$ is some `closeness' parameter.
    Assuming (for simplicity) that the polynomial is of form $\poly(x)= x^c$, we call $(\eps_0,\eta, c)$ the \emph{soundness parameters} and call a test $\mathcal D$ with these parameters a $(\eps_0,\eta, c)$-\emph{sound test}.
    More consisely, we may leave $\eps_0,c$ unspecified say a test is $\eta$-sound, i.e., a $\eta$-sound test is $(\eps_0,\eta,c)$-sound for some $\eps_0,c>0$.
\end{description}
In the low soundness regime, agreement testing is quite non-trivial even over $\mathcal F=\binom{[n]}{k}$, and there is a long line of works which settled this~\cite{GS00,DG08,DR06,DS14,IJKW10,DN23}.

Here, we will study agreement testing on sparser families of sets, in particular, we will take $\mathcal F=X(k-1)$ for some simplicial complex $X$ of dimension $d>k-1$
and define an agreement test based on its weights.
In particular, we will consider the following agreement test.
\paragraph{$(k,\sqrt k)$-test.} The $(k,\sqrt k)$-test $\mathcal D$ is a 2-query test given by the following process.
    \begin{enumerate}
        \item Sample $t\sim \Pi_d$.
        \item Sample $s\sim \Pi_{\sqrt{k}-1}$, conditioned on $s\subseteq t$.
        \item Sample $s_1,s_2\sim \Pi_{k-1}$, conditioned on $s\subseteq s_1,s_2 \subseteq t$.
    \end{enumerate}

Finally, we remark that for any function $G:[n]\to \Sigma$ the ensemble $\set{G\vert_s}_{s\in \mathcal F}$ can be thought of as an encoding of $G$ that is locally testable using the agreement test.
This code is often referred to as a derandomized direct product code and agreement testing is referred to as direct product testing in this context.

\subsubsection{Probabilistically Checkable Proofs (PCPs)}
For the most part, we will forego a discussion of the `proofs' view of probabilistically checkable proofs and take a more combinatorial view of PCPs using the language of the Label Cover problem.
\begin{definition} 
An instance of \emph{Label Cover} $\Psi = (G = (L\cup R, E), \Sigma_\ell, \Sigma_r, \Phi = \set{\phi_e}_{e \in E} )$ 
consists of a bipartite graph $G$, alphabets $\Sigma_\ell$, $\Sigma_r$ and constraints 
$\phi_e : \Sigma_\ell \to \Sigma_r$, for each edge $e\in E$.
\end{definition}

Given a label cover instance $\Phi$, the goal is to find assignments $A_\ell : L \to \Sigma_\ell$ and $A_r : R \to \Sigma_r$ that satisfy as many of the constraints as possible,
namely that maximize the quantity
\[
\val_{\Phi}(A_\ell, A_r) = \frac{1}{|E|}|\set{e = (u, v) \in E |\phi_e(A_\ell(u))= A_r(v)}|.
\]
This maximum value is referred to as the value of the instance $\Phi$ and denoted by
\[
\val(\Phi) = \max_{A_\ell,A_r} \val_\Phi(A_\ell, A_r).
\]
The label cover problem is a special case of the 2-ary constraint satisfaction problem where each constraint is a relation on the alphabet instead of being a function.
\begin{definition}
An instance $\Psi = (G = (V, E), \Sigma, \set{\Phi_e}_{e\in E})$ of
a 2-CSP consists of a weighted digraph $G$,
alphabet $\Sigma$, and constraints $\Phi_e \subseteq \Sigma \times \Sigma$, one for each edge.
\end{definition}
Let $\gapLC{\alpha}{\beta}$ be the promise problem wherein
the input is an instance $\Phi$ of label cover promised to either have $\val(\Phi)\ge \alpha$
or else $\val(\Phi) \le \beta$, and the goal is to distinguish between these two cases.

\begin{definition} 
An instance of \emph{Generalized Label Cover} $\Psi = (G = (L\cup R, E), \Sigma_\ell, \Sigma_r, \Phi = \set{\phi_e}_{e \in E} )$ 
consists of a bipartite graph $G$, alphabets $\Sigma_\ell$, $\Sigma_r$, $\set{\Sigma_\ell(u)}_{u\in L}$ with $\Sigma_\ell(u)\subseteq \Sigma_\ell$, for each $u\in L$, and constraints 
$\phi_e : \Sigma_\ell(u) \to \Sigma_r$, for each edge $e=(u,v)\in E$.
\end{definition}

\subsection{New Agreement Testers}
\cite{DD24a, BM24, BMVY25} showed the following agreement testing result for complexes with strong expansion properties.
We refer the reader to \cite[Remark 4.2]{BM24} for the specific parameters.
\begin{theorem}[\cite{DD24a}]
\label{thm:expander-to-agrtest}
There is a $c > 0$ such that for all $\delta > 0$, there is $r \in \N$ such that for any $\ell\geq \exp(\poly(1/\delta))$,
any $k\geq \poly (\ell)$ and there is a $\gamma_0(k)$ such that for any $\gamma\leq \gamma_0$ the following holds.

If a $k$-dimensional clique complex $X$ is a $\gamma$-local spectral expander and $(c2^{-o(r)}, r)$-swap coboundary expander,
then the $(\ell, \sqrt \ell)$-agreement test over $X(\ell)$ with respect to every alphabet $\Sigma$ has soundness $\delta$. 
Namely, if $F : X(\ell-1) \to \Sigma^\ell$ passes the $(\ell, \sqrt \ell)$-agreement test with respect to $X$ with probability at least $\delta$,
then there is $f : X(0) \to \Sigma$ such that
\[
\pr{A\sim \mu_\ell}{\dist(F [A], f |A) \leq \delta} \geq \poly(\delta).
\]
\qed
\end{theorem}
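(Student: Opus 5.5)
The statement is a cited result (\cite{DD24a}, with parameters as in \cite[Remark 4.2]{BM24}). The plan is to reconstruct the argument of those works: reduce low-soundness ($1\%$) agreement to a unique-games-type problem on the faces complex $\F^r X$, and then use swap coboundary expansion to solve that problem.

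\textbf{Step 1: local list decoding.} Assume $F$ passes the $(\ell,\sqrt\ell)$-test with probability $\delta$. Because $X$ is a $\gamma$-local spectral expander with $\gamma\le\gamma_0(k)$, the swap and down-up walks inside links sample well (\Cref{lem:expander-sampling}). For a typical $r$-face $s$, I would show that the restriction of the test to the link $X_s$ still passes with probability about $\delta$. Then, in each such link, I would invoke the known $1\%$ theorem for the complete complex (or a dense-enough link) to produce a short list $L_s=\{g^s_1,\dots,g^s_{t}\}$ with $t=\poly(1/\delta)$. Each $g^s_i$ is a function on the vertices of $X_s$, and the lists together explain a $\poly(\delta)$ fraction of the agreement. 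A pruning step makes all lists have the same size $t$ and makes distinct list elements far apart.

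\textbf{Step 2: a $\symmetric_t$-valued cochain.} For an edge $\{s,s'\}$ of $\F^rX$, $s\sqcup s'$ is a face of $X$, and the links of $s$ and $s'$ overlap on the link of $s\sqcup s'$. Using sampling in that common link, I would show that for most edges the two lists match up bijectively: each $g^s_i$ agrees with a unique $g^{s'}_j$ on most of the common part. This defines $f(s,s')=\pi\in\symmetric_t$, which is anti-symmetric after fixing an arbitrary default on the bad edges. For a triangle $\{s,s',s''\}$, all three links share the link of $s\sqcup s'\sqcup s''$, so the matchings compose consistently unless a sampling event fails. Expansion of the colored swap walks (\Cref{lem:color-swap-expansion}) then bounds $\wt(\delta f)$ by $o(1)$ in $\gamma$, plus a small error term depending on $r$.

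\textbf{Step 3: untangling and globalizing.} By $(c2^{-o(r)},r)$-swap coboundary expansion applied with $\Gamma=\symmetric_t$, $f$ is close to $\delta g$ for some $g:\F^rX(0)\to\symmetric_t$; this is where the threshold $2^{-o(r)}$ must beat the $2^{-\Omega(r)}$-type error from Step 2. Relabel each list by $g(s)$. After relabeling, the index $1$ is consistent across almost all edges of $\F^rX$. Define $G^{(1)}(v)$ as the plurality over $s$ with $v\in X_s$ of $g^s_1(v)$. These local functions then agree with each other in the $99\%$ sense, so the $99\%$ agreement theorem on local spectral expanders (via \Cref{lem:expansion-majority}) shows $G^{(1)}$ is $\delta$-close to $F[A]$ on a $\poly(\delta)$ fraction of $A$. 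I would choose $\ell\ge\exp(\poly(1/\delta))$ and $k\ge\poly(\ell)$ so that all sampling errors are dominated by $\poly(\delta)$.

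\textbf{Main obstacle.} The hard step is Step 2: building lists that are canonical enough that neighbouring faces' lists match bijectively, and showing the triangle defect is genuinely small rather than merely $O(\delta)$. I would first try the approach of \cite{BM24}. That approach uses the clique-complex hypothesis to identify the link of $s\sqcup s'$ with a sufficiently dense complex, and randomizes the list threshold so that borderline list elements occur only with small probability.
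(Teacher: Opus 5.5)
Your overall architecture (local lists, a $\symmetric_t$-valued $1$-cochain on $\F^rX$, untangling by swap coboundary expansion, then a $99\%$-type globalization) is the right one. However, Step 1 has a genuine gap.

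No list-decoding theorem is available for the test restricted to the link $X_s$ of an $r$-face. In a sparse HDX the link is itself a sparse local spectral expander (e.g.\ for the projected flags complex, a join of smaller flags complexes). It is neither complete nor dense, and the clique-complex hypothesis does not change this. Producing functions $g^s_i$ on all of $X_s(0)$ from the restricted test is exactly a $1\%$ theorem on a sparse HDX. That is the statement you are proving, and it would again need cohomological input on the links.

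The only place a complete complex is available is \emph{inside a face}. Since $X$ has dimension $k\ge\poly(\ell)$, every top face $T$ is a simplex. The test as defined samples a top face and then $B\subseteq A_1,A_2\subseteq T$ uniformly, so it is an average over $T$ of the standard test on $\binom{T}{\ell}$; this is where $k\ge\poly(\ell)$ enters. The local objects should therefore be short lists of pairwise-far functions $T\to\Sigma$ on the good top faces. These lists must be made consistent across top faces (through large common sub-faces and sampling). They are then pushed \emph{down by restriction} to the $r$-faces that are the vertices of $\F^rX$. The list at $s$ then lives in $\Sigma^{s}$, not on $X_s(0)$, and the permutation on an edge $\{s,s'\}$ is read off from lists on faces containing $s\sqcup s'$.

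This exposes a second missing piece. Step 3 needs the triangle defect to be exponentially small in $r$, so that it beats the $c2^{-o(r)}$ constant, with $r$ depending on $\delta$ alone. Nothing in your Step 2 improves as $r$ grows. In the face-based construction the $r$-dependence is explicit. Two list elements that are $\eta$-far on $T$ coincide on a uniformly random $(r+1)$-subset $s\subseteq T$ with probability about $(1-\eta)^{r+1}$. So with $t=\poly(1/\delta)$ elements, restriction to $s$ fails to be injective, and hence fails to define a permutation, with probability roughly $t^2 2^{-\Omega(\eta r)}$. This is the $2^{-\Omega(r)}$ contribution to $\wt(\delta f)$. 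The remaining sampling terms are made small through the choice of $\ell$, $k$, $\gamma$ and the randomized threshold. Dividing by a $2^{-o(r)}$ expansion constant then leaves an error $\ll\poly(\delta)$ once $r=r(\delta)$ is large. With that repair, Step 3 goes through essentially as you wrote it, with one change: $G^{(1)}(v)$ is a plurality over $r$-faces $s\ni v$ (not $v\in X_s$) of the relabeled first list element evaluated at $v$.
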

As an immediate corollary we get that the $(\ell,\sqrt \ell)$-test on the projected flags complex has a low soundness test.
\flagsAgrtest*
\begin{proof}
Let $c > 0, r, \gamma_0(k)$ be as in \Cref{thm:expander-to-agrtest}.
First note that by taking $q_0$ the flags complex $\buildings_q^{2d,S_{k}}$ has $\gamma\leq \gamma_0$ by \Cref{thm:spectral-projected}.
Now, by \Cref{thm:swap-coboundary-projected-flags} we have the the swap coboundary expansion required by \Cref{thm:expander-to-agrtest}.
Hence, by \Cref{thm:expander-to-agrtest}, the $(\ell, \sqrt \ell)$-agreement test over $\buildings^{S_{k}}(\ell)$ with respect to every alphabet $\Sigma$ has soundness $\delta$ for any $\ell\geq \exp(\poly(1/\delta))$.
\end{proof}

\subsection{A Near-linear Size PCP}
Finally, we show the projected flags complex can be used to construct PCPs using techniques in~\cite{BMVY25}.
\pcpTheorem*

We first record the set of properties of the projected flags complex in \Cref{thm:good-hdx} that are required for the PCP construction.
The statement of \Cref{thm:good-hdx} is an analog of Theorem 2.13 in~\cite{BMVY25}, albeit with slightly different parameters, as we replace the Chapman-Lubotzky complexes~\cite{CL25} with the projected buildings.
\begin{definition}[Symmetrically-Connected Complex]
\label{def:sym-bound-main}
Let $L$ be a $k$-partite complex.
We call $L$ \emph{symmetrically-connected} if
\begin{enumerate}
    \item For any $i\ne j\in [k]$, the diameter of $L^{ij}$ is bounded by $O(\frac{k}{|i-j|})$.
    \item $\symmetric_{L}$ acts transitively on its top faces.
\end{enumerate}
\end{definition}
\begin{theorem}
\label{thm:good-hdx}
For all $\delta \in (0, 1)$, and for all $C>0$ the following holds.
For large enough $\ell, k \in \N$, and for large enough $n \in \N$, there is a prime power $q=\Theta(\log^C n)$ and $d=\Theta\paren{\sqrt{\frac{\log(n)}{\log \log (n)}}}$,
one can construct in time $\poly(n)$
a $2k$-dimensional complex $X$ for which $n \leq |X(0)| \leq  n q^{O(d)}$ such that the following holds.
\begin{enumerate}
\item \label{item:partite} The complex $X$ is $(2k+1)$-partite.
\item \label{item:clique} The complex $X$ is a clique complex.
\item \label{item:degree-bound} $\Delta_{max}(X) \leq q^{O(kd)}$.
\item \label{item:sym-connected} Each vertex-link $L$ of $X$ is symmetrically-connected.
In other words, we have the following:
\begin{enumerate}
    \item \label{item:transitive-action} For each vertex link $L$ of $X$ there is a group $\symmetric(L)$ that acts transitively on the $(k-2)$-faces $L(k-2)$.
    \item \label{item:diameter-bound} For each vertex link $L$ of $X$ and any colors $i \ne j$, the bipartite graph $(L_i(0), L_j (0))$ is uniformly weighted (over edges) and has diameter $O(\frac{\max\set{i,j}}{|i-j|})$.
\end{enumerate}
\item  \label{item:link-eigenvalue-bound} $X$ is a $\frac{1}{\sqrt q}$-product.
\item \label{item:eigenvalue-bound} The 1-skeleton $G = (X(0), X(1))$ satisfies $\lambda(G) \leq \frac{1}{k(\sqrt q -1)}$ and $|\lambda|(G) \leq \frac{1}{2k}$.
\item \label{item:product-test} The $(\ell, \sqrt \ell)$-direct product test on $X$ has soundness $\delta$.
\end{enumerate}
\end{theorem}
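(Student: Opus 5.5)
Given $n$ and the constant $C$, I will instantiate $X=\buildings^{2d,S_k}_q$ with $q$ a prime power of order $\Theta(\log^C n)$, which exists by Bertrand-type density of prime powers. I then choose $d=\Theta(\sqrt{\log n/\log\log n})$ so that $q^{d^2}$ is as close to $n$ as possible. The flags complex on $\mathbb F_q^{2d}$ has $q^{\Theta(d^2)}$ vertices, and the central levels each contain $q^{d^2\pm O(kd)}$ subspaces. Adjusting $d$ by one step changes $q^{d^2}$ by a factor $q^{O(d)}$, so I can guarantee $n\le |X(0)|\le nq^{O(d)}$. The complex is strongly explicit, so the construction runs in $\poly(n)$ time: subspaces can be enumerated via reduced row echelon forms, and $|X(0)|\le n\cdot 2^{O(\sqrt{\log n\log\log n})}$.

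Several items are structural and follow directly. \Cref{item:partite} holds because $|S_k|=2k+1$ and vertices are colored by dimension. \Cref{item:clique} holds because pairwise comparable subspaces of distinct dimensions always form a chain, so every clique is a face. For \Cref{item:degree-bound}, fixing $W$ of dimension $j\in S_k$, the neighbours of $W$ lie in Grassmannians of $W$ or of $\mathbb F_q^{2d}/W$ at relative dimension at most $2k$. There are $q^{O(kd)}$ of them, and likewise for top faces containing $W$. \Cref{item:link-eigenvalue-bound} is exactly the ``moreover'' part of \Cref{thm:spectral-projected}.

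For \Cref{item:eigenvalue-bound}, \Cref{thm:spectral-projected} with $s=\emptyset$ gives $\lambda(G)\le \frac{2}{2k(\sqrt q-1)}$. For the two-sided bound, I apply the same proposition argument (\Cref{prop:kpartite-expansion}) in absolute value. The complete $(2k+1)$-partite skeleton $K$ has smallest eigenvalue $-1/(2k)$, and each bipartite component contributes at most $O(1/\sqrt q)$. Together these give $|\lambda|(G)\le \frac{1}{2k}+o(1)$. To reach exactly $\frac1{2k}$ I need to be careful: either the bound $\frac{1}{2k}$ is meant up to constants, or I must control the negative spectrum more finely. Here I would decompose functions into their colour-averages and colour-orthogonal parts, as in the proof of \Cref{prop:kpartite-expansion}.

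For \Cref{item:sym-connected}, a vertex link of $W$ with $\dim W=j$ is the join of $\buildings^{j,S_k\cap[j-1]}$ on $W$ and the analogous complex on $\mathbb F_q^{2d}/W$. The group $\mathrm{Stab}_{GL_{2d}}(W)$ acts transitively on its top faces by basis extension, which gives \Cref{item:transitive-action}. For \Cref{item:diameter-bound}, edges are uniformly weighted because the stabilizer acts transitively on edges of each colour pair. For the diameter, take colours $i<j$ (relative dimensions) on the same side of the join. To connect $A,A'$ of dimension $i$, I alternate between dimension $i$ and $j$ subspaces. Each step can change $\Theta(j-i)$ basis vectors: pass through some $B\supseteq A$ of dimension $j$, then choose $A''\subseteq B$ agreeing with $A'$ on a further $j-i$ directions. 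Hence $O(i/(j-i))\le O(\max\{i,j\}/|i-j|)$ steps suffice. If the colours lie on opposite sides of the join, the graph is complete bipartite and has diameter $2$.

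Finally, \Cref{item:product-test} is \Cref{thm:flags-agrtest}. It applies once $k\ge\poly(\ell)$ and $q\ge q_0(k)$, which holds for large $n$ since $q=\Theta(\log^C n)\to\infty$.

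I expect the main obstacle to be the two-sided eigenvalue bound of \Cref{item:eigenvalue-bound}, together with the diameter bound, where the constants must be tracked carefully. The other items are direct consequences of earlier results.
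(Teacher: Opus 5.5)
Your plan is the paper's: the same instantiation $X=\buildings^{2d,S_k}_q$ with $q\approx\log^C n$ a prime power and $d\approx\sqrt{\log_q n}$, with each item read off from \Cref{thm:spectral-projected}, \Cref{thm:flags-agrtest}, transitivity of the $GL$-stabilizer, and a diameter estimate. The paper cites that estimate from \cite{DD24buildings}, while you prove it directly.

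The one step you leave open is the two-sided bound $|\lambda|(G)\le\frac1{2k}$ in \Cref{item:eigenvalue-bound}. Adding the complete-graph contribution $\frac1{2k}$ to the bipartite contributions gives only $\frac1{2k}+O(1/\sqrt q)$, which does not prove the statement. The reading ``up to constants'' is not available. The fix is short, and there are two ways to do it.

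The first is the standard PSD argument. For a pure $2k$-dimensional complex the non-lazy vertex walk satisfies $\mathsf M=\frac{2k+1}{2k}\mathsf D_{2k,0}\mathsf U_{0,2k}-\frac1{2k}I$. Since $\mathsf D_{2k,0}\mathsf U_{0,2k}=\mathsf U_{0,2k}^*\mathsf U_{0,2k}$ is positive semidefinite, $\lambda_{\min}(\mathsf M)\ge-\frac1{2k}$.

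The second stays inside your decomposition $f_i=f_i^\parallel+f_i^\perp$. The cross terms vanish, and $\|g\|_{\kappa_1}^2+\mathbb{E}_i\|f_i^\perp\|^2=\|f\|^2=1$. The bipartite bound holds in absolute value because bipartite spectra are symmetric. Hence
\[
\langle \mathsf Mf,f\rangle\;\ge\;-\tfrac1{2k}\|g\|^2_{\kappa_1}-\Lambda\,\mathbb{E}_i\|f_i^\perp\|^2\;\ge\;-\max\set{\tfrac1{2k},\Lambda},\qquad \Lambda\le\tfrac{1}{k(\sqrt q-1)}.
\]
This is a convex combination, not a sum. Either way, $\lambda(G)\le\frac1{k(\sqrt q-1)}\le\frac1{2k}$ once $q\ge 9$, which holds for large $n$, so $|\lambda|(G)\le\frac1{2k}$ exactly.

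A smaller point on \Cref{item:diameter-bound}: your round-trip argument gives $O(a/(b-a))$ in terms of the actual dimensions $a<b$. On the lower side of the join (subspaces of $W$) those dimensions are about $d$. Read literally, that yields $O(d/|i-j|)$, whereas symmetric connectivity (\Cref{def:sym-bound-main}), which the routing step consumes, needs a bound depending only on $k$. On that side, run your argument on orthogonal complements inside $W$. The inclusion graph of $a$- and $b$-subspaces of $W$ is isomorphic to that of $(\dim W-b)$- and $(\dim W-a)$-subspaces, and these codimensions are at most $2k$. Presumably this is what you meant by ``relative dimensions''; state it explicitly.
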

\begin{proof}
Given $\delta>0$, $C>0$, and $n$,
take $q=2^{\ceil{\log_2 \log^C_2 n}}$ and $d=\ceil{\sqrt{\log_q n}}$.
If $q$ and $k$ are large enough, then the complex $X=\buildings^{2d, S_{k}}_{q}$ has $(\ell, \sqrt \ell)$-agreement test with soundness $\delta$ for large enough $\ell$ by \Cref{thm:flags-agrtest} (this shows \Cref{item:product-test}).
We get \Cref{item:link-eigenvalue-bound,item:eigenvalue-bound} from \Cref{thm:spectral-projected}, \Cref{fact:grassmann_expansion}, and the Trickle-Down theorem (\Cref{thm:trickle-down}).
It is elementary to see the group of automorphisms $\mathrm{Aut}(L)$ acts transitively on the top faces of $L$ where $L$ is a vertex link (this gives us \Cref{item:transitive-action}).\footnote{As in the proof of \Cref{lem:three-projection-coboundary}, taking an appropriate basis for the vector spaces in any two such flags, we can transform one to the other with the corresponding basis exchange matrix.}
\Cref{item:diameter-bound} follows directly from Claim 8.7.2 in \cite{DD24buildings} and hence we get \Cref{item:sym-connected}.
\Cref{item:degree-bound} is a straight-forward counting which we omit for the sake of brevity.
\Cref{item:partite,item:clique} also follow directly from the definition.
\end{proof}

\paragraph{Overview of PCP construction in \cite{BMVY25}.}
The first step of \cite{BMVY25}'s PCP involves reducing an arbitrary 2-query PCP, viewed as a 2-CSP, to a 2-CSP whose constraint graph underlies an HDX.
In particular, they show how to embed an arbitrary 2-CSP on a graph $G$ while maintaining the soundness provided that $G$ has a fault-tolerant routing protocol, and show any sufficiently local-spectral HDX (see~\Cref{item:link-eigenvalue-bound,item:eigenvalue-bound}) with dense symmetric links (see~\Cref{item:transitive-action,item:diameter-bound}) indeed satisfies such a protocol.
The second step uses a size-efficient direct product tester from HDX.
This allows derandomized parallel repetition for 2-CSPs on an HDX, and reduce their soundness to a small
constant close while incurring a poly-logarithmic blow-up in size.
Combining the first and second steps they get a quasi-linear PCP with small soundness, albeit with a large alphabet.

Since the projected flags complex satisfies essentially the same properties as the Chapman-Lubotzky complex albeit with different parameters (compare \Cref{thm:good-hdx} with \cite[Theorem 2.13]{BMVY25}), the same construction also works even if we use projected flags complex.
\Cref{thm:general-pcp} states a general version of this theorem.
In \Cref{app:pcp}, we provide a sketch of the proof to highlight the changes in parameters. 

\begin{restatable}[{\cite{BMVY25}}]{theorem}{generalpcp}
\label{thm:general-pcp}
There exist $\alpha, \eps_0, d_0 > 0$, and $r,C\in \N$ such that for large enough $n \in \N$ the following holds.
Let $X$ be $d$-partite $\lambda$-product such that
\begin{inparaenum}[(i)]
    \item its underlying graph $G$ has $|\lambda|(G)\leq \alpha/4$;
    \item its vertex links are symmetrically-bounded;
    \item its max degree $\Delta(X)$ is bounded by $\Delta$;
    \item the dimension $d\geq d_0$ and its size $|X(0)|= n$.
\end{inparaenum}
Also, assume that the $(\ell, \sqrt \ell)$-direct product tester on $X$ has soundness $\delta^2\leq \eps_0^2/16$.
Then there is a $\poly(n)$-time procedure mapping any 2-CSP $\Psi'$ with $n$ vertices on a $r$-regular graph $H$ and alphabet $\Sigma$, to an instance of Label Cover $\Phi$ over the
weighted inclusion graph $(X(\sqrt \ell), X(\ell))$,  over left alphabet $\Sigma^{\poly(\Delta \log (n)  \log |\Sigma|) \ell}$ and right alphabet $\Sigma^{\poly(\Delta \log (n)  \log |\Sigma|)\sqrt{\ell}}$ satisfying the following properties.
\begin{enumerate}[(i)]
    \item \textbf{Completeness:} If $\val(\Psi') = 1$ then $\val(\Psi) = 1$.
    \item \textbf{Soundness:} If $\val(\Psi')\leq 1-\poly(d)r^2\lambda^2\log^C n - \frac{1}{\log^C n}$, then $\val(\Psi) \leq \delta$.
    
\end{enumerate}
\end{restatable}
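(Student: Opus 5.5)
The plan follows the two-stage pipeline of \cite{BMVY25}: first embed the arbitrary 2-CSP $\Psi'$ onto the 1-skeleton $G$ of $X$ via fault-tolerant routing, then apply derandomized parallel repetition through the $(\ell,\sqrt\ell)$ direct product test on $X$. At each step I check that only the listed hypotheses are used: the $\lambda$-product property, $|\lambda|(G)\le\alpha/4$, symmetrically-connected vertex links, the max degree $\Delta$, and the agreement soundness. Nothing specific to the Chapman--Lubotzky complexes should enter.

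\textbf{Step 1 (routing on $X$).} The vertex links are symmetrically-connected: they have a transitive symmetry group and bipartite color graphs of diameter $O(k/|i-j|)$. Using these, I build a link-to-link routing protocol with short paths. Transitivity makes the load uniform, and the diameter bound keeps the local paths short. I then glue this to a global routing on $G$, using $|\lambda|(G)\le\alpha/4$ together with standard expander routing on a degree-$\Delta$ graph of $n$ vertices. The outcome should be a protocol routing any $r$-regular demand graph $H$ on $n$ vertices, with path lengths $\poly(d)\log n$ and congestion $\poly(\Delta\log n)$. Its fault tolerance should be such that if a $\poly(d)r^2\lambda^2\log^C n+1/\log^C n$ fraction of edges are corrupted, most messages still arrive correctly. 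The $\lambda$-product property controls the fraction of bad edges inside links via expander mixing (\Cref{lem:expander-mixing}) and sampling (\Cref{lem:expander-sampling}).

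\textbf{Step 2 (embedding the CSP).} Following \cite{BMVY25}, I place the vertices of $H$ on $X(0)$. Each vertex of $X$ receives as its label the full transcript of all messages routed through it, so the alphabet becomes $\Sigma^{\poly(\Delta\log n\log|\Sigma|)}$. Constraints on the edges of $G$ check the consistency of routed messages and, at endpoints, check the original constraints of $\Psi'$. Completeness is immediate. For soundness, the fault-tolerance bound from Step 1 converts any assignment violating at most an $\eps_0$ fraction of $G$-constraints into an assignment of $\Psi'$ of value above $1-\poly(d)r^2\lambda^2\log^C n-1/\log^C n$. This gives a 2-CSP on $G$ whose value is at most $1-\eps_0$ whenever $\Psi'$ has low value.

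\textbf{Step 3 (parallel repetition).} I form the Label Cover instance on the inclusion graph $(X(\sqrt\ell),X(\ell))$. A left vertex $A\in X(\ell)$ takes as its label an assignment to $A$ satisfying every embedded constraint on edges inside $A$; this is the generalized label cover with restricted alphabets $\Sigma_\ell(u)$. A right vertex $B\in X(\sqrt\ell)$ takes an assignment to $B$, and each projection constraint is restriction. Suppose the value exceeds $\delta$. By Cauchy--Schwarz, two random neighbours of a random $B$ agree with probability at least $\delta^2$, so the $(\ell,\sqrt\ell)$ test passes with probability $\delta^2$. The assumed soundness then yields a global $f:X(0)\to\Sigma'$ that is $\delta$-close to the local labels on a $\poly(\delta)$ fraction of $A$.

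For such $A$, the local labels satisfy all constraints inside $A$. Sampling on $X$, via the $\lambda$-product and \Cref{lem:expander-sampling}, then shows that $f$ violates at most an $O(\delta)<\eps_0$ fraction of the $G$-constraints, contradicting Step 2.

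The main obstacle is Step 1: the routing and its fault-tolerance analysis must rest only on symmetric connectivity and spectral bounds, while the size and alphabet accounting must track $\Delta$ rather than polylogarithmic degrees.
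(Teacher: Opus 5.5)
Your three-stage plan (link-to-link routing, embedding the CSP through transcripts, gap amplification with the $(\ell,\sqrt\ell)$ test) has the same architecture as the paper's sketch of \cite{BMVY25}. Your Step~3 is essentially \cite{BMVY25}'s Lemma~6.2, and $4\delta\le\eps_0$ is exactly the hypothesis $\delta^2\le\eps_0^2/16$.

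The real problem is that the guarantee you state in Step~1 is inverted, so it cannot feed Step~2. You claim the protocol survives corruption of a $\poly(d)r^2\lambda^2\log^C n+1/\log^C n$ fraction of edges. In Step~2, however, an assignment violating an $\eps_0$ fraction of the $G$-constraints acts like an adversary corrupting a \emph{constant} $\eps_0$ fraction of edges. What is needed, and what \Cref{lem:link-to-link} provides, is an $(\eps_0,\eta,\nu)$ guarantee: under any corruption of at most an $\eps_0$ fraction of edges, all but a $\nu=\eta+\poly(d)r^2\lambda^2\log^C n+1/\log^C n$ fraction of transmissions succeed. Your Step~2 in fact uses this correct form, so it does not follow from Step~1 as you stated it.

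Proving this guarantee is where the substance lies:
\begin{itemize}
\item Each message is written on every vertex of a vertex link, forwarded along the internal paths of \Cref{lem:bounded-corruption}, and majority-decoded.
\item A link is bad if at least a $\sqrt{\eps_0}$ fraction of its edges are corrupted. The $\lambda$-product bounds the measure of bad vertex- and edge-links by $\poly(d)\lambda^2$ (\Cref{lem:interlink-vertex} and \Cref{lem:interlink-edge}), and inside good links the majority vote absorbs the corruption.
\item The polylogarithmic congestion of the pebble routing turns the bad-link bound into the $\poly(d)r^2\lambda^2\log^C n$ term, and $1/\log^C n$ accounts for unrouted demands.
\end{itemize}

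Step~1 also omits the regularization that makes the global routing available at all. The pebble-routing theorem (\Cref{thm:ext-paths}) requires a regular expander, and $G=(X(0),X(1))$ is not regular. The paper therefore routes on the zig-zag product $Z$ of $G$ with an $r$-regular expander family $\mathcal H$ satisfying $|\lambda|(H_m)\le\alpha/2$. This $Z$ is $r^2$-regular with $|\lambda|(Z)\le\alpha/4+\alpha/2+\alpha^2/4\le\alpha$. That explains both the $\alpha/4$ in hypothesis (i) and the factor $r^2$ in the error term.

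Accordingly, the CSP variables live on $V(Z)$, not on $X(0)$. \cite{BMVY25} first reduce $\Psi'$ to a CSP on a bipartite $r$-regular graph with vertex set $V(Z)$ and split its edges into $r$ perfect matchings. They run one link-to-link protocol per matching and write the value of $j\in V(Z)$ on all of $X_{j_1}(0)$. Routing failures, which are counted over $V(Z)$, then translate directly into violated constraints; your bijective placement of $H$ on $X(0)$ does not arrange this.

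Finally, your Step~3 ends with generalized Label Cover, while the statement asks for Label Cover. The paper bridges this with maps $f_u:\Sigma\to\Sigma(u)$ that fix $\Sigma(u)$ pointwise.
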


We now start with a size efficient construction of PCP due to \cite{Meir16} stated in a more convenient formulation in terms of hardness of 2-CSPs.\footnote{We need a regularization step \citep[Lemma~4.2]{Din07} on top of the original statement to obtain this form.}
\begin{theorem}[\cite{Meir16}]
\label{thm:small-soundness-pcp}
There exist constants $c > 0, \delta \in (0,1)$ such that there is a polynomial-time reduction mapping a
3-SAT instance $\phi$ of size $n$ to a 2-CSP instance $\Psi$ over the regular graph $G = (V, E)$ and alphabet $\Sigma$ where
\begin{itemize}
    \item $\size(G) \leq n(\log n)^{c \log \log n}$ and $|\Sigma| = O(1)$.
    \item If $\phi$ is satisfiable, then $\val(\Psi) = 1$.
    \item If $\phi$ is not satisfiable, then $\val(\Psi) \leq 1 - \delta$.
\end{itemize}
\end{theorem}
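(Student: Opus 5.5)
The plan is to obtain the statement by black-box composition of three standard ingredients, the only nontrivial one being \cite{Meir16}, with the footnoted regularization step of \cite[Lemma~4.2]{Din07} on top. We do not reprove Meir's construction.

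\textbf{Step 1 (Meir's PCP).} Start from \cite{Meir16}, which gives a PCP verifier for 3SAT with proof length $n(\log n)^{c\log\log n}$, a constant number of queries $Q$, constant alphabet, perfect completeness and constant soundness $1-\delta_0$. Equivalently, it reduces $\phi$ in polynomial time to a $Q$-CSP $\Psi_Q$ with the following properties:
\begin{itemize}
\item the number of variables and constraints is $n(\log n)^{c\log\log n}$;
\item the alphabet has size $O(1)$;
\item if $\phi$ is satisfiable then $\val(\Psi_Q)=1$;
\item if $\phi$ is unsatisfiable then $\val(\Psi_Q)\le 1-\delta_0$.
\end{itemize}

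\textbf{Step 2 (reduce arity to $2$).} Apply the constraint-variable construction to get a bipartite 2-CSP $\Psi_2$:
\begin{itemize}
\item one vertex per constraint of $\Psi_Q$, labelled by a satisfying assignment to its $Q$ variables (alphabet size $|\Sigma|^Q=O(1)$);
\item one vertex per variable of $\Psi_Q$;
\item an edge between each constraint and each of its variables, checking consistency.
\end{itemize}
Completeness is preserved. For soundness, if a $1-\delta_0/Q$ fraction of edges were satisfied, decode each variable from its own label. Then at most a $\delta_0$ fraction of constraints touch an inconsistent edge, so $\val(\Psi_2)\le 1-\delta_0/Q$. The size grows by the factor $Q=O(1)$.

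\textbf{Step 3 (regularize).} Apply \cite[Lemma~4.2]{Din07}. Replace each vertex $v$ of degree $\deg(v)$ by a cloud of $\deg(v)$ copies, each carrying one original edge. Add the edges of a constant-degree expander on each cloud, labelled with equality constraints, and pad with self-loops to make the graph $r$-regular for a fixed $r$.

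\begin{itemize}
\item The size becomes $O(|E(\Psi_2)|)=n(\log n)^{c'\log\log n}$, and the alphabet is unchanged.
\item Completeness is clear.
\item Soundness is the main point to check. Given any assignment, replace each cloud by its plurality value. By expansion (as in \Cref{lem:expansion-majority}), the fraction of cloud vertices deviating from plurality is bounded by a constant times the fraction of violated equality edges in that cloud. So either many equality edges are violated, or the plurality assignment nearly satisfies $\Psi_2$ in value. Either way, $\val\le 1-\delta$ for some constant $\delta=\Omega(\delta_0/Q)$.
\end{itemize}

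Adjusting the constant $c$ to absorb the constant-factor blowups gives the stated theorem. The only place requiring care is this soundness bookkeeping in Step 3. It is standard and loses only constant factors.
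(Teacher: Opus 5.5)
Your proposal is correct and is essentially what the paper does. The paper takes the statement as a black box from \cite{Meir16} and applies the footnoted regularization step of \cite[Lemma~4.2]{Din07} on top; your constraint--variable step is the standard conversion of a constant-query PCP into a 2-CSP, which the paper leaves implicit. One point worth making explicit: the $n(\log n)^{c\log\log n}$ bound on the number of \emph{constraints} in Step~1 comes from Meir's randomness complexity $\log n+O((\log\log n)^2)$, since proof length alone only bounds the number of variables.
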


\begin{remark}
The construction of PCP in \cite{Meir16} is almost combinatorial. The only algebraic component is the multiplicative codes used in the paper.
\end{remark}

Applying this to our complex, i.e., combining \Cref{thm:good-hdx,thm:general-pcp,thm:small-soundness-pcp}, we obtain \Cref{thm:specific-pcp}.

\begin{theorem}
\label{thm:specific-pcp}
For all $\delta \in (0, 1)$, there exist constants $C, C' > 0$ such that for all sufficiently large integers $d$ and $n$, there exist $k, \ell$ with $k\geq \poly(\ell)$ and $\ell \geq \exp(\poly(1/\delta))$ so that the following holds. 
Let $\set{X_{n'}}_{n'\in \N}$ be the infinite sequence of complexes from \Cref{thm:good-hdx},
where every $X_{n'}$ is a $(2k+1)$-dimensional complex on $n'$ vertices with parameters
$q = \Theta(\log^{C'} n')$ and $\delta$ that is constructible in time $\poly(n')$.
There is a polynomial time reduction mapping a 3SAT instance $\phi$ of size $n$ to a label cover instance $\Psi$ over the weighted inclusion graph $(X_{n'}(\ell), X_{n'} (\sqrt \ell))$ for some $n'\leq  n \log^{C\log \log n} n$, such that
\begin{enumerate}
    \item If $\phi$ is satisfiable, then $\Psi$ is satisfiable.
    \item If $\phi$ is unsatisfiable, then $\val(\Psi)\leq \delta$.
    \item The left alphabet of $\Psi$ is $\Sigma^\ell$ and right alphabet is $\Sigma^{\sqrt \ell}$ for some alphabet $\Sigma$ with $\log |\Sigma| \leq q^{Ckd}$.
\end{enumerate}
\end{theorem}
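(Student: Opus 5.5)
The plan is to instantiate the chain \Cref{thm:small-soundness-pcp} $\to$ \Cref{thm:general-pcp} with the complexes of \Cref{thm:good-hdx}. Given $\delta$, we first fix $\ell\geq \exp(\poly(1/\delta))$ and $k\geq\poly(\ell)$, with both large enough that \Cref{thm:good-hdx} gives an $(\ell,\sqrt\ell)$-direct product test of soundness $\delta'$, where $\delta'^2\leq \eps_0^2/16$. We also want $\delta'\leq\delta$ and $2k+1\geq d_0$. Next we apply \Cref{thm:small-soundness-pcp} to $\phi$. This yields a 2-CSP $\Psi'$ on a regular graph $H$ with $N\leq n(\log n)^{c\log\log n}$ vertices, constant alphabet, and gap $1$ versus $1-\delta_0$. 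If needed, we regularize $H$ to a fixed degree $r$ using the standard \cite{Din07} step; this changes $N$ and the gap only by constant factors.

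We then choose the complex. We take $n'$ to be the smallest size in the family of \Cref{thm:good-hdx} with $n'\geq N$, and set $X=X_{n'}$ with $q=\Theta(\log^{C'}n')$. By \Cref{thm:good-hdx} we have $n'\leq N q^{O(d)}$. Since $d=\Theta(\sqrt{\log n'/\log\log n'})$ and $\log q=\Theta(\log\log n')$, this overhead is $q^{O(d)}=\exp(O(\sqrt{\log n\log\log n}))$. That is absorbed into the allowed bound $n\log^{C\log\log n}n$, possibly after enlarging $C$. If the family only hits sizes $\geq N$, we pad $\Psi'$ with dummy satisfied vertices; this is harmless for completeness and only weakens the soundness gap by a constant.

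Next we verify the hypotheses of \Cref{thm:general-pcp}. It needs a $d$-partite $\lambda$-product, which \Cref{item:link-eigenvalue-bound} provides with $\lambda=1/\sqrt q$. It needs $|\lambda|(G)\leq\alpha/4$, which \Cref{item:eigenvalue-bound} gives once $k\geq 2/\alpha$. It needs symmetrically connected links, which is \Cref{item:sym-connected}. The degree bound is $\Delta\leq q^{O(kd)}$, and the soundness condition on the product test was arranged in the first step. The delicate point, which I expect to be the main obstacle, is the soundness threshold. We need
\[
1-\delta_0\leq 1-\poly(2k+1)\,r^2\,\frac{1}{q}\,\log^{C}n'-\frac{1}{\log^{C}n'}.
\]
With $k$ and $r$ constant, this holds once $q\gg \log^{C}n'$. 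So $C'$ in $q=\Theta(\log^{C'}n')$ must be chosen larger than the $C$ of \Cref{thm:general-pcp}, with slack absorbing $\poly(k)r^2/\delta_0$. This is legitimate because \Cref{thm:good-hdx} holds for every exponent $C$. We must still check that the larger $q$ keeps $n'$ within budget, which it does: the overhead changes only in the constant of $\exp(O(\sqrt{\log n\log\log n}))$.

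Finally we read off the parameters. The reduction runs in $\poly(n')=\poly(n)$ time, and completeness is immediate from \Cref{thm:general-pcp}. The alphabets are $\Sigma_0^{\poly(\Delta\log n'\log|\Sigma_0|)\ell}$ and the corresponding power with $\sqrt\ell$. Rewriting these as $\Sigma^\ell$ and $\Sigma^{\sqrt\ell}$ for $\Sigma=\Sigma_0^{\poly(\Delta\log n')}$ gives $\log|\Sigma|\leq\poly(q^{O(kd)}\log n')\leq q^{Ckd}$ after adjusting $C$. The instance lives on the inclusion graph $(X(\ell),X(\sqrt\ell))$ as stated, which completes the plan.
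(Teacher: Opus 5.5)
Your overall route is the paper's: the paper proves this theorem only by the one-line remark that one combines \Cref{thm:small-soundness-pcp}, \Cref{thm:good-hdx} and \Cref{thm:general-pcp}. Your check of the hypotheses of \Cref{thm:general-pcp} is correct and more explicit than anything in the paper. This covers the $\frac{1}{\sqrt q}$-product giving $\lambda^2=1/q$, the requirement $k\geq 2/\alpha$ for $|\lambda|(G)\leq\alpha/4$, and taking $C'$ larger than the $C$ of \Cref{thm:general-pcp} so that $\poly(2k+1)r^2\log^{C}n'/q$ drops below the constant gap.

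The size bookkeeping, however, rests on a false inequality. You claim the overhead $q^{O(d)}=\exp(O(\sqrt{\log n\log\log n}))$ is absorbed into $n\log^{C\log\log n}n$ after enlarging $C$. But $\log^{C\log\log n}n=\exp(C(\log\log n)^2)$, and $\sqrt{\log n\log\log n}/(\log\log n)^2\to\infty$, so no constant $C$ works. The overhead is also genuinely that large: with $d=\lceil\sqrt{\log_q n}\rceil$, the ratio $|X(0)|/n\approx q^{d^2}/n$ can be $q^{\Theta(d)}$. So if $n'$ means the actual number of vertices, the bound $n'\leq n\log^{C\log\log n}n$ does not follow from \Cref{thm:good-hdx}.

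The same issue breaks your padding step. \Cref{thm:general-pcp} needs a CSP on exactly $|X(0)|$ vertices. When $|X(0)|/N$ is as large as $q^{\Theta(d)}$, adding dummy always-satisfied vertices (with the edges needed to keep the graph $r$-regular) dilutes the rejection probability from $\delta_0$ to about $\delta_0N/|X(0)|=\exp(-\Theta(\sqrt{\log n\log\log n}))$. That is far below the threshold $\poly(2k+1)r^2\lambda^2\log^{C}n'+\log^{-C}n'$ required by \Cref{thm:general-pcp}, so the loss is not a constant.

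The repair has two parts.
\begin{enumerate}
\item Read $n'$ as the input parameter of \Cref{thm:good-hdx} and take $n'=N\leq n(\log n)^{c\log\log n}$, so the stated bound on $n'$ is immediate. Then $X_{n'}$ has between $n'$ and $n'q^{O(d)}$ vertices. That overhead is paid only in the final instance size, which is why \Cref{thm:pcp-theorem} states $n\exp(C\sqrt{\log n\log\log n})$.
\item Match the $N$-vertex CSP to the complex by a gap-preserving blow-up instead of dummy padding. Replace each vertex by a cloud of roughly $|X(0)|/N$ copies joined by a constant-degree expander carrying equality constraints. This costs only a constant factor in the gap for any blow-up ratio. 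It is the role of the first step of the \cite{BMVY25} embedding sketched in the appendix, which gives $1-8\nu\mapsto 1-\nu$.
\end{enumerate}

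A minor point on soundness parameters: \Cref{thm:general-pcp} wants tester soundness $\tilde\delta^2$ with $\tilde\delta\leq\eps_0/4$, and then outputs soundness $\tilde\delta$. So you should ask \Cref{thm:good-hdx} for soundness $\min(\delta,\eps_0/4)^2$, rather than $\delta'$ with $\delta'^2\leq\eps_0^2/16$.
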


As an immediate corollary, we get the version of the PCP theorem as stated in \Cref{thm:pcp-theorem}.

\section*{Acknowledgements}
We thank Yotam Dikstein and Roy Meshulam for helpful discussions.
We are grateful to the International Centre for Theoretical Sciences (ICTS) for organizing the ``ICTS Workshop on HDXs and Codes'' (code: ICTS/HDXandCodes2025/04)
which greatly facilitated our discussions with Roy Meshulam.
We thank the anonymous reviewers of CCC 2026 for their comments and suggestions.
\bibliographystyle{alpha}
\bibliography{ref.bib}

\appendix
\section{Omitted Proofs}
\label{app:omitted}
\subsection{A Small Proposition on Expansion of Multipartite Graphs}
In this subsection, we prove the following proposition which roughly states that in a $k$-partite graph if for every two parts the bipartite graph induced by them is an expander, the entire $k$-partite graph is also an expander.
\kpartExpansion*
\begin{proof}[Proof of \Cref{prop:kpartite-expansion}]
Let $\mathsf M:\R^V\to \R^V$ be the transition matrix of the underlying walk on $G$, and similarly $\mathsf M^{ij}:\R^{P_j}\to \R^{P_i}$ for the bipartite walk from $P_i$ to $P_j$.

Let $f:V\to \R$ be a unit vector orthogonal to the all ones vector, i.e., we have $\ex{\pi_{1}}{f}=0$ and $\Ex{f^2}=1$.
Now, we bound $\inprod{\mathsf Mf,f}$ as follows. Write $f_i$ to denote the restriction of $f$ to part $P_i$ and observe
\begin{align*}
\inprod{\mathsf Mf,f}
&=\ex{ij\sim \kappa_2}{\inprod{\mathsf M^{ij}f_{j},f_{i}}_i},
\end{align*}
where the inner product is taken over the stationary distribution of $M^{ij}$ (on part $P_i$), since we may draw an edge by first selecting the parts of its vertices $i,j$, then conditionally on the selected parts.

Now, let $f_i^{\parallel}=\ex{P_i}{f_i}\one_{P_i}$ and $f_i^\perp = f_i - f_i^\parallel$. We have
\begin{align*}
\inprod{\mathsf Mf,f}&=\ex{ij\sim \kappa_2}{\inprod{\mathsf M^{ij}f_{j},f_{i}}_i}\\
&=\ex{ij\sim \kappa_2}{\inprod{\mathsf M^{ij}\paren{f^\parallel_{j}+f^\perp_j}, f^\parallel_i+f^\perp_i}_i}\\
&=\ex{ij\sim \kappa_2}{\inprod{\mathsf M^{ij}f^\parallel_{j},f^\parallel_{i}}_i} + \ex{ij\sim \kappa_2}{\inprod{\mathsf M^{ij}f^\perp_{j},f^\perp_{i}}_i}.
\end{align*}
We shall now bound the orthogonal and parallel components separately.
\paragraph{Bounding the parallel term.} Write $\kappa_1$ to denote the stationary distribution on our weighted complete graph $K$ over $[k]$, and define the function $g:[k] \to [0,1]$ by $g(i)=\ex{P_i}{f}$.
Observe that $g\perp\one_{[k]}$ as $\ex{i\sim \kappa_1}{g(i)}=\Ex{f}=0$
and is at most unit norm
\begin{align*}
\norm{g}_{\kappa_1}^2=\ex{i\sim \kappa_1}{g(i)^2}=\ex{i\sim \kappa_1}{\ex{P_i}{f}^2}\leq \ex{i\sim \kappa_1}{\ex{P_i}{f^2}}=\ex{\pi_1}{f^2}=1.
\end{align*}
So, the parallel component can be bound as
\begin{align*}
\ex{ij\sim \kappa_2}{\inprod{\mathsf M^{ij}f_j^\parallel, f_i^\parallel}_i}&=\ex{ij\sim \kappa_2}{\ex{P_j}{f}\ex{P_i}{f}\inprod{\mathsf M^{ij}\one_{P_j}, \one_{P_i}}_i}\\
&=\ex{ij\sim \kappa_2}{g(j)g(i)}\\
&\leq \lambda_2(K)\ex{i\sim \kappa_1}{g(i)^2}&g\perp_{\kappa_1} \one_{[k]}\\
&\leq \lambda_2(K).& \paren{\norm{g}_{\kappa_1}\leq 1}
\end{align*}
\paragraph{Bounding the orthogonal term.}
Note that $\ex{ij\sim \kappa_2}{\inprod{\mathsf M^{ij}f_j^\perp, f_i^\perp}_i}\leq \ex{ij\sim \kappa_2}{\lambda_{ij}\norm{f_i^\perp}\norm{f_j^\perp}}$ as $f_j^\perp\perp \one_{[k]}$.
Then by AM-GM we have
\begin{align*}
\ex{ij\sim \kappa_2}{\lambda_{ij}\norm{f_i^\perp}\norm{f_j^\perp}}
&\leq \ex{ij\sim \kappa_2}{
    \lambda_{ij}\cdot
    \paren{
        \frac{
            \norm{f_i^\perp}^2+\norm{f_j^\perp}^2
        }{2}
    }
}&\paren{\text{AM-GM Inequality}}\\
&= \ex{ij\sim \kappa_2}{\lambda_{ij}\cdot \norm{f_i^\perp}^2}\\
&= \ex{i\sim \kappa_1}{\ex{j\sim \kappa_2[i]}{\lambda_{ij}\norm{f_i^\perp}^2}}\\
&\leq \max_{i\in [k]}\ex{j\sim \kappa_2[i]}{\lambda_{ij}}\ex{ij\sim \kappa_2}{\norm{f_i^\perp}^2}&\paren{\ex{i\sim \kappa_1}{\norm{f_i^\perp}^2}\leq \ex{i\sim \kappa_1}{\norm{f_i}^2}=1}\\
&\leq \max_{i\in [k]}\ex{j\sim \kappa_2[i]}{\lambda_{ij}}
\end{align*}
as desired.
\end{proof}

\subsection{Color Restriction Lemma for the Faces Complex}

We now prove \Cref{lem:disjoint-color-restriction} which we recall is an abstraction of \cite[Lemma 8.2]{DD24buildings} that is specified to the full flags complex. We restate the lemma for convenience.

\disjointcolor*

We will need the following notion of distributional closeness.
\begin{definition} 
Let $(P, Q)$ be an (ordered) pair of probability distributions supported on a set $\Omega$.
We say that $(P, Q)$ are \emph{$(A, \alpha)$-smooth} if for every $v \in A$ it holds that $\alpha \pr{P}{v} \leq \pr{Q}{v}$.
We say that $(P, Q)$ are \emph{$\alpha$-smooth} if they are $(\Omega,\alpha)$-smooth.
\end{definition}
The following property is easy to verify from the definitions. We omit its proof.
\begin{claim}
Let $(P, Q)$ be $(A, \alpha)$-smooth. Then for every $B \subseteq A$ it holds that $\alpha \pr{P}{B} \leq \pr{Q}{B}$.\qed
\end{claim}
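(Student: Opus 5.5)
This is immediate from the definitions, so I will verify it directly by summing the pointwise inequality over $B$. Fix $B \subseteq A$. Since the distributions are supported on a finite (or at most countable) set $\Omega$, I will write
\[
\pr{P}{B} = \sum_{v \in B} \pr{P}{v}, \qquad \pr{Q}{B} = \sum_{v \in B} \pr{Q}{v}.
\]

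Because $B \subseteq A$, the $(A,\alpha)$-smoothness hypothesis applies to every $v \in B$, giving $\alpha \pr{P}{v} \leq \pr{Q}{v}$ for each term. All terms are nonnegative, so summing these inequalities over $v \in B$ preserves the inequality. Using linearity of the sum to pull out the constant $\alpha$, this yields
\[
\alpha \pr{P}{B} = \sum_{v\in B}\alpha \pr{P}{v} \leq \sum_{v\in B}\pr{Q}{v} = \pr{Q}{B}.
\]

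There is no real obstacle here. The only point to note is that the argument uses countable additivity and nonnegativity of the probabilities: in the finite setting of this paper every distribution is a finite sum of point masses, so termwise summation is justified. If one wanted a measure-theoretic version, the same argument goes through by integrating the density inequality $\alpha\,dP \leq dQ$ over $B$.
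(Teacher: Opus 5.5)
Your proof is correct and is exactly the direct verification the paper has in mind: the paper omits the proof as ``easy to verify from the definitions,'' and summing the pointwise inequality $\alpha \pr{P}{v} \leq \pr{Q}{v}$ over $v \in B \subseteq A$ is that verification. One small point: summing finitely many inequalities is valid regardless of sign, so nonnegativity is only needed for convergence in the countable case, which never arises here since all distributions in the paper are on finite sets.
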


We now move to proving \Cref{lem:disjoint-color-restriction}.
The proof is essentially verbatim the proof of \cite[Lemma 8.2]{DD24buildings} (i.e., the abstraction simply follows from observing the authors do not need any properties of the building beyond those given in the statement of \Cref{lem:disjoint-color-restriction}), and is included only for completeness.

\begin{proof}[Proof of~\Cref{lem:disjoint-color-restriction}]

Following \cite{DD24buildings}, we first claim we may assume a good fraction of larger projections onto $m^4$ colors are co-boundary expanders. This ensures that for any color $c$, the fraction of colors $c'\in I$ that intersect $c$ is a negligible fraction of the colors in $I$.
\begin{claim}[Claim 8.3.1 in \cite{DD24buildings}]
Let $I\sim \mathrm F^r \Delta_n((r+1)^2-1)$ of colors.
Then with probability $p/2$, $\mathcal Y^I$ has coboundary expansion $\Omega(\beta p)$.
\end{claim}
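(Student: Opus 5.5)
This claim is a statement about the uniform sample $I$ of $(r+1)^2$ disjoint colors. I would prove it by restricting to smaller sub-samples: a random $m$-subset of a random $(r+1)^2$-subset is itself uniform. The parameters are $m=\sqrt{r+1}$ and $N=(r+1)^2$, which is legitimate because $k\ge r^5$ leaves room for $N$ disjoint $(r+1)$-sets in $[k]$. The key coupling is the following: sample $I\sim \F^r\Delta_k(N-1)$ and then a uniform $I'\in\binom{I}{m}$. The marginal of $I'$ is then exactly uniform on $\F^r\Delta_k(m-1)$. Let $p_I$ denote the fraction of $m$-subsets $I'\subseteq I$ for which $\mathcal Y^{I'}$ has $h^1\ge\beta$. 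Then $\Ex{p_I}=p$, and by Markov/averaging, $\Pr{p_I\ge p/2}\ge p/2$.

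Fix any $I$ with $p_I\ge p/2$; I would show that $h^1(\mathcal Y^I)=\Omega(\beta p)$. This is a color-restriction argument in the style of \Cref{lem:color-restriction}, applied inside the $N$-partite complex $\mathcal Y^I$ with the $m$-subsets of its colors as the projections. The steps are as follows.

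First, take $f\in C^1(\mathcal Y^I,\Gamma)$ with $\wt(\delta f)=\eps$. For each good $I'$, the restriction $f|_{I'}$ has $\wt(\delta f|_{I'})=\eps_{I'}$. Because edges and triangles of $\mathcal Y^I$ are sampled by first picking colors, we have $\Ex{\eps_{I'}}=\eps$ over uniform $I'$. Hence the average of $\eps_{I'}$ over good $I'$ is at most $2\eps/p$.

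Second, coboundary expansion of each good $\mathcal Y^{I'}$ yields $g_{I'}:\mathcal Y^{I'}(0)\to\Gamma$ with $\dist(f|_{I'},\delta g_{I'})\le \eps_{I'}/\beta$.

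Third, I would glue the $g_{I'}$. For each vertex $v$ (of color $c$), look at the induced values $g_{I'}(v)$ over good $I'\ni c$. Two overlapping $I',I''$ both agree with $f$ on most edges at $v$. The vertex-link $\mathcal Y^{I'}_v$ is a $\Theta(1)$-edge expander by hypothesis, so \Cref{lem:expansion-majority} forces $g_{I'}(v)g_{I'}(u)^{-1}$ and $g_{I''}(v)g_{I''}(u)^{-1}$ to agree up to a single group element (a gauge shift) on most neighbours. After fixing gauges along a spanning chain of overlapping $I'$'s, I would define $g(v)$ by plurality.

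Finally, I would bound $\dist(f,\delta g)$ by a union of the local disagreements. The smoothness claim transfers edge measures between the distribution induced by good $I'$ and the full edge distribution on $\mathcal Y^I$: the good sets have density at least $p/2$, so the pair is $(\cdot,p/2)$-smooth. This yields $\dist(f,\delta g)=O(\eps/(\beta p))$.

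The main obstacle is the gluing and gauge-consistency step. Different good $I'$ give coboundaries $\delta g_{I'}$ that are each determined only up to a global constant, and for non-abelian $\Gamma$ these shifts must be synchronized consistently across the whole family. I would handle this as \cite{DD24buildings} does. Fix a reference good $I_0$. Since $N\gg m$, for each color $c$ the probability that a random good $I'$ also meets $I_0$ nontrivially is controlled; this is exactly why $N=(r+1)^2$ makes collisions a negligible fraction. Each $g_{I'}$ is aligned to $g_{I_0}$ through a common overlap, using the edge expansion of vertex links to show that the alignment is well defined up to $O(\eps/(\beta p))$ error.
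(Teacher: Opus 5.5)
Your coupling and Markov step are exactly the paper's: $\Ex{p_I}=p$ gives $\Pr{p_I\geq p/2}\geq p/2$. The paper then finishes in one line by applying \Cref{lem:color-restriction} as a black box to the $N$-partite complex $\mathcal Y^I$ with $\ell=m$ and success probability $p/2$, with the link hypothesis supplied by the assumption on the links $\mathcal Y^{I'}_v$. Here $m\geq 3$ once $r\geq 8$. This gives $h^1(\mathcal Y^I)\geq p\beta/72$. You instead re-derive color restriction by gluing the local $g_{I'}$, and that re-derivation has two genuine gaps.

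\textbf{Smoothness runs the wrong way.} Density $\geq p/2$ of good $I'$ only tells you that the edge (or triangle) distribution induced by a random good $I'$ is at most $(2/p)$ times the full distribution on $\mathcal Y^I$. The triangle version of this is what legitimately gives your bound on the average of $\eps_{I'}$. To bound $\dist(f,\delta g)$ under the full edge distribution you need the reverse domination, and that is false. A fixed pair of colors of $I$ lies in only a $\frac{m(m-1)}{N(N-1)}\approx (r+1)^{-3}$ fraction of $m$-subsets. So the good family may contain no $I'$ covering that pair, and edges between those two colors are then constrained by no $g_{I'}$ at all.

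\textbf{Gauge synchronization through a reference $I_0$ cannot work.} A uniform $m$-subset meets $I_0$ in $m^2/N=1/(r+1)$ colors in expectation, so most good $I'$ are disjoint from $I_0$, as you note yourself. Even a one-color overlap contains no edges from which a constant shift could be read off. Chaining alignments along overlapping sets accumulates error and raises a consistency-around-cycles problem that is itself cohomological. Relatedly, $N=(r+1)^2$ plays no role inside $\mathcal Y^I$, whose colors are all pairwise disjoint. It is used in \Cref{lem:disjoint-color-restriction} so that an outside color meets at most $r+1$ colors of $I$.

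\textbf{How to fix it.} Color restriction needs no gluing at all.
\begin{enumerate}
\item By averaging, fix a single good $I_0$ for which $\delta f$ has weight $O(\eps/p)$ on $\mathcal Y^{I_0}$, on triangles with exactly two vertices in $\mathcal Y^{I_0}$, and on triangles with exactly one vertex there.
\item Take $g$ on $\mathcal Y^{I_0}(0)$ from its coboundary expansion.
\item Set $g(v)=\maj_{u\in\mathcal Y^{I_0}_v(0)} f(vu)g(u)$ for every other vertex $v$.
\item Control edges with one or zero endpoints in $\mathcal Y^{I_0}$ via the edge expansion of $\mathcal Y^{I_0}_v$, \Cref{lem:expansion-majority}, and triangle-to-edge smoothness.
\end{enumerate}
This is exactly the template of the appendix proof of \Cref{lem:disjoint-color-restriction}, and it yields the claimed $\Omega(\beta p)$. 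Citing \Cref{lem:color-restriction} directly, as the paper does, is simpler still.
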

The claim is a simple Markov argument combined with \Cref{lem:color-restriction} (see~\cite{DD24buildings} for details).

For the rest of the proof, we fix $f: \mathcal Y(1) \to \Gamma$.
We need to show that there is some $g : \mathcal Y(0) \to \Gamma$ such that $ \beta' dist(f, \delta g) \leq \wt(\delta f)$, where $\beta'= \Omega(\beta p^{2})$.
Note that for a $I$ such that $\mathcal Y^I$ has coboundary expansion $\Omega(\beta p)$ it is easy to define $g$ inside $\mathcal Y^J$ (so long as $\delta f$ is not much larger than expected in $I$).
The challenge is to extend $g$ to the entire $\mathcal Y$.
To that end, for a fixed $I$ we define the following two distributions over triangles.
\begin{itemize}
\item $T_{IIn}$ is the distribution over $uvw \in \mathcal Y(2)$ given that $u, v \in \mathcal Y^I(0)$ and $w \not \in \mathcal Y^J(0)$.
\item  $T_{nnI}$ is the distribution over $uvw \in \mathcal Y(2)$ given that $u, v \not \in \mathcal Y^I(0)$ and $w \in \mathcal Y^I(0)$.
\end{itemize}

A second averaging argument implies the existence of a `good' $I$ such that $\mathcal{Y}^I$ is a coboundary expander, and for which $|\delta f|$ is not much larger when restricted to $I$, triangles with one edge in $I$, or triangles with one vertex in $I$, which allows us to define $g$ close to a coboundary in $\mathcal{Y}^I$ then extend it.

\begin{claim}[{\cite[Claim 8.3.2]{DD24buildings}}]
There exists some set $I\in \mathrm F^r \Delta((r+1)^2-1)$ colors such that
\begin{enumerate}[(i)]
    \item All colors in $I$ are mutually disjoint and $\mathcal Y^I$ has coboundary expansion $\Omega(\beta p)$.
    \item $\wt_{\mathcal Y^I} (\delta f) \leq O(p^{-1})\wt(\delta f)$.
    \item $\wt_{T_{IIn}} (\delta f) = \pr{uvw\sim T_{IIn}} {\delta f(uvw) \ne Id} \leq O(p^{-1})\wt(\delta f)$.
    \item $\wt_{T_{nnI}} (\delta f) = \pr{uvw\sim T_{nnI}} {\delta f(uvw) \ne Id} \leq O(p^{-1})\wt(\delta f)$.
\end{enumerate}
\end{claim}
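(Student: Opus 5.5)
The plan is a probabilistic averaging argument. I would sample $I\sim \mathrm F^r\Delta((r+1)^2-1)$ uniformly, so $I$ is a family of $(r+1)^2$ pairwise disjoint colors. I would then show that each of the three quantities in items (ii)--(iv) has expectation $O(\wt(\delta f))$ over the choice of $I$. By Markov's inequality, each of them exceeds $\frac{C}{p}\wt(\delta f)$ with probability at most $p/8$, once $C$ is a large enough constant. The previous claim (Claim 8.3.1) shows that item (i) holds with probability at least $p/2$. A union bound then leaves probability at least $p/2-3p/8=p/8>0$ that all four items hold simultaneously, so a good $I$ exists.

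\textbf{Item (ii).} Disjointness of the colors in $I$ is automatic from how $I$ is sampled. For the expectation, I would couple the two sides: first sample a random triangle $uvw\sim\mathcal Y(2)$, then sample $I$ conditioned on $\col(u),\col(v),\col(w)\in I$. The symmetry of the uniform distribution over disjoint color families implies that the pair $(I,uvw)$ has the same law as first drawing $I$ and then a triangle of $\mathcal Y^I$. Consequently
\[
\Ex{\wt_{\mathcal Y^I}(\delta f)}=\wt(\delta f).
\]

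\textbf{Items (iii) and (iv).} I would run the same coupling for the distributions $T_{IIn}$ and $T_{nnI}$. Averaged over $I$, the distribution of the triangle $uvw$ is no longer exactly $\mu_2$. It is $\mu_2$ reweighted by the conditional probability that $I$ contains exactly the prescribed vertex colors, namely $\col(u),\col(v)$ but not $\col(w)$ for $T_{IIn}$, and the analogous condition for $T_{nnI}$. I would show that $(T_{\cdot},\mu_2)$ is $\alpha$-smooth in the reverse direction, i.e. that the averaged $T_{\cdot}$ puts at most $O(1)$ times the $\mu_2$-mass on any triangle. The smoothness claim above then converts this pointwise domination into the bound $\Ex{\wt_{T_\cdot}(\delta f)}\le O(\wt(\delta f))$.

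\textbf{Main obstacle.} I expect the smoothness computation to be the hardest step. It requires showing that, for every triangle, the conditional probability that $I$ realizes the pattern ``two colors inside $I$, one color outside $I$'' (or ``one inside, two outside'') is within a constant factor of its average over triangles. Here the hypothesis $k\ge r^5$ enters. The family $I$ uses at most $(r+1)^3$ of the $k$ base colors. Hence the probability that a fixed color $c$ meets $\cup I$, or lies in $I$, depends on $c$ only through lower-order terms, and the disjointness constraints among $\col(u),\col(v),\col(w)$ change these probabilities by at most a $1\pm O((r+1)^3/k)$ factor. If this direct count becomes messy, my fallback is to condition on the colors of the triangle, compute the two relevant hypergeometric-type probabilities explicitly, and compare their ratio.
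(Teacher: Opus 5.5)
Your proposal is correct and follows the same route as the paper, which cites \cite[Claim 8.3.2]{DD24buildings}: combine Claim 8.3.1 with Markov's inequality on the expected restricted weights of $\delta f$, then take a union bound. The step you flag as the main obstacle is in fact an exact identity and does not need $k\geq r^5$. Every triangle of $\mathcal Y$ is colored by three pairwise disjoint $(r+1)$-subsets of $[k]$, and $\symmetric_k$ acts transitively both on such triples and on the families $I$ while preserving all the distributions involved. Hence the probability over $I$ that a fixed triangle has exactly two (resp.\ one) colors in $I$ is the same for every triangle, and the normalizing mass of $T_{IIn}$ (resp.\ $T_{nnI}$) is the same for every $I$, which gives $\ex{I}{\wt_{T_{IIn}}(\delta f)}=\ex{I}{\wt_{T_{nnI}}(\delta f)}=\wt(\delta f)$.
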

Choose a $I$ as in the above claim.
Now, we define $g:\mathcal Y(0)\to \Gamma$ as follows.
\begin{description}
\item[Inside $\mathcal Y^I(0)$:]
Since $\mathcal Y^I$ has coboundary expansion $\Omega(\beta p)$, there exists some $g$ such that
\[
\Omega(\beta p) \dist_{\mathcal Y^I} (f, \delta g) \leq \wt_{\mathcal Y^I} (\delta f) \leq O(p^{-1})\wt(\delta f).
\]
We take $g$ to be one such function. Now, we extend it to rest to the rest of the complex as follows.
\item[Outside $\mathcal Y^I(0)$:]
For $v\in \mathcal Y(0) \setminus \mathcal Y^I(0)$, let $g(v) = \maj_{u\in \mathcal Y^I_v(0)} \set{f(vu)g(u)}$ to be the most popular value (ties
broken arbitrarily).
We note that $\mathcal Y^I(0)\ne \emptyset$ because the color of $v$ can intersect at most $r+1$ colors in
$I$ because they are mutually disjoint, and this leaves at least $(r+1)^2 - (r+ 1)$ remaining colors
to choose from.
\end{description}
Let
\begin{align*}
E_j&= \set{\set{u, v}\in \mathcal Y(1)| \left| \mathcal Y^I(0) \cap \set{u, v}\right |= j},& j= 0, 1, 2.
\end{align*}
Let $\dist_{E_i}$ denote the distance between two functions from $C^1$ as per the distribution over $\mathcal Y(1)$ conditioned on the edge being in $E_i$.
We already know that
\[
\dist_{E_2} (f, \delta g) = \dist_{\mathcal Y^I}(f, \delta g) \leq O\paren{\frac{1}{\beta p^2}}\wt(\delta f)
\]
since this follows from the coboundary expansion of $\mathcal Y^I$ and the fact that $g$ was chosen to minimize this
distance.
It is now sufficient to show that $\dist_{E_1} (f, \delta g) = O( 1/\beta p^{2} ) \wt(\delta f)$, and that $\dist_{E_0} (f, \delta g) = O( 1/\beta p^{2} )\wt(\delta f)$ as $\dist( f, \delta g)$ is a convex combination of $\dist_{E_i} ( f, \delta g)$.

First, we consider the edges partly outside $\mathcal Y^I$, i.e., the edges in $E_1$. 
\begin{claim}[Claim 8.3.3 in \cite{DD24buildings}]
\label{claim:d-smoothness}
Let $I$ be a set of $(r + 1)^2$ mutually disjoint colors.
Consider the following distributions over $E_1$.
\begin{description}
    \item[$D_0$:]The distribution where one samples an edge $uv \in \mathcal Y(1)$ conditioned on being in $E_1$.
    \item[$D_2$:]The distribution where one samples an edge by first sampling a triangle $uvw \sim T_{IIn}$ such that
        $v, w \in \mathcal Y^I(0)$ and $u\not \in \mathcal Y^I(0)$, and then outputting $uv$.
\end{description}
Then $(D_0 , D_2)$ are $\frac{1}{2}$-smooth.
\end{claim}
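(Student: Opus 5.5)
The plan is to compute both distributions explicitly as reweightings of the edge measure $\pi_1$ of $\mathcal Y$ restricted to $E_1$. Then $\frac12$-smoothness reduces to showing that the reweighting factor varies by at most a factor of $2$ across $E_1$.

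Fix an edge $uv\in E_1$, with $v\in\mathcal Y^I(0)$ and $u\notin\mathcal Y^I(0)$.

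\emph{Writing $D_0$.} By definition $D_0(uv)=\pi_1(uv)/\pi_1(E_1)$.

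\emph{Writing $D_2$.} A triangle drawn from $T_{IIn}$ that outputs $uv$ is a triangle $uvw$ with $w\in\mathcal Y^I(0)$. The total $\pi_2$-mass of such triangles is
\[
\sum_{w}\pi_2(uvw)=\Theta(1)\cdot\pi_1(uv)\cdot p(uv),
\qquad
p(uv)=\pr{w\sim \mathcal Y_{uv}(0)}{w\in \mathcal Y^I(0)}.
\]
Here the constant $\Theta(1)$ is the same for every edge; it is the combinatorial factor relating $\pi_2$ to $\pi_1$ via the link measure. Hence $D_2(uv)=\pi_1(uv)\,p(uv)/Z$ for a normalizer $Z$. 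Comparing with $D_0$ gives $Z=\pi_1(E_1)\cdot\ex{e\sim D_0}{p(e)}$, and therefore
\[
\frac{D_2(uv)}{D_0(uv)}=\frac{p(uv)}{\ex{e\sim D_0}{p(e)}}\;\ge\;\frac{\min_{e\in E_1}p(e)}{\max_{e\in E_1}p(e)}.
\]
So it suffices to show $\min p\ge \frac12\max p$ over $E_1$.

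\emph{Computing $p(uv)$.} Recall the measure on $\mathcal Y=\F^rX$: we sample a top face of the $k$-partite $X$ (which uses all $k$ colors) and split off disjoint $(r+1)$-subsets. Conditioned on containing $u\sqcup v$, the link vertex $w$ has a color that is a uniformly random $(r+1)$-subset of $[k]\setminus(\col(u)\cup\col(v))$. Moreover, $w\in\mathcal Y^I(0)$ exactly when $\col(w)\in I$. Thus
\[
p(uv)=\frac{N(uv)}{\binom{k-2(r+1)}{r+1}},
\]
where $N(uv)$ is the number of colors of $I$ disjoint from $\col(u)\cup\col(v)$. The denominator is the same for every edge, so the comparison reduces to bounding $N(uv)$.

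\emph{Bounding $N(uv)$.} The colors in $I$ are mutually disjoint and there are $(r+1)^2$ of them. Since $\col(v)\in I$, it excludes exactly one of them. Since $\col(u)$ has size $r+1$, it meets at most $r+1$ of them. Hence
\[
(r+1)^2-(r+2)\;\le\; N(uv)\;\le\;(r+1)^2-1 .
\]
The ratio of the lower bound to the upper bound is at least $\frac12$ for all $r\ge1$. This yields $D_2(uv)\ge\frac12 D_0(uv)$ for every $uv\in E_1$, which is exactly the $\frac12$-smoothness of $(D_0,D_2)$.

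\emph{Main obstacle.} The only delicate step is justifying that the color of $w$ in the link of $uv$ is uniform over the remaining $(r+1)$-subsets, independently of the particular vertices $u,v$. This is what makes $p(uv)$ depend only on $\col(u)$ and $\col(v)$. It follows from the sampling description of the faces-complex measure, since every top face of the $k$-partite $X$ contains exactly one vertex of each color. Everything else is bookkeeping of normalizing constants.
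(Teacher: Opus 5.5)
Your reweighting argument is correct up to the last numerical step. The paper does not reprove this claim; it imports it from \cite{DD24buildings}. The pieces you set up all hold:
\begin{itemize}
\item both $D_0$ and $D_2$ are reweightings of $\pi_1$ on $E_1$;
\item the $D_2$ weight is $\pi_1(uv)\,p(uv)$ up to a uniform constant;
\item the color of the third face in the link of $uv$ is uniform over the $(r+1)$-subsets of $[k]\setminus(\col(u)\cup\col(v))$, so $p(uv)\propto N(uv)$;
\item disjointness of the colors in $I$, together with $\col(u)\cap\col(v)=\emptyset$, gives $(r+1)^2-(r+2)\le N(uv)\le (r+1)^2-1$.
\end{itemize}

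The step that fails is the final arithmetic:
\[
\frac{(r+1)^2-(r+2)}{(r+1)^2-1}=\frac{r^2+r-1}{r^2+2r},
\]
which is at least $\frac12$ exactly when $r^2\ge 2$, i.e.\ $r\ge 2$. At $r=1$ it equals $\frac13$, so your claim that it holds ``for all $r\ge 1$'' is false.

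This is not just slack in your min/max comparison; the statement itself fails at $r=1$. Take $r=1$ and $k$ large, and an edge $uv\in E_1$ whose $\col(u)$ meets two different colors of $I\setminus\{\col(v)\}$. Then $N(uv)=1$. A $D_0$-typical edge has $\col(u)$ disjoint from $\bigcup I$ and $N=3$, so $\ex{e\sim D_0}{N(e)}\to 3$. Hence $D_2(uv)/D_0(uv)=N(uv)/\ex{e\sim D_0}{N(e)}\to\frac13$. This uses only the color marginal of $\pi_1$, so it holds for every $k$-partite $X$.

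You should therefore add the hypothesis $r\ge 2$. Under it, your bound gives smoothness $\frac{r^2+r-1}{r^2+2r}\ge\frac12$, and the proof goes through. The hypothesis is harmless where the claim is used: in \Cref{lem:disjoint-color-restriction}, $m=\sqrt{r+1}$ must be an integer, which excludes $r=1$ (and $r=0$ is degenerate, with $T_{IIn}$ empty). In any case $r$ is taken large there.
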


By definition $\dist_{E_1} (f, \delta g) = \pr{uv\sim D_0} {f(uv) \ne \delta g(uv)}$.
By \Cref{claim:d-smoothness} we can replace $D_0$ with $D_2$ up to a constant factor loss and just show
\[
\pr{uv\sim D_2}{f(uv) \ne \delta g(uv)} \leq O\left( \frac{1}{\beta p^2}\right)\wt(\delta f)
\]
Towards that end, fix some $v\in \mathcal Y (0) \setminus \mathcal Y^I(0)$,
and let 
\[
\eps_v = \pr{u\in \mathcal Y^I_v (0)} {f(uv) \ne \delta g(uv)} = \pr{u\in \mathcal Y^I_v (0)} {g(v)^{-1} \ne g(u)^{-1}f(uv)}.
\]
The underlying graph of $\mathcal Y^I_v$ is a $\Omega(1)$-edge expander by \Cref{lem:spectral-to-edge} and \Cref{thm:spectral-projected}.
Therefore, by \Cref{lem:expansion-majority}, it holds that
\[
\pr{u\in \mathcal Y^J_v(0)} {g(v)^{-1} \ne g(u)^{-1}f(uv)} \leq O\paren{ \pr{uw\in \mathcal Y^J_v (1)} {g(w)^{-1}f(wv)\ne g(u)^{-1}f(uv)}}.
\]
In the following claim we should that the error probability on the right hand side of the inequality above can be bounded by the distance of $f$ from the coboundary restricted to $\mathcal Y^I$ and weight of $\delta f$ with respect to $T_{IIn}$ .

\begin{claim*}
If $g(w)^{-1}f(wv) \ne g(u)^{-1}f(uv)$, then either $f(wu) \ne \delta g(wu)$ or $\delta f(vwu) \ne Id$.
\end{claim*}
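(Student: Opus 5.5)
The plan is a contrapositive group computation: assume both $f(wu)=\delta g(wu)$ and $\delta f(vwu)=Id$, and deduce $g(w)^{-1}f(wv)=g(u)^{-1}f(uv)$. Only the definitions of $\delta_0$, $\delta_1$ and the anti-symmetry of $f$ are needed. Here $u,w\in\mathcal Y^I_v(0)$ are the endpoints of an edge in the link of $v$, so $vwu$ is a triangle of $\mathcal Y$. Hence $\delta f(vwu)$ is defined, and the edges $wu$, $wv$, $uv$ all lie in $\mathcal Y(1)$.

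First I expand the triangle condition. By definition $\delta f(vwu)=f(vw)f(wu)f(uv)$, so $\delta f(vwu)=Id$ gives $f(wu)=f(vw)^{-1}f(uv)^{-1}$. Anti-symmetry ($f(vw)^{-1}=f(wv)$ and $f(uv)^{-1}=f(vu)$) turns this into $f(wu)=f(wv)f(vu)$.

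Next I expand the edge condition. From $f(wu)=\delta g(wu)=g(w)g(u)^{-1}$ and the previous display I get $g(w)g(u)^{-1}=f(wv)f(vu)$. Left-multiplying by $g(w)^{-1}$ and right-multiplying by $f(vu)^{-1}=f(uv)$ gives
\[
g(u)^{-1}f(uv)=g(w)^{-1}f(wv),
\]
which contradicts the hypothesis. Since $\Gamma$ may be non-abelian, the multiplications must be done on the correct sides exactly as written.

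There is no real obstacle here. The only care points are orientation conventions, namely checking that the cyclic order $v\to w\to u\to v$ in $\delta f(vwu)$ matches the edges $wv$, $wu$, $uv$ used. If the orientation came out reversed, I would use $\delta f(vuw)=\delta f(vwu)^{-1}$ from the anti-symmetry of $2$-cochains; this does not change whether the value is $Id$.
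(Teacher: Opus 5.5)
Your proof is correct and is essentially the paper's argument. Both prove the contrapositive by the same direct group computation. The paper writes it as a chain of equivalences, starting from $g(w)^{-1}f(wv)=g(u)^{-1}f(uv)$, substituting $\delta g(wu)=f(wu)$, and arriving at $\delta f(vwu)=Id$. You run the same steps forward from the two hypotheses, with the non-abelian multiplications done on the correct sides.
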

\begin{proof}
Suppose $f(wu) = \delta g(wu)$ and $f(vwu) = Id$. 
Then we get
\begin{align*}
&g(w)^{-1}f(wv) = g(u)^{-1}f(uv)\\
\Leftrightarrow & f(vw)g(w)g(u)^{-1}f(uv) = Id\\
\Leftrightarrow & f(vw)\delta g(wu) f(uv) = Id\\
\Leftrightarrow & f(vw)f(wu)f(uv) = Id &(\text{by assumption, }\delta g(wu)=f(wu))\\
\Leftrightarrow & \delta f(vwu) = Id
\end{align*}
So, we get the contrapositive of the statement to be true, i.e., if $f(wu) = \delta g(vw)$ and $\delta f(vwu) = Id$, then $g(w)^{-1}f(wv) = g(u)^{-1}f(uv)$.
\end{proof}

From the above claim we get that 
\begin{align*}
\pr{uv\sim D_2}{f(uv) \ne \delta g(uv)} &\leq \ex{v\not \in \mathcal Y^I(0)}{\eps_v}\\
&= O \paren{ \ex{v\not \in \mathcal Y^I(0)}{ \pr{u \in \mathcal Y^I_v(0)} {g(v)^{-1} \ne g(u)^{-1}f(uv)}} }\\
&= O \paren{ \ex{v\not \in \mathcal Y^I(0)}{ \pr{uw \in \mathcal Y^I_v(1)} {\delta f(vwu)\ne Id}}  + \ex{v\not \in \mathcal Y^I(0)}{ \pr{uw \in \mathcal Y^I_v(1)} {f(wu) \ne \delta g(wu)}}}\\
&= O(\wt_{T_{IIn}} (\delta f) + \dist_{E_2} (f, \delta g))\\
&\leq O\paren{\frac{1}{\beta p^2}}\wt(\delta f)
\end{align*}
where the second-to-last line follows from the fact that choosing $v$ and then an edge $uw$ as above is equidistributed with $T_{IIn}$.

We can now move to showing that
$\dist_{E_0} (f, \delta g) = \pr{uv\sim P_0} {f(uv) \ne \delta g(uv)} = O( 1/ \beta p^2 )\wt(\delta f)$.
The proof is similar to the $E_1$ case. 
\begin{claim}[Claim 8.3.4 in \cite{DD24buildings}]
\label{claim:p-smoothness}
Let $J$ be a set of $(r + 1)^2$ mutually disjoint colors. Consider the following distributions over $E_0$ :
\begin{description}
    \item[$P_0$:]The distribution where one samples an edge $uv \in \mathcal Y(1)$ conditioned on $u,v \not \in \mathcal Y^I(0)$.
    \item[$P_1$:]The distribution where one samples an edge $uv \in \mathcal Y(1)$ by first sampling a triangle $uvw \sim T_{nnI}$ such that $u,v \not \in \mathcal Y^I(0)$ and $w\in \mathcal Y^I(0)$, and then outputting $uv$.
\end{description}
Then $(P_0 , P_1)$ are $\frac{1}{2}$-smooth.
\end{claim}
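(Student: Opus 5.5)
The plan is to compute both distributions explicitly in terms of the edge measure of $\mathcal Y$ and compare them pointwise. Write $\pi_1,\pi_2$ for the measures on $\mathcal Y(1),\mathcal Y(2)$. Sampling a triangle from $\pi_2$ and looking at one of its edges gives the edge measure $\pi_1$ together with the link measure on the third vertex. Hence, for $uv\in E_0$,
\[
P_0(uv)=\frac{\pi_1(uv)}{\pi_1(E_0)},\qquad P_1(uv)=\frac{\pi_1(uv)\,\rho(uv)}{\sum_{u'v'\in E_0}\pi_1(u'v')\,\rho(u'v')},
\]
where $\rho(uv)=\pr{w\sim \mathcal Y_{uv}(0)}{w\in \mathcal Y^I(0)}$. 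In a triangle of type $T_{nnI}$ the two vertices outside $\mathcal Y^I(0)$ are determined, so each such triangle contributes exactly one edge of $E_0$, up to the harmless constant from orderings. Therefore
\[
P_1(uv)=P_0(uv)\cdot\frac{\rho(uv)}{\ex{P_0}{\rho}}\;\geq\; P_0(uv)\cdot\frac{\min_{E_0}\rho}{\max_{E_0}\rho},
\]
and it suffices to show that $\rho$ varies by at most a factor of $2$ over $E_0$.

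To control $\rho$, I would use the partite structure. Let $t$ be a top face of $X$ containing $u\sqcup v$, and let $w$ be a uniformly random $(r+1)$-subset of $t\setminus(u\cup v)$. Because $X$ is $k$-partite and $t$ has exactly one vertex of each color, $\col(w)$ is a uniformly random $(r+1)$-subset of $[k]\setminus(\col(u)\cup\col(v))$. This distribution is independent of the choice of $t$ and depends only on $\col(u)\cup\col(v)$. So
\[
\rho(uv)=\frac{N(uv)}{\binom{k-2(r+1)}{r+1}},
\]
where $N(uv)$ is the number of colors $c\in I$ with $c\cap(\col(u)\cup\col(v))=\emptyset$.

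The colors of $I$ are pairwise disjoint, so each of the $2(r+1)$ elements of $\col(u)\cup\col(v)$ meets at most one color of $I$. This gives
\[
(r+1)^2-2(r+1)\;\leq\; N(uv)\;\leq\;(r+1)^2 .
\]
The ratio $\min\rho/\max\rho$ is then at least $1-\tfrac{2}{r+1}\geq \tfrac12$ once $r\geq 3$, which holds in our regime since $k\geq r^5$ and $r$ is large. This yields $P_1(uv)\geq \tfrac12 P_0(uv)$ for every $uv\in E_0$, i.e.\ $(P_0,P_1)$ is $\tfrac12$-smooth.

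The main point needing care is the first step: checking that the triangle measure, conditioned on the type $T_{nnI}$ and projected to its $E_0$-edge, really equals $\pi_1$ reweighted by $\rho$. In particular one must verify that conditioning on $u,v\notin\mathcal Y^I(0)$ does not introduce any further dependence on the edge. It does not, because the type of a triangle is determined by its colors, and the link color distribution computed above already accounts for that. The counting bound on $N(uv)$ is then routine.
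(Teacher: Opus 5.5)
Your proof is correct: reweighting $P_0$ by $\rho(uv)$ and observing that $\rho(uv)$ depends only on $\col(u)\cup\col(v)$ gives the claim, since pairwise disjointness of the colors in $I$ keeps $\rho$ within a factor $1-\frac{2}{r+1}$ of its maximum, so $P_1(uv)\geq \frac12 P_0(uv)$; you also rightly note that this needs $r\geq 3$, and for $r=1,2$ the constant $\frac12$ genuinely fails. The paper does not reprove this claim but imports it from \cite{DD24buildings}, and your color-counting argument is very likely the same reweighting that underlies that cited claim.
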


\begin{claim}[Claim 8.3.3 in \cite{DD24buildings}]
\label{claim:d-smoothness-two}
Let $J$ be a set of $(r + 1)^2$ mutually disjoint colors.
Consider the following distributions over $E_1$.
\begin{description}
    \item[$D_0$:]The distribution where one samples an edge $uv \in \mathcal Y(1)$ conditioned on being in $E_1$.
    \item[$D_1$:]The distribution where one samples an edge by first sampling a triangle $uvw \sim T_{nnI}$ such that
        $u, w \not \in \mathcal Y^I(0)$ and $v\in \mathcal Y^I(0)$, and then outputting $uv$.

\end{description}
Then $(D_1, D_0)$ are $\frac{1}{2}$-smooth.
\end{claim}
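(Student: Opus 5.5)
The plan is to compare the two distributions edge by edge, writing both as the same base weight times a color-dependent correction factor, and to show the correction factors differ by at most a factor of $2$. Fix an edge $uv\in E_1$ with $u\notin\mathcal Y^I(0)$ and $v\in\mathcal Y^I(0)$. An edge of $\mathcal Y=\F^rX$ is a pair of disjoint $r$-faces with $u\sqcup v\in X(2r+1)$. Sampling from $\mathcal Y(1)$ means sampling a top face of $X$ and then two disjoint $(r+1)$-subsets. So
\[
\Pr[\mathcal Y(1)]{uv}\;=\;c_1\cdot \pi_{2r+1}(u\sqcup v)
\]
for a normalizing constant $c_1$ that depends only on the dimensions. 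Hence
\[
D_0(uv)=\frac{\pi_{2r+1}(u\sqcup v)}{Z_0},
\]
where $Z_0$ is the total $\pi$-mass of $E_1$.

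Next I compute $D_1(uv)$. A triangle $uvw\sim T_{nnI}$ with $w\in \mathcal Y^I(0)$ (and, in the labelling of the claim, the other endpoint outside $\mathcal Y^I$) is obtained by sampling a $(3r+2)$-face of $X$ and splitting it into three disjoint $(r+1)$-sets, conditioned on the color pattern. Marginalizing over the third vertex $w$, I will show that
\[
D_1(uv)\;\propto\;\sum_{w}\pi_{3r+2}(u\sqcup v\sqcup w)\;=\;\pi_{2r+1}(u\sqcup v)\cdot \rho(u,v).
\]
Here $\rho(u,v)$ is the conditional probability, given the face $u\sqcup v$, that a random extension by a disjoint $(r+1)$-face has the color pattern required by $T_{nnI}$. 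I then write
\[
D_1(uv)=\frac{\pi_{2r+1}(u\sqcup v)\,\rho(u,v)}{Z_1},\qquad Z_1=\sum_{e\in E_1}\pi_{2r+1}(e)\,\rho(e)=Z_0\cdot\Ex{\rho},
\]
so the ratio $D_1(uv)/D_0(uv)$ equals $\rho(u,v)/\Ex{\rho}$.

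Since the complex is partite and the extension is a uniform sub-face of a uniformly drawn top face containing $u\sqcup v$, $\rho(u,v)$ depends only on $\col(u)$ and $\col(v)$. More precisely, it is the fraction of admissible colors for the third vertex that satisfy the membership condition. The key counting step is then as follows. The colors in $I$ are mutually disjoint, $|I|=(r+1)^2$, and $\col(u)$ and $\col(v)$ each meet at most $r+1$ colors of $I$. Therefore the number of colors of $I$ (respectively, of the complement of $I$) available to the third vertex varies over $E_1$ by at most an additive $O(r)$. This is relative to a total that is $\Theta((r+1)^2)$ for the $I$-part, and much larger for the non-$I$ part when $k\geq r^5$. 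Consequently $\rho$ lies within a multiplicative factor $1\pm O(1/r)$ of a fixed value, so $\rho(u,v)/\Ex{\rho}\le 2$ for $r$ at least a small constant. This gives $\tfrac12 D_1(uv)\le D_0(uv)$, which is $\tfrac12$-smoothness of $(D_1,D_0)$.

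The main obstacle is making the claim "$\rho$ depends only on colors, and only weakly" rigorous. This requires checking that, conditioned on $u\sqcup v$, the remaining top-face colors form a fixed set independent of the actual subspaces (true by partiteness). It also requires checking that the $T_{nnI}$ conditioning is just a ratio of counts of color $(r+1)$-sets disjoint from $\col(u)\cup\col(v)$ that lie inside or outside $I$. Once this is set up as a ratio of binomial-type counts, the bound follows from the disjointness of $I$ exactly as in the nonemptiness argument used when defining $g$ outside $\mathcal Y^I(0)$.
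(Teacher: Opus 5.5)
Your argument is correct. The paper gives no proof of its own here; it imports the claim verbatim from \cite{DD24buildings}. Your computation is the standard direct argument behind it: write $D_1(uv)/D_0(uv)=\rho(u,v)/\mathbb{E}_{D_0}[\rho]$, and use partiteness to see that $\rho$ depends only on $\col(u)\cup\col(v)$.

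One clarification on which count matters. In $D_1$ the marginalized third vertex lies \emph{outside} $\mathcal Y^I$, so
\[
\rho(u,v)=1-\frac{a(u,v)}{\binom{k-2r-2}{r+1}},\qquad a(u,v)=|\{c\in I: c\cap(\col(u)\cup\col(v))=\emptyset\}|.
\]
Since $\col(v)\in I$ and $\col(u)$ meets at most $r+1$ colors of $I$, we have $a(u,v)\in[(r+1)^2-r-2,\,(r+1)^2-1]$. Hence $\rho_{\max}/\rho_{\min}\le 2$ as soon as $\binom{k-2r-2}{r+1}\ge (r+1)^2+r$. This holds whenever $I$ exists at all, i.e.\ $k\ge (r+1)^3$.

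So for this claim you need neither the $\Theta((r+1)^2)$-sized count nor the restriction to $r$ larger than a small constant. Both matter only for the companion smoothness claim about $D_2$, where the third vertex lies in $\mathcal Y^I$.
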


By \Cref{claim:p-smoothness} it is enough to show that
\[
\pr{uv\sim P_1}{f(uv) \ne \delta g(uv)} = \pr{uvw \sim T_{nnI} ,u,v\not \in \mathcal Y^J(0)} {f(uv) \ne  \delta g(uv)} \leq O( 1/ \beta p^2 )\wt(\delta f).
\]
Now, similar to before we have:
\begin{claim*}
If $f(uv)=\delta g(uv)$, then either $\delta g(vw) \ne f(vw)$ or $\delta g(uv) \ne f(uw)$ or $\delta f(uvw)\ne Id$.
\end{claim*}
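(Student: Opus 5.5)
The plan is to argue by contrapositive and reduce the claim to one group identity. Assume the three alternatives all fail: $\delta g(vw)=f(vw)$, $\delta g(uv)=f(uw)$, and $\delta f(uvw)=Id$. I will then ask whether these force $f(uv)\neq\delta g(uv)$, since that is exactly what the contrapositive of the claim as printed requires.

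The first step is to solve for $f(uv)$. From $\delta f(uvw)=f(uv)f(vw)f(wu)=Id$ we get $f(uv)=\bigl(f(vw)f(wu)\bigr)^{-1}$. Substitute $f(vw)=g(v)g(w)^{-1}$. For the other factor, anti-symmetry and the second assumption give $f(wu)=f(uw)^{-1}=\bigl(g(u)g(v)^{-1}\bigr)^{-1}=g(v)g(u)^{-1}$. This yields
\[
f(uv)=\bigl(g(v)g(w)^{-1}g(v)g(u)^{-1}\bigr)^{-1}=g(u)g(v)^{-1}g(w)g(v)^{-1}.
\]
Comparing with $\delta g(uv)=g(u)g(v)^{-1}$ shows that $f(uv)=\delta g(uv)$ holds exactly when $g(w)g(v)^{-1}=Id$, i.e.\ when $g(v)=g(w)$.

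So under the negated alternatives, the claim's conclusion $f(uv)\neq\delta g(uv)$ is equivalent to $g(v)\neq g(w)$. The second step is therefore to check whether $g(v)\neq g(w)$ is forced, and it is not. Take any triangle $uvw$, any $g$ with $g(v)=g(w)$, and set $f=\delta g$. Then $f(uv)=\delta g(uv)$, $f(vw)=\delta g(vw)$, and $\delta f=Id$. Moreover $f(uw)=\delta g(uw)=g(u)g(w)^{-1}=g(u)g(v)^{-1}=\delta g(uv)$. Hence the hypothesis holds and all three alternatives fail.

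The main obstacle is thus not a technical step: the statement as printed fails for general $g$. It holds precisely on triangles where $g(v)\neq g(w)$, and the computation above proves it in that case. The version the surrounding argument actually needs has hypothesis $f(uv)\neq\delta g(uv)$ and middle alternative $\delta g(uw)\neq f(uw)$. That version follows from the same computation, with $f(uw)=\delta g(uw)$ substituted in place of $\delta g(uv)$, after which the product telescopes to $f(uv)=\delta g(uv)$. I would record this discrepancy explicitly, rather than assert the literal claim.
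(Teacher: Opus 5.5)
Your diagnosis and argument are correct: the printed claim is false (your $f=\delta g$ with $g(v)=g(w)$ example works), and the version actually needed when bounding $\pr{uv\sim P_1}{f(uv)\neq\delta g(uv)}$ has hypothesis $f(uv)\neq\delta g(uv)$ and middle alternative $\delta g(uw)\neq f(uw)$. Your proof of that version is the same as the paper's: its computation uses $f(uw)=\delta g(uw)$, despite the typo in its own supposition, and shows that under $f(vw)=\delta g(vw)$ one has $\delta f(uvw)=Id$ if and only if $f(uv)=f(uw)f(wv)=\delta g(uw)\,\delta g(wv)=\delta g(uv)$, which is exactly your telescoping step.
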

\begin{proof}
Suppose $\delta g(vw) = f(vw)$ and $\delta g(uv) = f(uw)$.
Then we get
\begin{align*}
\delta f(uvw)&=Id\\
\Leftrightarrow f(uv)f(vw)f(wu)&=Id\\
\Leftrightarrow f(uv) &= f(uw)f(wv)\\
&=\delta g(uw) \delta g(wv)\\
&= \delta g(u)g(v)^{-1}=\delta g(uv).
\end{align*}
\end{proof}

From the above claims we get
\begin{align*}
\pr{uv\sim P_1}{f(uv) \ne \delta g(uv)} &\leq \pr{uvw \sim T_{nnJ}}{\delta f(uvw) \ne Id} + 2 \pr{uw \sim D_1}{f(uw)\ne \delta g(uw)}\\
&\leq \pr{uvw \sim T_{nnJ}}{\delta f(uvw) \ne Id} + 4 \pr{uw \sim D_0}{f(uw)\ne \delta g(uw)}&\text{(\Cref{claim:d-smoothness-two})}\\
&\leq \wt_{T_{nnJ}}(\delta f) + 4\dist_{E_1}(f, \delta g)=O\paren{\frac{1}{\beta p^2}}\wt(\delta f).
\end{align*}
Thus, we have $\dist_{E_i} (f, \delta g)=O\paren{\frac{1}{\beta p^2}}\wt(\delta f)$ for each $i=0,1,2$ which completes the proof.
\end{proof}

\subsection{Coboundary Expansion of Spread Projections}
\label{app:buildings-expansion}

Recall our goal is to prove the following claim lower bounding the co-boundary expansion of spread 5-partite projections of deep links of the projected flags complex.

\buildingsexpansion*

As discussed, the proof closely follows \cite[Lemma 8.8]{DD24buildings}, which shows the same statement with a stronger (ambient-dimension dependent) notion of spread colors. As in \cite{DD24buildings}, we will rely directly on the following lemma lower bounding the coboundary expansion of slightly separated 4-partite projections of the full flags complex $\buildings=\buildings^{2d}_q$.

\begin{lemma}[{\cite[Lemma 8.9]{DD24buildings}}]
\label{lem:4projection-bound}
Let $\Gamma$ be any group.
Let $I' = \{i_0 < i_1 < i_2 < i_3\}$ such that $i_3 > 21$ and such that $i_j - i_{j-1} \geqslant 3$. Then
\[ h^1(\buildings^{I'}, \Gamma) \geqslant \exp\left(-O\left(\log\frac{i_3}{i_1 - i_0} \cdot \log\frac{i_3}{i_1}\right)\right). \]
\end{lemma}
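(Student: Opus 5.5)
The plan is to use the cones method. The group $GL_{2d}(\mathbb F_q)$ acts transitively on full flags, so $\mathrm{Aut}(\buildings^{I'})$ is transitive on the triangles (indeed on the top faces) of the $4$-partite projection. By \Cref{lem:cone-argument} it then suffices to build a cone of diameter
\[
R \le \exp\left(O\left(\log\tfrac{i_3}{i_1-i_0}\cdot\log\tfrac{i_3}{i_1}\right)\right).
\]

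\textbf{Base point and cone paths.} I fix a base vertex $v_0$ of color $i_0$. For each vertex $u$ I take $P_u$ to be a shortest path to $u$ inside the bipartite graph $\buildings^{\{i_0,i_1\}}$, followed by at most one extra edge to reach colors $i_2,i_3$. The first step is a diameter estimate of the form \emph{$\diam(\buildings^{\{i,j\}})=O(j/(j-i))$}, matching \Cref{item:diameter-bound}. To prove it, note that two $i_0$-spaces $A,A'$ can be joined through an $i_1$-space $B\supseteq A$ with $B\cap A'$ large. Each step of the path increases $\dim(\text{current }i_0\text{-space}\cap A')$ by $i_1-i_0$, so about $i_0/(i_1-i_0)$ steps suffice. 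The gap hypothesis $i_j-i_{j-1}\ge 3$ guarantees that every step is nondegenerate. Consequently every loop $P_u\circ(u,w)\circ P_w^{-1}$ has length $\ell_0=O(i_3/(i_1-i_0))$.

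\textbf{Filling a loop.} I would contract such a loop recursively. The base case is a loop of length at most about $i_3/i_1$ whose vertices all have dimension at most $i_1$. The span of its vertices then has dimension at most $i_3$, so some $x$ of color $i_3$ contains them all. Coning the loop from $x$ costs one triangle per edge, plus backtracks. For a longer loop of length $\ell$, I split it into about $i_3/i_1$ arcs of length $\ell/(i_3/i_1)$. Consecutive cut points are joined to a common apex by short chords, again using an $i_2$- or $i_3$-space containing the relevant sums. This reduces the loop to $O(i_3/i_1)$ smaller loops, each of length roughly $\ell\cdot i_1/i_3$ plus chord lengths. Writing $a=i_3/i_1$, the recursion is $T(\ell)\le O(a)\,T(\ell/a')+O(\ell)$ with depth $O(\log \ell_0)=O(\log\frac{i_3}{i_1-i_0})$. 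This gives $T(\ell_0)\le a^{O(\log \ell_0)}$, which is exactly the claimed bound. The extra edges to colors $i_2,i_3$ in $P_u$ are absorbed by a constant number of additional triangle moves, as in the case reductions of \Cref{lem:three-projection-coboundary}.

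\textbf{Main obstacle.} The hard step is the chord construction. Two cut points on the loop may be far apart in the $\{i_0,i_1\}$ graph, yet I need a chord between them of length $O(1)$, or at least of length much smaller than the arc it replaces, so that the recursion actually shrinks. My first attempt is to route each chord through an $i_3$-space containing the sum of the two cut vertices together with a few intermediate vertices on the arc. This works once the arc length is at most $i_3/i_1$; the precise dimension bookkeeping still has to be checked. The same bookkeeping must show that each arc together with its chords forms a loop of the smaller length, and that these sub-loops tile the original loop under $\sim_1$ moves.
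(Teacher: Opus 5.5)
\textbf{There is a genuine gap in the reduction step.} The paper does not prove this lemma; it imports it from \cite[Lemma 8.9]{DD24buildings}. So there is no in-paper argument to compare against, and your sketch has to stand on its own.

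Your set-up is fine: $GL$ acts transitively on partial flags, \Cref{lem:cone-argument} applies with $k=3$, and the cone paths have length $O(i_1/(i_1-i_0))$ in $\buildings^{\{i_0,i_1\}}$. The problem is that your only contraction mechanism is ``the whole loop lies in one $i_3$-space, cone from it.'' The step that reduces a general cone loop to that situation is missing, and that step is the entire content of the lemma.

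\textbf{Regime where $i_3$ is close to $i_1$.} The hypotheses allow, for instance, $I'=\{i_1-g,\,i_1,\,i_1+3,\,i_1+6\}$ with $g\ge 8$ and $i_1>2g+6$, in a large ambient space. Three things then go wrong.
\begin{itemize}
\item A $4$-cycle $v_0u_0v_1u_1$ with $u_0\cap u_1=v_0+v_1$ spans $2i_1-i_0-1=i_3+g-7>i_3$ dimensions, so no $i_3$-space contains it.
\item Two $i_0$-vertices meeting trivially span $2i_0>i_3$ dimensions, so they have no common neighbour. More generally, each step through a vertex of dimension at most $i_3$ gains at most $i_3-i_0=g+6$ dimensions of the target, against $g$ per step along the arc. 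So chords between cut points are not much shorter than the arcs they replace.
\item Splitting into ``about $i_3/i_1<2$ arcs'' splits nothing.
\end{itemize}
In this regime your recursion has neither a base case nor a reduction step.

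\textbf{Regime $i_3\ge 2i_1$.} Here short chords through $i_3$-vertices do exist, but the recursion still does not close.
\begin{itemize}
\item Every sub-loop it produces passes through chord vertices of colour $i_2$ or $i_3$, so your base case (all vertices of dimension at most $i_1$) does not apply. A sub-loop through an $i_3$-vertex $x$ lies in an $i_3$-space only if every vertex of it is a subspace of $x$, and the interior of the arc is not.
\item At the bottom you are left with loops such as $z\,x\,c\,c'\,x'\,z$, with $z$ your common apex, passing through two distinct $i_3$-vertices. Filling these means moving $x$ to $x'$ inside the link of $z$, a new contraction problem you never address.
\item Your stated fallback is circular. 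A chord built from an $i_3$-space containing the arc exists only when that arc could already be coned directly. So it handles loops of length at most about $(i_3/i_1)^2$, not cone loops of length up to order $i_0/(i_1-i_0)$.
\end{itemize}

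\textbf{What is missing.} You need a genuine reduction for loops that are \emph{not} contained in any single $i_3$-space. One possibility is a step that, using $i_2$- and $i_3$-vertices as pivots, enlarges the gap $i_1-i_0$ or halves loop length at a cost polynomial in the relevant ratio, iterated $O(\log\frac{i_3}{i_1-i_0})$ times. The shape of the exponent in the bound has to come from such a step.

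\textbf{Smaller point.} The reduction of edges to colours $i_2,i_3$ in \Cref{lem:three-projection-coboundary} used $2i_0\le i_1$, which you do not have here. Instead you must join $u'$ to $u_1$ inside $u$ by a path of length $O(i_1/(i_1-i_0))$ and cone that path from $u$. This is harmless for the final bound, but it is not a constant number of moves.
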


\begin{proof}[Proof of \Cref{claim:buildings-expansion}]
Let us denote by $w = \cup s \sqcup {w'}$. Fix some $I$, and let 
\[ I' = \{i_0 < i_1 < i_2 < i_3\} \]
be any four indexes inside $I$.
By \Cref{thm:spectral-projected} and \Cref{lem:color-restriction}, if we show that $\buildings^{I'}_w$ has coboundary expansion at least $\beta$, then it holds that $h^1(\buildings^{I'}_w, \Gamma) \geqslant \Omega(\beta)$. 

The coboundary expansion $h^1(\buildings^{I'}_w, \Gamma)$ depends on the choice of $I'$ and $w$. We split into two cases based on whether or not elements in $I'$ all lie in the same $\col(w)$-bin.

\paragraph{Different Bins}

We begin by considering the case where the indices of $I'$ are in more than one $\col(w)$-bin. We will rely on the following claim of \cite{DD24buildings}.
\begin{claim}[{\cite[Claim 8.7.1]{DD24buildings}}]
\label{claim:coboundary-diameter-bound}
Let $I' = \{i_0 < i_1 < i_2 < i_3\}$, let $w \in \buildings$ be such that $\col(w) \cap I = \emptyset$ and let $v \in w$ be $i_0 \leq \dim(v) \leq i_3$.
\begin{enumerate}
\item If $i_0 \leq \dim(v) \leqslant i_1$, then $h^1(\buildings^{I'}_w, \Gamma) \geq \Omega\paren{\frac{1}{\diam(\buildings^{\{i_1,i_2,i_3\}}_w)}}$,
\item If $i_1 < \dim(v) < i_2$, then $h^1(\buildings^{I'}_v, \Gamma) \geq \Omega\left(\frac{1}{\min\set{\diam(\buildings^{\set{i_0,i_1}}_v),\diam(\buildings^{\set{i_2,i_3}}_v)}}\right)$,
\item If $i_2 \leq \dim(v) \leq i_3$, then $h^1(\buildings^{I'}_v, \Gamma) \geq \Omega\paren{\frac{1}{\diam(\buildings^{\{i_0,i_1,i_2\}}_v)}}$.
\end{enumerate}
\end{claim}

Now, assume, for example, that $i_0$ is separate from $i_1,i_2,i_3$ (the rest of the cases follow from the same argument). In this case, by \Cref{claim:coboundary-diameter-bound} we will have that 
\[
    h^1(\buildings^{I'}_w, \Gamma) \geqslant \Omega\left(\frac{1}{\diam(\buildings^{\set{i_1,i_2,i_3}}_w)}\right).
\] 
If $i_1,i_2,i_3$ are not all in the same bin, then the diameter is constant (since by definition every dimension $i_1$ subspace in the link is contained in every dimension $i_3$ subspace in the link).
Otherwise, they are in the same bin.
Let $[k_0,k_1]$ be the $\col(w)$-bin that contains $i_1,i_2,i_3$.
Let $\tilde{i}_j = i_j - k_0$. Let's consider explicitly the subspaces in $\buildings^{\{i_1,i_2,i_3\}}_{w}$.
Let $v_0,v_1 \in w$ be the subspaces of dimension $k_0$ and $k_1$, respectively.
Then $\buildings^{\{i_1,i_2,i_3\}}_w$ contains all spaces that contain $v_0$ and are contained in $v_1$ of dimensions $I'$.
It is easy to see that $\buildings^{I'}_{w}$ is isomorphic to a 3-partite colored complex whose ambient space is $\mathbb{F}_q^{k_1-k_0}$, with parts corresponding to dimensions $\tilde{i}_1,\tilde{i}_2,\tilde{i}_3$. 
\begin{claim}[{\cite[Claim 8.7.2]{DD24buildings}}]
\label{claim:projected-flags-diameter-bound}
Let $I' \subseteq [n]$, $|I'| \geq 2$.
Then $\diam(\buildings^I_w) = O(\frac{\max I'}{\max I' - \min I'})$.
\end{claim}
By \Cref{claim:projected-flags-diameter-bound} and the above characterization of $\buildings_w^{I'}$ we therefore have
\[
\diam(\buildings^{\{i_1,i_2,i_3\}}_{w}) = \diam(\buildings^{k_1-k_0,\{\tilde{i}_1,\tilde{i}_2,\tilde{i}_3\}}) \leq O\left(\frac{\tilde{i}_3}{\tilde{i}_3 - \tilde{i}_1}\right).
\] 
We have that $\tilde{i}_3 \leqslant k_1 - k_0 \leqslant 200 k \frac{\log (r+1)}{(r+1)m}$ by well-spreadness.
To state this explicitly, the fact that $s \in \faces(m-6)$ implies that any $\col(\cup s)$-bin has length $\leq O(k \frac{\log (r+1)}{(r+1)m})$ by \Cref{item:small-bin} in \Cref{def:well-spread}.
The $\col(w)$-bins cannot be longer since $\cup s \subseteq w$.
Moreover, by \Cref{item:global-spread} in \Cref{def:well-spread}, the distance between every two colors in $J$ is at least $\Omega(\frac{k}{(m(r+1))^3})$, so in particular $\tilde i_3 - \tilde i_1= i_3 - i_1 \geq \Omega(\frac{k}{(m(r+1))^3})$.
Thus
\[
    \diam(\buildings^{\set{i_1,i_2,i_3}}_{w}) \leq O(\poly(r))
\]
and in turn
\[
    h^1(\buildings^{I'}_{w}) = \Omega(1/\mathrm{poly}(r)) = \exp(-O(\log r)).
\]

\paragraph{Same Bin} Next, consider the case where all indexes of $I'$ are in the same bin.
Similar to before, let us denote by $[k_0,k_1]$ be the $\col(w)$-bin that contains $I'$.
As before let $\tilde{I} = \{\tilde{i}_j\}_{j=0}^3$ where $\tilde{i}_j = i_j - k_0$ and as before $\buildings^{I'}_w$ is isomorphic to the four-partite complex $\buildings^{\tilde{I}}$ whose ambient space is $\mathbb{F}_q^{k_1-k_0}$, with parts corresponding to dimensions $\tilde{I}$.

The proof of this case follows from \Cref{lem:4projection-bound}, which shows that 
\[
    h^1(\buildings^{I'}_w, \Gamma) = h^1(\buildings^{k_1-k_0,\tilde{I}}, \Gamma) \geq \exp\paren{-O\paren{\log\paren{\frac{\tilde{i}_3}{\tilde{i}_1 - \tilde{i}_0}} \log\paren{\frac{\tilde{i}_3}{\tilde{i}_0}}}},
\] 
along with similar bounds on $\tilde{i_3}$ and $\tilde{i}_1-\tilde{i}_0$ by well-spreadness. In particular, as in the prior case, by \Cref{item:small-bin} in \Cref{def:well-spread}, every bin has size $O(k\frac{\log (r+1)}{(r+1)m})$ so $\tilde{i}_3 \leqslant O(k \frac{\log (r+1)}{(r+1)m})$, while by \Cref{item:global-spread} implies the distance between every two colors in $J$ (and therefore in $\tilde{I}$) is at least $\Omega(\frac{k}{(m(r+1))^3})$. Thus we have both $\tilde{i}_0 = i_0 - k_0 \geq \Omega(\frac{k}{(m(r+1))^3}$) and similarly $\tilde{i}_1 - \tilde{i}_0 = i_1 - i_0 \geq \Omega(\frac{k}{(m(r+1))^3})$, so
\[
    h^1(\buildings^{I'}_w, \Gamma) \geq \exp(-O(\log^2(r)))
\]
as desired.
\end{proof}
\newcommand{\zprod}{\textrm{\textcircled{z}}}
\newcommand{\rprod}{\textrm{\textcircled{r}}}
\newcommand{\inbound}{\mathtt{IN}}

\section{Generalized Construction of a PCP on an HDX}
\label{app:pcp}
In this section, we outline for completeness the following abstraction of \cite{BMVY25}'s PCP construction (sans alphabet reduction). We emphasize this section contains no new technical material over \cite{BMVY25}, and follows immediately from examining their proof. We give the sketch for completeness, since a result of this general form is not directly stated in \cite{BMVY25}.

\generalpcp*

\begin{remark*}
\cite{BMVY25} states \Cref{thm:general-pcp} for Generalized Label Cover instead of Label Cover.
We note that the result still holds for the vanilla label cover.
Consider an arbitrary set of maps $\set{f_u:\Sigma\to \Sigma(u)}_{u\in V}$ such that $f_u(a)=a$ for all $a\in \Sigma(u)$.
We can form a label cover instance $\Phi$ with alphabet $\Sigma$ from a generalized label cover instance $\Phi_0$ on $\Sigma, \set{\Sigma(u)}$ (on the same graph) with the same value using these maps by having the constraints as follows.
A constraint on $(u,v)$ as per $\Phi$ will be satisfied by $(a,b)$ whenever $(f_u(a),f_v(b))$ satisfies the constraint on $(u,v)$ as per $\Phi_0$.
\end{remark*}

The full proof follows immediately from a sequence of main claims presented in order below.
We omit exact details for the sake of avoiding redundancy and refer the reader back to \cite{BMVY25}. While we largely keep to the original notations, definitions, etc.,\ from \cite{BMVY25}, we will introduce a few additional convenient definitions for clarity. We stress these definitions serve only to streamline the statements and arguments, and do not otherwise change the material in any substantive way. 

Overall, there are two main components to the proof: 
\begin{inparaenum}[(a)]
    \item showing a `fault tolerant, link-to-link routing protocol' on any nice enough HDX, and
    \item showing that one can embed a PCP onto the underlying graph of any complex with such a protocol.
\end{inparaenum}
At the end, there is also a standard gap amplification step accomplished through the $1\%$-regime agreement test on the complex.
\subsection{Routing on an HDX}
In this section, we outline the link-to-link routing in \cite{BMVY25}, abstracting what requirements are needed in their argument. Before proceeding it will be helpful to setup some notation.
\paragraph{Notation.}
For $\nu \in [0,1]$, let $\maj_\nu (\sigma_1, \dots, \sigma_k)$ denote the \emph{threshold majority function} on $(\sigma_1, \dots , \sigma_k)$. 
If for some $\sigma$, we have $\sigma_i = \sigma$ for at least $\nu$ fraction
of indices $i$, then we take $\maj_\eta (\sigma_1, \dots , \sigma_k)=\sigma$; otherwise $\maj_\nu (\sigma_1, \dots , \sigma_k)=\bot$ (signifying `invalid').
We take the \emph{threshold} $\nu$ to be at least $0.5$ and it is typically close to 1 (e.g., $\nu=0.99$).


The routing problem of interest to us is the \emph{almost everywhere reliable transmission problem} from \cite{DPPU88},
where the goal is to design a sparse graph $G = (V, E)$ that allows transference of messages in a fault tolerant way.
To elaborate, at the start of the routing, each vertex $v$ has a message from $\Sigma$
and destination given by $\pi(v)$ where $\pi : V \to V$ is an arbitrary permutation. The goal is to design a $T$-round protocol to send the message in $v$ to $\pi(v)$. In each round, the protocol works by sending symbols in $\Sigma$ from each of the nodes to a subset of its neighbors in $G$. At the end of the protocol, we'd like to ensure most $v$ indeed transmit their message to $\pi(v)$, even if a small fraction of the vertices or edges behave maliciously.
The parameters of interest in this problem are as follows.
\begin{itemize}
    \item \textbf{Work Complexity:} the work complexity of a protocol is the maximum computational
    complexity any node in the graph $G$ incurs throughout the protocol.
    \item \textbf{Degree:} this is the (maximum) degree of $G$. We would like the degree to be as small as possible.
    \item \textbf{Tolerance:} a protocol is considered $(\eps(n), \nu(n))$-edge tolerant if, when an adversary
    corrupts up to $\eps(n)$-fraction of edges of the graph (allowing them to deviate arbitrarily from the protocol),
    at most $\nu(n)$-fraction of the transmissions from $u \to \pi(u)$ are disrupted.
\end{itemize}

In the following definition we want to define the notion of $T$ rounds of communication.
Any communication will be based on a graph $G$ and will use an alphabet $\Sigma_0\supseteq \set{\bot}$
with $\bot$ understood to mean `invalid'.
The idea is: each vertex starts with some label from $\Sigma_0$.
At each round of the communication each vertex will look at the messages received from its neighbors and its label
and decide the messages to be sent to each of its neighbors along with its label according to some function.
The messages received by $u$ from $v$ in the next round will be a function of the message sent by $v$ and whether the channel between $u$ and $v$ worked correctly or not (quantified by a ``channel response'').
At round 0, the received messages will be assumed to be $\bot$.

\begin{definition}[Communication Protocol]
\label{def:communication}
Let $G$ be a graph.
A \emph{communication protocol} $\mathcal P$ on $G$ using an alphabet $\Sigma_0\supseteq \set{\bot}$ is
described by a set of routing functions
\begin{align*}
\set{r_v:\Sigma_0^{N(v)\cup \set{v}}\to \Sigma_0^{N(v)\cup \set{v}}}_{v\in G}.
\end{align*}
Let $E_{\text{conn}}=\overrightarrow E \cup  \set{(v,v)|v\in V(G)}$.
Given any set of initial labels on vertices $m_{v}\in \Sigma_0$ and number of rounds $T\in \N$,
for any set of \emph{channel responses} $\set{c_{i,(u,v)}:\Sigma_0\to \Sigma_0}_{i\in [T], (u,v)\in \overrightarrow E}$
we obtain a set of messages $\set{\inbound_{i}:E_{\text{conn}}\to \Sigma_0}_{i\in [T]}$
that obey the following relations
\begin{align*}
\inbound_{0}(v,v)&=m_v && \forall v\in V(G),\\
\inbound_{0}(v,u)&=\bot && \forall v\in V(G), \forall u\in N(v),\\
\inbound_{i}(v,u)&=c_{i,(u,v)}(r_v(\inbound_{i-1}(v, u)))&&\forall v\in V(G), \forall i\in [T],\forall u\in N(v),\\
\inbound_{i}(v, v)&=r_v(\inbound_{i-1}(v, v))&&\forall v\in V(G), \forall i\in [T].
\end{align*}
We will say a vertex $u$ \emph{received} the message $\inbound_i(u, v)$ from the vertex $v$ in the round $i$.
For a fixed $v\in V(G)$ we call $(\inbound_{i}[u,\cdot])_{i\in [T]\cup \set{0}}$ the \emph{transcript} of $\mathcal P$ at $u$.
The \emph{work complexity} of $\mathcal P$ is the total computation that any vertex $v$ performs for computing $r_v$ throughout the protocol,
as measured by the circuit size for the equivalent Boolean functions that the vertex computes.\qed
\end{definition}
\begin{remark*}
\Cref{def:communication} is a memoryless model of a communication protocol, meant to formalize only what is needed for the PCP application in \cite{BMVY25}.
We will depart slightly from the convention in \cite{BMVY25} have labels on top of each vertex at each round for notational convenience. 
\end{remark*}

We now move to formally defining edge-tolerant routing protocols which need to account for possibly corrupted edges.
\begin{definition}[Edge Tolerant Routing]
Let $\mathcal R$ described by $\set{r_v:\Sigma_0^{N(v)\cup \set{v}}\to \Sigma_0^{N(v)\cup \set{v}}}_{v\in G}$ be a communication protocol on a graph $G$ using an alphabet $\Sigma_0$.
We say $\mathcal R$ is a $T$-round $(\eps(n), \nu(n))$-\emph{edge-tolerant routing protocol} if the following holds.
Call an edge $(u, v) \in G$ \emph{uncorrupted} if $v$ receives any message transferred from $u$ across $(u,v)$, i.e., for any round $i\in [T]$ and for any possible messages $m\in\Sigma_0$ sent from $u$ we have $c_{i,(u,v)}(m)=m$; otherwise, we say the edge $(u, v)$ is \emph{corrupted}.
Let $\pi : [n] \to [n]$ be any permutation of $V(G)$ and $\Sigma=\Sigma_0\setminus \set{\bot}$.
Then for all functions $f:V(G)\to \Sigma$, after running the protocol $\mathcal R$ for $T$ rounds with each vertex $v$ being labeled by $f(v)$, we get a final labeling $g: V(G)\to \Sigma_0$ with the guarantee that for any possible subset of corrupted edges of measure at most $\varepsilon(n)$ we have
\[
\pr{u\in V(G)}{f(u)\ne g(\pi(u))}\leq \nu(n).
\]
Finally, we say that the function $g$ was \emph{computed} by this protocol.\qed
\end{definition}

The main result of this subsection (\Cref{lem:link-to-link}) is a slight variation on edge-tolerant routing over simplicial complexes. Instead of starting with an assignemnt over vertices, we will start with an assignment to the vertex \textit{links} of the complex. Then, as in typical edge-tolerant routing, given any permutation on the vertices, the protocol `views' the starting majority value on $v$'s link as it's initial message, and aims to compute an assignment such that on almost all links, the majority value is the permutation of this initial majority assignment.

To formalize this, we will define the notion of link-to-link routing.
For technical reasons, it would have been convenient for the underlying graph to be regular.
Since this isn't the case for our complex, as in \cite{BMVY25} we will move to the `zig-zag product'~\cite{RVW02}\footnote{\cite{BMVY25} use a slight generalization of the zig-zag product.} of the complex's underlying graph with the following family of expanders~\cite{Mar73} (see also \cite{HLW06}). 
\begin{lemma}[\cite{Mar73}]
\label{lem:expander-family}
For all $\lambda > 0$, there exist $r, m_0 \in \N$ and a family of $r$-regular graphs $\mathcal H = \set{H_m}_{m\geq m_0}$
which is polynomial-time constructible,
where for each $m \geq m_0$ that graph $H_m$ has $m$ vertices and $|\lambda|(H_m) \leq \lambda$.
\end{lemma}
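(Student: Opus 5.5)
This statement is the Margulis/Gabber--Galil-type existence result, which the paper cites rather than proves. I would derive it from a standard explicit expander family, using graph powering to reach any $\lambda$ and a regularizing adjustment to hit every size $m$.

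\textbf{Base family.} Start from the Margulis--Gabber--Galil graphs on $\mathbb{Z}_n\times\mathbb{Z}_n$. These are $8$-regular, polynomial-time constructible, and have $|\lambda|\le \lambda_0<1$ for an absolute constant $\lambda_0$ (see \cite{HLW06}). Adding self-loops makes them lazy, which keeps the two-sided bound away from $-1$ at the cost of a constant.

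\textbf{Hitting every size.} These graphs only exist on $n^2$ vertices. To obtain exactly $m$ vertices, take $n=\lceil\sqrt m\rceil$, so that $n^2-m\le 2\sqrt m+1$. Then contract or merge the $n^2-m$ surplus vertices into distinct partners. I would pair each surplus vertex with a vertex at distance at least $2$ and identify the two, so that a surplus vertex and its partner are never adjacent and no self-loop is created by the merge.

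This yields a graph on exactly $m$ vertices with degrees between $8$ and $16$. I then regularize by adding self-loops up to degree $16$. Merging vertices is a quotient operation, and its spectral gap is controlled by that of the original graph: a standard Rayleigh-quotient argument loses only a constant factor, since each fiber has size at most $2$. The added self-loops only shrink the gap by a constant factor.

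\textbf{Reaching arbitrary $\lambda$.} Given a target $\lambda$, take the $t$-th power $H_m^t$ with $t=\lceil \log\lambda/\log\lambda_1\rceil$, where $\lambda_1<1$ is the constant bound on $|\lambda|$ obtained in the previous step. The result is $r=16^t$-regular, with $|\lambda|\le\lambda_1^t\le\lambda$. The parameter $m_0$ only needs to be large enough that the padding step works (for example $\sqrt m\gg 1$). Every step is polynomial time.

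\textbf{Main obstacle.} The delicate part is the size-adjustment step: certifying that the quotient graph keeps a constant two-sided spectral gap. If this is awkward, the fallback is to prove a constant edge expansion of the merged graph and use Cheeger's inequality for a one-sided gap. Laziness (the self-loops) then converts this into a two-sided bound.
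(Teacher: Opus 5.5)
Your argument is correct, and it does more than the paper, which gives no proof and simply cites Margulis (see also \cite{HLW06}). Read literally, that citation does not give the lemma as stated. The Margulis--Gabber--Galil graphs exist only on $n^2$ vertices and come with one fixed bound $\lambda_0<1$. The lemma, and its use for the clouds $H_{\deg(u)}$ in the zig-zag product, needs every size $m\ge m_0$ and every target $\lambda$. Your identification step and your powering step are exactly what closes this gap, with $t$ and $r$ depending only on $\lambda$.

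Two remarks on the details.

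First, the quotient step is easier than you fear, so no Cheeger fallback is needed. Let $\phi$ be the identification map from the $D$-regular $G$ to the padded $D'$-regular quotient $G'$, and let $f'$ be any function on $V(G')$.
\begin{itemize}
\item The Dirichlet form is preserved exactly: $\sum_{uv\in E(G')}(f'(u)-f'(v))^2=\sum_{uv\in E(G)}(f'(\phi u)-f'(\phi v))^2$, since loops contribute zero.
\item Since every fiber is nonempty, $\min_c\sum_{v\in V(G)}(f'(\phi v)-c)^2\ge\min_c\sum_{x\in V(G')}(f'(x)-c)^2$.
\end{itemize}
Hence $1-\lambda(G')\ge\frac{D}{D'}(1-\lambda(G))=\frac12(1-\lambda(G))$. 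Neither the non-adjacency of merged pairs nor the fiber-size bound is needed, except to fix $D'=2D$.

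Second, the two-sided bound requires uniform laziness. Your degree bookkeeping ($8$ to $16$, padded to $16$) is that of the non-lazy base. In that version a merged vertex need not carry any self-loops, so your argument gives no control of the smallest eigenvalue. There are two easy fixes:
\begin{itemize}
\item Make the base $\frac12$-lazy before merging. Then every vertex of the quotient has at least $D'/2$ loops, and all eigenvalues lie in $[0,1]$.
\item Alternatively, replace the final walk $W'$ by $\frac12(I+W')$.
\end{itemize}
With either fix, powering gives $|\lambda|\le\lambda_1^t$ as you claim.
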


\begin{definition}[Zig-Zag Product]
Let $G$ be a graph and $\mathcal H= \set{H_k}_{k\in I}$ be a family of graphs indexed by $I$ such that $|V(H_k)|=k$.
Further assume that $\set{\deg(u)}_{u\in V(G)}\subseteq I$.
The \emph{zig-zag product} between $G$ and $\mathcal H$, denoted $G\zprod \mathcal H$, has the following set of vertices and edges.
\begin{description}
    \item[Vertices:] Replace a vertex $u$ of $G$ with a copy of $H_{\deg(u)}$, that is the graph from $H$ on $\deg (u)$ many vertices (henceforth called a \emph{cloud}).
    For $u \in V (G)$, $c \in V (H_{\deg(u)})$, let $(u, c)$ denote the $c$-th vertex in the cloud of $u$.
    For a vertex $v=(u,c)\in V(Z)$, we will use $v_1$ to denote $u$ and $v_2$ to denote $c$.
    \item[Edges:] $((u, c_1), (v, c_4 ))$ is an edge if there exist $c_2$ and $c_3$ such that $(v, c_4)$ can be reached from $(u, c_1)$ by the following sequence of steps:
    \begin{enumerate}[i.]
        \item take a step in the cloud of $u$ to go to $(u, c_2)$, where an edge $c_1\sim c_2$ must be in $H_{\deg(u)}$;
        \item take a step between the clouds of $u$ and $v$ to go to $(v, c_3)$, where $u$ must be the $c_3$-th neighbor of $v$ and $v$ must be the $c_2$-th neighbor of $u$;
        \item finally, take a step in the cloud of $v$ to reach $(v, c_4)$, where an edge $c_3\sim c_4$ must be in $H_{\deg(u)}$. \qed
    \end{enumerate}
\end{description}
For a vertex $z\in G\zprod \mathcal H$, $z_1$ and $z_2$ denote the components of $z$ from $G$ and $\mathcal H$, respectively.
\end{definition}

The key property of the zig-zag product is that taking the product of an arbitrary expander $G$ with a family of regular expander graphs $\mathcal H$
yields a regular expander graph with structure closely related to $G$. 
\begin{lemma}
If $G$ is a graph on $n$ vertices with $|\lambda|(G) \leq \alpha$,
and $\mathcal H = \set{H_m}_{m\geq m_0}$ is a family of $r$-regular graphs with $|\lambda|(H_m) \leq \beta$ for all $m \geq m_0$,
then $G \zprod \mathcal H$ is a $r^2$-regular graph with $|\lambda|(G \zprod \mathcal H)$ at most $\alpha + \beta + \beta^2$.
\end{lemma}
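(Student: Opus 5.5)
We need to prove the zig-zag lemma: if $|\lambda|(G)\le\alpha$ and each $H_m$ is $r$-regular with $|\lambda|(H_m)\le\beta$, then $G\zprod\mathcal H$ is $r^2$-regular with $|\lambda|\le \alpha+\beta+\beta^2$. Throughout, $G$ carries its natural (degree-proportional) weighting, so the clouds, whose sizes are $\deg(u)$, make the uniform distribution on $V(G\zprod\mathcal H)$ the natural one.

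\emph{Regularity.} I would check it directly from the definition. From $(u,c_1)$ there are $r$ choices of $c_2$. The middle step is then forced: $v$ is the $c_2$-th neighbor of $u$, and $c_3$ is the index of $u$ in $v$'s neighbor list. Finally there are $r$ choices of $c_4$. This gives $r^2$ walks out of each vertex. Reversibility of the middle step makes the resulting operator symmetric, so the graph is $r^2$-regular counted with multiplicity.

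\emph{Factorization.} Write the normalized walk operator as $\mathsf Z=\tilde{\mathsf B}\mathsf P\tilde{\mathsf B}$, acting on $\ell_2$ of the uniform measure on $V(Z)$. Here $\tilde{\mathsf B}$ is the block-diagonal operator applying $\mathsf A_{H_{\deg(u)}}$ inside each cloud, and $\mathsf P$ is the permutation matrix swapping $(u,c_2)\leftrightarrow(v,c_3)$. Both are self-adjoint, and $\mathsf P$ is an involution. Split $\tilde{\mathsf B}=\mathsf J+\mathsf E$, where $\mathsf J$ averages within each cloud and $\mathsf E=\tilde{\mathsf B}-\mathsf J$. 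Since each $H_m$ is regular with $|\lambda|\le\beta$, we get $\|\mathsf E\|\le\beta$, and $\mathsf J\mathsf E=\mathsf E\mathsf J=0$. Expanding,
\[
\mathsf Z=\mathsf J\mathsf P\mathsf J+\mathsf J\mathsf P\mathsf E+\mathsf E\mathsf P\mathsf J+\mathsf E\mathsf P\mathsf E.
\]

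\emph{Bounding the terms.} Take $f\perp\one$ with $\|f\|=1$. I would bound $|\langle \mathsf Z f,f\rangle|$ term by term. Write $f=f^\parallel+f^\perp$ with $f^\parallel=\mathsf Jf$, and set $a=\|f^\parallel\|$, $b=\|f^\perp\|$, so $a^2+b^2=1$.

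For the main term: $f^\parallel$ is the lift of a function $g$ on $V(G)$. The cloud of $u$ has weight $\deg(u)/\sum\deg$, which is exactly the stationary measure, so the lift is an isometry and $g\perp\one$. Moreover $\mathsf J\mathsf P\mathsf J$ acts on lifts as $\mathsf A_G$: averaging $\mathsf P$ over a cloud picks a uniform neighbor. Hence $|\langle \mathsf J\mathsf P\mathsf Jf,f\rangle|\le\alpha a^2$.

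The remaining terms satisfy $|\langle \mathsf J\mathsf P\mathsf E f,f\rangle|\le\beta ab$ (twice, by symmetry) and $|\langle\mathsf E\mathsf P\mathsf Ef,f\rangle|\le\beta^2b^2$, using $\|\mathsf E f\|=\|\mathsf E f^\perp\|\le\beta b$ and $\|\mathsf P\|=1$. Altogether,
\[
|\langle \mathsf Z f,f\rangle|\le \alpha a^2+2\beta ab+\beta^2b^2\le \alpha+\beta+\beta^2,
\]
because $2ab\le1$ and $a^2,b^2\le1$.

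\emph{Main obstacle.} The one step needing care is the identification of $\mathsf J\mathsf P\mathsf J$ with $\mathsf A_G$ under the correct weighting on a non-regular $G$. One must check two things: the cloud-size weights match $G$'s stationary distribution $\pi_0\propto\deg$, and $G$ is treated as unweighted, or with multi-edges encoded through the degree list, so that this identification holds. Everything else is routine norm bookkeeping.
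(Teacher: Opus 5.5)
Your proof is correct. The paper gives no proof of this lemma; it quotes it as the standard zig-zag theorem of \cite{RVW02}. Your argument is exactly that standard proof: write the walk as $\tilde{\mathsf B}\mathsf P\tilde{\mathsf B}$, split $\tilde{\mathsf B}=\mathsf J+\mathsf E$ with $\|\mathsf E\|\le\beta$, and identify $\mathsf J\mathsf P\mathsf J$ with $\mathsf A_G$ on cloud-constant lifts.

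The weighting caveat you raise is a genuine point about how the paper uses the lemma, not a gap in your argument. In the paper's application, $G$ is the underlying graph of $X$ with its complex-induced edge weights. The clouds and $\mathsf P$, however, only see unweighted degrees. So $|\lambda|(G)$ in the lemma must be read for the simple random walk on $G$, with stationary measure $\propto\deg$. That is precisely the setting your proof handles.
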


Finally, we can now formally define the notion of link-to-link routing. 

\begin{definition}[Link-to-Link Routing]
Let $\mathcal H$ be a family of $k$-regular graphs.
We say that a simplicial complex $X$ has an \emph{$\mathcal H$-regularized $(\eps(n), \eta(n), \nu(n))$-link-to-link routing protocol} with round complexity $T(n)$ and work complexity $w(n)$ if the following holds.
Let $G = (X(0), X(1))$.
Let $Z = G \zprod \mathcal H$ and $\mathcal S = \set{(v, u) : v=(v_1,v_2) \in V (Z), u \in X_{v_1}(0)}$.
Let $\pi : V (Z) \to V (Z)$ be any permutation on $V(Z)$.
Let $\Sigma$ be some alphabet.
There is (i) a communication protocol on $G$ using some alphabet $\Sigma_0\supseteq \set{\bot}$ with round complexity $T(n)$ and work complexity $w(n)$,
and (ii) an polynomial-time computable (injective) encoding function $\mathsf{Enc}:\Sigma^\mathcal S \to \Sigma_0^{V(G)}$
such that given a function $A_0 : \mathcal S \to \Sigma$ satisfying 
\[
\pr{v=(v_1,v_2)\sim V(Z)}{\maj_{0.99}(A_0(v, u) | u \sim X_{v_1}(0)) \ne \bot} \geq 1 - \eta,
\]
if we start the protocol with $\mathsf{Enc}(A_0)$ on the vertices of $G$, 
then for all possible corruptions of at most an $\eps$-fraction of edges, at the end of $T$ rounds vertices of $G$ is labeled with $\mathsf{Enc}(A_T)$,
where $A_T : \mathcal S \to \Sigma$ is a function satisfying
\[
\pr{v=(v_1,v_2)\sim V(Z)}{\maj_{0.99}(A_T(\pi(v), w)|w\sim X_{\pi(v)_1}(0))=\maj_{0.99}(A_0(v,u)|u\sim X_{v_1}(0))}\geq 1-\nu(n).
\]
In this case, we say $A_T$ was \emph{computed} by this protocol.\qed
\end{definition}

To construct a link-to-link routing protocol over a complex, we will require that the vertex-links of the complex have the following strong connectivity and symmetry properties.

\begin{definition}[Symmetrically-Connected Complex, Restatement of \Cref{def:sym-bound-main}]
\label{def:sym-bound}
Let $L$ be a $k$-partite complex.
We call $L$ \emph{symmetrically-connected} if
\begin{enumerate}
    \item For any $i\ne j\in [k]$, the diameter of $L^{ij}$ is bounded by $O(\frac{k}{|i-j|})$.
    \item $\symmetric_{L}$ acts transitively on its top faces.
\end{enumerate}
\end{definition}

\begin{remark*}
The definition of link-to-link routing and symmetrically connected complexes were implicit in various statements of \cite{BMVY25}.
\end{remark*}

The following lemma, an abstraction of \cite[Section 4]{BMVY25}, states that any local spectral expander with symmetrically-connected vertex links has a link-to-link routing.
\begin{lemma}
\label{lem:link-to-link}
There exist $\eps_0, \alpha, d_0(\alpha) > 0$, and $r,C\in \N$ such that for any large enough $n \in \N$ the following holds.
Let $X$ be a $d$-partite $\lambda$-product such that
\begin{inparaenum}[(i)]
    \item its underlying graph $G$ has $|\lambda|(G)\leq \alpha/4$;
    \item its vertex links are symmetrically-connected;
    \item its max degree $\Delta(X)$ is bounded by $\Delta=\Delta(n)$;
    \item the dimension $d\geq d_0$ and its size $|X(0)|= n$.
\end{inparaenum}
Then, there exists a family of $r$-regular graphs $\mathcal H$ such that
$X$ has an $\mathcal H$-regularized $(\eps_0, \eta, \eta+\poly(d)r^2\lambda^2\log^C n+ \frac{1}{\log^C n})$-link-to-link routing protocol
that computes the function from $\mathcal S$ to an alphabet $\Sigma$
with round complexity $T = O(\log n)$ and work complexity $\poly(T\Delta \log |\Sigma|)$.
\end{lemma}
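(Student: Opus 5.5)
We will follow \cite[Section 4]{BMVY25} and check that each property of the Chapman--Lubotzky complexes used there is supplied by one of the hypotheses (i)--(iv). The protocol has three layers. The first is a \emph{local} step: inside each vertex link, each vertex replaces its value by a threshold majority over its neighbors in the link. The second is a \emph{global} step: we route along $G$, regularized to $Z=G\zprod\mathcal H$ using the expander family of \Cref{lem:expander-family}. The third is an error-reduction argument, which uses the fact that corruptions confined to an $\eps_0$ fraction of edges hit only a small fraction of links. Regularity matters because the permutation routing is done on the $r^2$-regular expander $Z$. We take $\mathcal H$ with $|\lambda|(H_m)\le \alpha/4$. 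Then by the zig-zag lemma $|\lambda|(Z)\le \alpha/4+\alpha/4+\alpha^2/16<\alpha$, and $\alpha$ is chosen small enough that the classical expander-based permutation routing (e.g.\ via random paths of length $O(\log n)$, derandomized as in \cite{BMVY25}) applies with $T=O(\log n)$ rounds.

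\textbf{Step 1 (link-level transmission).} Each step of a $Z$-path moves a message from the link of $v_1$ to the link of an adjacent vertex $u_1$. The two links overlap on $X_{\{v_1,u_1\}}$, so we simulate one $Z$-step by a constant number of rounds of communication inside these links. Following \cite{BMVY25}, this is done by routing within a link along paths of length $O(k/|i-j|)$ between color classes; this is exactly the diameter bound in \Cref{def:sym-bound}. Transitivity of $\symmetric_L$ on top faces makes the canonical path system uniformly loaded, so every edge of a link carries an $O(1)$-bounded share of the paths. Consequently, if a link contains an $\eta'$ fraction of corrupted edges, only an $O(\eta'\cdot\poly(d))$ fraction of link vertices receive wrong values, and the $0.99$-threshold majority still recovers the correct link value.

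\textbf{Step 2 (concentration of corruptions).} We need most links to contain few corrupted edges. We get this from the $\lambda$-product property together with the sampling lemma \Cref{lem:expander-sampling}, applied to the bipartite graph between $X(1)$ and link vertices; its second eigenvalue is controlled through \Cref{lem:color-swap-expansion}, contributing a factor $\poly(d)\lambda$. This shows that the fraction of links with more than a $1/\poly(d)$ fraction of corrupted edges is at most $\eps_0\poly(d)\lambda^2$. Along a path of length $T=O(\log n)$ in $Z$, with its $r^2$ branching, a union bound gives an additive loss of $\poly(d)r^2\lambda^2\log^C n$. The remaining $1/\log^C n$ term comes from the failure probability of the routing scheme, after fixing the path system. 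The initial $\eta$ fraction of links with undefined majority is simply carried through as an additive $\eta$.

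\textbf{Step 3 (accounting).} The encoding $\mathsf{Enc}$ stores at each vertex of $G$ its values $A_0(v,u)$ for all $v$ in its cloud and $u$ in the corresponding link, so $\log|\Sigma_0|=\poly(\Delta)\log|\Sigma|$. Each round has each vertex perform majorities over $\le\Delta$ inputs, which gives work $\poly(T\Delta\log|\Sigma|)$. The main obstacle is Step 1: we must verify that \cite{BMVY25}'s within-link routing uses only the diameter and transitivity properties of \Cref{def:sym-bound}, and not any finer structure of the Chapman--Lubotzky links. If it does use more, we will rederive the needed load bound directly from the transitive action, by averaging a single canonical path over the group.
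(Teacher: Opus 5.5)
Your skeleton matches the paper's: pebble routing on $Z$, link-to-link hand-offs, and sampling via swap walks. The gap is in the error accounting inside links, which is where the content of the lemma lies.

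\textbf{The bad-link threshold is incompatible with a universal $\eps_0$.} In Step 1 you route between color classes $i,j$ along paths of length $O(k/|i-j|)$. For nearby classes this is $\Theta(k)=\Theta(d)$, so you get that an $\eta'$ fraction of corrupted link edges spoils an $O(\eta'\poly(d))$ fraction of link vertices. Step 2 therefore needs a bad-link threshold below $1/\poly(d)$.

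But $\eps_0$ is a universal constant fixed before $d$, and the adversary may spread a full $\eps_0$ fraction of corruptions evenly. Sampling $u\sim X(0)$ and then an edge of $X_u$ is the same as sampling an edge of $X$, so a typical link then carries about an $\eps_0\gg 1/\poly(d)$ fraction of corrupted edges. \Cref{lem:expander-sampling} only bounds links exceeding this mean by a margin. Your bound $\eps_0\poly(d)\lambda^2$ is really the bound for threshold $\eps_0+1/\poly(d)$, at which Step 1 gives nothing. For large $d$, almost every link is bad.

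\textbf{The missing idea} is in \Cref{lem:paths} and \Cref{lem:bounded-corruption}. Route only between classes with $|i-j|\ge k\eps$ and declare the remaining $O(\eps)$ fraction of internal paths invalid. Valid paths then have length $O(1/\eps)$, independent of $d$, with controlled congestion. A probabilistic choice then ensures that any $\eps$ fraction of corrupted edges in a link spoils only $O(\eps^{1/8})$ of its internal paths. With this, a link is called bad at threshold $\sqrt\eps$. \Cref{lem:interlink-vertex} bounds the measure of bad links by $\poly(d)\lambda^2$, and \Cref{lem:intralink} shows that in a good link few vertices have many corrupted outgoing paths.

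Two further steps are also missing.

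First, each hand-off is relayed through the edge link $X_{u_1u_2}$, which is a vanishing fraction of $X_{u_1}(0)$. So ``most link vertices receive correct values'' says nothing about the vertices that actually relay the message. You need a separate notion of bad edge link (too large a fraction of $\mathcal D_{u_1}$ or $\mathcal D_{u_2}$ inside $X_{u_1u_2}$) and the bound of \Cref{lem:interlink-edge}.

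Second, the $\log^C n$ term does not come from a union bound along a single path of length $O(\log n)$. The paths are fixed from $\pi$ and the corruptions are worst-case. It comes from a double count using the congestion guarantee of \Cref{thm:ext-paths}: every vertex and edge of $Z$ lies on at most $\log^{C_1}n$ paths. Hence the fraction of paths meeting a bad vertex link or bad edge link is at most $\log^{C_1}n$ times the measure of bad ones, with the $r^2$ coming from $|E(Z)|/|V(Z)|=r^2/2$. The same congestion bound is what sizes $\Sigma_0$ at $|\Sigma|^{O(\log^{C+1}n)}$, which your Step 3 omits.

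Two smaller corrections to the protocol description. The encoding should place $A_0(v,u)$ at the vertex $u$ itself. Each update should be a majority over messages received through internal paths from all of the previous link, not a local majority over neighbours.
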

We sketch \cite{BMVY25}'s proof of this lemma below. 
Take $\mathcal H$ to be the family of $r$-regular expander graphs from \Cref{lem:expander-family} with $|\lambda|(H_m) \leq \alpha/2$, where $\alpha$ is some universal constant (as in \Cref{thm:ext-paths}).

There are three main steps in constructing an ($\mathcal H$-regularized) link-to-link protocol on $X$:
\begin{inparaenum}[(i)]
    \item finding paths across links,
    \item finding paths inside links,
    \item and combining them to get a communication protocol on $G=(X(0),X(1))$.
\end{inparaenum}
Towards completing the first step, we show the existence of a `relaxed pebble routing protocol' on $Z=G\zprod \mathcal H$ (see \cite[Section 3.1]{BMVY25}).


\begin{definition}[Relaxed Pebble Routing]
We say that a graph $G$ has an $T$-round $(\Delta,\nu)$-relaxed pebble-routing protocol if the following holds.
For all permutations $\pi : V (G) \to V (G)$ there is a $T$-round communication protocol on $G$
such that in each round every vertex receives at most $\Delta$ message symbols in $\Sigma$,
and then sends these messages forward to at most $\Delta$ of its neighbors.
If the protocol starts with $f : V (G) \to \Sigma$, then at the $T$-th round we
have the function $g : V (G) \to \Sigma$ on the vertices satisfying $g(\pi(u)) = f (u)$ for at least $1-\nu$ fraction of $u$.
\end{definition}
We now state the relaxed pebble-routing result due to \cite{Upf94,BMVY25}.
\begin{theorem}[\cite{Upf94,BMVY25}]
\label{thm:ext-paths}
There exists a universal constant $\alpha > 0$ such that the following holds.
Let $G = (V, E)$ be a regular expander graph with $|\lambda|(G) \leq \alpha$.
Let $c \geq 0$ be a fixed constant and $\pi:V\to V$ be a permutation.
Then there is a $\poly(|E|)$-time algorithm to construct a protocol with $O(\log n)$-length paths $\mathcal P$ from $u$ to 
$\pi(u)$ for all but $O(\log^{-c} (n))$-fraction of $u$.
Furthermore, every vertex in $V$ is used in at most $t = O(\log^{c+1} n)$ paths in $\mathcal P$.
In other words, there is a $\poly(|E|)$-time algorithm to construct a $O(\log n)$-round $(O(\log^{c+1} n),O(\log^{-c} (n)))$-relaxed pebble-routing protocol on $G$. 
\end{theorem}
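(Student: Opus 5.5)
The plan is to use random walks conditioned on their endpoints, relying only on the mixing of the regular expander $G$. Let $D$ be the degree of $G$ and $\mathsf A$ its normalized adjacency operator, which is doubly stochastic because $G$ is regular. Write $P_i(x,y)=\mathsf A^i(x,y)$ for the probability that an $i$-step random walk from $x$ ends at $y$. Fix the walk length $L=C_0\log n$, with $C_0$ a large enough constant (depending on $\alpha$) so that $\alpha^{L/2}\leq n^{-3}$.

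The first step is to show that conditioned paths exist. By the spectral bound $|\lambda|(G)\leq\alpha$, every $i\geq L/2$ and all $x,y$ satisfy
\[
\left|P_i(x,y)-\tfrac1n\right|\leq\alpha^{i}\leq n^{-3}.
\]
In particular $P_L(u,\pi(u))\geq \tfrac{1}{2n}>0$. For each $u$ we then sample $p_u$, a uniformly random length-$L$ walk from $u$ conditioned on ending at $\pi(u)$. These samples are taken independently across $u$. Each sample can be drawn in $\poly(|E|)$ time by the standard dynamic program that counts walks, $N_i(x,y)=(D^i\mathsf A^i)(x,y)$, and extends the walk one step at a time with the correct conditional probabilities. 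Each $p_u$ has length $O(\log n)$ as required.

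The second step, which I expect to be the main obstacle, is bounding the congestion. For a vertex $v$ and a step $i$, the probability that $p_u$ sits at $v$ at time $i$ is
\[
\frac{P_i(u,v)\,P_{L-i}(v,\pi(u))}{P_L(u,\pi(u))}\leq 2n\,P_i(u,v)\,P_{L-i}(v,\pi(u)).
\]
The bound splits according to which half of the walk step $i$ lies in.
\begin{itemize}
\item If $i\leq L/2$, then $L-i\geq L/2$, so $P_{L-i}(v,\pi(u))\leq 2/n$. Double stochasticity gives $\sum_u P_i(u,v)=1$, so the expected number of paths at $v$ at time $i$ is at most $4$.
\item If $i>L/2$, the argument is symmetric: use $P_i(u,v)\leq 2/n$ and $\sum_u P_{L-i}(v,\pi(u))=1$, where the latter holds because $\pi$ is a bijection.
\end{itemize}
So at every round the expected load on every vertex is $O(1)$, and summed over all rounds the expected total load on a vertex is $O(\log n)$. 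For a fixed $(v,i)$ the load is a sum of independent indicators over $u$, since the paths are independent.

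The third step turns expected load into a worst-case bound. By Chernoff, the per-round load at any $(v,i)$ exceeds $t=\Theta(\log^{c+1}n)$ (in fact already $\Theta(\log n)$ suffices) with probability $n^{-\omega(1)}$, and the same holds for the total load on a vertex. A union bound over the $nL$ pairs $(v,i)$ then shows that with high probability no vertex receives or forwards more than $t$ messages in any round, and no vertex lies on more than $t$ paths. This already gives the claimed $O(\log n)$-round $(O(\log^{c+1}n),O(\log^{-c}n))$-relaxed pebble routing. In the protocol, each vertex forwards each pebble it holds along that pebble's path.

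For robustness I would instead discard every path that passes through a vertex with load exceeding $t$. By Markov's inequality, in expectation at most an $O(\log^{-c}n)$ fraction of the $u$ are discarded. This matches the stated failure fraction even if only the expected-load bound is used. The algorithm is randomized and polynomial-time, and it succeeds with high probability; it can be derandomized by the method of conditional expectations applied to the pessimistic estimator $\sum_{v,i}\exp(\text{load})$. Since the theorem only asks for a $\poly(|E|)$-time construction, I would not expect this step to cause trouble.
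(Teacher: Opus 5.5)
Your argument is correct, and it proves a stronger statement than the one cited, by a different route. The paper itself gives no proof here; it imports the result from \cite{Upf94,BMVY25}. There, roughly, the paths are built deterministically and greedily: each pair is routed along a short path that avoids vertices already saturated. Upfal's fault-tolerance lemma guarantees such short detours keep existing, since deleting a small set of vertices from a sufficiently strong expander leaves an induced subgraph on almost all vertices with logarithmic diameter. Both features of the statement come from that argument. The small universal $\alpha$ is what Upfal's lemma needs. The Markov-type accounting (total load $O(n\log n)$, so only $O(n/\log^{c}n)$ saturated vertices) produces the trade-off between failure fraction $\log^{-c}n$ and congestion $\log^{c+1}n$.

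Your conditioned random walks use only spectral mixing and work for any fixed $\alpha<1$. They route \emph{every} $u$ with per-round and total congestion $O(\log n)$, so the stated parameters follow for all $c$. The price is randomness. Since the theorem is used inside a deterministic reduction, the derandomization is not optional. Your sketch of it is sound: the $p_u$ are independent, and the bridge marginals given a prefix are computable by the same walk-counting dynamic program. Hence the conditional expectation of $\sum_{v,i}(1+\delta)^{\mathrm{load}(v,i)-t}$ can be evaluated exactly. It starts below $1$ by your Chernoff computation, and it can be kept from increasing while the walks are fixed one step at a time.

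One side remark should be dropped. Markov's inequality bounds the probability that a given vertex is overloaded, not the expected number of paths passing through overloaded vertices. So it does not by itself give an $O(\log^{-c}n)$ discarded fraction; combining it with independence across $u$ gives only $O(\log^{2}n/t)$, a logarithmic factor worse. You do not need that paragraph, since the Chernoff bound already routes all $u$. Also, for $t=\Theta(\log n)$ the Chernoff tail is $n^{-C}$ for a large constant $C$, not $n^{-\omega(1)}$. That is still enough for the union bound over the $nL$ pairs $(v,i)$.
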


Now, we can proceed with completing the proof of \Cref{lem:link-to-link}.
The missing details of the rest of the proof may be be found in \cite[Section 4.4]{BMVY25}, although some relevant lemmas are quoted from elsewhere in \cite{BMVY25} as needed.
We also maintain the same paragraph names to help the reader.

\paragraph{Setting up paths over 1-links}
Fix a permutation $\pi:V(Z)\to V(Z)$ and let $\mathcal R$ be a relaxed-pebble-routing protocol as per \Cref{thm:ext-paths} that has $|V (Z)|$ paths each of same length $T' = O(\log n )$ (repeat the final vertex if a path is smaller) where every vertex and edge in $Z$ is used in at most $\log^{C_1} n$ paths,
for some constant $C_1$ that depends on $C$.
Note technically for at most $\frac{1}{\log^C n}$ of vertices, we do not have a path, but as a matter of convention, we say there is an invalid path between $u$ and $\pi(u)$ as per $\mathcal R$ when there is no path between them.
Thus, we have $|V(Z)|$ total paths.
Each valid path $P=u_1\to u_2 \to \dots \to u_{T'}$ can be interpreted as the following length-$T$ `link-to-link message transfer' on $X$ for $T=2T'-1$:
\[
X_{(u_1)_1} (0) \to  X_{(u_1)_1,(u_2)_1}(0) \to X_{(u_2)_1} (0) \to \dots \to X_{(u_{T'})_1}(0).
\]
Expand each path in $\mathcal R$ as shown above into vertex-links connected by edge-links.
For even $t \in [T]\cup \set{0}$, let $L_{j,t}$ denote the vertex-link that occurs in the $(t/2)$-th time-step of the $j$-th path in $\mathcal R$,
and for odd $t \in [T]$, let $L_{j,t}$ denote the intermediate edge-link that is the intersection of the vertex-links $L_{j,t-1}$ and $L_{j,t+1}$.
Thus, if the $j$-th path was $u_1\to u_2 \to \dots \to u_{T'}$ we would have
\[
L_{j,0}=X_{(u_1)_1} (0) \to  L_{j,1}=X_{(u_1)_1,(u_2)_1}(0) \to L_{j,2}=X_{(u_2)_1} (0) \to \dots \to L_{j,T}=X_{(u_{T'})_1}(0).
\]


\paragraph{Setting up paths inside 1-links.}
\cite{BMVY25} then show that a set of short paths that connects most pairs of vertices (others are declared invalid) within a link (hereafter referred to as internal paths) can be constructed in polynomial time for any symmetrically-connected complex $L$.
The following two lemmas from \cite[Section 4.1]{BMVY25} aid in this.
\begin{lemma}[{\cite[Lemma 4.1]{BMVY25}}]
\label{lem:paths}
Let $L$ be a $k$-partite symmetrically-connected complex such that the following hold.
Fix $\eps > 0$ and a pair of indices $i, j \in [k]$ with $|i-j|\geq k\eps$.
Then there is a $\poly(|L^{ij} (1)|)$-time algorithm that constructs a set of paths
$\mathcal P_{ij}= \set{P(u, v)}_{u\in L^i(0),v\in L^j(0)}$,
where each valid path is of length $O(1/\eps)$ and at most $O(\eps)$ fraction of paths are invalid. Furthermore, each edge in $L^{ij}(1)$ is used in at most $O(\frac{|L^i(0)||L^j(0)|}{\eps^3|L^{ij}(1)|})$ paths in $\mathcal P_{ij}$.
\end{lemma}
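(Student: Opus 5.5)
Since every valid path is chosen inside the bipartite graph $L^{ij}$, I would exploit the two defining properties of a symmetrically-connected complex: the diameter bound $\diam(L^{ij})=O(k/|i-j|)=O(1/\eps)$, and the transitive action of $\symmetric_L$ on top faces. The natural plan is to first construct a \emph{single} family of shortest paths and then \emph{symmetrize} it over the group, so that edge loads are forced to be roughly uniform. Concretely, for each pair $(u,v)\in L^i(0)\times L^j(0)$ at distance at most $D=O(1/\eps)$ (this is every pair, by the diameter bound) I would fix a shortest path $P_0(u,v)$ in $L^{ij}$, computable by BFS in $\poly(|L^{ij}(1)|)$ time. Transitivity on top faces implies that the group acts transitively on $L^{ij}(1)$: every edge of colors $\{i,j\}$ lies in a top face, and its image under the transitive action is determined by the colors. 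Hence a random group element $g$ maps any fixed edge to a uniformly distributed edge of $L^{ij}(1)$. The edges are uniformly weighted in $L^{ij}$, which matches this.

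The load bound comes from averaging. The total number of edge-uses over all pairs is at most $D\cdot|L^i(0)||L^j(0)|$. If we define the randomized path $P_g(u,v)=g\cdot P_0(g^{-1}u,g^{-1}v)$ with $g$ uniform in $\symmetric_L$, then the expected number of paths through a fixed edge $e$ equals the average load, namely $D|L^i(0)||L^j(0)|/|L^{ij}(1)|$. I would not sample: instead I would pick $g_{u,v}$ deterministically in a balanced way, or simply argue via Markov that there is a choice in which few edges are overloaded. Edges with load exceeding $\eps^{-2}$ times the average then carry at most an $O(\eps)$ fraction of paths. We declare those paths invalid, which gives the $O(\eps)$ invalid fraction. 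Every surviving edge then has load at most $O\!\left(\frac{|L^i(0)||L^j(0)|}{\eps^3|L^{ij}(1)|}\right)$, which is exactly the stated bound (one factor $1/\eps$ from $D$ and $\eps^{-2}$ from the threshold).

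The main obstacle is making this polynomial time and deterministic, since $\symmetric_L$ may be huge. For that reason I would avoid enumerating the group altogether. Instead I would iteratively reroute: process the pairs, and for each one choose a path of length $\le D$ avoiding the currently overloaded edges when possible, otherwise mark it invalid. The counting from the averaging argument shows that the overloaded set never captures more than an $O(\eps)$ fraction of pairs, because the total weight is bounded by $D|L^i||L^j|$ and the threshold is $\eps^{-2}$ times the average. The pruning step can also be done by a single pass with an explicit load count, which runs in time polynomial in $|L^{ij}(1)|$.
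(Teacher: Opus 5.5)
The paper imports this lemma from \cite{BMVY25} without proof, so I am judging your argument on its own. Your ingredients are the right ones. The diameter bound gives $D=O(1/\eps)$. Symmetry makes the expected load of the symmetrized routing the same on every edge, namely $\mu\le DN/|E|$ with $N=|L^i(0)||L^j(0)|$ and $E=L^{ij}(1)$. A threshold $T=DN/(\eps^{2}|E|)$ produces the $\eps^{-3}$.

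The gap is the step that actually carries the lemma: that only an $O(\eps)$ fraction of pairs end up invalid. Both times you invoke it, the reason you give is the aggregate count (total load at most $DN$, threshold $\eps^{-2}$ times the average). That count only shows the overloaded set $F$ has at most $\eps^{2}|E|$ edges. It says nothing about how many paths cross $F$, or how many pairs lose all short paths once $F$ is removed. Nothing in it excludes a family where most paths funnel through a few hub edges, in which case pruning invalidates almost everything.

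Relatedly, randomizing with a single global $g$ only permutes the edge loads of $\{P_0(u,v)\}$ and changes nothing. With independent $g_{u,v}$ per pair the random version can be completed, but only via a leave-one-out estimate you did not give. The path $P(u,v)$ contributes at most $1$ to the load $X_e$ and is independent of the other pairs, so $\mathbb P[e\in P(u,v),\ X_e> T]\le \mathbb P[e\in P(u,v)]\cdot\mu/(T-1)$. Summing over $e$ and $(u,v)$ gives at most $|E|\mu^{2}/(T-1)\lesssim\eps^{2}DN=O(\eps N)$ invalid paths in expectation; you then repeat or derandomize. ``Pick $g_{u,v}$ in a balanced way'' is not by itself an argument.

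Your greedy rerouting is in fact a good deterministic algorithm, but its analysis needs the symmetry edge by edge, not just the total. Let $\mathcal D_{u,v}$ be the uniform distribution on shortest $u$--$v$ paths. A type-preserving automorphism $g$ maps these bijectively to shortest $gu$--$gv$ paths, and the group is transitive on $E$. So the fractional load $\phi(e)=\sum_{(u,v)}\mathbb P_{P\sim\mathcal D_{u,v}}[e\in P]$ equals $\mu$ for \emph{every} edge.

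Now suppose a pair is declared invalid. Then every path of length at most $D$ between its endpoints, in particular every shortest one, meets the saturated set at that moment. Saturation is monotone, so that set is contained in the final saturated set $F$, and the entire unit of $\mathcal D_{u,v}$ passes through $F$. Hence the number of invalid pairs is at most $\sum_{e\in F}\phi(e)=\mu|F|\le\mu\cdot DN/T\le\eps^{2}DN=O(\eps N)$. Meanwhile every edge ends with load at most $T=O(N/(\eps^{3}|E|))$.

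This flow-blocking step is the missing idea. With it, your last paragraph becomes a complete deterministic $\poly(|L^{ij}(1)|)$-time proof in which the group is used only in the analysis and never enumerated.
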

Now, using a probabilistic argument on the output of \Cref{lem:paths}, one can get \Cref{lem:bounded-corruption}.
\begin{lemma}[{\cite[Lemma 4.2]{BMVY25}}]
\label{lem:bounded-corruption}
Let $L$ be a $k$-partite symmetrically-connected complex.
Fix $\eps > 0$.
Then there is a $\poly(|L|)$-time algorithm to construct a
set of paths $\mathcal P = \set{P_{U, V} }_{U \ne V \in L}$ in which each valid path has length at most $O(1/\eps^{1/8})$.
Furthermore, any
adversary that corrupts at most an $\eps$-fraction of the edges in $L$ corrupts at most an $O(\eps^{1/8})$-fraction of the paths in $\mathcal P$.\footnote{Here, we call a path corrupted if any edge in the path is corrupted.}
\end{lemma}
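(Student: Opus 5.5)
The plan is to obtain $\mathcal P$ as a random mixture of the path systems $\mathcal P_{ij}$ from \Cref{lem:paths}, and to bound corruption by a double-counting (Markov) argument over edge loads. Fix $\eps$ and set $\eps' = \eps^{1/8}$. Given $U \ne V \in L$, sample colors so that the path from $U$ to $V$ is routed through vertices of well-separated colors: pick $i = \col(U)$, $j = \col(V)$ if $|i-j| \geq k\eps'$, and otherwise pick an intermediate color $\ell$ with $|i-\ell|, |\ell-j| \geq k\eps'$ and a vertex $W \in L^\ell(0)$. We then concatenate $P(U,W) \in \mathcal P_{i\ell}$ with $P(W,V) \in \mathcal P_{\ell j}$. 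Such an $\ell$ exists as long as $\eps'$ is small relative to $1/3$, since at most a $4\eps'$ fraction of colors is excluded. Each valid piece has length $O(1/\eps')$, so the concatenated path has length $O(\eps^{-1/8})$ as required. We fix $W$ via a pseudo-random but deterministic choice, for instance the one that minimizes load after averaging; the algorithm derandomizes by picking the best choice among polynomially many candidates. This keeps the running time polynomial in $|L|$, because \Cref{lem:paths} runs in time $\poly(|L^{ij}(1)|)$.

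For the corruption bound, the adversary corrupts a set $B$ of edges of measure at most $\eps$. Using transitivity of $\symmetric_L$ on top faces, the edge measure restricted to each $L^{ij}(1)$ is uniform and every pair of colors carries the same mass. So if $B$ has measure $\eps_{ij}$ inside $L^{ij}(1)$, then $\ex{ij}{\eps_{ij}} \lesssim \eps$. The congestion bound in \Cref{lem:paths} says each edge lies in at most $O\big(|L^i(0)||L^j(0)|/(\eps'^3 |L^{ij}(1)|)\big)$ paths, so the fraction of $\mathcal P_{ij}$ hit by $B$ is at most $O(\eps_{ij}/\eps'^3)$. Averaging over the random choice of colors (and of $W$, whose marginal is uniform on $L^\ell(0)$ by symmetry), the fraction of corrupted composite paths is at most $O(\eps/\eps'^3)$ plus the $O(\eps')$ fraction of invalid paths. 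This yields $O(\eps^{5/8} + \eps^{1/8}) = O(\eps^{1/8})$. Since $U \ne V$ range over all faces and not just vertices, we route faces through one of their vertices, chosen uniformly, and absorb this by transitivity.

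The main obstacle is the second step. We must make sure the distribution over pairs $(i,\ell)$ induced by the routing does not concentrate mass on a single color pair, where the adversary could put all its corruption budget. If the routing does concentrate, the loss is a factor of $k^2$ rather than a constant. The fix I would try first is to choose $\ell$ uniformly among all admissible colors. Then each pair $(i,\ell)$ receives weight at most $O(1/k^2)\cdot(1-4\eps')^{-1}$, matching the uniform weight up to constants, so the per-pair corruption bounds sum to $O(\eps)$. If that fails, the fallback is to use a smaller exponent $\eps^{1/8}$ chosen specifically to pay for the resulting loss.
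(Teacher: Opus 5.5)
Your handling of well-separated pairs is correct, and it is the ``probabilistic argument on the output of \Cref{lem:paths}'' that the paper alludes to. With $\eps'=\eps^{1/8}$, the congestion bound turns an $\eps_{ij}$-fraction of corrupted edges in $L^{ij}(1)$ into at most an $O(\eps_{ij}/\eps'^3)$-fraction of corrupted paths in $\mathcal P_{ij}$. Since $\pi_1$ gives every color pair equal mass, and pairs $(U,V)$ are drawn from $\pi_0\times\pi_0$ (each color class has $\pi_0$-mass $1/k$), averaging gives $O(\eps/\eps'^3)=O(\eps^{5/8})$.

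The step that fails is your bound for the composite paths $P(U,W)\circ P(W,V)$ used for close pairs. With a single fixed $W=W(U,V)$ per pair, the first legs form at most $|L^i(0)||L^j(0)|$ of the $|L^i(0)||L^\ell(0)|$ paths of $\mathcal P_{i\ell}$. \Cref{lem:paths} only controls the corrupted fraction of the \emph{whole} system $\mathcal P_{i\ell}$, not of this sub-collection. The adversary is chosen after the paths are fixed, so it can spend its budget exactly on the $O(|L^i(0)||L^j(0)|/\eps')$ edges these legs use. That can be a vanishing fraction of $L^{i\ell}(1)$: take, e.g., $i=j$ the lines of a flags complex and $\ell$ a middle dimension. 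In that case every composite path of that color pair is corrupted at negligible cost. So uniformity of the marginal of $W$ is not enough. This is a counting obstruction, so no choice of $W$, random or derandomized, fixes it in general, and ``picking the best among polynomially many candidates'' does not address it.

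The repair is to drop the intermediate routing. The lemma permits invalid paths, so declare $P_{U,V}$ invalid whenever $|\col(U)-\col(V)|<\eps' k$, and count these paths as corrupted. This includes same-color pairs, which \Cref{lem:paths} never covers. Their $\pi_0\times\pi_0$-mass is at most $O(\eps')+1/k$, which is $O(\eps^{1/8})$ once $k\gtrsim\eps^{-1/8}$. That is the regime where the lemma is applied (fixed $\eps$, large dimension). For bounded $k$ the same counting shows the bound can fail, e.g.\ for the $2$-partite $\buildings^{6,\{1,3\}}_q$ with $q$ large, where same-color line pairs carry mass $1/4$. So this restriction is implicit in the statement. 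Adding the $O(\eps')$ invalid paths from \Cref{lem:paths} and the $O(\eps^{5/8})$ from far pairs gives the claim, with all valid paths of length $O(\eps^{-1/8})$.

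Two minor points. First, $U,V$ range over vertices of $L$ (this is how the lemma is used, for $v,w\in X_u(0)$), so your face-routing remark is unnecessary. Second, your ``fallback'' could not work anyway, since a loss of a factor $k^2$ cannot be paid for by changing the power of $\eps$.
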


For every vertex link $X_u$ for $u \in X(0)$, we use the algorithm in \Cref{lem:bounded-corruption}
with the parameter $\sqrt \eps$, to construct a collection of short paths $\mathcal P_{X_u} = \set{P_{X_u} (v, w)}_{v,w\in X_u (0)}$ 
between all pairs of vertices $v, w$ inside the link $X_u$.
We will refer to these as the internal paths in $X_u$.

\paragraph{The description of the routing protocol.} With this setup, we now describe the final global routing procedure. 
    \subparagraph{Encoding of $A_0:\mathcal S \to \Sigma$ on $G$:}
    for the $j$-th (interlink) path, we store the value $A_0(v_j,u)$ on the vertex $u\in X_{(v_j)_1}$, where $v_j\in V(Z)$ is the first vertex in the interlink path.
    \subparagraph{`Composing' interlink and internal paths for message passing:}
    At each time-step $t\in [T]$, consider the $j$-th path.
    A vertex $u\in L_{j,t}$ first sets its stored message to the corresponding $j$-th path as the majority (with threshold 1/2) of all the messages received (that were sent using internal paths) from vertices of $L_{j,t-1}$.
    Then, it sends the message it has stored to every vertex in $L_{j,t+1}$ using the paths in $\mathcal P_{L_{j,t}}$ if $t$ is even and $\mathcal P_{L_{j,t+1}}$ if $t$ is odd.
    At time-step 0, we skip the receive step. 

    \subparagraph{Alphabet:} The alphabet $\Sigma_0$ has to be large enough to accommodate the symbols received on a vertex from all the paths,
    i.e., we need $\Sigma_0$ to be of size $|\Sigma|^{O(\log^{C+1} n)}$, where the exponent comes from congestion guarantee of \Cref{thm:ext-paths}.
\begin{remark*}
As we also have labels on top each vertex, the order of the `send' and `receive' operations are reversed.
\end{remark*}

\paragraph{Bad links and bounding them.} In the following lemmas for a set of corrupted edges $\mathcal E\subseteq X(1)$, we call a vertex-link $X_u$ \emph{bad} if $\pi_1(X_u(1)\cap \mathcal E)\geq \sqrt \eps$ and \emph{good} otherwise.
The guarantees of the routing protocol are essentially immediate from the following lemmas which are essentially bounding probabilities of the bad events, and are the direct consequence of expansion of swap-walks in a local-spectral-expander (\Cref{lem:color-swap-expansion}),
the expander mixing lemma (\Cref{lem:expander-mixing}),
and a sampling lemma for bipartite expanders (\Cref{lem:expander-sampling}).
\begin{lemma}[{\cite[Claim 4.7]{BMVY25}}]
\label{lem:interlink-vertex}
Let $X$ be a $d$-partite $\lambda$-product.
Let $\mathcal E\subseteq X(1)$ be a set of corrupted edges.
Then, we have
\[
\pr{u\sim X(0)}{X_u\text{ is bad}}=\pr{u\sim X(0)}{\pi_1(X_u(1)\cap \mathcal E)\geq \sqrt \eps} \leq \poly(d) \lambda^2.
\]
\end{lemma}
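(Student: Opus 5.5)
We bound the probability that a uniformly random vertex link $X_u$ has a large fraction of corrupted edges by averaging, inside each link, over pairs of colors and using the $\lambda$-product property. For a vertex $u\in X(0)$, the edge measure $\pi_1$ on $X_u(1)$ decomposes as an average over color pairs $\{i,j\}\not\ni\col(u)$ of the edge measures of the bipartite graphs $X_u^{ij}$. So $X_u$ can only be bad if, for a noticeable fraction of these color pairs, the corrupted edges have relative weight at least about $\sqrt\eps$ inside $X_u^{ij}$. It therefore suffices to fix a triple of colors $(a,i,j)$ and bound the probability, over $u\in X^a(0)$, that $\mathcal E\cap X^{ij}_u(1)$ has conditional weight at least $\sqrt\eps$. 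We then sum or average over the $O(d^3)$ triples, which accounts for the $\poly(d)$ factor.

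For fixed colors $a,i,j$, consider the colored swap walk $\mathsf S_{\{a\},\{i,j\}}$ between $X^a(0)$ and the edges $X^{ij}(1)$. An edge $e$ of color $\{i,j\}$ is adjacent to $u$ exactly when $u\cup e$ is a triangle, so the neighbourhood of $u$ is $X^{ij}_u(1)$ with its link measure. By \Cref{lem:color-swap-expansion}, $\lambda(\mathsf S_{\{a\},\{i,j\}})\le\sqrt2\lambda$.

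Let $B=\mathcal E\cap X^{ij}(1)$ and let $\delta_{ij}$ be its relative measure. We apply \Cref{lem:expander-sampling} with the roles arranged so that $B$ lies on the edge side and $T$ is the set of vertices $u$ whose link sees an excess. With $\eps'=\sqrt\eps/2$, this gives
\[\pr{u}{\text{fraction of }B\text{ in }X_u^{ij}\ge\delta_{ij}+\sqrt\eps/2}\le 2\lambda^2\delta_{ij}/(\eps/4)=O(\lambda^2\delta_{ij}/\eps).\]
The average of $\delta_{ij}$ over color pairs is comparable to $\eps$, since the total corrupted measure is at most $\eps$ and each color class has measure about $1/\binom d2$ up to factors we absorb into $\poly(d)$. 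Summing over colors, the total bad probability from excess is $\poly(d)\lambda^2\cdot\eps/\eps=\poly(d)\lambda^2$.

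The main obstacle is handling color pairs where $\delta_{ij}$ itself is already large, at least $\sqrt\eps/2$, since then no excess is needed for badness. I would handle this by Markov: the weight $\pi_1(X_u(1)\cap\mathcal E)$ is an average over pairs of per-pair fractions. A link is bad only if either the average of the baseline $\delta_{ij}$ (weighted by the link's color distribution, which is uniform over pairs by partiteness) exceeds $\sqrt\eps/2$, or some pair has an excess exceeding $\sqrt\eps/2$. The first case cannot occur once $\eps$ is small, since that average equals $\pi_1(\mathcal E)\lesssim\eps<\sqrt\eps/2$ after normalizing by color-pair weights; these are equal in a partite complex with uniform marginals. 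So only excess events remain, and a union bound over pairs gives the claimed $\poly(d)\lambda^2$.
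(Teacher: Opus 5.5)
Your proof is correct and follows the route the paper indicates for \cite[Claim 4.7]{BMVY25}. You bound the colored swap walk $\mathsf S_{\{a\},\{i,j\}}$ by $\sqrt2\lambda$ via \Cref{lem:color-swap-expansion}, apply \Cref{lem:expander-sampling} to $\mathcal E\cap X^{ij}(1)$ with deviation $\sqrt\eps/2$, and union bound over color pairs to get $O(d^2\lambda^2)$.

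One small slip: the baseline $\mathbb{E}_{\{i,j\}\not\ni a}[\delta_{ij}]$ is at most $\frac{d}{d-2}\pi_1(\mathcal E)$, not equal to $\pi_1(\mathcal E)$. This is harmless once $\eps$ is below an absolute constant, which the statement implicitly needs anyway, since for $\eps$ near $1$ every link can be bad.
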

\begin{lemma}[{\cite[Claim 4.8]{BMVY25}}]
\label{lem:intralink}
Let $X$ be a $d$-partite $\lambda$-product.
Let $\mathcal E\subseteq X(1)$ be a set of corrupted edges.
Let
\[
\mathcal D_u = \set{v \in X_u (0) ~\Big|~ \pr{w\sim X_u(0)}{P_{X_u} (v, w)\text{ is corrupted}} \geq \eps^{1/32}}.
\]
If $X_u$ is good, then $\pr{v\sim X_u(0)}{v \in \mathcal D_u} \lesssim \eps^{\frac{1}{32}}$.
\end{lemma}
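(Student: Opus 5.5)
The plan is a direct averaging (Markov) argument on top of the guarantee of \Cref{lem:bounded-corruption}, applied inside the single link $X_u$. Recall that for every vertex link $X_u$ the internal paths $\mathcal P_{X_u}=\set{P_{X_u}(v,w)}$ were built by the algorithm of \Cref{lem:bounded-corruption} with parameter $\sqrt\eps$ rather than $\eps$. Since $X_u$ is good, the corrupted edges inside the link satisfy $\pi_1(X_u(1)\cap\mathcal E)<\sqrt\eps$. So the adversary restricted to $L=X_u$ corrupts less than a $\sqrt\eps$-fraction of the edges of $L$. \Cref{lem:bounded-corruption} then gives that at most an $O((\sqrt\eps)^{1/8})=O(\eps^{1/16})$-fraction of the paths in $\mathcal P_{X_u}$ are corrupted. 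In other words,
\[
\pr{v,w\sim X_u(0)}{P_{X_u}(v,w)\text{ is corrupted}}\leq O(\eps^{1/16}).
\]

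Next define $p(v)=\pr{w\sim X_u(0)}{P_{X_u}(v,w)\text{ is corrupted}}$, so that $\ex{v}{p(v)}$ is exactly the quantity bounded above. By definition, $\mathcal D_u=\set{v : p(v)\geq\eps^{1/32}}$. Markov's inequality gives
\[
\pr{v}{v\in\mathcal D_u}\leq \frac{O(\eps^{1/16})}{\eps^{1/32}}=O(\eps^{1/32}),
\]
which is the claimed bound $\lesssim\eps^{1/32}$. The diagonal pairs $v=w$ are excluded in \Cref{lem:bounded-corruption}. They carry negligible measure, and we may regard the trivial path as uncorrupted, so they do not affect the bound.

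The only step I expect to need care is matching measures. \Cref{lem:bounded-corruption} counts a fraction of paths, i.e.\ of pairs $(U,V)$ under its own counting measure, whereas $\mathcal D_u$ and the averaging use $v,w\sim X_u(0)$ under the link's stationary distribution. Similarly, the edge fraction $\pi_1(X_u(1)\cap\mathcal E)$ is measured by the link's edge distribution. I would resolve this using symmetric-connectedness. The group $\symmetric(L)$ acts transitively on the top faces of $L$, so within each color class the induced vertex distribution is uniform, and the edge distribution on each bipartite piece $L^{ij}$ is uniform as well (cf.\ \Cref{item:diameter-bound}). The colors of the partite complex each carry equal weight $1/k$. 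Hence the link's distributions and the uniform-over-pairs and uniform-over-edges measures implicit in \Cref{lem:bounded-corruption} agree up to the structure of the partition. I would either check that the construction in \cite{BMVY25} is already stated with respect to these measures, or absorb any bounded distortion into the $O(\cdot)$. With that settled, the Markov step above finishes the proof.
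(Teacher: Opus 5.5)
Your proof is correct and matches the argument behind the cited \cite[Claim 4.8]{BMVY25}; the paper itself only cites it. A good link has fewer than a $\sqrt\eps$-fraction of corrupted edges, so \Cref{lem:bounded-corruption} run with parameter $\sqrt\eps$ leaves at most an $O(\eps^{1/16})$-fraction of corrupted internal paths, Markov's inequality at threshold $\eps^{1/32}$ finishes, and neither the spectral tools nor the $\lambda$-product hypothesis are needed.

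On the measure issue, only your first fallback works. The color classes of a link can differ in size by factors like $q^{\Omega(d)}$, so the uniform-over-vertices measure and the link measure are not related by a bounded factor and the distortion cannot be absorbed into the $O(\cdot)$. It is not needed, though: the fractions in \Cref{lem:bounded-corruption} are already taken with respect to the link's own color-balanced distributions.
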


\begin{lemma}[{\cite[Claim 4.9]{BMVY25}}]
\label{lem:interlink-edge}
Let $X$ and $\mathcal D_u$ be as above. Call an edge-link $X_{u,v}$ \emph{bad} if either $X_u$ or $X_v$ is a bad vertex-link, or one of $\pi_1(X_{uv}(1)\cap \mathcal D_u)$ or $\pi_1(X_{uv}(1)\cap \mathcal D_v)$ is at
least $\eps^{\frac{1}{64}}$.
Then
\[
\pr{uv\sim X(1)}{X_{uv}\text{ is bad}} \leq \poly(d) \lambda^2.
\]
\end{lemma}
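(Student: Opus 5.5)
The plan is a union bound over the four ways an edge-link $X_{uv}$ can be bad. Sampling $uv\sim X(1)$ and looking at one endpoint, say $u$, gives $u\sim\pi_0$. So by \Cref{lem:interlink-vertex},
\[
\pr{uv\sim X(1)}{X_u \text{ bad}}\leq \poly(d)\lambda^2,
\]
and the same bound holds for $X_v$. It remains to bound the probability that $X_u$ is good but $\pi_1(X_{uv}(1)\cap \mathcal D_u)\geq \eps^{1/64}$; the term for $\mathcal D_v$ is symmetric. I read this quantity as the measure of $\mathcal D_u$ inside the link $X_{uv}(0)$, since $\mathcal D_u$ is a set of vertices.

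Condition on $u$ with $X_u$ good. The edge $uv$ is then drawn by picking $v\sim X_u(0)$, and $X_{uv}(0)$ is exactly the neighborhood of $v$ in the underlying graph of $X_u$. By \Cref{lem:intralink}, $\delta_u:=\pi_0(\mathcal D_u)\lesssim \eps^{1/32}$. The event is therefore the sampling event ``the neighborhood of $v$ over-represents $\mathcal D_u$.'' This is the setting of \Cref{lem:expander-sampling}, applied to the bipartite graph whose sides are two copies of $X_u(0)$ with the link edges between them.

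To get a spectral bound for this graph while respecting partiteness, decompose by colors. Sample $v$ of color $i$ and the neighbor color $j$; conditioned on these, the neighbor is a step of the bipartite graph $X_u^{ij}$, which has $\lambda(X_u^{ij})\leq\lambda$ because $X$ is a $\lambda$-product. Write $\delta_{u,j}$ for the density of $\mathcal D_u$ inside color class $j$. Apply \Cref{lem:expander-sampling} in each $X_u^{ij}$ with deviation parameter $\eps^{1/64}/2$. It gives that the fraction of $v$ whose $j$-neighborhood has $\mathcal D_u$-density exceeding $\delta_{u,j}+\eps^{1/64}/2$ is at most
\[
4\lambda^2\delta_{u,j}/\eps^{1/32}.
\]
Averaging over $j$ turns $\delta_{u,j}$ into $\delta_u$, so this term contributes $O(\lambda^2)$. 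The remaining case is $v$ whose neighborhood has large $\mathcal D_u$-density only because $\delta_{u,j}$ itself is large, i.e.\ $\delta_{u,j}\geq \eps^{1/64}/2$. By Markov over colors, the weight of such $j$ is at most $2\delta_u/\eps^{1/64}\lesssim \eps^{1/64}$. Such colors can therefore contribute at most an $O(\eps^{1/64})$ fraction of the neighborhood, and I would absorb this by slightly adjusting the threshold constant (or taking $\eps$ small).

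I expect the main obstacle to be this last bookkeeping step: passing from per-color-pair spectral bounds to the density over the whole $d$-partite neighborhood $X_{uv}(0)$, where each color class carries its own weight. That is where the $\poly(d)$ factor enters, either from union-bounding over the $O(d^2)$ color pairs or, alternatively, from bounding $\lambda$ of the whole link graph via \Cref{lem:color-swap-expansion} or \Cref{prop:kpartite-expansion}. Summing the four contributions, the two bad-vertex-link terms plus the two $\mathcal D$-density terms, gives $\poly(d)\lambda^2$.
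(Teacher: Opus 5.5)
Your proposal is correct and follows the route the paper indicates via \cite{BMVY25}. You union bound with \Cref{lem:interlink-vertex} for bad endpoint links, use \Cref{lem:intralink} to make $\mathcal D_u$ sparse in a good link, and apply \Cref{lem:expander-sampling} to each bipartite projection $X_u^{ij}$ via the $\lambda$-product property. Working color pair by color pair is in fact needed: the whole $(d-1)$-partite link graph has eigenvalue $-\frac{1}{d-2}$, so a single sampling bound on the whole link would only give $O(1/d^2)$ rather than $\poly(d)\lambda^2$.

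Only the last step needs tidying. The heavy/light split is unnecessary, and as you wrote it the density bound is only $O(\eps^{1/64})$ with an unspecified constant. Instead, average the per-color guarantee over the $d-2$ colors of $X_{uv}(0)$: the $j$-neighborhood density is at most $\delta_{u,j}+\eps^{1/64}/2$, and this holds for heavy $j$ as well. That gives total density at most $\frac{d-1}{d-2}\delta_u+\eps^{1/64}/2=O(\eps^{1/32})+\eps^{1/64}/2<\eps^{1/64}$ for small $\eps$. Also, the exceptional set is a union over $j$ (a sum, not an average), and that sum is where the factor $d$ in $\poly(d)\lambda^2$ comes from.
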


Now, the performance of the internal paths can be bounded using \Cref{lem:intralink}
(see the ``Link to Link Transfer on Good Paths'' paragraph in the original proof),
and the performance of the interlink paths can be bounded by interlink paths using \Cref{lem:interlink-vertex} and \Cref{lem:interlink-edge}
(see the ``Bounding the number of Good Paths'' paragraph in the original proof), and then combining them using a union bound with the failure probability $\frac{1}{\log^C n}$ of the initial pebble routing protocol $\mathcal R$ to get the overall bound to be
\begin{align*}
&\eta+\poly_1(d)\lambda^2 \log^C n + \poly_2(d) r^2 \lambda^2 \log^C n + \frac{1}{\log^C n}\\
=&\eta+\poly(d) r^2 \lambda^2 \log^C n + \frac{1}{\log^C n}.
\end{align*}

\subsection{Embedding PCP on an HDX with an Edge-Tolerant Routing}

Finally, we sketch how \cite{BMVY25} use the link-to-link routing protocol to embed a PCP onto a simplicial complex $X$; hence sketch a proof of \cite[Lemma 5.3]{BMVY25}.
Let $G=(X(0),X(1))$ and $Z=G \zprod \mathcal H$, where $\mathcal H$ is a family of regular expanders chosen so that \Cref{lem:link-to-link} gives the required link-to-link routing.

\cite{BMVY25} start by reducing a general 2-CSP $\Psi_0$ to a 2-CSP $\Phi$ on a bipartite $r$-regular graph $G'$ on $|V (Z)|$ vertices such that if $\Psi_0$ is satisfiable then so is $\Phi$ and if $\val(\Psi_0) \leq 1 - 8\nu$ then $\val(\Phi) \leq 1 - \nu$.
We skip further discussion of this reduction here and refer the interested reader to the proof of Lemma 5.2 in \cite{BMVY25}.

Since the graph $G'$ is a bipartite $r$-regular graph, we may partition the edge set of $G'$ into $r$ perfect
matchings, i.e., permutations $\set{\pi_i}_{i\in [r]}$ with $\pi_i^2=Id$ for each $i\in [r]$.
We can view a satisfying assignment $A:V(Z)\to \Sigma$ for $\Phi$ as an assignment such that $(A(v),A(\pi_i(v)))$ satisfies $(v,\pi_i(v))$ for each $i \in [r]$.
We will use this variant of this observation to reduce the 2-CSP $\Phi$ to a generalized 2-CSP $\Psi$ on $G=(X(0),X(1))$.
Towards this, for each matching, we use \Cref{lem:link-to-link} to construct a corresponding routing protocol.
Thus, we will get a set of link-to-link protocols $\set{\mathcal R_i}_{i\in [r]}$.
From an assignment $A:V(Z)\to \Sigma$, we now obtain $A_0:\mathcal S \to \Sigma$ with $\mathcal S = \set{(v, u) : v=(v_1,v_2) \in V (Z), u \in X_{v_1}(0)}$ given by
$A_0(j,u) = A(j)$ for each $j\in V(Z)$ and $u\in X_{j_1}$.
If the assignment $A:V(Z)\to \Sigma$ is a satisfying assignment, then for any $i\in [r]$, starting with $A_0:\mathcal S \to \Sigma$ we will end up with $A_{i,T}:\mathcal S \to \Sigma$ such that $(A_0(j,u),A_{i,T}(j,u))$ satisfies $(j, \pi_i(j))$ in $\Phi$.


\paragraph{The alphabet.} The alphabet of $\Psi$ is the set of all possible transcripts for each protocol corresponding to each matching;
i.e., for any vertex $u$ a symbol in the alphabet would correspond to a possible value of $\paren{\inbound_{i,t}(u,\cdot)}_{t\in[T]\cup \set{0}, i\in [r]}$,
where $\set{\inbound^{}_{i,t}(v,\cdot)}_{t\in [T]\cup \set{0}, v\in V(Z)}$ is the sequence of messages corresponding to $\mathcal R_i$.
As discussed above, the starting message of all these protocols encodes the same satisfying assignment to the $\Phi$
and moreover each protocol computes the assignment to the matched vertex.
Towards enforcing this, we will also restrict the alphabet further to symbols where $\paren{\inbound_{i,0}(j,u), \inbound_{i,T}(j,u)}$ satisfy the constraint on the edge $(v, \pi_i(v))$ in the 2-CSP $\Phi$ for all $u$.

\paragraph{The constraints of $\Psi$:} 
For any edge $(u,v)$ in $G$ the constraint on the edge checks whether
\begin{enumerate}
    \item for each $j \in V (Z)$ for which $u, v$ are both in $X_{j_1}(0)$: for all $i \in [r]\cup \set{0}$ it holds that $A_{i,T} (j, u) = A_{i,T} (j, v)$, and
    \item for each $i \in [r]$ and $t \in [T - 1]$: $\inbound_{i,t+1} (u, v) = \mathtt{OUT}_{i,t} (v, u)$ and $\inbound_{i,t+1} (v, u) = \mathtt{OUT}_{i,t}(u, v)$.
\end{enumerate}
These constraints ensure the routing protocols are being followed correctly.

This construction, along with a careful analysis of the soundness and completeness, gives us the following two lemmas, which abstract \cite[Lemma 5.3]{BMVY25}. 
\begin{lemma}
There exist $r,C\in \N$ such that for large enough $n \in \N$ the following holds.
Let $\Sigma$ be an alphabet.
Let $X$ be $d$-partite $\lambda$-product with max degree $\Delta(X)$ that has a \emph{$\mathcal H$-regularized $(\eps(n), \eta(n), \eta(n)+\nu(n))$-link-to-link routing} that computes functions with values in $\Sigma$ using an alphabet $\Sigma_0$ of size $f(|\Sigma|)$ with work complexity $w(n)$, where $\mathcal H$ is some family of graphs.
Then there is a $\poly(n)$-time procedure mapping any 2-CSP $\Psi_0$ with $n$ vertices
on a $r$-regular graph $G_0$ on the alphabet $\Sigma$,
to a 2-CSP $\Psi$ on the graph $G = (X(0), X(1))$ with alphabet size at most $f(|\Sigma|)^{r\Delta w(n)}$,
satisfying the following properties.
\begin{enumerate}[(i)]
    \item \textbf{Completeness:} If $\val(\Psi_0) = 1$ then $\val(\Psi) = 1$.
    \item \textbf{Soundness:} If $\val(\Psi_0)\leq 1-\eta-\nu-\poly(d)\lambda^2$, then $\val(\Psi) \leq 1-\eps(n)$. 
\end{enumerate}
\end{lemma}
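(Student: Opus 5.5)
The statement to prove is the last lemma: an HDX $X$ carrying an $\mathcal H$-regularized link-to-link routing protocol lets us embed any 2-CSP on an $r$-regular graph into a 2-CSP on $G=(X(0),X(1))$. The construction is the one just described, and the plan has four steps.

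\textbf{Step 1 (regularize).} First apply the reduction of \cite[Lemma 5.2]{BMVY25}. It turns $\Psi_0$ into a 2-CSP $\Phi$ on a bipartite $r$-regular graph $G'$ with vertex set $V(Z)$, where $Z=G\zprod\mathcal H$. This reduction preserves satisfiability, and $\val(\Psi_0)\le 1-8\nu'$ implies $\val(\Phi)\le 1-\nu'$. Then decompose $E(G')$ into $r$ perfect matchings, viewed as involutions $\pi_1,\dots,\pi_r$. For each $\pi_i$, invoke the assumed link-to-link routing to obtain a protocol $\mathcal R_i$.

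\textbf{Step 2 (alphabet, constraints, completeness).} The alphabet at a vertex $u\in X(0)$ is the set of tuples of transcripts $(\inbound_{i,t}(u,\cdot))_{i,t}$ over all $r$ protocols. Each transcript has at most $\Delta$ neighbors, $T$ rounds, and per-round complexity bounded by $w(n)$. This gives the claimed size $f(|\Sigma|)^{r\Delta w(n)}$. We restrict the alphabet so that the decoded pair (initial value, final value) satisfies $\Phi$'s constraint on $(j,\pi_i(j))$.

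The edge constraints are the two checks described above:
\begin{itemize}
\item consistency of the encoded $A_0$ and $A_{i,T}$ values across vertices sharing a link, and
\item faithful message passing, i.e.\ what $u$ records receiving from $v$ equals what $v$'s routing function outputs.
\end{itemize}
Completeness is immediate. Take a satisfying $A$ of $\Phi$, encode it as $A_0(j,u)=A(j)$, and run every $\mathcal R_i$ honestly with no corruptions. All checks then pass, and the final values are exactly $A(\pi_i(j))$, which satisfy the constraints.

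\textbf{Step 3 (soundness).} Argue the contrapositive. Suppose an assignment to $\Psi$ violates fewer than an $\eps$ fraction of edges. Declare those violated edges corrupted. On the remaining edges, the transcripts form a legitimate execution of each $\mathcal R_i$ under at most $\eps$ corruption.

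Define $A(j)=\maj_{0.99}(A_0(j,u)\mid u\in X_{j_1}(0))$. Consider the consistency checks on edges inside $X_{j_1}$. Since the vertex links are $\lambda$-products, they are expanders, so by \Cref{lem:expansion-majority} the majority is defined for all but an $\eta$ fraction of $j$. This holds after discarding bad links, which by \Cref{lem:interlink-vertex} cost at most $\poly(d)\lambda^2$.

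The routing guarantee then gives, for all but an $\eta+\nu$ fraction of $j$, that the final majority at $\pi_i(j)$ equals $A(j)$. Intersecting these events with the alphabet restriction, the pair $(A(j),A(\pi_i(j)))$ satisfies the edge $(j,\pi_i(j))$. So $A$ satisfies $\Phi$ up to a $1-\eta-\nu-\poly(d)\lambda^2$ fraction, contradicting the soundness hypothesis.

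\textbf{Main obstacle.} The delicate step is Step 3's decoding. We must check that the majority of $A_0(j,\cdot)$ over the link is well defined, and that it is the same value the routing protocol treats as the message. This requires matching the link-local consistency checks against the $0.99$-threshold majority through link expansion. We must also ensure that the corruption fraction counted on $G$ corresponds to the edge measure used in the routing definition. I would handle this first with the smoothness and majority-on-expander arguments already used in the proof of \Cref{lem:disjoint-color-restriction}, absorbing the constant losses into $\poly(d)\lambda^2$.
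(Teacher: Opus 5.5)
Your proposal follows the same route as the paper, which only describes this construction and defers the analysis to \cite{BMVY25}. The steps match: regularize via \cite[Lemma 5.2]{BMVY25}, route each of the $r$ matchings with its own link-to-link protocol, use restricted transcripts as the alphabet with link-consistency and message-faithfulness checks, and for soundness treat violated edges as corruptions and decode by $0.99$-majorities over good links. The loose ends in your write-up are shared with the paper's sketch. First, the regularization step loses a constant factor. Second, you need $\eta\ge\poly(d)\lambda^2$ so that the bad-link fraction meets the routing hypothesis. Third, the alphabet count should be $(T+1)(\Delta+1)$ symbols per protocol together with $T\le w(n)$, not a per-round bound. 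Fourth, even an uncorrupted run only routes correctly outside a small fraction of invalid pebble paths, so those pairs must be exempted from the alphabet restriction for completeness to be exactly $1$.
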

As we have link-to-link routing on symmetrically-connected complex with bounded degree (\Cref{lem:link-to-link}) we get the following as corollary of the above lemma.
\begin{corollary}
There exist $\alpha, \eps_0, d_0(\alpha) > 0$, and $C\in \N$ such that for large enough $n \in \N$ the following holds.
Let $X$ be $d$-partite $\lambda$-product such that
\begin{inparaenum}[(i)]
    \item its underlying graph $G$ has $|\lambda|(G)\leq \alpha/4$;
    \item its vertex links are symmetrically-connected;
    \item its max degree $\Delta(X)$ is bounded by $\Delta$;
    \item the dimension $d\geq d_0$ and its size $|X(0)|= n$.
\end{inparaenum}
Then there is a $\poly(n)$-time procedure mapping any 2-CSP $\Psi_0$ with $n$ vertices
on a $r$-regular graph $G_0$ and alphabet $\Sigma$,
to a 2-CSP $\Psi$ on the graph $G = (X(0), X(1))$ with alphabet size at most $|\Sigma|^{\poly(\Delta \log (n)  \log |\Sigma|)}$,
satisfying the following properties. 
\begin{enumerate}[(i)]
    \item \textbf{Completeness:} If $\val(\Psi_0) = 1$ then $\val(\Psi) = 1$.
    \item \textbf{Soundness:} If $\val(\Psi_0)\leq 1-\poly(d)r^2\lambda^2\log^C n-\frac{1}{\log^C n}$, then $\val(\Psi) \leq 1-\eps_0$.
\end{enumerate}
\end{corollary}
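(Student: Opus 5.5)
The plan is to obtain the corollary by chaining \Cref{lem:link-to-link} into the embedding lemma stated just before it. No new combinatorics is needed; the work is in matching parameters.

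\textbf{Step 1: get the routing protocol.} Let $\alpha,\eps_0,d_0$ be the constants of \Cref{lem:link-to-link}. Under hypotheses (i)--(iv), that lemma gives an $r'$-regular expander family $\mathcal H$ (from \Cref{lem:expander-family}) and an $\mathcal H$-regularized $(\eps_0,\eta,\eta+\nu)$-link-to-link routing protocol on $X$, where
\[
\nu=\poly(d)r'^2\lambda^2\log^{C}n+\frac{1}{\log^{C}n}.
\]
The protocol has round complexity $T=O(\log n)$ and work complexity $w(n)=\poly(T\Delta\log|\Sigma|)=\poly(\Delta\log n\log|\Sigma|)$. By the alphabet paragraph of that construction, its internal alphabet $\Sigma_0$ has size $f(|\Sigma|)=|\Sigma|^{O(\log^{C+1}n)}$, which is dictated by the congestion bound of \Cref{thm:ext-paths}.

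\textbf{Step 2: apply the embedding lemma.} Feed this protocol into the preceding embedding lemma with $\eps(n)=\eps_0$. The resulting 2-CSP $\Psi$ on $G=(X(0),X(1))$ has alphabet size at most
\[
f(|\Sigma|)^{r\Delta w(n)}=|\Sigma|^{O(\log^{C+1}n)\cdot r\Delta\cdot\poly(\Delta\log n\log|\Sigma|)}=|\Sigma|^{\poly(\Delta\log n\log|\Sigma|)},
\]
since $r$ is an absolute constant. Completeness is inherited verbatim from the embedding lemma.

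For soundness, the embedding lemma guarantees $\val(\Psi)\le 1-\eps_0$ whenever
\[
\val(\Psi_0)\le 1-\eta-\nu-\poly(d)\lambda^2 .
\]
I would take $\eta=0$, or $\eta\le 1/\log^C n$ if the analysis needs slack. This is legitimate because the encoded assignment $A_0(j,u)=A(j)$ is constant on every link, so its $0.99$-majority is always defined. With $r,r'$ constants, the term $\poly(d)\lambda^2$ is absorbed into $\poly(d)r^2\lambda^2\log^Cn$, and the constant $C$ is enlarged if needed to absorb a $1/\log^Cn$ term coming from $\eta$. The threshold then takes exactly the stated form $1-\poly(d)r^2\lambda^2\log^Cn-\log^{-C}n$.

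\textbf{Main obstacle.} The step I expect to be fiddly is this bookkeeping of constants. There are two distinct degree constants in play: the degree $r$ of the input CSP graph, and the degree $r'$ of the cloud expanders $\mathcal H$. Both must be fixed constants so that they disappear into the $\poly(\cdot)$ exponent and the $\poly(d)$ factor; I would set $r$ to be the maximum of the two. I would also check that $\eta$ is chosen consistently with the definition of link-to-link routing, so that the stated failure bound genuinely holds for the honest encoding.
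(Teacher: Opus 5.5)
Your proposal is correct and follows the paper's route exactly. The paper obtains this corollary in one line by feeding \Cref{lem:link-to-link} into the preceding embedding lemma, and your bookkeeping of the alphabet size and the soundness threshold just makes that composition explicit.

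One small correction to your justification for dropping $\eta$: the fact that the honest encoding is constant on links only bears on completeness. What licenses the choice is that $\eta$ is a free parameter of \Cref{lem:link-to-link}. In the soundness direction, what keeps it negligible is that the intra-link consistency constraints of $\Psi$ force a cheating assignment violating at most an $\eps_0$ fraction of constraints to have a well-defined $0.99$-majority on all but about a $\poly(d)\lambda^2$ fraction of links, and this is absorbed into the threshold.
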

Now that we have embedded a 2-CSP onto the underlying graph of an HDX with edge-tolerant routing,
we will use $(\ell, \sqrt \ell)$-agreement test to get a label cover instance (PCP) with a better soundness (assuming that the soundness of the agreement test is also good). 
\begin{lemma}[{\cite[Lemma 6.2]{BMVY25}}]
For all $\delta > 0$ there exists $\ell \in \N$ such that the following holds.
Suppose that $X$ is a complex for which the $(\ell, \sqrt \ell)$-direct product tester has soundness $\delta^2$. Then there is a polynomial time procedure such that given a 2-CSP instance $\Psi$ over the weighted graph $G = (X(0), X(1))$ with alphabets $\Sigma$, $\set{\Sigma(u)}_{u\in G}$
, produces an instance of generalized Label Cover $\Phi$ over the
$(\ell,\sqrt \ell)$-containment graph $(X(\sqrt \ell), X(\ell))$ over left alphabet $\Sigma^\ell$ and right alphabet $\Sigma^{\sqrt \ell}$ such that the following properties hold.
\begin{enumerate}
    \item The projection map $\phi_{(A,B)}$ associated to the edge $(A, B)$ is defined as
    the restriction of the assignment to $A$ to the coordinates in $B$.
    That is, for all $\sigma \in \Sigma_L(A)$, we have $\phi(A,B)(\sigma) = \sigma|_B$.
    \item If $\val(\Psi) = 1$, then $\val(\Phi) = 1$.
    \item If $\val(\Psi) \leq 1-4\delta$, then $\val(\Phi)\leq \delta$.
\end{enumerate}
\end{lemma}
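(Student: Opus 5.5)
The plan is the standard direct-product amplification: encode each $\ell$-face by a locally satisfying assignment to its vertices, and use the agreement test to decode a global assignment. Concretely, build $\Phi$ on the containment graph $(X(\ell),X(\sqrt\ell))$ as follows.
\begin{itemize}
\item For a left vertex $A\in X(\ell)$, let $\Sigma_L(A)$ be the set of tuples $\sigma\in\prod_{u\in A}\Sigma(u)$ satisfying every constraint of $\Psi$ on an edge $uv\subseteq A$. Such edges are edges of $G$, since $A$ is a face.
\item For a right vertex $B$, use the alphabet $\Sigma^{\sqrt\ell}$.
\item The projection is $\phi_{(A,B)}(\sigma)=\sigma|_B$, and edges are weighted by the $(\ell,\sqrt\ell)$-test distribution: $B$ together with a random $A\supseteq B$.
\end{itemize}
This construction is clearly polynomial time, and item 1 holds by definition. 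Completeness is immediate: restricting a satisfying assignment of $\Psi$ to each $A$ and each $B$ satisfies every constraint.

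For soundness, argue by contrapositive. Suppose $F_L,F_R$ satisfy more than a $\delta$ fraction of the edges of $\Phi$. For each $B$, let $p_B=\Pr_{A\supseteq B}[F_L(A)|_B=F_R(B)]$, so that $\mathbb E_B[p_B]>\delta$. In the $(\ell,\sqrt\ell)$ test, $A_1,A_2$ are conditionally independent given $B$, and they agree on $B$ whenever both agree with $F_R(B)$. By Jensen,
\[
\mathrm{agr}(F_L)\ \ge\ \mathbb E_B\big[p_B^2\big]\ \ge\ \big(\mathbb E_B[p_B]\big)^2\ >\ \delta^2 .
\]
Soundness $\delta^2$ of the tester then yields a global $g:X(0)\to\Sigma$ and a family $\mathcal A\subseteq X(\ell)$ of measure at least $\poly(\delta)$ on which $\dist(F_L(A),g|_A)\le\delta^2$.

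It remains to show that $g$ violates fewer than a $4\delta$ fraction of the constraints of $\Psi$. Fix $A\in\mathcal A$. Since $F_L(A)\in\Sigma_L(A)$ satisfies all constraints inside $A$, an edge $uv\subseteq A$ can be violated by $g$ only if $g$ disagrees with $F_L(A)$ at $u$ or at $v$. For a random edge of $A$ (under the induced measure) this happens with probability at most $2\delta^2\le\delta$. So every $A\in\mathcal A$ sees a violation fraction at most $\delta$ under $g$.

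Suppose for contradiction that $\val(\Psi)\le 1-4\delta$. Then $g$ violates at least a $4\delta$ fraction of $X(1)$. Sampling then says that all but a tiny fraction of $A\in X(\ell)$ see a violation fraction of at least $3\delta$. This is where I would use that the $(1,\ell)$ containment graph of a local spectral expander is a good sampler, via \Cref{lem:expander-sampling}. With $\lambda$ small and $\ell$ large enough in terms of $\delta$, this bad fraction is at most $\lambda_{\mathrm{eff}}^2\cdot O(1/\delta)$ by \Cref{lem:expander-sampling}, with a Chernoff-type bound available for large $\ell$. Since this is below $\poly(\delta)$, $\mathcal A$ must contain some $A$ that sees violation at least $3\delta>\delta$, a contradiction. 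Hence $\val(\Phi)\le\delta$.

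The main obstacle is this last sampling step: making the decay in $\ell$ beat the $\poly(\delta)$ mass of $\mathcal A$. I would first try the spectral bound on the edge-to-$\ell$-face containment walk, whose second eigenvalue is roughly $O(1/\ell)+O(\lambda)$, combined with \Cref{lem:expander-sampling}. Choosing $\ell\ge\poly(1/\delta)$ beyond the $\poly(\delta)$ exponent, which the statement's "there exists $\ell$" permits, and $\lambda$ small, should make this work.
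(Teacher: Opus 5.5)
Your proposal is correct and follows the standard argument, which is how \cite{BMVY25} prove their Lemma 6.2 (the present paper only quotes it without proof): left alphabet of locally satisfying assignments, Jensen to get agreement $>\delta^2$, decode $g$ from test soundness, and contradict $\val(\Psi)\le 1-4\delta$ because the containment graph $(X(\ell),X(1))$ is a good sampler once $\ell\ge \mathrm{poly}(1/\delta)$ is large enough to beat the $\mathrm{poly}(\delta)$ mass of good faces. Three points need fixing or stating. First, make explicit the hypothesis your sampling step adds: $X$ must be a $\gamma$-local spectral expander with $\gamma$ small in terms of $\ell$ and $\delta$, as in \Cref{thm:expander-to-agrtest}. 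The quoted statement leaves this implicit, but this reduction genuinely needs it. For example, take a disjoint union of two good complexes, with a trivially satisfiable CSP on one copy and a far-from-satisfiable one on the other; the test stays sound, yet $\val(\Phi)\ge 1/2$. Second, in this paper's version of the test $A_1,A_2$ share a top face $t$, so run Jensen conditioned on $(B,t)$ rather than on $B$ alone. Third, the resulting bound from \Cref{lem:expander-sampling}, applied to the complement of the violated edges, is of order $(1/\ell+\mathrm{poly}(\ell)\gamma)/\delta^2$ rather than what you wrote, but this still suffices.
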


In conclusion, we get that for nice enough complex that supports an agreement test (with appropriate parameters) we can 
\begin{inparaenum}[(i)]
    \item first construct a routing protocol on it,
    \item then we can embed a 2-CSP using the routing protocol,
    \item finally, we can amplify the gap using the agreement test to get label cover instance with desired soundness.
\end{inparaenum}
\generalpcp*

\end{document}